\let\emph\undefined
\newcommand{\emph}[1]{\textsl{#1}}
\numberwithin{equation}{section}
\newtheoremstyle{style1}
  {13pt}
  {13pt}
  {}
  {}
  {\normalfont\bfseries}
  {.}
  {.5em}
  {}
\theoremstyle{style1}
\newtheorem{definition}[equation]{Definition}
\newtheorem{example}[equation]{Example}
\newtheorem{remark}[equation]{Remark}
\newtheorem{construction}[equation]{Construction}
\newtheoremstyle{style2}
  {13pt}
  {13pt}
  {\slshape}
  {}
  {\normalfont\bfseries}
  {.}
  {.5em}
  {}
\theoremstyle{style2}
\newtheorem{lemma}[equation]{Lemma}
\newtheorem{theorem}[equation]{Theorem}
\newtheorem{proposition}[equation]{Proposition}
\newtheorem{corollary}[equation]{Corollary}
\definecolor{Blue} {rgb} {0.282352,0.239215,0.803921}
\definecolor{Green} {rgb} {0.133333,0.545098,0.133333}
\definecolor{Red}   {rgb} {0.803921,0.000000,0.000000}
\definecolor{Violet}{rgb} {0.580392,0.000000,0.827450}
\newcounter{jfc}
\newcommand{\R}{\mathbb{R}}
\newcommand{\C}{\mathbb{C}}
\renewcommand{\P}{\mathbb{P}}
\newcommand{\Z}{\mathbb{Z}}
\newcommand{\N}{\mathbb{N}}
\newcommand{\bbS}{{\mathbb{S}}}
\newcommand{\bbD}{{\mathbb{D}}}
\newcommand{\bbT}{{\mathbb{T}}}
\newcommand{\Fa}{\mathcal{F}}
\newcommand{\Ga}{\mathcal{G}}
\newcommand{\Ta}{\mathcal{T}}
\newcommand{\Sa}{\mathcal{S}}
\newcommand{\Ha}{\mathcal{H}}
\newcommand{\Ra}{\mathcal{R}}
\newcommand{\Aa}{{\mathcal{A}}}
\newcommand{\Ba}{{\mathcal{B}}}
\newcommand{\Ja}{\mathcal{J}}
\newcommand{\La}{\mathcal{L}}
\newcommand{\Ia}{\mathcal{I}}
\newcommand{\Ua}{{\mathcal{U}}}
\newcommand{\ol}[1]{\overline{#1}}
\newcommand{\ul}[1]{\underline{#1}}
\newcommand{\Ad}{{\mathrm{Ad}}}
\newcommand{\colim}{\operatorname{colim}}
\newcommand{\Set}{\operatorname{\mathscr{S}et}}
\newcommand{\Grpd}{{\mathscr{G}\mathrm{rpd}}}
\newcommand{\Grp}{{\mathscr{G}\mathrm{rp}}}
\newcommand{\midwedge}{\text{\Large$\wedge$}}
\newcommand{\Cscr}{\mathscr{C}}
\newcommand{\Fscr}{\mathscr{F}}
\newcommand{\Hscr}{\mathscr{H}}
\newcommand{\Dscr}{\mathscr{D}}
\newcommand{\Ascr}{{\mathscr{A}}}
\newcommand{\holim}{\operatorname{holim}}
\newcommand{\hocolim}{\operatorname{hocolim}}
\newcommand{\sSet}{{\mathscr{S}\mathrm{et}_{\hspace{-0.04cm}\Delta}}}
\newcommand{\cH}{{\check{\mathrm{H}}}}
\newcommand{\rmB}{{\mathrm{B}}}
\newcommand{\rmH}{\mathrm{H}}
\newcommand{\Sym}{{\mathrm{Sym}}}
\newcommand{\ev}{{\mathrm{ev}}}
\newcommand{\coev}{{\mathrm{coev}}}
\newcommand{\Bun}{{\mathrm{Bun}}}
\newcommand{\Dfg}{\operatorname{\mathscr{D}fg}}
\newcommand{\Gau}{{\mathrm{Gau}}}
\newcommand{\dd}{\mathrm{d}}
\newcommand{\pr}{\operatorname{pr}}
\newcommand{\Cat}{\operatorname{\mathscr{C}at}}
\newcommand{\sfH}{{\mathsf{H}}}
\newcommand{\sfZ}{{\mathsf{Z}}}
\newcommand{\sfE}{{\mathsf{E}}}
\newcommand{\sfX}{{\mathsf{X}}}
\newcommand{\sfP}{{\mathsf{P}}}
\newcommand{\sfu}{{\mathsf{u}}}
\newcommand{\sfa}{{\mathsf{a}}}
\newcommand{\sfr}{{\mathsf{r}}}
\newcommand{\sfl}{{\mathsf{l}}}
\newcommand{\sfR}{{\mathsf{R}}}
\newcommand{\sfc}{{\mathsf{c}}}
\newcommand{\sfs}{{\mathsf{s}}}
\newcommand{\sfB}{{\mathsf{B}}}
\newcommand{\sfL}{{\mathsf{L}}}
\newcommand{\sfG}{{\mathsf{G}}}
\newcommand{\sfC}{{\mathsf{C}}}
\newcommand{\sfD}{{\mathsf{D}}}
\newcommand{\sfA}{{\mathsf{A}}}
\newcommand{\sfg}{{\mathsf{g}}}
\newcommand{\pt}{{\mathtt{pt}}}
\newcommand{\PT}{{\mathtt{PT}}}
\newcommand{\PSh}{{\mathtt{PSh}}}
\newcommand{\hol}{\operatorname{hol}}
\newcommand{\End}{\mathsf{End}}
\newcommand{\Hom}{\mathsf{Hom}}
\newcommand{\Aut}{\mathsf{Aut}}
\newcommand{\id}{\text{id}}
\newcommand{\HLBdl}{\mathrm{HLBdl}}
\newcommand{\BGrb}{\mathrm{BGrb}}
\newcommand{\D}{{D\hspace{-0.25cm}\slash}{}}
\newcommand{\iu}{\mathrm{i}}
\newcommand{\qandq}{\qquad \text{and} \qquad}
\newcommand{\rk}{\mathrm{rk}}
\newcommand{\String}{{\mathrm{String}}}
\newcommand{\DS}{\big/\hspace{-0.15cm}\big/}
\newcommand{\U}{{\operatorname{U}}}
\let\to\undefined
\newcommand{\to}{\longrightarrow}
\let\mapsto\undefined
\newcommand{\mapsto}{\longmapsto}
\newcommand{\opp}{\text{op}}
\newcommand{\Diff}{\mathrm{Diff}}
\newcommand{\PU}{{\mathbb{P}\mathrm{U}}}
\newcommand{\Des}{{\mathfrak{Des}}}
\newcommand{\Cart}{{\operatorname{\mathscr{C}art}}}
\newcommand{\Mfd}{\operatorname{\mathscr{M}fd}}
\newcommand{\qen}{\hfill $\vartriangleleft$}
\DeclareMathSymbol{\Phiit}{\mathalpha}{letters}{"08} 
\DeclareMathSymbol{\Psiit}{\mathalpha}{letters}{"09}
\DeclareMathSymbol{\Sigmait}{\mathalpha}{letters}{"06}
\DeclareMathSymbol{\Xiit}{\mathalpha}{letters}{"04}
\DeclareMathSymbol{\Piit}{\mathalpha}{letters}{"05}\let\Pi\undefined\newcommand{\Pi}{\Piit}
\DeclareMathSymbol{\Gammait}{\mathalpha}{letters}{"00}
\DeclareMathSymbol{\Omegait}{\mathalpha}{letters}{"0A}
\DeclareMathSymbol{\Upsilonit}{\mathalpha}{letters}{"07}
\DeclareMathSymbol{\Thetait}{\mathalpha}{letters}{"02}
\let\Phi\undefined\newcommand{\Phi}{\Phiit}
\let\Sigma\undefined\newcommand{\Sigma}{\Sigmait}
\let\Psi\undefined\newcommand{\Psi}{\Psiit}
\newenvironment{myitemize}{\begin{itemize}[itemsep=-0.05cm, leftmargin=*, topsep=0.1cm]}{\end{itemize}}
\newenvironment{myenumerate}{\begin{enumerate}[itemsep=-0.05cm, leftmargin=*, topsep=0.1cm, label={(\arabic*)}]}{\end{enumerate}}
\begin{document}

\begin{flushright}
\small
\textsf{Hamburger Beiträge zur Mathematik Nr.\,834}\\
{\sf ZMP--HH/20--10} \\
{\sf EMPG--20--08} \\
\end{flushright}

\vspace{10mm}

\begin{center}
	\textbf{\LARGE{Smooth 2-Group Extensions \\[3mm] and
            Symmetries of Bundle Gerbes}}\\
	\vspace{1cm}
	{\large Severin Bunk$^{a}$}, \ \ {\large Lukas Müller$^{b}$} \ \ and \ \ {\large Richard J. Szabo$^{c}$}

\vspace{5mm}

{\em $^a$ Fachbereich Mathematik, Bereich Algebra und Zahlentheorie\\
Universit\"at Hamburg\\
Bundesstra\ss e 55, D\,--\,20146 Hamburg, Germany}\\
Email: {\tt  severin.bunk@uni-hamburg.de\ }
\\[7pt]
{\em $^b$ Max-Planck-Institut f\"ur Mathematik\\
Vivatsgasse 7, D\,--\,53111 Bonn, Germany}\\
Email: {\tt lmueller4@mpim-bonn.mpg.de\ }
\\[7pt]
{\em $^c$ Department of Mathematics\\
Heriot-Watt University\\
Colin Maclaurin Building, Riccarton, Edinburgh EH14 4AS, U.K.}\\
and {\em Maxwell Institute for Mathematical Sciences, Edinburgh, U.K.}\\
and {\em Higgs Centre for Theoretical Physics, Edinburgh, U.K.}\\
Email: {\tt r.j.szabo@hw.ac.uk\ }
\end{center}

\vspace{1cm}

\begin{abstract}
\noindent
We study bundle gerbes on manifolds $M$ that carry an action of a connected Lie group $G$.
We show that these data give rise to a smooth 2-group extension of $G$ by the smooth 2-group of hermitean line bundles on $M$.
This 2-group extension classifies equivariant structures on the bundle gerbe, and its non-triviality poses an obstruction to the existence of equivariant structures.
We present a new global approach to the parallel transport of a bundle
gerbe with connection, and use it to give an alternative construction
of this smooth 2-group extension in terms of a homotopy-coherent version of the associated bundle construction.
We apply our results to give new descriptions of nonassociative magnetic translations in quantum mechanics and the Faddeev-Mickelsson-Shatashvili anomaly in quantum field theory.
We also propose a definition of smooth string 2-group models within our geometric framework.
Starting from a basic gerbe on a compact simply-connected Lie group $G$, we prove that the smooth 2-group extensions of $G$ arising from our construction provide new models for the string group of $G$.
\end{abstract}

\newpage

\tableofcontents

\newpage

\section{Introduction}

This paper is motivated by the following problem from physics: In~\cite{BMS:NA_translations} we showed how a bundle gerbe with connection on
$\R^d$ gives rise to a 3-cocycle on the translation group $\R^d_{\mathtt{t}}$ of $\R^d$.
Even though this 3-cocycle is trivial in group cohomology, it is very interesting from a physical as well as from a mathematical perspective:
it gives a geometric explanation to the presence of nonassociativity
in quantum mechanics with magnetic monopole backgrounds, and it
implements the action of the parallel transport of a bundle gerbe on
its 2-Hilbert space of sections. This appearence of nonassociativity in
quantum mechanics goes back to~\cite{Jackiw:3-cocycles,Gunaydin:1985ur},
but as of yet the more natural extension to realistic scenarios
involving periodically confined motion on configuration spaces such as tori
$\bbT^d$ has not been worked out. The discussion
of~\cite{Jackiw:3-cocycles} was a response to the observed violation
of the Jacobi identity for the algebra of field operators in quantum gauge theories with
chiral fermions~\cite{Jo}, which is a manifestation of the chiral anomaly. Interest in these
models has been recently revived through their conjectural relevance
to non-geometric flux compactifications of string theory,
which is based on backgrounds that are tori or more
generally torus
bundles~\cite{Lust:2010iy,MSS:NonGeo_Fluxes_and_Hopf_twist_Def,Bakas:2013jwa,Mylonas:2013jha}. However,
the original finding~\cite{Blumenhagen:2010hj} of 
nonassociativity in Wess-Zumino-Witten models based on other compact
Lie groups has so far received considerably less attention, and in
particular has not been understood from a geometric perspective.

In the present paper we work out the geometric framework and origin behind these results in complete generality.
Subsequently, we present several applications of our results in both
physics and mathematics, along the lines discussed above.
We consider an action $\Phi \colon G \times M \to M$ of a connected Lie group $G$ on a manifold $M$, where $M$ is endowed with a bundle gerbe $\Ga$.
One can now ask whether $\Ga$ admits a $G$-equivariant structure.
At the very least, such a structure should consist of a choice of 1-isomorphism $\Ga \to \Phi_g^*\Ga$ for every $g \in G$.
Instead of considering possible choices for such 1-isomorphisms individually, we assign to $g$ the groupoid of all such 1-isomorphisms.
This yields an object which can be understood as a bundle $\Sym_G(\Ga) \to G$ of groupoids over $G$.
Considering $g=e$, the identity element of $G$, we see that its typical fibre is the groupoid $\HLBdl(M)$ of hermitean line bundles on $M$.

The definition of $\Sym_G(\Ga)$ so far does not capture the smooth structure of the gerbe $\Ga$.
We thus enhance the construction to take into account smooth families of elements of $G$.
Then one can make sense of $\Sym_G(\Ga)$ as a category fibred in groupoids over a base category $\Cart$ that encodes smooth families of geometric objects.
Categories fibred in groupoids over $\Cart$ assemble into a 2-category $\Hscr$, and there exists a fully faithful inclusion of the category of smooth manifolds into $\Hscr$.
Motivated by~\cite{SP:String_group} we define a smooth 2-group to be a group object in $\Hscr$.
One of the central examples for us is the smooth 2-group $\HLBdl^M$ of hermitean line bundles on $M$.
We introduce a notion of smooth principal 2-bundle in $\Hscr$ that lies between the definitions of higher principal bundles used in~\cite{SP:String_group} and~\cite{NSS:oo-bdls_I} (see in particular Appendix~\ref{app:2-buns are oo-buns}).
We show that our principal 2-bundles are well behaved from a homotopical as well as from a geometric point of view (more precisely, they form effective epimorphisms while also admitting local sections).
With the notion of smooth 2-group and principal 2-bundles, we can make precise what it means to be a (central) extension of smooth 2-groups in analogy to extensions of Lie groups.
Then, our first main results can be summarised as

\begin{theorem}
Let $G$ be a connected Lie group acting on a manifold $M$, and let $\Ga$ be a bundle gerbe on $M$.
Then:
\begin{myenumerate}
\item There is a (non-central) extension of smooth 2-groups
\begin{equation}
\label{eq:Sym SES}
\begin{tikzcd}
	1 \ar[r] & \HLBdl^M \ar[r] & \Sym_G(\Ga) \ar[r] & \ul{G} \ar[r] & 1 \ ,
\end{tikzcd}
\end{equation}
where $\ul{G} \in \Hscr$ denotes the category fibred in groupoids associated to $G$.

\item The smooth 2-group $\Sym_G(\Ga)$ acts on $\Ga$, and the action covers that of $G$ on $M$.

\item The gerbe $\Ga$ admits a $G$-equivariant structure if and only if there exists a morphism $\ul{G} \to \Sym_G(\Ga)$ of smooth 2-groups which splits the extension~\eqref{eq:Sym SES}.
\end{myenumerate}
\end{theorem}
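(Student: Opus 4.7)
The plan is to construct $\Sym_G(\Ga)$ as a category fibred in groupoids over $\Cart$ whose objects over a Cartesian space $c$ are pairs $(f,\varphi)$ consisting of a smooth map $f \colon c \to G$ together with a 1-isomorphism
\begin{equation}
\varphi \colon \pr_M^* \Ga \to (f \times \id_M)^* \Phi^* \Ga
\end{equation}
of bundle gerbes on $c \times M$; morphisms over the identity on $f$ are 2-isomorphisms of such $\varphi$. The assignment $(f,\varphi) \mapsto f$ yields the projection $\Sym_G(\Ga) \to \ul{G}$, and the fibre over the constant map at $e \in G$ is the groupoid of 1-automorphisms of $\pr_M^*\Ga$. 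Since 1-automorphisms of a bundle gerbe are classified by hermitean line bundles, promoting this equivalence to smooth families identifies $\HLBdl^M$ with the kernel in $\Hscr$.

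For part (1), I would next endow $\Sym_G(\Ga)$ with the structure of a smooth 2-group in $\Hscr$. The product of $(f,\varphi)$ and $(g,\psi)$ is defined by composing $\psi$ with an appropriate pullback of $\varphi$ along the map $(c,m) \mapsto (c, \Phi(g(c),m))$, using the action property $\Phi_{fg} = \Phi_f \circ \Phi_g$ to make the source and target of the composition match; the unit is the identity 1-automorphism of $\Ga$, and the inverse of $(f,\varphi)$ arises from inverting $\varphi$. Associativity and unitality will hold up to coherent 2-isomorphisms whose construction is routine but requires bookkeeping. The main obstacle is to prove that the projection to $\ul{G}$ is an effective epimorphism of smooth 2-groups, equivalently that it admits local sections over $G$. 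This is where the hypothesis that $G$ be connected enters: for $g$ in the identity component, $\Phi_g^*\Ga$ and $\Ga$ carry the same Dixmier-Douady class, so 1-isomorphisms between them exist and can be arranged into smooth families over sufficiently small neighbourhoods in $G$ via a partition of unity and the local structure theory of bundle gerbes. Combining this with the identification of the kernel yields the extension~\eqref{eq:Sym SES}.

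Part (2) is essentially tautological: an object $(f,\varphi)$ of $\Sym_G(\Ga)$ already carries a 1-isomorphism $\varphi$ which provides the action datum. I would assemble these into an action morphism $\Sym_G(\Ga) \times \Ga \to \Ga$ in $\Hscr$ and verify that, composed with the projection to $\ul{G}$, it recovers the smooth action $\Phi$ of $G$ on $M$; naturality and associativity of this action follow from the group structure already constructed in (1).

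Part (3) then follows by unwinding definitions. A morphism of smooth 2-groups $s \colon \ul{G} \to \Sym_G(\Ga)$ splitting the extension corresponds to a smooth coherent family $(g,\varphi_g)$ with $\varphi_g \colon \Ga \to \Phi_g^*\Ga$ equipped with 2-isomorphisms witnessing $\varphi_{fg} \cong \Phi_g^*\varphi_f \circ \varphi_g$, which is precisely the data of a $G$-equivariant structure on $\Ga$. The only nontrivial step is to verify that the coherence data carried by morphisms of group objects in $\Hscr$ match, up to equivalence, the coherence data of equivariant structures; this is a direct comparison of definitions, and is expected to cause no substantive difficulty once (1) has been established.
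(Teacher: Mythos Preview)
Your overall strategy matches the paper's: the construction of $\Sym_G(\Ga)$ as pairs $(f,\varphi)$, the product via composition with a pullback along $(1,\Phi_{f_0})$, the identification of the kernel with $\HLBdl^M$ through the standard equivalence between automorphisms of a gerbe and line bundles, and the reading of a splitting as an equivariant structure are all exactly what the paper does.

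There is, however, a genuine gap in your surjectivity argument. You write that 1-isomorphisms ``can be arranged into smooth families over sufficiently small neighbourhoods in $G$ via a partition of unity.'' Partition of unity is a linear tool and does not apply to 1-isomorphisms of gerbes; there is no way to take convex combinations of such morphisms. More importantly, you are solving the wrong problem. What must be shown is that $p \colon \Sym_G(\Ga) \to \ul{G}$ is an essentially surjective Grothendieck fibration, i.e.\ that for every object $f \colon c \to G$ of $\ul{G}$ the fibre $\BGrb(c \times M)(\pr_M^*\Ga,\Phi_f^*\Ga)$ is non-empty. The paper's argument is simpler and direct: since $c \in \Cart$ is contractible and $G$ is connected, the map $f$ is homotopic to the constant map at $e$, so $\Phi_f^*\Ga$ and $\pr_M^*\Ga$ have the same Dixmier-Douady class on $c \times M$ and a 1-isomorphism exists. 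No gluing is required; the contractibility of the test object does all the work.

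Your treatment of part~(3) as ``a direct comparison of definitions'' with ``no substantive difficulty'' undersells what is needed. A splitting $s \colon \ul{G} \to \Sym_G(\Ga)$ assigns a 1-isomorphism $s(f) \colon \pr_M^*\Ga \to \Phi_f^*\Ga$ to every smooth map $f \colon c \to G$, whereas an equivariant structure is a single 1-isomorphism $A \colon \pr_M^*\Ga \to \Phi^*\Ga$ over $G \times M$. Passing from the former to the latter requires choosing a good open cover $\{c_i\}$ of $G$, applying $s$ to the inclusions $c_i \hookrightarrow G$, and then using the functoriality of $s$ on overlaps to obtain coherent 2-isomorphisms that allow the $s(c_i)$ to glue to a global $A$; the 2-isomorphism $\chi$ over $G^{\times 2} \times M$ similarly requires passing to a refinement compatible with the multiplication map. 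This is a descent argument, not a tautology, and it is where the smooth structure on $\Sym_G(\Ga)$ genuinely earns its keep.
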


An extension similar to~\eqref{eq:Sym SES} was considered
in~\cite{FRS:Higher_U(1)-connections}, where symmetries of a gerbe with connection were investigated in relation with higher geometric prequantisation.
Infinitesimal versions of the extension~\eqref{eq:Sym SES} were considered in~\cite{Collier:Inft_Symmetries_of_gerbes,FRS:Higher_U(1)-connections}, where it was shown that these give rise to the standard $H$-twisted Courant algebroid on $M$, where $H$ is the 3-form curvature of the connection on $\Ga$.
These considerations have been expanded on and applied to higher versions of Kaluza-Klein reductions of string theory in~\cite{Alfonsi:DFT_and_KK}.

Our point here is that in many applications, such as nonassociativity
in quantum mechanics and string theory, anomalies in quantum field theory, as well as interesting topological constructions, connections on $\Ga$ only play a secondary role: in this context, they can be seen as a tool to compute the extensions~\eqref{eq:Sym SES} and their associated cocycles.
The key to this computability is an alternative presentation of $\Sym_G(\Ga)$ in terms of a categorified descent construction.

In order to work out this construction, we introduce a novel global approach to the parallel transport of a bundle gerbe.
Parallel transport for gerbes has been constructed in~\cite{SW:Fctrs_v_forms,SW:PT_and_Fctrs,SW:Local_2-fctrs}, but for our purposes a global, rather than local, treatment is necessary.
Our construction relies heavily on the transgression-regression
machine for bundle gerbes~\cite{Waldorf:Transgression_II} together with the properties of the fusion product and the connection on the transgression line bundle that were studied in~\cite{Waldorf:Transgression_II,BW:Transgression_of_D-branes}.
Given a connection on $\Ga$, we construct its parallel transport as a quadruple $\pt^\Ga = (\pt^\Ga_1, \pt^\Ga_2, \pt^\Ga_\star, \varepsilon^\Ga)$, consisting of the following data:
first, there is a 1-isomorphism $\pt_1^\Ga \colon \ev_0^*\Ga \to \ev_1^*\Ga$ over the path space $PM$ of $M$, where $\ev_t \colon PM \to M$ is the evaluation of a path at $t \in[0,1]$.
Second, there is a 2-isomorphism \smash{$\pt_2^\Ga \colon \pt_{1|\gamma_0}^\Ga \to \pt_{1|\gamma_1}^\Ga$} for every smooth homotopy with fixed endpoints between paths $\gamma_0$ and $\gamma_1$, which depends smoothly on the paths and the homotopy.
The 2-isomorphisms $\pt_\star^\Ga$ and $\varepsilon^\Ga$ implement the
compatibility of the parallel transport with concatenation of paths
and with constant paths, respectively.
Furthermore, the collection $\pt^\Ga$ is required to be invariant
under thin homotopies in a precise way. We show
\begin{theorem}
Every bundle gerbe with connection has a canonical parallel
transport. 
\end{theorem}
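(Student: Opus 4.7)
The plan is to construct each component of the quadruple $\pt^\Ga = (\pt^\Ga_1, \pt^\Ga_2, \pt^\Ga_\star, \varepsilon^\Ga)$ from the transgression line bundle $\Ta\Ga \to LM$ together with its fusion product and its canonical connection (as developed in Waldorf's work and~\cite{BW:Transgression_of_D-branes}), and then verify the required compatibilities.

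For the 1-isomorphism $\pt^\Ga_1 \colon \ev_0^*\Ga \to \ev_1^*\Ga$ on $PM$, I would pull back $\Ga$ along the total evaluation $\ev \colon [0,1] \times PM \to M$, $(t,\gamma) \mapsto \gamma(t)$. The resulting gerbe admits a fibrewise trivialization along the $[0,1]$-direction obtained from the connection, and restricting to $t=0,1$ yields the desired 1-isomorphism. Equivalently, and more in line with the transgression approach, I would realise $\pt^\Ga_1|_\gamma$ through the fusion product: pairing $\gamma$ with auxiliary paths glues them into loops, and the fibres of $\Ta\Ga$ over those loops together with the fusion product canonically assemble into a 1-isomorphism between the gerbe fibres at the endpoints. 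The two descriptions must be shown to agree canonically.

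The 2-isomorphism $\pt^\Ga_2$ associated to a smooth homotopy $h$ with fixed endpoints between $\gamma_0$ and $\gamma_1$ is built from the parallel transport of $\Ta\Ga$: the homotopy $h$ corresponds to a path in $PM$ whose image under the above gluing becomes a loop in $LM$ (or a disc in $M$), and the parallel transport of the connection on $\Ta\Ga$ along it gives an isomorphism of hermitean lines which, under the identification from the previous step, is precisely the required 2-isomorphism $\pt^\Ga_{1|\gamma_0} \to \pt^\Ga_{1|\gamma_1}$. Smooth dependence on $h$ is inherited directly from smoothness of the connection on $\Ta\Ga$. The compatibility 2-isomorphisms $\pt^\Ga_\star$ and $\varepsilon^\Ga$ then come from structural properties of the fusion product: multiplicativity of fusion under concatenation of loops translates, via the construction of $\pt^\Ga_1$, into the coherence for path concatenation, while the behaviour of fusion at constant loops gives the unit coherence $\varepsilon^\Ga$. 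Thin-homotopy invariance is inherited from the fact that thin surfaces have trivial surface holonomy, so the corresponding loops in $LM$ carry trivial $\Ta\Ga$-parallel transport.

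The main obstacle I anticipate is not pointwise construction but global smoothness and 2-categorical coherence on $PM$: the assignment $\gamma \mapsto \pt^\Ga_1|_\gamma$ must be upgraded to an honest 1-isomorphism of bundle gerbes over all of $PM$ with smooth trivialisation data, and the three families $\pt^\Ga_2$, $\pt^\Ga_\star$, $\varepsilon^\Ga$ must be smooth, satisfy the pentagon/triangle coherences for concatenation and units, and interact correctly under horizontal and vertical composition. To handle this I would work systematically with the pullback of $\Ga$ along $[0,1] \times PM \to M$ relative to an explicit surjective submersion, trivialise using a fixed choice (e.g.\ via a connection on a chosen cover), and then prove that changing the choice produces a canonical 2-isomorphism between the resulting parallel transport data; the fusion product on $\Ta\Ga$ is what ultimately guarantees that these canonical 2-isomorphisms satisfy the coherences, turning the construction into a well-defined, choice-independent structure $\pt^\Ga$.
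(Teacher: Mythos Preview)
Your proposal identifies the right ingredients---the transgression line bundle $\Ta\Ga$, its fusion product, and its superficial connection---and these are indeed what drive the paper's construction. However, the paper organises the argument quite differently, and this difference is not cosmetic.

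The paper does \emph{not} attempt to build $\pt^\Ga_1$ directly for an arbitrary gerbe $\Ga$, and it explicitly remarks (end of Section~\ref{sec: path space for PT}) that the direct analogue of ``pull back to $[0,1]\times PM$ and trivialise fibrewise'' is difficult to make precise for gerbes. Instead, the key organisational move is to first replace $\Ga$ by $\Ga' = \Ra\Ta(\Ga)$, the regression of the transgression. The point is that $\Ga'$ is, by construction, defined over the path fibration $P_0M \to M$ with underlying line bundle literally given by (a pullback of) $\Ta\Ga$. This makes your ``auxiliary paths'' idea completely concrete: the relevant space is $P_{\partial\Delta^2}M = P_0M \times_M PM \times_M P_0M$, the map to $LM$ is explicit, and the 1-isomorphism $\pt^{\Ga'}_1$ has underlying line bundle $\sfs^*\Ta\Ga$ with structure morphism $\beta$ built from the fusion product. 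All four pieces of $\pt^{\Ga'}$ are then written down explicitly in terms of fusion and the superficial parallel transport on $\Ta\Ga$; in particular, $\pt^{\Ga'}_2$ uses not only parallel transport on $\Ta\Ga$ but also the canonical section over discs coming from the curving (your description of $\pt^\Ga_2$ as ``parallel transport along a loop in $LM$'' omits this piece).

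Having done this for $\Ga'$, the paper then \emph{transfers} the parallel transport to $\Ga$ via Waldorf's 1-isomorphism $\Aa_\Ga \colon \Ga \to \Ga'$, conjugating by $\Ba$ and $\Ba^{-1}$ for any representative $\Ba \in [[\Aa_\Ga]]$. Since $\Aa_\Ga$ is only determined up to 2-isomorphism, the resulting $\pt^{\Ga,\Ba}$ depends on the choice of $\Ba$---but the paper shows that any two choices yield a \emph{unique} isomorphism in $\PT(\Ga)$, and then defines the canonical $\pt^\Ga$ as a colimit over the groupoid $[[\Aa_\Ga]]$. This is how ``canonical'' is achieved; your proposal to ``prove that changing the choice produces a canonical 2-isomorphism'' is the right instinct, but the mechanism is this regression--transfer--colimit procedure rather than a direct analysis of trivialisations of $\ev^*\Ga$ on $[0,1]\times PM$.
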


Using the parallel transport, we are able to write down a
$\HLBdl^M$-valued \v{C}ech 1-cocycle on the covering of $G$ by its
space of based paths.
These data are equivalently transition functions for an
$\HLBdl^M$-principal 2-bundle $\Des_\sfL \to \ul{G}\,$.
We construct $\Des_\sfL$ explicitly by a homotopy-coherent version of the associated bundle construction.
Then we prove

\begin{theorem}
The principal 2-bundle $\Des_\sfL \to \ul{G}$ is a smooth 2-group extension of $\ul{G}$ by $\HLBdl^M$.
There is a weakly commutative diagram of smooth 2-groups
\begin{equation}
\begin{tikzcd}[row sep=1cm]
	1 \ar[r] & \HLBdl^M \ar[r] \ar[d, equal] & \Sym_G(\Ga) \ar[r] \ar[d, "\Psi"] & \ul{G} \ar[r] \ar[d, equal] & 1
	\\
	1 \ar[r] & \HLBdl^M \ar[r] & \Des_\sfL \ar[r] & \ul{G} \ar[r] & 1
\end{tikzcd}
\end{equation}
The morphism $\Psi$ is an equivalence.
\end{theorem}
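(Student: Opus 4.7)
The plan is to proceed in three stages matching the three assertions of the theorem. The central input throughout is the canonical parallel transport $\pt^\Ga = (\pt^\Ga_1, \pt^\Ga_2, \pt^\Ga_\star, \varepsilon^\Ga)$ supplied by the preceding theorem, which encodes precisely the coherence data needed to upgrade local descent cocycles to multiplicative ones. To equip $\Des_\sfL$ with a smooth 2-group structure extending that of $\ul{G}$, I would exploit the fact that the \v{C}ech covering by the based path space $P_eG \to G$ is compatible with the group operation on $G$ via concatenation of paths. Multiplication on $\Des_\sfL$ is then induced from the tensor product on $\HLBdl^M$ twisted by the compatibility 2-isomorphism $\pt^\Ga_\star$, with unit controlled by $\varepsilon^\Ga$. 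The associator and unitor axioms of a smooth 2-group reduce directly to the corresponding coherence conditions of $\pt^\Ga_\star$ and $\varepsilon^\Ga$, while thin homotopy invariance ensures that the construction descends from $P_eG$ to $G$. Combined with the local sections built into the associated bundle construction, this presents $\Des_\sfL \to \ul{G}$ as a smooth 2-group extension by $\HLBdl^M$.

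For the morphism $\Psi$, I would construct, for each path $\gamma$ from $e$ to $g$ in $G$, a canonical 1-isomorphism $\sfT_\gamma \colon \Ga \to \Phi_g^*\Ga$ obtained by pulling back $\pt^\Ga_1$ along the $\Phi$-translate of $\gamma$. Given a symmetry $f \colon \Ga \to \Phi_g^*\Ga$ representing an object of $\Sym_G(\Ga)_g$, the comparison $f \otimes \sfT_\gamma^{-1}$ is an autoequivalence of $\Ga$ and hence corresponds to a unique hermitean line bundle on $M$, which I take as the value of $\Psi(f)$ at $\gamma$. As $\gamma$ varies over paths with fixed endpoints, $\pt^\Ga_2$ implements exactly the cocycle condition needed for these line bundles to assemble into a global object of $\Des_\sfL$. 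Multiplicativity of $\Psi$ and its unitality follow from $\pt^\Ga_\star$ and $\varepsilon^\Ga$; by construction $\Psi$ restricts to the identity on the kernel $\HLBdl^M$ and projects to the identity on $\ul{G}$, so both squares in the diagram commute up to canonical 2-isomorphism.

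To show that $\Psi$ is an equivalence, I would invoke a five-lemma argument in the 2-category $\Hscr$: a morphism between extensions of smooth 2-groups that restricts to an equivalence on the kernel and on the quotient is itself an equivalence. Fibrewise, this reduces to the observation that any two 1-isomorphisms $\Ga \to \Phi_g^*\Ga$ differ by tensoring with a unique hermitean line bundle, so that $\sfT_\gamma$ together with the $\HLBdl^M$-action exhausts $\Sym_G(\Ga)_g$ up to isomorphism. The main obstacle will be verifying the higher coherence data making $\Psi$ a genuine morphism of smooth 2-groups in $\Hscr$: the multiplicativity 2-isomorphism is assembled from $\pt^\Ga_\star$, and checking that it intertwines the associators and unitors of $\Sym_G(\Ga)$ and $\Des_\sfL$ requires a careful diagram chase combining thin-homotopy invariance of $\pt^\Ga$ with the fusion compatibility of the transgression line bundle used in the definition of $\Des_\sfL$.
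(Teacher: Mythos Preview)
Your outline captures the core idea—use parallel transport along $\Phi$-translates of paths in $G$ as a reference 1-isomorphism $\sfT_\gamma$ and extract the line bundle by comparison—and this is precisely what the paper does, writing the comparison as $\Hom_1(\pt^{\Ga'}_f, A)$ via the internal hom of $\BGrb$. The technical route differs in three respects. First, the 2-group structure on $\Des_\sfL$ is established (in a separate proposition) using the \emph{pointwise} product on $P_0G$ and the fusion product $\lambda$ on $\Ta\Ga$ directly, not concatenation and $\pt^\Ga_\star$; this sidesteps the reparametrisation bookkeeping your approach would incur. Second, the paper does not invoke a five-lemma: it factors $\Psi$ through an intermediate descent category $\Des(\Sym_G(\Ga'))$ and shows each factor is an equivalence directly—the pullback $\varpi^*$ by the descent theorem for morphisms of bundle gerbes, and $\Hom_1(\pt^{\Ga'}_1,\,\cdot\,)$ by exhibiting the explicit inverse $(\,\cdot\,)\otimes\pt^{\Ga'}_1$. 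This is more constructive than a 2-categorical five-lemma in $\Hscr$, which would itself require justification. Third, the cocycle over $(P_0G)^{[2]}$ in $\Des_\sfL$ is governed by the fusion product on $\Ta\Ga$ (identified with the holonomy of $\pt^{\Ga'}_1$ via $\hol(\Ga)\cong\Ta\Ga_{|L_*M}$), rather than by $\pt^\Ga_2$; your assignment of roles to the components of $\pt^\Ga$ is slightly off there, though the substance is right.
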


In the case $M = \R^d$, where $G = \R^d_{\mathtt{t}}$ is the translation
group of $\R^d$, and where $\Ga = \Ia_B $ is a trivial gerbe on
$\R^d$ with a connection $B \in\Omega^2(\R^d)$ corresponding to a
magnetic field, we show that the
extension $\Sym_{\R^d_{\mathtt{t}}}(\Ia ) \to \ul{\R}^d_{\,{\mathtt t}}$ reproduces the 3-cocycles we obtained in~\cite{BMS:NA_translations}.
We achieve this by choosing a certain global section of the path
fibration of $\R^d_{\tt t}$ and implicitly pass through $\Des_\sfL$ in the computation.
We show that the parallel transport we defined implements
nonassociative magnetic translations on the sections of the gerbe,
whereas the 2-group extension $\Sym_{\R^d_{\mathtt{t}}}(\Ia) \to
\ul{\R}^d_{\,{\mathtt t}}$ allows us to understand the algebraic
structure of nonassociative magnetic translations even without making any reference to sections.
The latter is particularly useful in cases where there is no good notion of sections, such as when the Dixmier-Douady class of $\Ga$ is non-torsion.
In particular, we study in detail the action of nonassociative
magnetic translations on tori $\bbT^d$ and give an explicit
description of \smash{$\Sym_{\R^d_{\mathtt{t}}}(\Ga)$} for general choice of a gerbe $\Ga$ on $\bbT^d$.

As a further application, we show that if ${\mit\Gamma}$ is a group of gauge transformations, the smooth 2-group extensions $\Sym_{\mit\Gamma}(\Ga) \to \ul{{\mit\Gamma}}$ control the Faddeev-Mickelsson-Shatashvili anomalies in quantum field theory~\cite{Faddeev:Op_Anomaly,FM:Nonabelian_Anomalies,Mickelsson:1983xi}.
The relation between gerbes and these anomalies has been investigated in~\cite{CM94,CM96}, but only as algebraic objects, disregarding the smooth structures.
The relevant bundle gerbe $\Ga$ lives on the space $\Ascr$ of gauge fields
and describes the obstruction to a Fock bundle descending to the orbit
space $\Ascr/{\mit\Gamma}$.
Here the extension $\Sym_{\mit\Gamma}(\Ga) \to \ul{{\mit\Gamma}}$ is split, so that $\Ga$ admits an equivariant structure.
At the same time $\Ga$ is trivialisable as a bundle gerbe, but the anomaly is precisely the obstruction to choosing a ${\mit\Gamma}$-equivariant trivialisation.
This allows us to understand the anomaly in a conceptual way as a higher smooth 1-cocycle on ${\mit\Gamma}$.

Finally, we consider the situation where $M = G$ is a compact simply-connected Lie group, acting on itself by left multiplication, and where $\Ga$ is a bundle gerbe on $G$ whose Dixmier-Douady class generates $\rmH^3(G;\Z) \cong \Z$.
We motivate and propose a new smooth string 2-group model for the string group of $G$.
For this, we first show that with our definition of principal 2-bundle, principal $\sfA$-bundles on a manifold give rise to $\sfA$-valued \v{C}ech 1-cocycles,  for any smooth 2-group $\sfA$.
Then we call a smooth 2-group extension $\sfA \to \sfP \to \ul{G}$ a smooth 2-group model for the string group of $G$ if $\sfA$ is equivalent to an Eilenberg-MacLane space $K(\Z;2)$ in a certain sense and the class in $\cH{}^1(G;\sfB\U(1)) \cong \rmH^3(G;\Z)$ extracted from the 2-bundle $\sfP \to \ul{G}$ is a generator.
Using this definition of smooth string 2-group models, we show

\begin{theorem}
Let $\Sym_G(\Ga)$ and $\Des_\sfL$ be the smooth 2-group extensions of $\ul{G}$ by $\HLBdl^G$ constructed from $\Ga$ with respect to the left action of $G$ on itself via left multiplication.
Then both $\Sym_G(\Ga)$ and $\Des_\sfL$ are smooth 2-group models for the string group of $G$.
\end{theorem}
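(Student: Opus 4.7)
The plan is to verify the two defining properties of a smooth string 2-group model — that the structure 2-group is a smooth $K(\Z;2)$ and that the extracted class in $\cH^1(G;\sfB\U(1)) \cong \rmH^3(G;\Z)$ is a generator — for our two extensions. Since the preceding theorem exhibits an equivalence of 2-group extensions $\Psi \colon \Sym_G(\Ga) \to \Des_\sfL$ over the identities on $\HLBdl^G$ and $\ul{G}$, it is enough to prove the statement for $\Des_\sfL$, whose $\HLBdl^G$-valued \v{C}ech transition data are explicit from the parallel transport construction.

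For the first condition I would establish an equivalence of smooth 2-groups $\sfB\U(1) \simeq \HLBdl^G$ in $\Hscr$. Since $G$ is compact and simply-connected, hence $2$-connected, one has $\rmH^1(G;\Z) = 0 = \rmH^2(G;\Z)$. The vanishing of $\rmH^2$ renders every hermitean line bundle on $G$ trivialisable, so the inclusion $\sfB\Aut(\One) \hookrightarrow \HLBdl^G$ of the full subgroupoid on the trivial bundle is essentially surjective. The automorphism group of $\One$ is $C^\infty(G,\U(1))$, and the vanishing of $\rmH^1$ together with a smooth deformation retraction onto the constant maps $\U(1) \hookrightarrow C^\infty(G,\U(1))$ promotes this embedding to an equivalence of smooth groups. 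Delooping inside $\Hscr$ yields the desired equivalence, and the short exact sequence $\Z \to \R \to \U(1)$ identifies $\sfB\U(1)$ as the smooth incarnation of $K(\Z;2)$.

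For the second condition I would compute the characteristic class of $\Des_\sfL$ in $\cH^1(G;\sfB\U(1))$ directly from its \v{C}ech 1-cocycle. By construction, the transition data are assembled from the parallel transport $\pt^\Ga = (\pt_1^\Ga, \pt_2^\Ga, \pt_\star^\Ga, \varepsilon^\Ga)$: on two based paths in $G$ sharing a common endpoint they produce the hermitean line bundle $\pt^\Ga_{1|\gamma_1 \cdot \gamma_0^{-1}}$ on $G$, with the higher coherences implementing fusion and reparametrisation. Through the equivalence of the first step, this is precisely the fusion-product cocycle associated with the transgression line bundle of $\Ga$ over $LG$ studied in~\cite{Waldorf:Transgression_II,BW:Transgression_of_D-branes}. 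The regression half of the transgression-regression machine then identifies the induced class in $\rmH^3(G;\Z)$ with the Dixmier-Douady class $\mathrm{DD}(\Ga)$, which is a generator by hypothesis.

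The main obstacle is the cocycle-level matching in the second step: one must verify that the homotopy-coherent associated-bundle construction producing $\Des_\sfL$ from $\pt^\Ga$ recovers, as an $\HLBdl^G$-valued \v{C}ech 1-cocycle, exactly Waldorf's fusion line bundle with its connection, up to the equivalences of the first step. Once this identification is secured, the remaining assertions follow formally from the transgression-regression correspondence and from the classifying-space identification $\cH^1(G;\sfB\U(1)) \cong \rmH^3(G;\Z)$.
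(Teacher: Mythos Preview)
Your first step contains a real gap: the smooth deformation retraction $\U(1) \hookrightarrow C^\infty(G,\U(1))$ is only a homotopy equivalence of diffeological spaces, not an isomorphism of diffeological groups, and it does \emph{not} deloop to an equivalence of smooth 2-groups in $\Hscr$. Concretely, the fibre of $\sfB\ul{\U(1)^G}$ over $c \in \Cart$ is $\sfB\big(C^\infty(c \times G, \U(1))\big)$, which is not equivalent as a groupoid to $\sfB\big(C^\infty(c,\U(1))\big)$ because the inclusion of constants is far from surjective on automorphisms. The paper's notion of string-admissibility is tailored precisely to avoid this difficulty: it only asks for $\sfA \simeq \sfB\ul{H}$ for \emph{some} diffeological abelian group $H$ of the correct homotopy type, and one takes $H = \U(1)^G$, so that the equivalence actually available is $\HLBdl^G \simeq \sfB\ul{\U(1)^G}$. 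Your two ingredients (triviality of line bundles from $\rmH^2(G;\Z)=0$, and the homotopy equivalence $\U(1)^G \simeq \U(1)$) are exactly what is needed for this corrected statement. One then also needs a separate cohomological comparison $\cH^2(G;\U(1)^G) \cong \cH^2(G;\U(1))$, established by a five-lemma argument using softness of the sheaf $\R^G$, with the isomorphism induced by evaluation at a basepoint of $G$.

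For the second step the paper takes a more direct route than your transgression--regression proposal, and in particular works with $\Sym_G(\Ga)$ rather than $\Des_\sfL$: choose local sections $\psi_i \colon m_i^*\Ga \to \pi_i^*\Ga$ over a good cover $\{U_i\}$ of $G$ (where $m_i$ and $\pi_i$ are multiplication and second projection $U_i \times G \to G$), form the line bundles $L_{ij}$ on $U_{ij} \times G$ from $\psi_j^{-1}\circ\psi_i$, trivialise them, and extract the $\U(1)^G$-valued \v{C}ech 2-cocycle $c_{ijk}$. Now simply restrict to the slice $U_i \times \{e\}$. Since $m_i|_{U_i \times \{e\}}$ is the inclusion $U_i \hookrightarrow G$, the restricted $\psi_i$ become local trivialisations of $\Ga$ itself (after fixing $\Ga_{|e} \cong \Ia$), so the evaluation-at-$e$ image of $[c_{ijk}]$ is on the nose the Dixmier--Douady cocycle of $\Ga$. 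This sidesteps entirely the obstacle you flag; your route via $\Des_\sfL$ and fusion might be workable, but it demands bridging the path-fibration description with the good-open-cover extraction procedure, which the paper's choice of $\Sym_G(\Ga)$ renders unnecessary.
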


The remainder of this paper is organised as follows.
In Section~\ref{Sec: Background} we briefly recall some background material on diffeological spaces, bundle gerbes, and transgression.
Section~\ref{Sec: U(1)} provides a motivation of the later
constructions on the level of principal bundles; many concepts become clear already at this level.
In Section~\ref{Sec: parallel transport} we provide our definition and construction of the parallel transport associated to a bundle gerbe with connection.
The construction of $\Sym_G(\Ga)$ and $\Des_\sfL$ takes place in Section~\ref{sect:extensions from gerbes}.
Here we first motivate and then introduce the necessary language of Grothendieck fibrations, smooth 2-groups, and principal 2-bundles, before defining and studying the extensions $\Sym_G(\Ga)$ and $\Des_\sfL$.
We conclude this section by relating these extensions to equivariant structures on $\Ga$.
In the remaining three sections we apply our general results:
in Section~\ref{Sec: Magnetic translations} we study nonassociative magnetic translations using our parallel transport, Section~\ref{Sec: Anomalies} contains the discussion of chiral anomalies and the Faddeev-Mickelsson-Shatashvili anomaly, and in Section~\ref{Sec:String} we show that $\Sym_G(\Ga)$ and $\Des_\sfL$ provide new models for the string group.
We defer some technical results on categories fibred in groupoids and on principal 2-bundles to Appendix~\ref{app:Principal 2-bundles}.

\section{Preliminaries on diffeological spaces and gerbes}
\label{Sec: Background}

In this section we review some of the relevant background material related 
to diffeological spaces and bundle gerbes that will be used throughout
this paper. 

\subsection{Diffeological spaces}

Throughout this paper we will use diffeological spaces (see \cite{book Diffeology} for an extensive
introduction) to describe 
the smooth structure on infinite-dimensional spaces 
such as path and mapping spaces. The idea behind diffeological spaces 
is to describe the smooth 
structure on a space $X$ by specifying the set of smooth maps from Cartesian spaces to $X$. A \emph{Cartesian space} 
$c$ is a smooth manifold diffeomorphic to $\R^n$ for some $n \in
\N_0$. We denote by $\Cart$ the category with 
Cartesian spaces as objects and smooth maps as morphisms.
\begin{definition}
A \emph{diffeological space} is a set $X$ together with a collection of 
maps $c \longrightarrow X$ from Cartesian spaces into $X$, called \emph{plots}, such that 
\begin{myenumerate}
\item the composition of a plot with a smooth map between Cartesian spaces is again a plot,

\item every map $\R^0\longrightarrow X$ is a plot, and

\item 
if $f\colon c \longrightarrow X$ is a map such that there 
exists an open cover $\{ c_i \}_{i\in I}$ of $c$ by Cartesian spaces and $f_{|c_i}$ 
is a plot for all $i\in I$, then $f$ is a plot.
\end{myenumerate} 
A map $f\colon X \longrightarrow Y$ between diffeological spaces
is \emph{smooth} if it maps plots of $X$ to plots of $Y$. 
We denote by $\Dfg$ the category of diffeological spaces and smooth maps.
Isomorphisms in $\Dfg$ are \emph{diffeomorphisms}.  
\end{definition}

\begin{remark}
Usually plots are defined to be maps from open subsets $U$ of Cartesian spaces to $X$.
Since every open subset $U$ of a Cartesian space can be covered by Cartesian spaces, both definitions
are equivalent. 
Diffeological spaces are exactly the concrete sheaves on 
the site of Cartesian spaces~\cite{BH:Diffelogical spaces}. This 
implies that the category of diffeological spaces $\Dfg$ admits all limits 
and colimits, and is Cartesian closed. For more background on this
perspective on diffeological spaces, see also~\cite{Bunk:2020ifw}.
\qen
\end{remark}

Important examples of diffeological spaces include the following.
\begin{example}
Every manifold $M$ (possibly with boundaries or corners) defines a diffeological space by declaring 
a map $f\colon c\longrightarrow M$ to be a plot if and only if $f$ is 
a smooth map of differentiable manifolds. This defines
a fully faithful embedding of the category of smooth manifolds $\Mfd$ into the 
category of diffeological spaces $\Dfg$. \qen
\end{example}

\begin{example}
Let $X$ be a diffeological space and $Y\subset X$ a subset.
We can equip $Y$ with a diffeology by declaring a map 
$c\longrightarrow Y$ to be a plot if and only if the composition 
with the embedding $Y\longrightarrow X$ is a plot. This is called the
subspace diffeology on $Y$. \qen
\end{example}

\begin{example}
Let $X$ and $Y$ be diffeological spaces. The Cartesian product 
$X\times Y$ is a diffeological space by declaring a map 
$f\colon c \longrightarrow X\times Y$ to be a plot if and only if 
$ \pr_X \circ f$ and $\pr_Y \circ f$ are plots, where $\pr_X$ and
$\pr_Y$ are the respective projections of $X\times Y$ onto $X$ and
$Y$. This is called the product diffeology on $X\times Y$. \qen
\end{example}

\begin{example}
\label{eg:Dfg examples}
Let $X$ and $Y$ be diffeological spaces. The set of smooth maps 
$Y^X$ from $X$ to $Y$ becomes a diffeological space by declaring 
a map $f\colon c \longrightarrow Y^X$ to be a plot if and only 
if the map 
\begin{align}
f^{\dashv}\colon c \times X &\longrightarrow Y \\ 
 (u, x) & \longmapsto f(u)(x) 
\end{align} 
is smooth. This is called the mapping space diffeology on $Y^X$. \qen 
\end{example}

A smooth map $f\colon M \longrightarrow M'$ between smooth manifolds is a surjective submersion if and only 
if it admits local sections through every point in $M$, i.e.~for every point $y \in M$ there exists an open neighbourhood $U$ of $f(y)$ in $M'$ and a smooth map $\widehat{s} \colon U \longrightarrow M$ such
that $ f \circ \widehat{s} = 1_U$ is the identity map of $U$.  
Surjective submersions define a Grothendieck topology on the category of manifolds, and many (higher) geometric objects on manifolds can be constructed via sheafification with respect to this topology (see, for instance,~\cite{NS:Equivar}).
On the category of diffeological spaces, a useful Grothendieck topology is induced by the subductions:

\begin{definition}
A smooth map $f\colon X \longrightarrow Y$ of diffeological 
spaces is a \emph{subduction} if for all plots $\varphi \colon c \longrightarrow Y$
and $x\in c$ there exists an open neighbourhood $U_x \subset c$ of 
$x$ and a plot $\widehat{\varphi}_x\colon U_x \longrightarrow X$ such 
that $\varphi_{|U_x}=f\circ \widehat{\varphi}_x$.
\end{definition}

\begin{example}
Let $M$ be a connected manifold. The space of paths in $M$ with 
sitting instants $PM$ is the subspace of $M^{[0,1]}$ of maps which 
are constant in an open neighbourhood of $0$ and $1$, equipped with
the subspace diffeology. The evaluation maps $\ev_0 \colon PM \longrightarrow M$
and $\ev_1 \colon PM \longrightarrow M$ at $0$ and $1$, respectively, are 
subductions.
\qen   
\end{example}

Another source for subductions are quotient maps. Let $X$ be a diffeological 
space and $\sim$ an equivalence relation on $X$. Then the space $X/{\sim}$ 
becomes a diffeological space in a canonical way making the map $\pi \colon X\longrightarrow X/{\sim}$ into 
a subduction: a map $\varphi\colon c \longrightarrow X/{\sim} $ is a plot if and only
if for all $x\in c$ there exists an open neighbourhood $U_x \subset c$ of 
$x$ and a plot $\widehat{\varphi}_x\colon U_x \longrightarrow X$ such 
that $\varphi_{|U_x}=\pi \circ \widehat{\varphi}_x$. Clearly all subductions are of this type for 
appropriate equivalence relations. 
Diffeological quotients behave nicely with respect to quotients of manifolds when they exist.

\begin{proposition}\label{Prop: quotients G and diffeology}
Let $M$ be a manifold with a free and proper action of a Lie group $G$. 
Define an equivalence relation $\sim_G$ on $M$ by $m_1\sim_G m_2$ if and only if 
there exists $g\in G$ such that $g\cdot m_1=m_2$.
Then the manifold $M/G$ and the diffeological space $M/{\sim}_G$ agree. 
\end{proposition}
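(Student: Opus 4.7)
The plan is to show that the two diffeologies on the set $M/G$ — the one coming from the smooth manifold structure on the orbit space and the quotient diffeology inherited from $M$ via $\pi \colon M \to M/{\sim}_G$ — coincide, since the underlying sets agree tautologically. Write $\underline{M/G}$ for the diffeological space associated to the manifold, and $M/{\sim}_G$ for the diffeological quotient; I must check that a map $\varphi \colon c \to M/G$ is a plot in one sense if and only if it is a plot in the other.

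For the easy direction, suppose $\varphi \colon c \to M/{\sim}_G$ is a plot for the quotient diffeology. By definition every $x \in c$ has a neighbourhood $U_x$ on which $\varphi_{|U_x} = \pi \circ \widehat{\varphi}_x$ for some smooth $\widehat{\varphi}_x \colon U_x \to M$. Since the action of $G$ on $M$ is free and proper, $\pi \colon M \to M/G$ is a smooth submersion of manifolds, so $\pi \circ \widehat{\varphi}_x$ is smooth. Thus $\varphi$ is locally smooth as a map of manifolds, hence smooth, i.e. a plot of $\underline{M/G}$.

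For the hard direction, suppose $\varphi \colon c \to \underline{M/G}$ is smooth as a map of manifolds. Here I would invoke the classical fact that a free and proper action of a Lie group $G$ on $M$ makes $\pi \colon M \to M/G$ into a principal $G$-bundle. In particular $\pi$ admits local smooth sections: for every $y \in M/G$ there exist an open neighbourhood $V_y \subset M/G$ and a smooth $s_y \colon V_y \to M$ with $\pi \circ s_y = \id_{V_y}$. Given $x \in c$, choose such a section $s_y$ near $y = \varphi(x)$; then $U_x := \varphi^{-1}(V_y)$ is an open neighbourhood of $x$ in the Cartesian space $c$, and $\widehat{\varphi}_x := s_y \circ \varphi_{|U_x} \colon U_x \to M$ is smooth and satisfies $\pi \circ \widehat{\varphi}_x = \varphi_{|U_x}$. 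After covering $U_x$ by Cartesian subspaces this exhibits $\varphi$ as a plot for the quotient diffeology on $M/{\sim}_G$.

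The main obstacle is really the non-trivial direction above, but it is dispatched the moment one appeals to the existence of local sections for a principal $G$-bundle; the entire content of the proposition is the translation between the submersion structure on $\pi$ and the local-lifting condition built into the definition of the diffeological quotient. No other subtlety arises, since plots of both diffeologies are defined by a local condition on their domain and $c$ can always be covered by Cartesian subsets of any open cover.
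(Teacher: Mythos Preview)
Your proof is correct and follows essentially the same route as the paper: the paper simply cites that $\pi \colon M \to M/G$ is a surjective submersion (hence a subduction) and declares the result to follow, whereas you unpack this by explicitly checking both inclusions of plots via local sections of $\pi$. The content is identical; you have just written out what the paper leaves implicit in the phrase ``every surjective submersion is a subduction''.
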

\begin{proof}
From \cite[Theorem 21.10]{Lee} it follows that $\pi\colon M \longrightarrow M/G$ is 
a surjective submersion. Since every surjective submersion is a subduction, the 
statement follows.
\end{proof}

\begin{definition}
Let $X$ be a diffeological space and $k\geq 0$. A \emph{$k$-form $\omega$} on $X$ consists 
of a family of differential forms $\omega_\varphi \in \Omega^k(c)$ indexed by the plots
$\varphi \colon c \longrightarrow X$ of $X$ such that $\omega_{\varphi_1}= f^*\omega_{\varphi_2}$
for all commuting triangles
\begin{equation}
\begin{tikzcd}
c_1 \ar[rr,"f"] \ar[dr,"\varphi_1",swap]& & c_2 \ar[ld, "\varphi_2"] \\
 & X &
\end{tikzcd} 
\end{equation}
\end{definition} 
\begin{definition}[{\cite[Section 3]{WaldorfI}}]
Let $G$ be a Lie group and $X$ a diffeological space. A \emph{principal $G$-bundle} on
$X$ consists of a subduction $\pi\colon P\longrightarrow X$ together with a fibre-preserving 
right action $P\times G \longrightarrow P$ such that the map
\begin{align}\label{Eq: Definition Bundle}
P\times G & \longrightarrow P \times_X P \\
(p, g) & \longmapsto (p,p\cdot g)
\end{align}
is a diffeomorphism. 
A \emph{connection} on a principal $G$-bundle $P$ is a 1-form 
$A\in \Omega^1(P; \mathfrak{g})$ satisfying 
\begin{align}
\rho^* A = \Ad^{-1}_{\pr_G}(\pr_P^* A)+ \pr_G^* \theta 
\end{align}
on $P\times G$, where $\rho \colon P\times G \longrightarrow P$ is the right $G$-action,
$\theta$ is the left-invariant Maurer-Cartan 1-form on $G$, and
$\pr_P\colon P\times G \longrightarrow P$ and $\pr_G \colon P\times G \longrightarrow G$ are
the projections onto $P$ and $G$, respectively.
\end{definition}

\subsection{Bundle gerbes and transgression}
\label{Sec: Bundle gerbes}

Bundle gerbes are higher categorical analogues of line bundles. 
They provide a geometric realisation for the third cohomology group 
with integer coefficients. Similarly to line bundles, bundle gerbes can be equipped with connections.
We briefly recall the definition of the 2-groupoid of bundle gerbes and their transgression to loop space.
For details we refer to~\cite{Waldorf--More_morphisms,Waldorf:Transgression_II,Bunk--Thesis,Murray--Bundle_gerbes}.

Let $X$ be a diffeological space.
We denote by $\HLBdl(X)$ (resp. $\HLBdl^\nabla(X)$) the category of
hermitean line bundles (resp. with connection) on $X$. 
Before defining bundle gerbes we need to introduce some notation:
for a subduction $\pi\colon Y \longrightarrow X $ of diffeological spaces
we denote by \begin{align}
 Y^{[n]}= \big\{ (y_0,y_1,\dots, y_{n-1})\in Y^{\times n} \ \big| \ \pi(y_0)=\pi(y_1)=\dots =\pi(y_{n-1}) \big\}\subset Y^{\times n}
\end{align} 
the $n$-fold iterated fibre product $Y^{[n]}=Y\times_X\cdots\times_X
Y$ over $X$ equipped with the subspace diffeology.
Then $Y^{[\bullet]}$ is a simplicial diffeological space corresponding
to the subduction groupoid $Y\times_X Y \rightrightarrows Y$, and for
$k<n$ and 
$0\leq i_1 <\dots < i_k < n$ we define the smooth face maps
\begin{align}
\pi_{i_1,\dots, i_k} \colon Y^{[n]} & \longrightarrow Y^{[k]} \\ 
(y_0,y_1,\dots, y_{n-1}) & \longmapsto (y_{i_1}, \dots , y_{i_k}) \ .
\end{align}
\begin{definition}[{\cite{Waldorf:Transgression_II}}]
Let $X$ be a diffeological space. A \emph{hermitean bundle gerbe} on $X$ consists of a subduction $\pi \colon 
Y\longrightarrow X$, a hermitean line bundle $L\longrightarrow Y^{[2]}$, and a unitary isomorphism 
$\mu \colon \pi_{1,2}^*L\otimes \pi_{0,1}^*L \longrightarrow \pi_{0,2}^*L$ of line bundles
over $Y^{[3]}$, called the \emph{bundle gerbe multiplication}, which 
is associative over $Y^{[4]}$,
i.e. $\pi_{0,2,3}^*\mu\circ(\pi_{0,1,2}^*\mu\otimes1) = \pi_{0,1,3}^*\mu\circ(1\otimes\pi_{1,2,3}^*\mu)$.

A \emph{connection} on a hermitean bundle gerbe $\mathcal{G}=(\pi \colon Y\longrightarrow X, L, \mu)$
consists of a hermitean connection $\nabla^L$ on $L$ and a 2-form $B\in \Omega^2(Y)$ such that
\begin{myenumerate}
\item the isomorphism $\mu  \colon \pi_{1,2}^*L\otimes \pi_{0,1}^*L \longrightarrow \pi_{0,2}^*L$
is parallel with respect to $\nabla^L$, and 
\item the curvature of $\nabla^L$ is equal to $\iu\,(\pi_1^*B-\pi_0^*B)$.
\end{myenumerate}
The 2-form $B$ is called a \emph{curving}. The second condition
implies that the closed 3-form $\dd B=\pi^*H$ 
descends to a unique closed 3-form $H$ on $X$ with integer periods, which is called the \emph{curvature} of the bundle gerbe 
connection~$(\nabla^L, B)$ .
\end{definition}
Schematically, the data corresponding to a bundle gerbe can be
visualised by the
diagram
\begin{equation}
\begin{tikzcd}[column sep=1em, row sep=1cm]
\pi_{1,2}^*L\otimes \pi_{0,1}^*L \ar[rd] \ar[rr,"\mu"]& &  \pi_{0,2}^*L \ar[ld] & L \ar[d] &  & \\
& Y^{[3]} \ar[rr, shift left] \ar[rr, shift right] \ar[rr] & &
Y^{[2]}\ar[rr, shift left, "\pi_1"] \ar[rr, shift right, swap,"\pi_0"] & & Y \ar[d,"\pi"] \\
& & & & &  X
\end{tikzcd}
\end{equation} 
illustrating that hermitean bundle gerbes are equivalent to
$\U(1)$-central extensions of subduction groupoids.

\begin{example}
Let $X$ be a diffeological space. The \emph{trivial hermitean bundle gerbe $\mathcal{I}$ on $X$} consists of the identity
subduction $1_X \colon X\longrightarrow X$ together with the trivial hermitean line bundle 
$I \coloneqq X\times \C$ over $X^{[2]}=X$ and bundle gerbe multiplication 
\begin{align}
X\times (\C \otimes \C) & \longrightarrow X\times \C \\ 
\big(x, (z_1\otimes z_2)\big) & \longmapsto (x, z_1\, z_2) \ .
\end{align}   
For every 2-form $B \in \Omega^2(X)$ we can define a connection on
$\mathcal{I}$ by setting $\nabla^{I}=\dd$
and taking $B$ as the curving. We denote the resulting hermitean bundle gerbe with connection 
by $\mathcal{I}_B $. The curvature of $\Ia_B $ is given by $H=\dd B $. 
\qen
\end{example}
Hermitean bundle gerbes (resp. with connection) on a diffeological space $X$ are the objects of a 
symmetric monoidal bicategory 
which we denote by $\BGrb(X)$ (resp.~$\BGrb^\nabla (X)$)~\cite{Waldorf--More_morphisms}.  

\begin{definition}
\label{def:mps of BGrbs}
Let $\mathcal{G}=(\pi \colon Y\longrightarrow X, L, \mu, \nabla^L, B)$ and $\mathcal{G}'=(\pi' \colon Y'\longrightarrow X, L', \mu', \nabla^{L'}, B')$ be hermitean bundle gerbes with connection on 
a diffeological space $X$. A \emph{1-isomorphism} $\mathcal{G}\longrightarrow \mathcal{G}'$ of hermitean bundle 
gerbes (with connection) consists
of a subduction $\xi \colon Z\longrightarrow Y\times_X Y'$, a hermitean line bundle $E$ (with hermitean connection
$\nabla^E$) on $Z$ and (parallel) unitary isomorphisms 
$$  
\alpha \colon  \big((\pr_Y \circ\, \xi)^{[2]}\big) ^* L \otimes \xi_1^*E
\longrightarrow \xi_0^*E  \otimes \big((\pr_{Y'} \circ\, \xi)^{[2]}\big)
^* L' 
$$ 
over $Z^{[2]}$ satisfying a natural set of compatibility 
conditions, see~\cite{Waldorf:Transgression_II} for details. We will
denote such a 1-isomorphism by $(E,\xi)$ (resp. $(E,\xi,\nabla^E)$),
or sometimes simply by $E$.
\end{definition}

\begin{remark}
One can also define non-invertible 1-morphisms of bundle gerbes by
using higher rank hermitean vector bundles $E$ in Definition~\ref{def:mps of BGrbs}~\cite{Waldorf--More_morphisms}.
In that case, a 1-morphism is weakly invertible if and only if the underlying hermitean
vector bundle $E$ is of rank
$1$~\cite[Proposition~2.3.4]{waldorf}. However, with the exception of
Section~\ref{Sec: Magnetic translations}, we will only consider
invertible 1-morphisms of bundle gerbes in the
present paper.
\qen
\end{remark}

\begin{definition}
\label{def:2mps of BGrbs}
Let $(\xi_a \colon Z_a \longrightarrow Y\times_X Y',E_a, \nabla^{E_a}, \alpha_a)$ and
$(\xi_b \colon Z_b \longrightarrow Y\times_X Y',E_b, \nabla^{E_b}, \alpha_b)$ be 
1-isomorphisms $\mathcal{G}\longrightarrow \mathcal{G}'$ of hermitean
bundle gerbes with connection. 
A \emph{2-isomorphism} of bundle gerbes is an equivalence class of a subduction  
$\omega \colon W\longrightarrow Z_a \times_{Y\times_X Y'} Z_b$ and a parallel unitary 
isomorphism $ (\pr_{Z_a} \circ\, \omega)^*E_a \longrightarrow (\pr_{Z_b} \circ\, \omega)^*E_b$
satisfying a natural compatibility condition, see e.g.~\cite{Waldorf--More_morphisms} for details and the equivalence
relation. 
\end{definition}

Bundle gerbes on a diffeological space $X$ are classified by their
Dixmier-Douady class in $\rmH^3(X;\Z)$, analogously to the Chern-Weil
classification of line bundles by their Chern class in
$\rmH^2(X;\Z)$. For a bundle gerbe with connection, the Dixmier-Douady
class maps to the de~Rham cohomology class of the curvature under
the homomorphism $\rmH^3(X;\Z)\to\rmH^3(X;\R)$ induced by
the inclusion of coefficient groups $\Z\hookrightarrow\R$.

Let $\Ga$ be a hermitean bundle gerbe defined over a subduction $\pi:Y \to X$, with underlying 
hermitean line bundle $L \to Y^{[2]}$.
Let $\Aa \colon \Ga \to \Ga$ be an endomorphism of $\Ga$, with underlying hermitean vector bundle $A$ over some subduction $\xi:Z \to Y^{[2]}$.
Consider the hermitean vector bundle $L^\vee \otimes A$ on $Z$, where
we denote the dual line bundle by $L^\vee$.
This comes with a canonical descent isomorphism defined by the diagram~\cite{Waldorf--More_morphisms,Bunk--Thesis}
\begin{equation}
\label{eq:construction in Section 2}
\begin{tikzcd}[row sep=1cm, column sep=1.5cm]
	\xi_1^*(L^\vee \otimes A) \ar[r] \ar[dd, dashed]
	& \pi_{2,3}^*L^\vee \otimes \xi_1^*A \ar[d, "\pi_{0,2,3}^*\mu^{-1}"]
	\\
	& \pi_{0,2}^*L \otimes \pi_{0,3}^*L^\vee \otimes \xi_1^*A \ar[d]
	\\
	\xi_0^*(A \otimes L^\vee) & \pi_{0,3}^*L^\vee \otimes \xi_0^*A \otimes \pi_{1,3}^*L \ar[l, "\pi_{0,1,3}^*\mu"]
\end{tikzcd}
\end{equation}
In fact, this construction establishes an equivalence of categories $
\mathsf{R}\colon \BGrb(X)(\Ga, \Ga) \longrightarrow \HLBdl(X)$.

From a hermitean bundle gerbe with connection $\Ga$ on a 
diffeological space $X$ we can 
construct the \emph{transgression line bundle} $\Ta \Ga$ over the loop space $LX$
of $X$. The fibre $\Ta \Ga_{\gamma}$ over a loop $\gamma\colon \mathbb{S}^1 \longrightarrow X$ consists of
equivalence classes $[[\Sa],z]$ of a 2-isomorphism class of a trivialisation $\Sa\colon \gamma^* \Ga \longrightarrow 
\mathcal{I}_0$ in $\BGrb^\nabla(\mathbb{S}^1)$ over the unit circle $\mathbb{S}^1$ and an element $z\in \C$. Two pairs 
$([\Sa],z)$ and $([\Sa'],z')$ are equivalent if and only if $z'=\hol(\mathbb{S}^1,\mathsf{R}(\Sa'\circ \Sa^{-1}))\, z$.
For the construction of a diffeological structure on $\Ta \Ga \coloneqq \coprod_{\gamma\in LX}\, \Ta \Ga_{\gamma}$ we refer to~\cite{Waldorf:Transgression_II}. 
A connection on a line bundle over the loop space $LX$ is 
\emph{superficial} if the holonomy around every thin
loop\footnote{A loop $\Gamma \in LLX$ is \emph{thin} if the adjoint map
$\Gamma^\dashv\colon \mathbb{S}^1\times \mathbb{S}^1 \longrightarrow  X$ has
at most rank 1.} is equal to~$1$ and thin homotopic loops\footnote{Two loops 
$\Gamma, \Gamma'\in LLX$ are \emph{thin homotopic} if there exists
a homotopy $h\in PLLX$ such that the adjoint map $h^\dashv: [0,1] \times \mathbb{S}^1\times \mathbb{S}^1 \longrightarrow X$ has at most rank 2.} have the same holonomy.
In the situation where $X = M$ is a manifold, a superficial connection on $\Ta \Ga$ has been
constructed from the connection on $\Ga$ in~\cite[Prop.~3.3.1]{Waldorf:Transgression_II}; note that in our later constructions, we will always work with bundle gerbes over manifolds.
The bundle gerbe multiplication
induces, for all triples of paths $(\gamma_1,\gamma_2, \gamma_3)$ with
sitting instants and the same start and end points, a \emph{fusion product}
\begin{align}
\lambda\colon\Ta \Ga_{\overline{\gamma_2} \star \gamma_1} \otimes 
\Ta \Ga_{\overline{\gamma_3} \star \gamma_2} \longrightarrow 
\Ta \Ga_{ \overline{\gamma_3} \star \gamma_1} \ ,
\end{align}
where $\star$ denotes the concatenation of paths and $\overline{\gamma}$ is
the path $t\longmapsto \gamma(1-t)$.
The fusion product depends smoothly on the paths, is parallel with respect to 
the superficial connection, and is associative. The connection and fusion product
satisfy one further compatibility condition, related to the rotation of all loops
involved by $180^\circ$ (see~\cite[Definition 2.1.5]{Waldorf:Transgression_II}).
A line bundle over $LX$ admitting all the structures discussed above 
is a \emph{fusion line bundle with superficial connection}.
 
For $X = M$ a manifold, transgression extends to a functor $\Ta$ from $\mathtt{h}\BGrb^\nabla(M)$, the 1-category
obtained from $\BGrb^\nabla(M)$ by identifying isomorphic 1-morphisms, to 
the category of fusion line bundles with superficial connection over $LM$.
The central result of~\cite{Waldorf:Transgression_II} is that $\Ta$ defines
an equivalence of categories. An explicit inverse 
functor $\mathcal{R}$ is constructed
in~\cite{Waldorf:Transgression_II} and is
called \emph{regression}.  

\section{Group extensions from principal bundles}\label{Sec: U(1)}

In this section we construct group extensions from group actions on manifolds with principal bundles. 
We generalise this extension to
higher geometry in Section~\ref{sect:extensions from gerbes}. 
We present two perspectives on this group extension. The first one is
global. The second one is local 
and can be formulated in terms of the parallel transport of an
auxiliary connection on a principal bundle.

\subsection{Global description}
\label{sect:gauge bundles on groups}

Let $H$ be a Lie group and $P \to M$ a principal $H$-bundle on a
manifold $M$; principal $H$-bundles on $M$ and isomorphisms form a
groupoid which we denote by $\Bun_H(M)$.
We consider a Lie group action
\begin{align}
	\Phi \colon G \times M &\to M \\
 (g,x)&\longmapsto \Phi_g(x)=\Phi(g,x)
\end{align}
on the base manifold $M$, and ask whether and how this action lifts to $P$.
An action of a Lie group $G$ on $M$ can equivalently be written as a smooth homomorphism of groups $\Phi \colon G \to \Diff(M)$, where $\Diff(M)$ is the diffeological group of 
diffeomorphisms $M \to M$. In general, the action of $G$ does not lift to $P$.
Instead, we will construct a group extension 
\begin{align}
1\longrightarrow \Gau(P) \longrightarrow \Sym_G(P)\longrightarrow G \longrightarrow 1
\end{align} 
of $G$ by the gauge group $\Gau(P)$ of $P$.   The group $\Sym_G(P)$ acts on the total 
space $P$ in a way compatible with the action of $G$ on $M$. 
We show that it is the universal extension of $G$ having this 
property. 
\begin{remark}
The extension can be constructed as the pullback of
the short exact sequence 
\begin{align}
1 \longrightarrow \Gau(P) \longrightarrow \Aut_G(P) \longrightarrow \Diff_P(M) \longrightarrow 1
\end{align} 
of diffeological groups along $\Phi$, where $\Aut_G(P)$ is the group of $G$-equivariant diffeomorphisms 
of $P$ and $\Diff_P(M)$ is the subgroup of diffeomorphisms of $M$ which admit an equivariant lift to $P$. In the following we 
present a different construction which generalises directly to bundle gerbes.
\end{remark}

We can pull back the bundle $P$ along the source and target maps of
the action groupoid
\begin{equation}
\begin{tikzcd}
	G \times M \ar[rr, shift left,"\Phi"] \ar[rr, shift
        right,swap,"\pr_M"] & & M \ .
\end{tikzcd} 
\end{equation}
We define a bundle
\begin{equation}
\label{eq:def P_G}
	\Sym_G(P) \overset{\pi}{\to} G
	\qquad \mbox{with} \quad
	\Sym_G(P)_{|g} \coloneqq \Bun_{H}(M)(P, \Phi_g^*P)
\end{equation}
for all $g\in G$,
where $\Bun_{H}(M)(P, \Phi_g^*P)$ is the collection of gauge transformations from $P$ to $\Phi_g^* P$. 
In order for $\Sym_G(P)$ to be a bundle over $G$, we must ensure that
the fibres of $\Sym_G(P) $ are actually pairwise 
diffeomorphic.
It might happen that a pullback bundle $\Phi_g^*P$ is no longer isomorphic to $P$ and hence the fibre over $g$ 
is empty. As an example,
consider the action of the group $G=\Z$ on the 2-torus $M=\bbT^2$
generated by an orientation-reversing diffeomorphism $f$, and let $P
\to \bbT^2$ be a $\U(1)$-bundle with non-trivial Chern class.
Then $[f^*P] = -[P]$, and thus $\Sym_\Z(P)_{|1}=\Bun_{\U(1)}(P, f^*P) = \varnothing$.
Hence in \eqref{eq:def P_G} we have to ensure that the fibres of
$\Sym_G(P) $ are actually all non-trivial.

We restrict our attention to \emph{connected} Lie groups $G$; otherwise,
if $G$ is not connected,
we consider only the connected component of the identity $e\in G$. We show that in this case 
the fibres are always non-trivial:
we need to show that for any $g \in G$ the fibre of $\Sym_G(P) \to G$ over $g$ is non-empty.
That is, we need to show that there exists an isomorphism $ P \to \Phi_g^*P $ of $H$-bundles over $M$.
Let $f_P \colon M \to \sfB H$ be a map that classifies the bundle $P \to M$.
Then $\Phi_g^*P$ is classified by the map $f_P \circ \Phi_g \colon M \to \sfB H$.
Since $G$ is connected, we can find a smooth path $\gamma \colon [0,1] \to G$ with $\gamma(0) = e$ and $\gamma(1) = g$.
Consider the smooth map
\begin{align}
	\Phi_\gamma \colon [0,1] \times M &\to M \\
	(t,x) &\mapsto \Phi_{\gamma(t)}(x) \ .
\end{align}
We can postcompose this map by $f_P$ to obtain a homotopy
\begin{equation}
	f_P \circ \Phi_\gamma \colon [0,1] \times M \to \sfB H
\end{equation}
from $f_P$ to $f_P \circ \Phi_g$.
This shows that there exists a bundle isomorphism $P \to \Phi_g^*P $.
We note for later use that this argument generalises to $n$-gerbes $\Ga$, as these are classified by maps $f_\Ga \colon M \to \sfB^{n+1}\U(1)$.

In order to equip the set $\Sym_G(P)$ with a diffeology, we note that 
$\Sym_G(P)$ can be identified with the subspace of the Cartesian product of the
space of $H$-equivariant diffeomorphisms
$P\longrightarrow P$ which cover the action of an arbitrary element $g\in G$ on $M$ 
with $G$,
and equip $\Sym_G(P)$ with the subspace diffeology. Concretely, for $c\in \Cart$, a map $f\colon c \longrightarrow 
\Sym_G(P)$ is a plot if and only if the composition $\pi\circ f \colon c\longrightarrow G$ is smooth 
and the induced map $ \pr_M^*P \longrightarrow  \Phi_{f}^*P$ is an isomorphism in $\Bun_H(c\times M)$, where 
$\pr_M \colon c\times M \longrightarrow M $ is the projection onto $M$
and $\Phi_f=\Phi\circ(f\times 1_M)$. 
The automorphism group or group of gauge transformations
\begin{equation}
	\Gau(P) \coloneqq \Bun_{H}(M)(P,P)
\end{equation}
acts simply and transitively on each fibre $\Sym_G(P)_{|g}$ from the right via precomposition.
The set $\Gau(P)$ forms a diffeological group with respect to the composition of 
automorphisms and the smooth structure induced from the mapping space diffeology on $P^P$.

\begin{proposition}
$\pi \colon \Sym_G(P) \to G$ is a principal $\Gau(P)$-bundle on $G$.
\end{proposition}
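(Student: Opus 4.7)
The plan is to verify the three conditions in the definition of a principal $\Gau(P)$-bundle: (i) the right $\Gau(P)$-action on $\Sym_G(P)$ is fibre-preserving and smooth; (ii) the shear map $\Sym_G(P) \times \Gau(P) \to \Sym_G(P) \times_G \Sym_G(P)$ is a diffeomorphism; and (iii) $\pi$ is a subduction. Conditions (i) and (ii) are essentially formal consequences of the definitions, while (iii) is the main obstacle.

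For (i) and (ii), I would define the action by precomposition: given $\phi \in \Sym_G(P)_{|g}$ and $\psi \in \Gau(P)$, set $\phi \cdot \psi \coloneqq \phi \circ \psi$. This is manifestly fibre-preserving, and its smoothness follows directly from the fact that composition of $H$-equivariant diffeomorphisms is smooth in the mapping space diffeology, hence in the subspace diffeology on $\Sym_G(P)$. Freeness and transitivity on the fibres are immediate from the groupoid structure on $\Bun_H(M)$: for $\phi_1, \phi_2 \in \Sym_G(P)_{|g}$ one has $\phi_2 = \phi_1 \circ (\phi_1^{-1} \circ \phi_2)$ with unique $\phi_1^{-1} \circ \phi_2 \in \Gau(P)$. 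The resulting inverse of the shear map is smooth by the same argument together with smoothness of inversion in $\Bun_H(M)$.

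The remaining and main task is showing that $\pi$ is a subduction. Given a plot $\varphi \colon c \to G$ and a point $x_0 \in c$, I must produce a neighbourhood $U_{x_0} \subset c$ and a smooth lift $\widehat{\varphi} \colon U_{x_0} \to \Sym_G(P)$ of $\varphi_{|U_{x_0}}$. The strategy is to upgrade the existence argument given in the text to a smoothly parameterised version. Concretely, I would first construct a smooth family of paths $\gamma \colon U_{x_0} \times [0,1] \to G$ with $\gamma(x,0) = e$ and $\gamma(x,1) = \varphi(x)$. This can be done by fixing a smooth path from $e$ to $\varphi(x_0)$ and concatenating it (with sitting instants) with a family of short paths from $\varphi(x_0)$ to $\varphi(x)$ obtained from the exponential map on a small enough neighbourhood of $\varphi(x_0)$, shrinking $U_{x_0}$ if necessary.

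The smooth family $\gamma$ yields a smooth map $\Phi_\gamma \colon U_{x_0} \times [0,1] \times M \to M$, $(x,t,m) \mapsto \Phi_{\gamma(x,t)}(m)$, which is a smooth homotopy over $U_{x_0}$ between $\pr_M$ and $\Phi \circ (\varphi \times 1_M)$. Standard parameterised homotopy invariance of principal $H$-bundles then produces an isomorphism $\Psi \colon \pr_M^* P \to (\Phi \circ (\varphi \times 1_M))^* P$ in $\Bun_H(U_{x_0} \times M)$. By the explicit description of the diffeology on $\Sym_G(P)$ spelled out before the proposition, such an isomorphism is precisely the data of a plot $\widehat{\varphi} \colon U_{x_0} \to \Sym_G(P)$ with $\pi \circ \widehat{\varphi} = \varphi_{|U_{x_0}}$, completing the verification. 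The genuinely delicate point is ensuring that the parameterised homotopy lift can be performed smoothly; this is where the exponential-map construction of $\gamma$ (and therefore smoothness of $\Phi_\gamma$) is essential, and where most of the work of the proof lies.
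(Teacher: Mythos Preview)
Your argument is correct, but it takes a longer route than the paper's for the subduction step. The paper exploits that the domain of a plot is a Cartesian space $c$, which is already contractible; hence the smooth map $f \colon c \to G$ is itself smoothly homotopic to the constant map at $e$, and one obtains an isomorphism $\varphi_f \colon \pr_M^*P \to \Phi_f^*P$ over all of $c \times M$ in one stroke by ordinary homotopy invariance of principal bundles. This gives a lift of the \emph{entire} plot, with no need to pass to a neighbourhood $U_{x_0}$, concatenate paths, or invoke the exponential map on $G$. Your construction of the family $\gamma$ is effectively building by hand a homotopy that contractibility of $c$ supplies for free, and the ``genuinely delicate point'' you flag about smooth parameterised homotopy lifting is exactly the standard fact the paper uses implicitly. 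The treatment of the $\Gau(P)$-action and the shear map is the same in both approaches.
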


\begin{proof}
We verify
that the map $\pi \colon \Sym_G(P) \to G$ is a subduction. Let $f\colon c\longrightarrow G$ be 
a plot. 
We can pick an isomorphism $\varphi_f \colon  \pr_M^*P   \longrightarrow \Phi_f^*P$ (since $c$ is contractible) and define the map 
\begin{align}
\widehat{f} \colon c &\longrightarrow \Sym_G(P) \\
 x & \longmapsto \varphi_{f|\{x \} \times M} \ .
\end{align}
The map $\widehat{f}$ is a smooth lift of the plot $f$, showing that $\Sym_G(P) \to G$
is a subduction. 

The map 
\begin{align}
\Sym_G(P)\times_G \Sym_G(P) &\longrightarrow \Sym_G(P)\times \Gau(P)  \\ 
(\varphi\colon P \longrightarrow \Phi_g^* P, \varphi' \colon P \longrightarrow \Phi_g^* P)& \longmapsto (\varphi,  \varphi^{-1} \circ \varphi')
\end{align} 
provides a smooth inverse to the map $\Sym_G(P)\times \Gau(P)
\longrightarrow \Sym_G(P) \times_G \Sym_G(P) $ from \eqref{Eq:
  Definition Bundle}, and the result follows.
\end{proof}
  
\begin{proposition}\label{Prop: multiplicative structure bundle}
$\Sym_G(P)$ is a diffeological group. The principal bundle $\Sym_G(P)
\to G$ is part of an extension of diffeological groups
\begin{equation}
1\longrightarrow \Gau(P)\longrightarrow \Sym_G(P)\longrightarrow G
\longrightarrow 1 \ .
\end{equation}
\end{proposition}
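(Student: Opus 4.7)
The plan is to make the group structure on $\Sym_G(P)$ explicit through the identification, already used in the text to define the diffeology, between elements $\varphi \colon P \to \Phi_g^*P$ and $H$-equivariant diffeomorphisms $\tilde \varphi \colon P \to P$ satisfying $\pi_P \circ \tilde \varphi = \Phi_g \circ \pi_P$, where $\pi_P \colon P \to M$ is the bundle projection. Under this identification I would define the multiplication as composition, $\tilde \varphi_1 \cdot \tilde \varphi_2 \coloneqq \tilde \varphi_1 \circ \tilde \varphi_2$, and check the elementary algebraic facts: if $\tilde \varphi_i$ covers $g_i$, then the composition is again $H$-equivariant and covers $\Phi_{g_1} \circ \Phi_{g_2} = \Phi_{g_1 g_2}$, while the identity $1_P$ covers the neutral element $e\in G$ and $\tilde \varphi^{-1}$ covers $g^{-1}$. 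This makes $\Sym_G(P)$ an abstract group for which the projection $\pi \colon \Sym_G(P) \to G$ and the inclusion $\Gau(P) \hookrightarrow \Sym_G(P)$ are group homomorphisms by construction.

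The second step is smoothness. By the very definition of the diffeology, $\Sym_G(P)$ sits as a subspace of the diffeological group $\Diff^H(P) \times G$ of $H$-equivariant self-diffeomorphisms of $P$ paired with elements of $G$, and its multiplication and inversion are the restrictions of those of $\Diff^H(P) \times G$. Since the mapping space diffeology on $P^P$ makes composition and taking inverses of diffeomorphisms smooth (Example~\ref{eg:Dfg examples}), and $G$ is a Lie group, both operations are smooth on $\Sym_G(P)$. Equivalently, unwinding the plot condition from \eqref{eq:def P_G}, given plots $f_1, f_2 \colon c \to \Sym_G(P)$ with trivialising bundle maps $\pr_M^*P \to \Phi_{\pi\circ f_i}^*P$ over $c \times M$, the bundle map corresponding to the product $f_1 \cdot f_2$ is obtained by pulling back one factor along $\Phi_{\pi \circ f_2}$ and composing, which is a smooth operation in families parametrised by~$c$.

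Finally, to obtain the extension it remains to identify the kernel and check exactness on the right. An equivariant diffeomorphism lies in $\ker(\pi)$ precisely when it covers $1_M \in \Diff(M)$, i.e.\ when it is a gauge transformation; hence $\ker(\pi) = \Gau(P)$ with its diffeology as a bundle automorphism group. Surjectivity of $\pi$ on the level of sets, together with the fact that it is a subduction, is exactly the content of the preceding proposition, and was the step where connectedness of $G$ was used. The main obstacle is the smoothness verification, and the cleanest route is to commit to the subspace-diffeology description in $\Diff^H(P) \times G$ so that smoothness of multiplication and inversion is inherited from the standard diffeological group structure on $\Diff(P)$; the remaining content is then purely formal.
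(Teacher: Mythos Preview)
Your proposal is correct and essentially the same as the paper's proof. The paper defines the product in the pullback picture as $\mu(\psi,\phi) \coloneqq \Phi_{g'}^{*}\psi \circ \phi \colon P \to \Phi_{g'}^{*}P \to \Phi_{gg'}^{*}P$, which under the identification you invoke (and which the paper itself uses to define the diffeology) is precisely composition of the corresponding $H$-equivariant diffeomorphisms; your smoothness argument via the subspace diffeology in $\Diff^H(P)\times G$ is a slightly more explicit version of the paper's one-line ``these maps are smooth''.
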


\begin{proof}
To complete the proof we need to equip $\Sym_G(P)$ with a
diffeological group structure such that the map $\Sym_G(P) \to G$ 
becomes a morphism of diffeological groups. 
Consider isomorphisms $\psi \colon P \to \Phi_g^*P$ and $\phi \colon P \to \Phi_{g'}^*P$ for $g,g' \in G$.
We set
\begin{equation}
	\mu(\psi, \phi) \coloneqq \Phi_{g'}^*\psi \circ \phi \ \colon
        P \to \Phi_{g'}^*P \to \Phi_{g'}^*\Phi_{g}^*P =
        \Phi_{g\,g'}^*P \ .
\end{equation}
This is associative by the associativity of pullbacks, the multiplication in $G$, and composition of morphisms. The inverse
of an element $\psi \colon P \to \Phi_g^* P$ with respect to $\mu$ is the isomorphism \[ 
P= \Phi_{g^{-1}}^*\Phi_{g}^*P \xrightarrow{\Phi_{g^{-1}}^*\psi^{-1}}
\Phi_{g^{-1}}^*P \ , \] 
and the result follows from the observation that these maps are
smooth. 
\end{proof}

\begin{proposition}\label{Prop: Action line bundle}
The group $\Sym_G(P)$ acts smoothly on $P$, lifting the action of $G$ on $M$.
It is universal in the following sense: 
let $\widehat{G}$ be a Lie group, $\varphi \colon \widehat{G} \longrightarrow G$
a Lie group homomorphism and $\widehat{\psi} \colon \widehat{G} \times P \longrightarrow P$
an action of $\widehat{G}$ on $P$ making the diagram
\begin{equation}
\begin{tikzcd}[row sep=1cm]
\widehat{G} \times P \ar[r, "\widehat{\psi}"] \ar[d, "\varphi\times \varpi", swap] & P \ar[d,"\varpi"]\\ 
G \times M \ar[r, swap ,"\Phi"] & M 
\end{tikzcd}
\end{equation} 
commute, where $\varpi \colon P \to M$ is the bundle projection. Then there exists a unique smooth group homomorphism $\widehat{G}\longrightarrow 
\Sym_G(P)$ such that the diagram
\begin{equation}
\begin{tikzcd}[row sep=1cm]
\widehat{G} \times P \ar[rd] \ar[drr, bend left=18] \ar[ddr, bend right] &  &  \\
& \Sym_G(P) \times P \ar[r] \ar[d, swap] & P \ar[d,]\\ 
 & G \times M \ar[r ] & M 
\end{tikzcd}
\end{equation}
commutes. 
\end{proposition}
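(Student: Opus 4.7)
The plan is to construct the action claimed in the first sentence, then to produce the universal lift $F\colon\widehat{G}\to\Sym_G(P)$ by dualising the given action $\widehat{\psi}$, and finally to argue its uniqueness.

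For the action, I would note that each element $\psi\in\Sym_G(P)_{|g}$, being an $H$-equivariant isomorphism $\psi\colon P\to\Phi_g^*P$ over $M$, corresponds under the canonical identification $(\Phi_g^*P)_x\cong P_{\Phi_g(x)}$ to an $H$-equivariant diffeomorphism $\widetilde\psi\colon P\to P$ covering $\Phi_g$. Setting
\begin{equation}
\Sym_G(P)\times P\longrightarrow P \ ,\qquad (\psi,p)\longmapsto\widetilde\psi(p) \ ,
\end{equation}
defines a map covering $\Phi$. Smoothness follows from the embedding of $\Sym_G(P)$ into $\mathrm{Diff}_H(P)\times G$ used to define its diffeology, together with smoothness of the evaluation $\mathrm{Diff}_H(P)\times P\to P$. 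That this is a left action reduces, after unpacking the definition $\mu(\psi,\phi)=\Phi_{g'}^*\psi\circ\phi$ from Proposition~\ref{Prop: multiplicative structure bundle}, to the identity $\widetilde{\Phi_{g'}^*\psi\circ\phi}=\widetilde\psi\circ\widetilde\phi$, which holds because the pullback $\Phi_{g'}^*\psi$ acts on the fibre of $\Phi_{g'}^*P$ at $x$ by the same map as $\psi$ on the fibre of $P$ at $\Phi_{g'}(x)$.

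For the universal property, let $(\widehat{G},\varphi,\widehat\psi)$ be as in the statement, and assume, as is implicit in speaking of an action on a principal bundle, that each $\widehat\psi_{\widehat{g}}\coloneqq\widehat\psi(\widehat{g},-)\colon P\to P$ is $H$-equivariant. Then $\widehat\psi_{\widehat{g}}$ is an $H$-equivariant diffeomorphism of $P$ covering $\Phi_{\varphi(\widehat{g})}$, so by the universal property of the pullback it corresponds to a unique isomorphism $F(\widehat{g})\colon P\to\Phi_{\varphi(\widehat{g})}^*P$ in $\Bun_H(M)$, i.e.\ to an element of $\Sym_G(P)_{|\varphi(\widehat{g})}$. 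Smoothness of $F\colon\widehat{G}\to\Sym_G(P)$ follows from smoothness of $\varphi$ and of $\widehat{g}\mapsto\widehat\psi_{\widehat{g}}$, the latter being equivalent to smoothness of $\widehat\psi$ by the defining property of the mapping-space diffeology. The homomorphism property follows from
\begin{equation}
\widetilde{F(\widehat{g}_1\widehat{g}_2)}(p)=\widehat\psi(\widehat{g}_1\widehat{g}_2,p)=\widehat\psi\bigl(\widehat{g}_1,\widehat\psi(\widehat{g}_2,p)\bigr)=\widetilde{F(\widehat{g}_1)}\circ\widetilde{F(\widehat{g}_2)}(p)=\widetilde{\mu(F(\widehat{g}_1),F(\widehat{g}_2))}(p) \ ,
\end{equation}
combined with the injectivity of $\chi\mapsto\widetilde\chi$. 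Commutativity of the displayed triangle is then immediate.

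For uniqueness, any other smooth homomorphism $F'\colon\widehat{G}\to\Sym_G(P)$ making the diagram commute must satisfy $\widetilde{F'(\widehat{g})}=\widehat\psi_{\widehat{g}}=\widetilde{F(\widehat{g})}$ for every $\widehat{g}\in\widehat{G}$, and the bijectivity of $\chi\mapsto\widetilde\chi$ forces $F'=F$. The only non-trivial aspect of the argument is the bookkeeping that distinguishes an isomorphism $\psi\colon P\to\Phi_g^*P$ of $H$-bundles over $M$ from its associated diffeomorphism $\widetilde\psi$ of $P$, and keeping track of how the diffeology on $\Sym_G(P)$ interacts with the mapping-space diffeology on $\mathrm{Diff}_H(P)$; once that dictionary is established, every remaining step is a direct verification.
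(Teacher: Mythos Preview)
Your proof is correct and follows essentially the same approach as the paper: the action is evaluation, and the universal homomorphism sends $\widehat{g}$ to the bundle isomorphism $\widehat\psi_{\widehat{g}}\colon P\to\Phi_{\varphi(\widehat{g})}^*P$. The paper's proof is a two-line sketch that simply writes down these maps without verification, whereas you have carefully spelled out the dictionary between isomorphisms $P\to\Phi_g^*P$ and $H$-equivariant diffeomorphisms of $P$ covering $\Phi_g$, and used it to check the action axiom, smoothness, the homomorphism property, and uniqueness.
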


\begin{proof}
The action is via the evaluation
\begin{align}
	\widehat{\Phi} \colon \Sym_G(P) \times P &\to P \\
	(\phi,p) &\mapsto \phi(p) = \phi_{|\varpi(p)}(p) \ .
\end{align}
The unique smooth group homomorphism in the universality statement is
\begin{align}
\widehat{G}&\longrightarrow \Sym_G(P) \\ 
\widehat{g}&\longmapsto \big(\widehat{\psi}_{\widehat{g}} \colon P \longrightarrow \Phi_{\varphi(\widehat{g})}^*P\big) \ ,
\end{align}                         
and the result follows.
\end{proof}

The construction of the group $\Sym_G(P)$ is functorial in $P$, i.e. 
an
isomorphism of bundles $\psi \colon P \longrightarrow P'$
induces an isomorphism of group extensions 
\begin{align}\label{Eq: Functoriality}
\widehat{\psi}\colon\Sym_G(P) & \longrightarrow\Sym_G(P') \\
( f\colon P\longrightarrow g^* P) &\longmapsto \Big( P' \xrightarrow{\psi^{-1}} P\overset{f}{\longrightarrow} g^* P \xrightarrow{g^*\psi} g^*P' \Big) \ .
\end{align}

\subsection{Equivariant bundles}\label{Sec: Eq bundles}

Let $G$ be a connected Lie group, $M$ a manifold with $G$-action $\Phi \colon G\times M \longrightarrow M$, and
$P$ a principal $H$-bundle over $M$. A \emph{$G$-equivariant structure} on 
$P$ consists of an isomorphism $\chi \colon \pr_M^* P \longrightarrow
\Phi^* P$ of principal bundles over $G\times M$ such that the diagram
\begin{equation}
\begin{tikzcd}
P_x \ar[rr,"\chi_{(g\,g',x)}"] \ar[rd,"\chi_{(g,x)}",swap]  & & P_{\Phi_{g\,g'}(x)} \\ 
 & P_{\Phi_{g}(x)} \ar[ru, " \Phi_{g}^* \chi_{(g',x)}",swap] &
\end{tikzcd}
\end{equation} 
commutes for all $g,g'\in G$ and $x\in M$. We denote by $\mathcal{E}(P)$
the set of equivariant structures on $P$.
A \emph{splitting} $s$ of $\pi \colon \Sym_G(P)\longrightarrow G$ is a smooth
group homomorphism $s\colon G \longrightarrow \Sym_G(P)$ such that
$\pi\circ s = 1_G$. We denote the set of splittings of $\pi \colon \Sym_G(P)\longrightarrow G$ by $\mathcal{S}(G;\Sym_G(P))$.
\begin{proposition}
There is a natural bijection of sets $\Xi \colon \mathcal{E}(P)\longrightarrow
\mathcal{S}(G;\Sym_G(P))$. In particular, the bundle $P$ admits 
an equivariant structure if and only if the extension
$$
1 \longrightarrow \Gau(P) \longrightarrow \Sym_G(P)\longrightarrow G
\longrightarrow 1
$$ 
is trivial as an extension of diffeological groups. 
\end{proposition}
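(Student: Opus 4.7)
The plan is to unpack the definitions and exhibit $\Xi$ and its inverse directly.

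First I would define $\Xi$ as follows. Given a $G$-equivariant structure $\chi \colon \pr_M^* P \to \Phi^* P$, restriction to the slice $\{g\} \times M$ yields, for every $g \in G$, a bundle isomorphism $\chi_g \colon P \to \Phi_g^* P$, i.e.\ an element of the fibre $\Sym_G(P)_{|g}$. This defines a set-theoretic section $\Xi(\chi) \colon G \to \Sym_G(P)$, $g \mapsto \chi_g$, of $\pi$. To show $\Xi(\chi)$ is smooth, I would pick any plot $f \colon c \to G$ and note that, by the definition of the diffeology on $\Sym_G(P)$, one must verify that the induced map of pullback bundles $\pr_M^* P \to \Phi_f^* P$ over $c \times M$ is smooth. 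But this map is exactly the pullback of $\chi$ along $(f \times 1_M) \colon c \times M \to G \times M$, hence smooth. The group homomorphism property $\Xi(\chi)(g g') = \mu(\Xi(\chi)(g), \Xi(\chi)(g'))$ translates, using the definition of $\mu$ given in the proof of Proposition~\ref{Prop: multiplicative structure bundle}, to the cocycle condition displayed in the commutative triangle defining $\mathcal{E}(P)$ (modulo a trivial reordering of arguments).

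Next I would construct the inverse. Given a splitting $s \in \mathcal{S}(G;\Sym_G(P))$, define $\chi^s \colon \pr_M^* P \to \Phi^* P$ over $G \times M$ by $\chi^s_{(g,x)} \coloneqq s(g)_x$. Smoothness of $\chi^s$ as a bundle isomorphism over $G \times M$ is equivalent, via the plot-test for the diffeology on $\Sym_G(P)$ applied to the identity plot $1_G \colon G \to G$ (which is smooth since $G$ is a manifold and hence covered locally by Cartesian plots), to smoothness of $s$; the local-to-global property of bundle isomorphisms lets us glue the Cartesian plot data into a bundle isomorphism over $G \times M$. The cocycle condition on $\chi^s$ is then a direct rewriting of the homomorphism property $s(g g') = \mu(s(g), s(g'))$. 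The two assignments $\chi \mapsto \Xi(\chi)$ and $s \mapsto \chi^s$ are manifestly mutually inverse by construction, and naturality with respect to isomorphisms of bundles is immediate from the functoriality of $\Sym_G(-)$ recorded in~\eqref{Eq: Functoriality}.

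For the ``in particular'' clause, I would observe that an extension of diffeological groups $1 \to \Gau(P) \to \Sym_G(P) \to G \to 1$ is trivial (that is, isomorphic to the semidirect product extension via a smooth group splitting) if and only if there exists some smooth group-theoretic splitting $s \colon G \to \Sym_G(P)$; this is a standard elementary fact that holds equally well in the diffeological setting, since the canonical map $\Gau(P) \times G \to \Sym_G(P)$, $(\alpha, g) \mapsto \alpha \cdot s(g)$, is then a smooth isomorphism with smooth inverse. Combined with the bijection $\Xi$, this yields the equivalence with the existence of a $G$-equivariant structure on $P$.

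The step requiring most care is the equivalence between smoothness of $s \colon G \to \Sym_G(P)$ in the diffeological sense and smoothness of the total bundle isomorphism $\chi^s$ over $G \times M$; everything else is essentially transport of structure. This is nothing more than the adjunction underlying the mapping-space diffeology of Example~\ref{eg:Dfg examples}, combined with the fact that the diffeology on $\Sym_G(P)$ was defined precisely so as to make this adjunction hold for smooth families of bundle isomorphisms, but one has to check that plots on $G$ may be taken to be the Cartesian charts and that the resulting local bundle isomorphisms glue.
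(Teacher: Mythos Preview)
Your proposal is correct and follows exactly the same approach as the paper: define $\Xi(\chi)(g) = \chi_{|\{g\}\times M}$ and the inverse by $\chi^s_{(g,x)} = s(g)_x$. The paper's proof is in fact much terser than yours; the smoothness verifications, the homomorphism check, and the argument for the ``in particular'' clause that you spell out are all left implicit there.
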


\begin{proof}
Let $(P,\chi)$ be an equivariant bundle. We define 
$
\Xi (P,\chi) (g) \colon P \longrightarrow \Phi_g^*P 
$ to be $\chi_{|\{g\}\times M}$. The inverse $\Xi^{-1}\colon \mathcal{S}(G;\Sym_G(P))\longrightarrow \mathcal{E}(P)$ can be constructed by
sending a splitting $s\colon G \longrightarrow \Sym_G(P)$ to the
isomorphism $\Xi^{-1}(s) \colon \pr_M^* P \longrightarrow \Phi^* P $
which is given by
$s(g)(x)\colon P_x \longrightarrow P_{\Phi_g(x)}$ at $(g,x)\in G\times M$.
\end{proof}

Let $(P, \chi)$ and $(P', \chi')$ be $G$-equivariant $H$-bundles on $M$. An isomorphism $\psi \colon P \longrightarrow 
P'$ is \emph{equivariant} if the diagram
\begin{equation}
\begin{tikzcd}[row sep=1cm]
P \ar[r,"\psi"] \ar[d, "\chi_g ",swap]& P' \ar[d, "\chi_g' "] \\
\Phi_{g}^*P \ar[r,"\Phi_{g}^*\psi",swap] & \Phi_{g}^*P' 
\end{tikzcd}
\end{equation}
commutes for all $g\in G$.
The equivariant structures on $P$ and $P'$ can be described by smooth group homomorphisms
$s_P \colon G \longrightarrow \Sym_G(P)$ and    
$s_{P'} \colon G \longrightarrow \Sym_G(P')$. 
Since the isomorphism $\widehat{\psi} $ defined in 
\eqref{Eq: Functoriality}
intertwines the action of $\Sym_G(P)$ and $\Sym_G(P')$ on $P$ 
and $P'$, respectively, it follows that $\psi$ is equivariant if and
only if $s_{P'}= \widehat{\psi} \circ s_P$. Hence the smooth group
extension $\Sym_G$ contains all information
on equivariance. 

\subsection{Description via parallel transport}
\label{sect:non-triv fibs via connections}

The extension $\Sym_G(P)$ can be described more explicitly using the parallel transport of a
connection on $P$, as we will now explain. 
In Section~\ref{Sec: Magnetic translations} we apply this to the
description of magnetic translations in quantum mechanics. 
We consider a principal $H$-bundle $P \to M$.
Let $P_0 G$ denote the diffeological space of smooth paths in $G$ with sitting instants based at $e\in G$, $\ev_1\colon P_0G \longrightarrow G$ the evaluation at the end point, $(P_0G)^{[2]}$ the fibre product $P_0G\times_{G} P_0G$ with respect
to $\ev_1$, and $LM$ the space
of smooth loops in $M$. 
We denote by $\star$ the concatenation of paths. 
For a path $\gamma \colon [0,1] \longrightarrow G $ we denote by $\overline{\gamma}$ the precomposition 
of $\gamma$ with 
\begin{align}
[0,1]  &\longrightarrow [0,1] \\
t  &\longmapsto 1-t \ .
\end{align}                         
For a path $\gamma \in P_0 G$ and a point $x \in M$, set
\begin{align}
	\gamma_x \colon [0,1] &\to M \\
	t &\mapsto \Phi_{\gamma(t)} (x) \ .
\end{align}

Endow $P$ with an arbitrary connection $A$.
The $H$-bundle $P$ with connection then induces a principal $\Gau(P)$-bundle on $G$ as follows:
we set
\begin{equation}
	\La_G \coloneqq \big( P_0 G \times \Gau(P) \big) \big/{\sim} \
        ,
\end{equation}
where we define the equivalence relation
\begin{equation}
	(\gamma, \phi) \sim \big( \alpha, \hol(P, \alpha, \gamma)\circ \phi \big)
	\qquad \text{with} \quad
	\hol(P, \alpha, \gamma) (x) \coloneqq \hol \big( P, (
        \overline{\alpha}\star \gamma)_x  \big) \ \in \ \End(P_x)
\label{Eq: Decent description bundles}
\end{equation}
for all $(\gamma,\alpha)\in(P_0G)^{[2]}$ and $x\in M$, 
and we interpret the holonomy of $P$ along a loop starting and ending
at $x$ as an endomorphism of the fibre $P_x$.
Note that, with this notation, we have defined a smooth map $\hol(P,-) \colon (P_0 G)^{[2]} \to \Gau(P)$.
We endow $\La_G$ with
the quotient diffeology.

Then the $\Gau(P)$-bundle $\La_G \to G$ can be defined in terms of descent data 
as follows:
the action $\Phi$ of $G$ on $M$ induces a smooth map
\begin{align}\label{EQ: Descent description line bundle}
	L\Phi \colon (P_0 G)^{[2]} \times M &\to LM \\
	(\gamma, \alpha, x) &\mapsto (\overline{\alpha} \star \gamma )_x \ .
\end{align}
Explicitly, 
\begin{equation}
	( \overline{\alpha } \star \gamma)_x(t) = \Phi_{(\overline{\alpha }
          \star \gamma) (t)}(x) \ \in \ M
\end{equation}
for all $t \in [0,1]$ and $x \in M$.
The descent data for the bundle $\La_G$ consists of the subduction
$P_0G\to G$, the trivial bundle $P_0G\times\Gau(P)\to P_0G$, and the smooth map
\begin{align}
	\sfg \colon (P_0 G)^{[2]} &\to \Gau(P) \\
	(\gamma,\alpha)&\mapsto \sfg(\gamma,\alpha) = \hol ( P,
                         \overline{\alpha } \star \gamma) \ .
\end{align}

\begin{proposition}\label{Prop: multiplicative}
The total space $\La_G$ is a smooth group extension
\begin{equation}
\begin{tikzcd}
1 \ar[r] & \Gau(P) \ar[r] & \La_G \ar[r] & G \ar[r] & 1 \ .
\end{tikzcd}
\end{equation}
\end{proposition}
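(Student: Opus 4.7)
The plan is to identify $\La_G$ with the group extension $\Sym_G(P)\to G$ from Proposition~\ref{Prop: multiplicative structure bundle} via parallel transport, thereby transferring the smooth group structure, and then to extract the classifying 2-cocycle by choosing a local smooth section of $\ev_1\colon P_0G\to G$.

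First I would verify that the descent datum $\sfg$ satisfies the \v{C}ech cocycle condition on $(P_0G)^{[3]}$, so that $\La_G$ becomes a principal $\Gau(P)$-bundle on $G$ in the diffeological sense. The required identity reduces to a relation among holonomies of loops of the form $(\overline{\gamma_j}\star \gamma_i)_x$ in $M$, which follows from the multiplicativity of holonomy under concatenation together with its thin-homotopy invariance: the segment $\gamma_1\star\overline{\gamma_1}$ appearing in the composed loop is thin-homotopic to a constant path, collapsing $\sfg(\gamma_1,\gamma_2)\circ \sfg(\gamma_0,\gamma_1)$ to $\sfg(\gamma_0,\gamma_2)$. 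Smoothness of $\sfg$ as a $\Gau(P)$-valued function then endows $\La_G$ with the structure of a diffeological principal $\Gau(P)$-bundle.

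Next I would define the parallel-transport map
\begin{equation}
\Psi\colon \La_G \longrightarrow \Sym_G(P) \ , \qquad
\big[(\gamma,\phi)\big] \longmapsto \mathsf{PT}(\gamma)\circ \phi \ ,
\end{equation}
where $\mathsf{PT}(\gamma)\colon P\to \Phi_{\gamma(1)}^*P$ is the fibrewise parallel transport of $P$ along $\gamma_x=\Phi_{\gamma(\cdot)}(x)$. Well-definedness on equivalence classes follows because the action of $\sfg(\gamma,\alpha)$ on $\phi$ is precisely compensated by the difference of parallel transports along $\gamma$ and $\alpha$. The map $\Psi$ intertwines the right $\Gau(P)$-action and covers $1_G$, and thus is a morphism of principal $\Gau(P)$-bundles over $G$: surjectivity holds because any isomorphism $P\to \Phi_g^*P$ decomposes as $\mathsf{PT}(\gamma)\circ \phi$ for any choice of path $\gamma$ from $e$ to $g$, and injectivity is immediate from the definition of~$\sim$. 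Hence $\Psi$ is a diffeomorphism, and transferring the group structure of Proposition~\ref{Prop: multiplicative structure bundle} along $\Psi$ turns $\La_G$ into a diffeological group fitting into the displayed extension.

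Finally, to exhibit the classifying cocycle I would pick a smooth section $s\colon U\to P_0G$ of $\ev_1$ over an open neighbourhood $U\ni e$ in $G$, which exists because $\ev_1$ is a subduction. The local section $\widehat{s}(g)=[(s(g),\One)]$ of $\La_G$ in general fails to be multiplicative, and the discrepancy produces a smooth map $c\colon U\times U\to \Gau(P)$. Associativity in $\La_G$ combined with the cocycle property of $\sfg$ yields the group 2-cocycle identity for $c$, and passing to a cover of $G$ by left translates of $U$ produces a smooth $\Gau(P)$-valued group cohomology class on $G$ of degree two that controls the extension. The main technical obstacle is smoothness: verifying that $\Psi$ is a diffeological diffeomorphism and that the transported product is smooth requires controlling the smooth dependence of parallel transport on plots in $P_0G$ and plots in $M$ simultaneously, and threading this dependence through the subspace and quotient diffeologies used in the definitions of $P_0G$ and $\La_G$.
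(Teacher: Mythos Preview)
Your approach is correct but takes a genuinely different route from the paper. The paper constructs the multiplication on $\La_G$ \emph{directly} by writing down the explicit formula
\[
\mu\big((\gamma,\phi),(\gamma',\phi')\big) = \big(\gamma\,\gamma',\ \pt_{\gamma\gamma'}^{-1}\circ \pt_{\gamma(\gamma'(1))}\circ (\Phi_{\gamma'(1)}^*\phi)\circ \pt_{\gamma'}\circ\phi'\big)\,,
\]
motivated by the 2-simplex homotopy $\Delta^2(x,\gamma,\gamma')$ between the pointwise product path $\gamma\,\gamma'$ and the concatenation $(\gamma\cdot\gamma'(1))\star\gamma'$; the bulk of the proof is then the well-definedness check on equivalence classes. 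Only afterwards, in a separate proposition, does the paper introduce your map $\Psi$ (there called $\Gamma$) and verify that it is an isomorphism of the two group extensions already in hand.

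You invert this logic: you first establish $\Psi$ as a diffeomorphism of principal $\Gau(P)$-bundles over $G$, and then \emph{pull back} the group structure from $\Sym_G(P)$. This is economical and avoids the explicit well-definedness computation, but it hides the multiplication formula, which the paper uses immediately afterwards (e.g.\ to simplify $\mu$ in the abelian case and to discuss commutativity obstructions). Conversely, your argument makes the equivalence of $\La_G$ and $\Sym_G(P)$ a tautology rather than a theorem. Note also that the paper does not actually prove the ``controlled by a degree-2 cocycle'' clause at all, treating it as the standard consequence of having a group extension with a smooth local section; your explicit construction of the cocycle from a local section of $\ev_1$ is therefore more than the paper supplies. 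One small imprecision: your cocycle $c$ is only defined on the subset of $U\times U$ where $g\,g'\in U$, not on all of $U\times U$.
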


\begin{proof}
Let $\gamma$ and $\gamma'$ be two paths in $G$.
The evaluation $\ev_1 \colon P_0 G \to G$ is a group homomorphism with
respect to the pointwise product of paths.

Let $x \in M$ be an arbitrary point.
To any triple $(x, \gamma, \gamma')$, we can associate a map
\begin{align}
	\Delta^2(x,\gamma,\gamma') \colon |\Delta^2| &\to M \\
	(t_1,t_2) &\mapsto \Phi_{\gamma(t_1)}\big(\gamma'_x(t_2)\big)
                    \ ,
\end{align}
where $|\Delta^2|$ is the standard topological 2-simplex with $|\Delta^2| \cong \{ (t_1, t_2) \in \R^2\, | \, 0 \leq t_2 \leq t_1 \leq 1 \}$.
Diagrammatically, this is a homotopy
\begin{equation}
\begin{tikzcd}[row sep=1.25cm, column sep=1cm]
	& \gamma(1) \cdot \gamma'(1) \cdot x
	\\
	x \ar[r, "\gamma'(t) \cdot x"'] \ar[ur, "\gamma'(t) \cdot \gamma(t) \cdot x"] & \gamma'(1) \cdot x \ar[u, "\gamma(t) \cdot \gamma'(1) \cdot x"']
\end{tikzcd}
\end{equation}
between the product path $\gamma \, \gamma' \in P_0 G$ and the concatenated path $(\gamma \, \gamma'(1)) \star \gamma' \in P_0 G$.

For $\gamma , \gamma' \in P_0G$ and $\phi, \phi' \in \Gau(P)$, we define
\begin{equation}\label{Eq:Multiplication}
	\mu \big( (\gamma, \phi), (\gamma', \phi') \big) \coloneqq \big( \gamma \, \gamma',\, \pt_{\gamma\, \gamma'}^{-1} \circ \pt_{\gamma\, \left( \gamma'(1)\right)} \circ (\Phi_{\gamma'(1)}^*\phi)\, \circ \pt_{\gamma'} \circ \phi' \big)\, ,
\end{equation}
where we denote by $\pt_\gamma $ the isomorphism $P\longrightarrow \Phi_{\gamma(1)}^*P$
defined at a point $x\in M$ by the parallel transport along the path $\gamma_x$.
This is well-defined: let $\alpha, \alpha' \in P_0 G$ with $\gamma(1) = \alpha(1)$ and $\gamma'(1) = \alpha'(1)$.
Then
\begin{align}
	\mu \big( (\alpha, \hol(P, \alpha, \gamma) \circ \phi)&, (\alpha', \hol(P, \alpha', \gamma')\circ \phi') \big)
	\\[4pt]
	&= \big( \alpha \, \alpha',\, \pt_{\alpha\, \alpha'}^{-1}   \circ \Phi_{\alpha'(1)}^*(\pt_{\alpha} \circ \hol(P, \alpha, \gamma) \circ \phi')\, \circ \pt_{\alpha'} \circ \hol(P, \alpha', \gamma')\circ \phi'  \big)
	\\[4pt]
	&= \big( \alpha \, \alpha',\, \hol(P, \alpha \, \alpha',
          \gamma \, \gamma') \circ \pt_{\gamma\, \gamma'}^{-1}   \circ \Phi_{\gamma'(1)}^*(\pt_{\gamma} \circ \phi')\, \circ \pt_{\gamma'}\circ \phi'  \big)
	\\[4pt]
	&= \big( \gamma \, \gamma',\,   \pt_{\gamma\, \gamma'}^{-1}   \circ \Phi_{\gamma'(1)}^*(\pt_{\gamma} \circ \phi')\, \circ \pt_{\gamma'}\circ \phi'  \big)
	\\[4pt]
	&= 	\mu \big( (\gamma, \phi), (\gamma', \phi') \big) \ ,
\end{align}
where we used $\Phi_{\gamma'(1)}^*\pt_{\gamma}= \pt_{\gamma\, (\gamma'(1))}$.
Associativity then follows immediately from the associativity of the
products in $P_0 G$ and $\Gau(P)$, together with associativity of
taking pullbacks. Smoothness follows from the definition of the
quotient diffeology and the smooth dependence of parallel transport on
the path. 
\end{proof}

\begin{remark}
For abelian structure group $H$, we can use the fact that parallel 
transport commutes with gauge transformations to get the simplified expression
\begin{align}
\mu \big( (\gamma, \phi), (\gamma', \phi') \big) = \big( \gamma \, \gamma',\, \hol(P, \partial |\Delta^2|) \, (\Phi_{\gamma'(1)}^*\phi) \circ \phi' \big)
\end{align}
for the multiplication \eqref{Eq:Multiplication}. 
\qen
\end{remark}

\begin{remark}
\label{rmk:commutativity}
If $G$ is abelian, then the multiplicative structure yields isomorphisms
\begin{equation}
	\La_{G|g} \times \La_{G|g'} \to \La_{G|g\,g'} = \La_{G|g'\,g} \to \La_{G|g'} \times \La_{G|g}
\end{equation}
for all $g,g' \in G$.
That is, the group extension $\La_G$ spoils the commutativity of $G$, since its fibres multiply commutatively only up to coherent isomorphism.
\qen
\end{remark}

We summarise the connection to the construction from Section~\ref{sect:gauge bundles on groups} in

\begin{proposition}
Let $G$ be a connected Lie group, and let $P\longrightarrow M$ be 
a principal $H$-bundle on a manifold $M$ with smooth $G$-action. 
The map 
\begin{align}
\Gamma \colon \La_{G} &\longrightarrow \Sym_G(P) \\
\big[(\gamma,\phi)\big]  &\longmapsto \big(\pt_{\gamma} \circ \phi \colon 
P \longrightarrow \Phi_{\gamma(1)}^*P\big)
\end{align}
is an isomorphism of diffeological group extensions of $G$. 
\end{proposition}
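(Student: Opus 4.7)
The plan is to verify that $\Gamma$ is a smooth group homomorphism covering the identity on $G$ and restricting to the identity on the kernel $\Gau(P)$, and then to conclude that $\Gamma$ is a diffeomorphism from the fact that both its source and target are principal $\Gau(P)$-bundles over $G$, so that any $\Gau(P)$-equivariant bundle map over the identity of $G$ is automatically a fibrewise bijection.

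The first step is to verify well-definedness of $\Gamma$ on the quotient $\La_G=(P_0G\times\Gau(P))/{\sim}$. Under the equivalence relation \eqref{Eq: Decent description bundles}, $(\gamma,\phi)\sim(\alpha,\hol(P,\alpha,\gamma)\circ\phi)$, so well-definedness reduces to the identity $\pt_\alpha\circ\hol(P,\alpha,\gamma)=\pt_\gamma$. Pointwise at $x\in M$, the endomorphism $\hol(P,\alpha,\gamma)(x)$ is the holonomy of $P$ around the loop $(\overline{\alpha}\star\gamma)_x$, which decomposes as $\pt_{\alpha_x}^{-1}\circ\pt_{\gamma_x}$ by the thin-homotopy invariance and reversal properties of parallel transport; composing with $\pt_{\alpha_x}$ on the left yields $\pt_{\gamma_x}$. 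Smoothness of $\Gamma$ then follows from the definition of the quotient diffeology on $\La_G$ together with the smooth dependence of parallel transport on its path argument.

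The second step is to check multiplicativity by applying $\Gamma$ to the formula \eqref{Eq:Multiplication} for $\mu_{\La_G}$ and comparing with the product $\mu_{\Sym_G(P)}(\psi,\phi)=\Phi_{g'}^*\psi\circ\phi$ given in the proof of Proposition~\ref{Prop: multiplicative structure bundle}. Composing with $\pt_{\gamma\gamma'}$ on the left cancels the factor $\pt_{\gamma\gamma'}^{-1}$ appearing in \eqref{Eq:Multiplication}, leaving $\pt_{\gamma\cdot(\gamma'(1))}\circ(\Phi_{\gamma'(1)}^*\phi)\circ\pt_{\gamma'}\circ\phi'$. By the identity $\Phi_{\gamma'(1)}^*\pt_\gamma=\pt_{\gamma\cdot(\gamma'(1))}$ already employed in the proof of Proposition~\ref{Prop: multiplicative}, this is equal to $\Phi_{\gamma'(1)}^*(\pt_\gamma\circ\phi)\circ\pt_{\gamma'}\circ\phi'=\mu_{\Sym_G(P)}\big(\Gamma[(\gamma,\phi)],\Gamma[(\gamma',\phi')]\big)$. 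Compatibility of $\Gamma$ with the projections to $G$ is immediate, and $\Gamma$ restricts to the identity on $\Gau(P)\hookrightarrow\La_G$, $\phi\mapsto[(e_G,\phi)]$, because $\pt_{e_G}=1_P$.

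To finish, $\Gamma$ is $\Gau(P)$-equivariant as a morphism of right $\Gau(P)$-bundles over $G$, hence fibrewise bijective and therefore bijective as a map of sets. For smoothness of $\Gamma^{-1}$, I would cover $G$ by open sets $U$ admitting smooth local sections $\sigma\colon U\to P_0G$ of the subduction $\ev_1\colon P_0G\to G$; over each such $U$ the inverse is given by $\psi\mapsto[(\sigma(g),\pt_{\sigma(g)}^{-1}\circ\psi)]$, which is manifestly smooth. The principal technical input for the whole argument is the well-definedness identity $\pt_\alpha\circ\hol(P,\alpha,\gamma)=\pt_\gamma$; once it is in place, multiplicativity and $\Gau(P)$-equivariance become essentially formal consequences of the compatibility identities of parallel transport already used in Propositions~\ref{Prop: multiplicative structure bundle} and~\ref{Prop: multiplicative}.
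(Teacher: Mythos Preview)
Your proposal is correct and follows essentially the same approach as the paper: well-definedness via the identity $\pt_\alpha\circ\hol(P,\alpha,\gamma)=\pt_\gamma$, multiplicativity via the same computation using $\Phi_{\gamma'(1)}^*\pt_\gamma=\pt_{\gamma\cdot(\gamma'(1))}$, bijectivity from the $\Gau(P)$-torsor structure on the fibres, and smoothness from the smooth dependence of parallel transport. The one place where you go slightly beyond the paper is in supplying an explicit argument for smoothness of $\Gamma^{-1}$ via local sections of $\ev_1\colon P_0G\to G$; the paper's proof only verifies smoothness of $\Gamma$ itself.
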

\begin{proof}
The map is well-defined: consider two representatives
$(\gamma, \phi)$ and $(\alpha, \hol(P,\alpha,\gamma)\circ \phi)$ of the same equivalence class in $\La_G$, and calculate \[ 
\pt_\alpha \circ \hol(P,\alpha,\gamma)\circ \phi= \pt_\alpha \circ \pt_{\overline{\alpha}}\circ \pt_\gamma \circ \phi =\pt_\gamma \circ \phi \ .
\] The map is bijective, because two gauge transformations
$P\longrightarrow \Phi_{g}^*P$ differ by exactly one gauge transformation of $P$. 
It also follows directly from the definition that $\Gamma$ is a morphism of extensions.
We check that $\Gamma$ is a group homomorphism: for $[(\gamma,\phi)], \, [(\gamma',\phi')]\in \La_G$ we compute 
\begin{align}
\mu\big(\Gamma(\gamma,\phi), \Gamma (\gamma',\phi')\big) & = \mu(\pt_{\gamma} \circ \phi ,\, \pt_{\gamma'} \circ \phi') \\[4pt]
&=  \Phi_{\gamma'(1)}^*(\pt_{\gamma} \circ \phi) \circ \pt_{\gamma'} \circ \phi' \\[4pt]
&= \Gamma\big(\gamma \, \gamma',\, \pt_{\gamma\, \gamma'}^{-1} \circ \pt_{\gamma\, (\gamma'(1))} \circ (\Phi_{\gamma'(1)}^*\phi)\, \circ \pt_{\gamma'} \circ \phi' \big) \\[4pt]
&= \Gamma\Big(\mu\big((\gamma,\phi), (\gamma',\phi)\big)\Big) \ .
\end{align}

Finally, we verify that $\Gamma$ is smooth. Let $f\colon c \longrightarrow \mathcal{L}_G$
be a plot admitting a lift $\widehat{f}\colon c \longrightarrow 
P_0 G\times \Gau(P)$. We denote the components of $\widehat{f}$ by $\widehat{f}_\gamma$
and $\widehat{f}_{\Gau(P)}$. It is enough to show that  
\begin{align*}
 \pr_M^* P &\longrightarrow \Phi_{\widehat{f}_\gamma}^*P  \\
  P_x \ni p &\longmapsto \pt_{\widehat{f}_\gamma(u)} \big(\widehat{f}_{\Gau(P)}(u)(p)\big)  \in P_{\Phi_{\widehat{f}_\gamma(u)(1)}(x)}
\end{align*} 
is a gauge transformation. This follows from the smoothness of parallel transport 
(recalled in Section~\ref{sec: path space for PT} below).   
\end{proof}

\begin{corollary}
The action $\Phi \colon G \times M \to M$ lifts to an action
\begin{equation}
	\widehat{\Phi} \colon \La_G \times P \to P\,
\end{equation}
which covers the action of $G$ on $M$. 
\end{corollary}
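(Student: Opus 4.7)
The plan is to transport the smooth action of $\Sym_G(P)$ on $P$ established in Proposition~\ref{Prop: Action line bundle} along the isomorphism of diffeological group extensions $\Gamma \colon \La_G \to \Sym_G(P)$ constructed in the preceding proposition. Concretely, I would define
\begin{align}
\widehat{\Phi} \colon \La_G \times P &\longrightarrow P \\
\big([(\gamma,\phi)], p\big) &\longmapsto \Gamma\big([(\gamma,\phi)]\big)(p) \,=\, \pt_\gamma\big(\phi(p)\big) \ .
\end{align}
Well-definedness on equivalence classes is immediate from the identity $\pt_\alpha \circ \hol(P,\alpha,\gamma)\circ \phi = \pt_\gamma \circ \phi$ already verified in the proof that $\Gamma$ descends to $\La_G$.

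Next I would verify that $\widehat{\Phi}$ covers $\Phi$: for $p \in P_x$, one has $\phi(p) \in P_x$ and hence $\pt_\gamma(\phi(p)) \in P_{\Phi_{\gamma(1)}(x)}$, so that the square
\begin{equation}
\begin{tikzcd}[row sep=1cm]
\La_G \times P \ar[r, "\widehat{\Phi}"] \ar[d, "\pi \times \varpi", swap] & P \ar[d, "\varpi"] \\
G \times M \ar[r, "\Phi", swap] & M
\end{tikzcd}
\end{equation}
commutes. The action axioms for $\widehat{\Phi}$ then reduce to the corresponding axioms for the action of $\Sym_G(P)$ on $P$: associativity follows from $\Gamma\big(\mu(a,b)\big) = \mu\big(\Gamma(a),\Gamma(b)\big)$ (this is exactly the group-homomorphism property of $\Gamma$ verified in the preceding proposition), and the unit axiom follows because the class of $(\gamma_e, 1_P)$, with $\gamma_e$ the constant path at $e \in G$, maps under $\Gamma$ to the identity of $\Sym_G(P)$.

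Finally, smoothness of $\widehat{\Phi}$ follows by composing the smooth map $\Gamma \times 1_P$ with the smooth evaluation $\Sym_G(P) \times P \to P$ provided by Proposition~\ref{Prop: Action line bundle}. There is no real obstacle: the corollary is a formal consequence of Proposition~\ref{Prop: Action line bundle} and the isomorphism $\Gamma$, so the only content is recording that the transported action is the one given by parallel transport followed by the gauge transformation, which is exactly the expected formula $\widehat{\Phi}\big([(\gamma,\phi)],p\big) = \pt_\gamma(\phi(p))$ to be used in the application to magnetic translations in Section~\ref{Sec: Magnetic translations}.
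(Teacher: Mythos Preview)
Your proof is correct and is exactly the intended argument: the paper states this as a corollary with no proof, because it follows immediately by transporting the action of $\Sym_G(P)$ on $P$ from Proposition~\ref{Prop: Action line bundle} along the isomorphism $\Gamma$ of the preceding proposition. Your explicit formula $\widehat{\Phi}\big([(\gamma,\phi)],p\big) = \pt_\gamma(\phi(p))$ and the verifications you provide are precisely what the reader is expected to supply.
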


\section{A global approach to parallel transport for bundle gerbes}
\label{Sec: parallel transport}

In Section~\ref{Sec: U(1)} we have constructed two diffeological
groups, $\Sym_G(P)$ and $\La_G$, which extend $G$ and control the
existence of $G$-equivariant structures on a principal bundle $P$ over
$M$.
The key to constructing $\La_G$, as well as to comparing the groups $\Sym_G(P)$ and $\La_G$ (see Section~\ref{sect:non-triv fibs via connections}), was the parallel transport on the principal bundle $P$.

If one replaces the principal bundle $P$ by a bundle gerbe $\Ga$ on $M$, there exist categorified versions of both these constructions which will be given in Section~\ref{sect:extensions from gerbes}.
However, in order to write down the categorification of $\La_G$ we need a notion of parallel transport for $\Ga$.
In this section we give a definition of parallel transport for $\Ga$ suited for our purposes and explicitly construct such a parallel transport from any connection on $\Ga$.
Our construction relies heavily on Waldorf's transgression-regression machine~\cite{Waldorf:Transgression_II}.

There is a different approach to the parallel transport on a bundle gerbe developed by Schreiber and Waldorf~\cite{SW:PT_and_Fctrs,SW:Fctrs_v_forms,SW:Local_2-fctrs}.
It relies on their technology of transport functors and is based on local constructions, which are then glued to global objects.
In~\cite{Waldorf:PT_in_2Bundles}, this has been extended to a canonical assignment of a parallel transport (in terms of a transport 2-functor) to any principal 2-bundle with connection whose structure group is a Lie 2-group.

Here, in contrast, we directly define and construct a global version of parallel transport suitable for our purposes.
As our main goal in this paper is the construction of categorified smooth group extensions, we leave it for future work to prove in detail that our notion of parallel transport for $\Ga$ agrees with that of Schreiber and Waldorf,  and instead focus on building the necessary input for the constructions in Section~\ref{sect:extensions from gerbes}.

\subsection{A path space approach to parallel transport on line bundles}
\label{sec: path space for PT}

Before we give our definition and construction of the parallel transport for bundle gerbes, we recast the parallel transport on line bundles from a global perspective.
Our notion of parallel transport for bundle gerbes will then be a categorification of this picture.
Let $M$ be a connected smooth manifold, and fix a base point $x\in M$;
if $M$ is not connected, we restrict to its connected components individually.
We denote by $PM$ the diffeological space of smooth paths with sitting instants in $M$ 
and by $P_0M$ the subspace of paths starting at $x$. 
Let $L$ be a line bundle on $M$ with connection. The smoothness of the parallel 
transport on $L$ can be encoded as follows: for $t \in [0,1]$, denote by $\ev_t \colon 
PM \longrightarrow M$, $\gamma \mapsto \gamma(t)$, the evaluation at $t$.
Parallel transport on $L$ is in particular an isomorphism
\begin{align*}
\pt^L\colon \ev_0^*L
&\longrightarrow \ev_1^*L
\\ 
L_{\gamma(0)} = \ev_0^*L_\gamma \ni \ell
&\longmapsto \pt^L_\gamma (\ell) \in L_{\gamma(1)} = \ev_1^*L_\gamma
\end{align*} 
of line bundles over $PM$. 

There is a different way to construct this isomorphism using descent.
Via transgression and regression
\cite{WaldorfI} we can construct a bundle $\Ra \Ta (L)$, which is isomorphic to $L$, from the descent data 
\begin{equation}
\begin{tikzcd}[row sep=1cm]
  \U(1) &  \\
(P_0{M})^{[2]} \ar[u,"f"] \ar[r, shift right=0.70]\ar[r, shift left=0.70] & {P}_0M  \ar[d,"\ev_1"] 
\\
 & M 
\end{tikzcd} \ \  
\end{equation}
with respect to the path fibration.
Here $f$ is constructed as in Section~\ref{sect:non-triv fibs via connections} from the holonomy of $L$. 
The total space of the line bundle $\Ra\Ta(L)$ consists of 
equivalence classes of pairs $(\gamma,\zeta)\in P_0 M\times \C$, where the equivalence
relation reads as $(\gamma_1,\zeta)\sim (\gamma_2,f(\gamma_1,\gamma_2)\,\zeta)$ for 
$(\gamma_1,\gamma_2)\in(P_0{M})^{[2]}$ and $\zeta \in \C$.
An isomorphism $ g_\chi \colon \Ra \Ta (L) \longrightarrow L$ 
can be constructed by picking a 
trivialisation $\chi \colon\C \longrightarrow  L_{x}$ of the fibre of $L$ 
over the base point $x \in M$ and defining
\[
g_\chi \big([\gamma,\zeta]\big) \coloneqq \pt^L_\gamma \big(\chi(\zeta)\big) \ .
\] 
 
The pullbacks $\ev_0^* \Ra\Ta(L)$ and $\ev_1^* \Ra\Ta(L)$ are thus described in terms of descent data with respect to the covers 
$ \ev_0^*P_0 M\cong P_0M \times_M PM \to M$ and $ \ev_1^*P_0 M \cong PM\times_M P_0M \to M$, respectively. 
In order to construct the isomorphism $\pt^{\Ra\Ta(L)}$ explicitly we use the space (see Figure~\ref{Fig:P Delta})
\begin{equation}
	P_{\partial \Delta^2} M \coloneqq \ev_0^*P_0 M \times_{PM} \ev_1^*P_0M
	\cong P_0 M \times_M PM \times_M P_0 M \ ,
\end{equation}
which fits into the diagram 
 \begin{equation}
 \begin{tikzcd}
 & P_{\partial \Delta^2} M \ar[rd] \ar[ld] & \\ 
\ev_0^*P_0M \ar[rd]  & & \ev_1^*P_0M  \ar[ld]\\ 
 & PM &  
 \end{tikzcd}
\end{equation}   

\begin{figure}[h]
\begin{center}
\begin{overpic}[scale=0.5]
{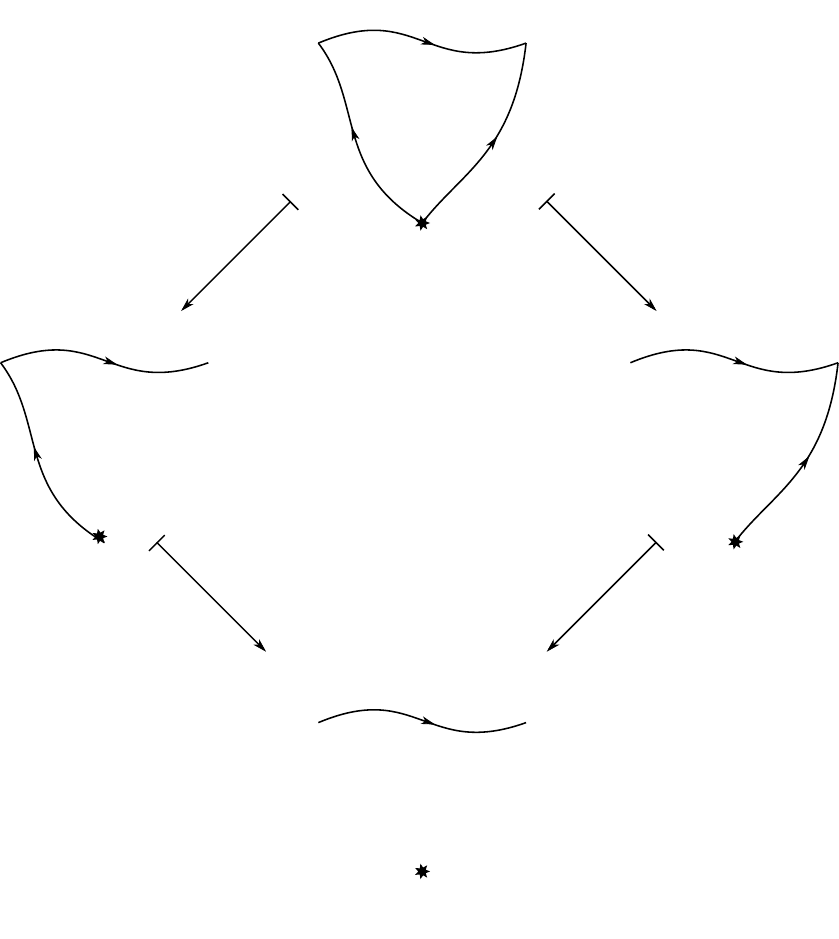}
\put(90,225){$P_{\partial \Delta^2}M$}
\put(0,145){$\ev_0^*P_0M$}
\put(165,145){$\ev_1^*P_0M$}
\put(90,0){$PM$}
\end{overpic}
\end{center}
\caption{\small Elements in the spaces $P_{\partial \Delta^2}M$, $\ev_0^*P_0M$ and $\ev_1^*P_0M$.}
\label{Fig:P Delta}
\end{figure}     
An isomorphism from $\ev_0^*\Ra \Ta (L)$ to $\ev_1^*\Ra \Ta (L)$ can be described by 
a function $P_{\partial \Delta^2} M \longrightarrow \U(1)$ which is compatible with the descent data. There is a 
canonical choice for such a function given by
\begin{align}
 P_{\partial \Delta^2} M \longrightarrow LM
  \xrightarrow{ \ \hol \ } \U(1) \ .
\end{align} 
Concretely, the induced map is 
\begin{align*}
\pt^{\Ra \Ta (L)} \colon \ev_0^*\Ra \Ta (L)
&\longrightarrow \ev_1^*\Ra \Ta (L)
\\ 
[\gamma_{xy},\zeta]&\mapsto\pt^{\Ra \Ta (L)}_{(\gamma_{xy}, \gamma_{yz}, \gamma_{xz})} [\gamma_{xy},\zeta]
= \big[ \gamma_{xz},\hol\big(L,\overline{\gamma_{xz}}\star( \gamma_{yz}\star
                     \gamma_{xy})\big)\, \zeta \big] \ ,
\end{align*} 
where $x$ is the fixed base point of $M$ while $y,z \in M$ are arbitrary points, and $\gamma_{ab}$ denotes a path from $a$ to $b$ for $a,b \in \{x,y,z\}$.
The holonomy appearing 
here agrees with $ \hol(L,(\overline{\gamma_{xz}}\star \gamma_{yz})\star \gamma_{xy})$.  The construction is independent
of all choices involved.    
Now a straightforward computation shows that the diagram
\begin{equation}
\begin{tikzcd}[column sep=2cm, row sep=1.25cm]
\ev_0^*L \ar[r,"\pt^L"] \ar[d, "\ev_0^* g_\chi", swap] & \ev_1^*L \ar[d, "\ev_1^* g_\chi"] \\ 
\ev_0^*\Ra \Ta (L) \ar[r,"\pt^{\Ra \Ta (L)}",swap] & \ev_1^*\Ra \Ta (L)
\end{tikzcd}
\end{equation}
commutes. This shows that we can construct the parallel transport on $L$ completely in terms
of the descent data with respect to the path fibration. For bundle gerbes, the analogue of
$\pt^L$ is difficult to define directly, but an analogous approach via descent data on $P_0M$ allows us to solve this problem.

\subsection{Global definition of parallel transport on bundle gerbes}

As before, let $M$ be a manifold, and let $\Ga \in \BGrb(M)$ be a bundle gerbe on $M$.
A parallel transport on $\Ga$ should in particular be a 1-isomorphism
\begin{equation}
	\pt^\Ga_1 \colon \ev_0^*\Ga \to \ev_1^*\Ga
\end{equation}
in $\BGrb(PM)$ with a 2-isomorphism
\begin{equation}
	\sfc^* \pt^\Ga_1 \cong 1_\Ga \ ,
\end{equation}
where $\sfc \colon M \to PM$ is the embedding of $M$ into $PM$ as constant paths.
Note that the parallel transport is, in general, an isomorphism of
gerbes \emph{without} connections. The same is true for bundles: the parallel transport on a vector
bundle with connection respects the connection if and only if the connection is flat.

To proceed further, we need some definitions. Let $i,n \in \N$ with $1 \leq i \leq n$.
For each $s = (s_1, \ldots, s_{n-1}) \in [0,1]^{n-1}$, define a smooth map
\begin{align}
	\iota^n_{i;s} \colon [0,1] &\to [0,1]^n
	\\
	t &\mapsto (s_1, \ldots, s_{i-1}, t, s_i, \ldots, s_{n-1}) \ .
\end{align}
Consider the diffeological spaces $P^nM$ which are defined by the sets of
all maps 
$$
\Sigma \colon [0,1]^n \to M
$$ 
satisfying the following property: for all $i = 1,
\ldots, n$, there exists $\epsilon_i > 0$ such the map $\Sigma
\circ \iota^n_{i;s}:[0,1]\to M$ is locally constant on $[0,\epsilon_i) \sqcup (1-\epsilon_i,1]$.
Note that in a plot of $P^nM$, the $\epsilon_i$ do not have to be
constant over the domain of the plot. The space $P^n M$ describes $n$-cubes in $M$ with sitting instants in all directions perpendicular to the faces of $[0,1]^n$; that is, $P^nM$ describes iterated smooth homotopies of paths with sitting instants in $M$.

We also consider the subspaces $P^n_*M$ of the diffeological spaces $P^nM$
consisting of maps $\Sigma\in P^nM$ satisfying the following property: for
all $s \in [0,1]^{n-1}$, and for each $ j = 1, \ldots, n-1$ such that
$ s_j \in \{0,1\}$, the map $\Sigma \circ \iota^n_{i;s}$ is constant
for all $i>j$. The space $P^n_*M$ describes iterated smooth homotopies with fixed endpoints in $M$.
For example, $P_*M = PM$ is the space of paths with sitting instants, $P^2_*M$ consists of maps $\Sigma \in
P^2M$ such that 
$$
\Sigma(0,t) = \Sigma(0,0) \qquad \mbox{and} \qquad \Sigma(1,t) =
\Sigma(1,0)
$$ 
for all $t \in [0,1]$ and so is the space of homotopies
of paths with fixed endpoints in $M$, and an element in $P^3_*M$ is a
family of fixed-ends homotopies between two fixed paths in $M$.
We say that an element $\Sigma$ in $P^n_*M$ or in $P^n M$ is
\emph{thin} if its differential $\Sigma_*$ has non-maximal ranks $\rk(\Sigma_{*|s}) < n$ for all $s \in [0,1]^n$.

Let $s = (s_1, \ldots, s_k) \in [0,1]^k$ and $n = k+l$.
For $0 \leq i_1 < \cdots < i_l \leq n$, we define a map
\begin{equation}
	\iota^n_{i_1, \ldots, i_l; s} \colon [0,1]^l \to [0,1]^n
\end{equation}
which inserts the coordinates of $t = (t_1, \ldots, t_l) \in [0,1]^l$ into the $k$-tuple $s$ such that
\begin{equation}
	\big( \iota^n_{i_1, \ldots, i_l; s}(t) \big)_j = t_j
\end{equation}
for every $j \in \{i_1, \ldots, i_l\}$.
The maps $\iota^n_{i_1, \ldots, i_l; s} \colon [0,1]^l \to [0,1]^n$ induce maps
\begin{equation}
	\iota^{n*}_{i_1, \ldots, i_l; s} \colon P^n M \to P^l M
\end{equation}
which map $P_*^n M$ to $P^l_* M$.

For the parallel transport of a bundle gerbe, there should also be a 2-isomorphism
\begin{equation}
\label{eq:pt^G thin 1}
	\pt^\Ga_2 \colon (\iota^{2*}_{1;0})^* \pt^\Ga_1 \to (\iota^{2*}_{1;1})^* \pt^\Ga_1
\end{equation}
in $\BGrb(P^2_*M)$.
In other words, any map $\Sigma \in P^2_*M$ is in particular a smooth map $[0,1]^2 \to M$ from the square to $M$.
This map is constant on the vertical edges of the square.
Pulling back the isomorphism $\pt^\Ga_1$ to the horizontal edges of the square gives two 1-morphisms $\Ga_{\Sigma(0,0)} \to \Ga_{\Sigma(1,0)}$, and the 2-morphism $\pt^\Ga_2$ relates these.
The data $(\pt^\Ga_1, \pt^\Ga_2)$ are required to satisfy the following two properties, which are motivated by~\cite{BW:OCFFTs, BW:Transgression_of_D-branes, Waldorf:Transgression_II}:
\begin{myenumerate}
\item For any two thin maps $\Sigma, \Sigma' \in P^2_*M$ with $\Sigma
  \circ \iota^2_{1;s} = \Sigma' \circ \iota^2_{1;s}$ for $s = 0,1$,
  there is an equality
\begin{equation}
\label{eq:pt^G thin 2}
	\pt^\Ga_{2|\Sigma} = \pt^\Ga_{2|\Sigma'} \ .
\end{equation}
That is, the 2-morphism $\pt^\Ga_2$ evaluated on any pair of fixed-ends thin homotopies between any two given paths in $M$ gives the same result.

\item We further demand that for any thin map $h \in P^3_*M$, there is
  an equality
\begin{equation}
\label{eq:pt^G thin 3}
	(\iota^{3*}_{1,2;0})^*\pt^\Ga_2{}_{|h} =
        (\iota^{3*}_{1,2;1})^*\pt^\Ga_2{}_{|h} \ .
\end{equation}
\end{myenumerate}

As we will be using $P^n_* M$ mostly for $n = 0,1,2$, we adopt the convention to write $\gamma_2 \star \gamma_1$ for the concatenation of smooth paths in $M$, and if $\Sigma, \Sigma' \in P_*^2 M$ are homotopies $\Sigma \colon \gamma \to \gamma'$ and $\Sigma' \colon \gamma' \to \gamma''$, we write $\Sigma' \star_2 \Sigma \colon \gamma \to \gamma''$ for their vertical concatenation.
If $\Xi \colon \alpha \to \alpha'$ is a further homotopy in $ P_*^2 M$ such that the starting point of $\alpha$ is the endpoint of $\gamma$, then we write $\Xi \star \Sigma \colon \alpha \star \gamma \to \alpha' \star \gamma'$ for the horizontal concatenation of the homotopies.
We will also often use the term `composition' instead of `concatenation'.

\begin{definition}
\label{def:pt for gerbe}
Let $M$ be a smooth manifold.
A \emph{parallel transport} on a bundle gerbe $\Ga \in \BGrb(M)$ is a quadruple $\pt^\Ga = (\pt^\Ga_1, \pt^\Ga_2, \pt^\Ga_\star, \varepsilon^\Ga)$ of
\begin{myenumerate}
\item a 1-isomorphism
\begin{equation}
	\pt^\Ga_1 \colon \ev_0^*\Ga \to \ev_1^*\Ga
\end{equation}
of bundle gerbes over $PM$,

\item a 2-isomorphism
\begin{equation}
\pt^\Ga_2 \colon (\iota^{2*}_{1;0})^* \pt^\Ga_1 \to (\iota^{2*}_{1;1})^* \pt^\Ga_1
\end{equation}
in $\BGrb(P^2_*M)$,

\item a 2-isomorphism
\begin{equation}
	\pt^\Ga_\star \colon \pr_1^* \pt^\Ga_1 \circ \pr_2^* \pt^\Ga_1 \to (\,\boldsymbol\cdot\, \star \,\boldsymbol\cdot\,)^* \pt^\Ga_1
\end{equation}
over $PM \times_M PM$, where $\pr_1$ and $\pr_2$ are the respective
projections of $PM\times_MPM$ to the first and second factors, and

\item a 2-isomorphism
\begin{equation}
	\varepsilon^\Ga \colon \sfc^* \pt^\Ga_1 \to 1_\Ga
\end{equation}
over $M$, where $\sfc \colon M \to PM$ is the inclusion of $M$ as the space of constant paths.
\end{myenumerate}

These data are required to satisfy properties~\eqref{eq:pt^G thin 2} and~\eqref{eq:pt^G thin 3}.
Due to property~\eqref{eq:pt^G thin 2}, there is a \emph{canonical} 2-isomorphism
\begin{equation}
	\pt^\Ga_{1|(\gamma_3 \star \gamma_2) \star \gamma_1} \cong \pt^\Ga_{1|\gamma_3 \star (\gamma_2 \star \gamma_1)}
\end{equation}
for every $(\gamma_1, \gamma_2, \gamma_3) \in PM \times_M PM \times_M PM$, and we demand that $\pt^\Ga_\star$ is coherently associative with respect to this isomorphism.
The morphism $\pt^\Ga_\star$ also needs to be compatible with the
unitors in $\BGrb(PM)$ and sit in a commutative diagram
\begin{equation}
\label{eq:compat pt_2 and pt_star}
\begin{tikzcd}[column sep=2cm, row sep=1.25cm]
	\pt^\Ga_{1|\gamma_2} \circ \pt^\Ga_{1|\gamma_1} \ar[r, "\pt^\Ga_{\star|\gamma_2, \gamma_1}"] \ar[d, "\pt^\Ga_{2|\Sigma_2} \circ \pt^\Ga_{2|\Sigma_1}"']
	& \pt^\Ga_{1|\gamma_2 \star \gamma_1} \ar[d, "\pt^\Ga_{2|\Sigma_2 \star \Sigma_1}"]
	\\
	\pt^\Ga_{1|\alpha_2} \circ \pt^\Ga_{1|\alpha_1} \ar[r, "\pt^\Ga_{\star|\alpha_2, \alpha_1}"']
	& \pt^\Ga_{1|\alpha_2 \star \alpha_1}
\end{tikzcd}
\end{equation}
for all $x,y,z \in M$, all paths $\gamma_1, \alpha_1$ from $x$ to $y$, all paths $\gamma_2, \alpha_2$ from $y$ to $z$ in $M$, and for all fixed-ends homotopies $\Sigma_i \colon \gamma_i \to \alpha_i$.
Furthermore, $\pt^\Ga_2$ has to respect vertical composition and satisfy the interchange law
\begin{equation}
\label{eq:interchange law for pt^Ga}
	\pt^\Ga_{2|\Sigma'_1 \star \Sigma'_0} \circ \pt^\Ga_{2|\Sigma'_1 \star \Sigma'_0}
	= \pt^\Ga_{2|\Sigma'_1 \star_2 \Sigma_1} \circ_2 \pt^\Ga_{2|\Sigma'_0 \star_2 \Sigma_0}
\end{equation}
for all points $x_0,x_1,x_2 \in M$, all paths $\alpha_i, \beta_i,
\gamma_i $ from $x_i$ to $x_{i+1}$, and for all fixed-ends homotopies $\Sigma_i \colon \alpha_i \to \beta_i$ and $\Sigma'_i \colon \beta_i \to \gamma_i$ with $i = 0,1$.
\end{definition}

\begin{remark}
The associativity condition in detail reads as follows:
for every concatenable triple $(\gamma_1, \gamma_2, \gamma_3)$ of
paths in $M$ there is a commutative diagram
\begin{equation}
\begin{tikzcd}[column sep=1.75cm, row sep=1.25cm]
	\pt^\Ga_{1|\gamma_3} \circ \pt^\Ga_{1|\gamma_2 \star \gamma_1} \ar[d, "\pt^\Ga_{\star|\gamma_3, \gamma_2 \star \gamma_1}"']
	& \pt^\Ga_{1|\gamma_3} \circ \pt^\Ga_{1|\gamma_2} \circ \pt^\Ga_{1|\gamma_1} \ar[r, "\pt^\Ga_{\star|\gamma_3, \gamma_2} \circ 1"]
	\ar[l, "1 \circ \pt^\Ga_{\star|\gamma_2, \gamma_1}"']
	& \pt^\Ga_{1|\gamma_3 \star \gamma_2} \circ \pt^\Ga_{1|\gamma_1} \ar[d, "\pt^\Ga_{\star|\gamma_3 \star \gamma_2, \gamma_1}"]
	\\
	\pt^\Ga_{1|\gamma_3 \star (\gamma_2 \star \gamma_1)} \ar[rr]
	& & \pt^\Ga_{1|(\gamma_3 \star \gamma_2) \star \gamma_1}
\end{tikzcd}
\end{equation}
in $\BGrb(PM\times_MPM)$, where the bottom arrow is the canonical 2-isomorphism obtained via~\eqref{eq:pt^G thin 2} from any reparameterisation of $[0,1]$ that yields a homotopy $\gamma_3 \star (\gamma_2 \star \gamma_1) \sim (\gamma_3 \star \gamma_2) \star \gamma_1$.
\qen
\end{remark}

\begin{remark}
By property~\eqref{eq:pt^G thin 3}, our definition factors through the path 2-groupoid of $M$ as defined by Schreiber and Waldorf~\cite{SW:Fctrs_v_forms,SW:Local_2-fctrs}.
Given a manifold $M$, they construct a 2-groupoid internal to diffeological spaces, whose level sets are essentially $M $, $PM$ and the quotient $P^2_* M/{\sim}$ of $P_*^2 M$ by thin homotopies.
Note that in this quotient they also implement condition~(4.1).
\qen
\end{remark}

In contrast to the case of parallel transport on vector bundles, we can define morphisms between parallel transports on a given bundle gerbe.

\begin{definition}
\label{def:PT(G)}
Let $\Ga \in \BGrb(M)$ be a bundle gerbe on $M$.
Let $\pt^\Ga = (\pt^\Ga_1, \pt^\Ga_2, \pt^\Ga_\star, \varepsilon^\Ga)$ and $\pt'{}^\Ga = (\pt'_1{}^\Ga, \pt'_2{}^\Ga, \pt'_\star{}^\Ga, \varepsilon'{}^\Ga)$ be two choices of parallel transport on $\Ga$.
A \emph{morphism} $\pt^\Ga \to \pt'{}^\Ga$ of parallel transports on $\Ga$ is a 2-isomorphism $\psi \colon \pt^\Ga_1 \to \pt'_1{}^\Ga$ in $\BGrb(PM)$ that intertwines the 2-isomorphism $\pt^\Ga_2$ with $\pt'_2{}^\Ga$, the 2-isomorphism $\pt^\Ga_\star$ with $\pt'_\star{}^\Ga$, and the 2-isomorphism $\varepsilon^\Ga$ with $\varepsilon'{}^\Ga$.
This defines a groupoid $\PT(\Ga)$ of parallel transports on $\Ga$.
\end{definition}

This notion of morphism of parallel transports is not an analogue of a
gauge transformation, since it does not necessarily come from an
automorphism of the bundle gerbe $\Ga$.

\subsection{Construction of the parallel transport}

We now proceed to show that every bundle gerbe with connection on a manifold $M$ has a canonical parallel transport.
Let $M$ be a connected manifold, and fix a base point $x \in M$;
otherwise, if $M$ is not connected, we treat the connected components
of $M$ separately.
By results of Waldorf~\cite{Waldorf:Transgression_II}, any bundle gerbe $\Ga \in \BGrb^\nabla(M)$ is isomorphic to a bundle gerbe $\Ga' \in \BGrb^\nabla(M)$ that is defined over the diffeological path fibration $P_0 M \to M$.
Given a choice of base point $x \in M$, Waldorf constructs a bundle gerbe $\Ga' = \Ra \Ta (\Ga)$ as the regression of the transgression line bundle of $\Ga$, together with a natural 1-isomorphism $\Aa_\Ga \colon \Ga \to \Ga'$ in the homotopy category of $\BGrb^\nabla(M)$; that is, $\Aa_\Ga$ is determined only up to 2-isomorphism.
(We remark, however, that the natural 1-isomorphism $\Aa_\Ga$ from~\cite{Waldorf:Transgression_II} is determined \emph{canonically} once we fix a preimage of the base point $x \in M$ under the surjective submersion $\pi \colon Y \to M$ underlying the bundle gerbe $\Ga$.)

Consider the bundle gerbe $\Ga' = \Ra \Ta (\Ga) \in \BGrb^\nabla(M)$ with connection on $M$, defined with respect to the path fibration $\pi:P_0 M \to M$.
Its line bundle $L$ is the pullback of the transgression line bundle $\Ta \Ga \to LM$ along the map
\begin{align}
	(P_0 M)^{[2]} &\to LM
	\\
	(\alpha, \alpha') &\mapsto \ol{\alpha'} \star \alpha \ .
\end{align}
By a slight abuse of notation, we also denote this pullback by $\Ta \Ga \to (P_0 M)^{[2]}$.

\subsubsection*{Construction of $\pt^{\Ga'}_1$}

We would like to construct a 1-isomorphism
\begin{equation}\label{eq:pt1Ga'}
	\pt^{\Ga'}_1 \colon \ev_0^*\Ga' \to \ev_1^*\Ga'
\end{equation}
in $\BGrb(PM)$.
For $t = 0,1$, the bundle gerbe $\ev_t^*\Ga'$ is defined over the subduction $\ev_t^*P_0 M \to PM$.
There are canonical isomorphisms of diffeological spaces
\begin{equation} 
	\ev_0^*P_0 M \cong P_0 M \times_M PM
	\qandq
	\ev_1^*P_0 M \cong PM \times_M P_0 M \ .
\end{equation}
Recall from Section~\ref{sec: path space for PT} the space
\begin{equation}
	P_{\partial \Delta^2} M \coloneqq \ev_0^*P_0 M \times_{PM} \ev_1^*P_0M
	\cong P_0 M \times_M PM \times_M P_0 M \ .
\end{equation}
A point in the total space $P_{\partial \Delta^2} M$ is a triple $(\alpha_0, \gamma, \alpha_1)$ of a path $\gamma \in PM$ and based paths $\alpha_t \in P_0 M$ such that $\gamma(t) = \alpha_t(1)$ for $t = 0,1$.
Any 1-morphism $\ev_0^*\Ga' \to \ev_1^*\Ga'$ is defined over
(possibly a refinement of) the subduction $\xi:P_{\partial \Delta^2} M \to
PM$.

There is a smooth map, i.e.~a morphism of diffeological spaces
\begin{align}
\label{eq:sfs}
	\sfs \colon P_{\partial \Delta^2} M &\to LM
	\\
	(\alpha_0, \gamma, \alpha_1) &\mapsto \ol{\alpha_1} \star
                                       (\gamma \star \alpha_0) \ .
\end{align}
There is also the smooth map
\begin{align}
	\tilde{\sfs} \colon P_{\partial \Delta^2} M &\to LM
	\\
	(\alpha_0, \gamma, \alpha_1) &\mapsto (\ol{\alpha_1} \star
                                       \gamma) \star \alpha_0 \ .
\end{align}
The maps $\sfs$ and $\tilde{\sfs}$ are smoothly homotopic via precomposition by a homotopy $h$ of piecewise smooth homeomorphisms $[0,1] \to [0,1]$; these fail to be smooth exactly at those points of the interval where the concatenations happen, but at these points all three paths have sitting instants, so that at each time the homotopy maps to $LM$, as desired.
For each triple of paths $(\alpha_0, \gamma, \alpha_1)$, this results in a thin homotopy in $LM$ from $\ol{\alpha_1} \star (\gamma \star \alpha_0)$ to $(\ol{\alpha_1} \star \gamma) \star \alpha_0$.
By the superficiality of the parallel transport $\pt^{\Ta \Ga}$ on the
transgression line
bundle~\cite[Definition~2.2.1]{Waldorf:Transgression_II} (see also the
end of Section~\ref{Sec: Bundle gerbes}), we thus obtain a canonical isomorphism
\begin{equation}
	r \colon \sfs^*\Ta\Ga \to \tilde{\sfs}^*\Ta\Ga
\end{equation}
in $\HLBdl^\nabla(P_{\partial \Delta^2} M)$.
The fact that this isomorphism preserves connections is a direct consequence of~\cite[Lemma~2.3.3]{Waldorf:Transgression_II}.
Since $\pt^{\Ta\Ga}$ is thin-invariant, it follows that the morphism $r$ is defined independently of the choice of homotopy $h$.

We define a morphism $\pt^{\Ga'}_1\colon \ev_0^* \Ga' \longrightarrow \ev_1^*\Ga'$ as follows:
its underlying line bundle is the line bundle $\sfs^* \Ta \Ga \to P_{\partial \Delta^2} M$.
To turn this into a morphism of bundle gerbes, we need to provide an isomorphism of line bundles
\begin{equation}
	\beta \colon \pr_0^*\Ta \Ga \otimes \xi_1^* \sfs^*\Ta \Ga \to \xi_0^*\sfs^*\Ta \Ga \otimes \pr_1^*\Ta \Ga
\end{equation}
over $(P_{\partial \Delta^2} M)^{[2]}$.
Let us unravel this: the fibre product $(P_{\partial \Delta^2} M)^{[2]} = P_{\partial \Delta^2} M \times _{PM} P_{\partial \Delta^2} M$ consists of pairs $((\alpha_0, \gamma, \alpha_1),(\alpha'_0, \gamma, \alpha'_1))$ where $(\alpha_0, \gamma, \alpha_1)$ and $(\alpha'_0, \gamma, \alpha'_1)$ are elements of $P_{\partial \Delta^2} M$.
For $t = 0,1$, there are the projection maps
\begin{align}
	\pr_t \colon (P_{\partial \Delta^2} M)^{[2]} &\to (P_0 M)^{[2]}
	\\
	\big((\alpha_0, \gamma, \alpha_1),(\alpha'_0, \gamma,
  \alpha'_1)\big) &\mapsto (\alpha_t, \alpha'_t) \ .
\end{align}
Thus
\begin{align}
	(\pr_0^*\Ta \Ga \otimes \xi_1^* \sfs^*\Ta \Ga)_{((\alpha_0, \gamma, \alpha_1),(\alpha'_0, \gamma, \alpha'_1))}
	&= \Ta \Ga_{\ol{\alpha'_0} \star \alpha_0} \otimes \Ta
          \Ga_{\ol{\alpha'_1} \star (\gamma \star \alpha'_0)} \ ,
	\\[4pt]
	(\xi_0^* \sfs^*\Ta \Ga \otimes \pr_1^*\Ta \Ga)_{((\alpha_0, \gamma, \alpha_1),(\alpha'_0, \gamma, \alpha'_1))}
	&= \Ta \Ga_{\ol{\alpha_1} \star (\gamma \star \alpha_0)}
          \otimes \Ta \Ga_{\ol{\alpha'_1} \star \alpha_1} \ .
\end{align}
Let $\lambda \colon \pi_{0,1}^*\Ta \Ga \otimes \pi_{1,2}^*\Ta \Ga \to
\pi_{0,2}^*\Ta \Ga$ denote the fusion product of the transgression
line bundle $\Ta \Ga$ over $(P_0 M)^{[3]}$
(see~\cite[Section~4.2]{Waldorf:Transgression_II}), which provides the
bundle gerbe multiplication on $\Ga'$. At a point $(\alpha_0,\alpha_1,\alpha_2)\in 
(P_0M)^{[3]}$ the fusion product consists of unitary isomorphisms
\begin{align}
\lambda_{\alpha_0,\alpha_1,\alpha_2} \colon \Ta \Ga_{\ol{\alpha_1}\star \alpha_0}\otimes \Ta \Ga_{ \ol{\alpha_2} \star \alpha_1}
\longrightarrow \Ta \Ga_{\ol{\alpha_2} \star \alpha_0} \ .
\end{align} 
The diffeological space $(P_{\partial \Delta^2} M)^{[2]}$ comes with smooth maps
\begin{align}
	p_0 \colon (P_{\partial \Delta^2} M)^{[2]} &\to (P_0 M)^{[3]}
	\\
	\big((\alpha_0, \gamma, \alpha_1),(\alpha'_0, \gamma,
  \alpha'_1)\big) &\mapsto (\alpha_0, \alpha'_0, \ol{\gamma} \star
                    \alpha'_1)
\end{align}
and
\begin{align}
p_1 \colon (P_{\partial \Delta^2} M)^{[2]} &\to (P_0 M)^{[3]}
	\\
	\big((\alpha_0, \gamma, \alpha_1),(\alpha'_0, \gamma,
  \alpha'_1)\big) &\mapsto (\gamma \star \alpha_0, \alpha'_1,
                    \alpha_1) \ .
\end{align}

We set
\begin{equation}
	\beta \coloneqq p_1^*\lambda^{-1} \circ r^{-1} \circ
        p_0^*\lambda \circ (1 \otimes r) \ .
\end{equation}
Explicitly, at a point $((\alpha_0, \gamma, \alpha_1),(\alpha'_0,
\gamma, \alpha'_1)) \in (P_{\partial \Delta^2} M)^{[2]}$, this is the
isomorphism defined by the diagram
\begin{equation}
\label{eq:def beta}
\begin{tikzcd}[row sep=1cm, column sep=2.5cm]
	\Ta \Ga_{\ol{\alpha'_0} \star \alpha_0} \otimes \Ta \Ga_{\ol{\alpha'_1} \star (\gamma \star \alpha'_0)}
	\ar[r, "1 \otimes r"] \ar[dd, dashed, "\beta"']
	& \Ta \Ga_{\ol{\alpha'_0} \star \alpha_0} \otimes \Ta \Ga_{(\ol{\alpha'_1} \star \gamma) \star \alpha'_0}
	\ar[d, "\lambda"]
	\\
	& \Ta \Ga_{(\ol{\alpha'_1} \star \gamma) \star \alpha_0}
	\ar[d, "r^{-1}"]
	\\
	\Ta \Ga_{\ol{\alpha_1} \star (\gamma \star \alpha_0)} \otimes \Ta \Ga_{\ol{\alpha'_1} \star \alpha_1}
	& \Ta \Ga_{\ol{\alpha'_1} \star (\gamma \star \alpha_0)}
	\ar[l, "\lambda^{-1}"]
\end{tikzcd}
\end{equation}
This morphism is compatible with the bundle gerbe multiplication on $\Ga'$:
consider an arbitrary point
\begin{equation}
	\big( (\alpha_0, \gamma, \alpha_1),(\alpha'_0, \gamma, \alpha'_1), (\alpha''_0, \gamma, \alpha''_1) \big)
	\ \in \ (P_{\partial \Delta^2} M)^{[3]} \ .
\end{equation}
Then there is a commutative diagram
\begin{equation}
\begin{tikzcd}[row sep=1cm, column sep=2.5cm]
	\Ta \Ga_{\ol{\alpha'_0} \star \alpha_0} \otimes \Ta \Ga_{\ol{\alpha''_0} \star \alpha'_0} \otimes \Ta \Ga_{\ol{\alpha''_1} \star (\gamma \star \alpha''_0)}
	\ar[r, "\lambda_{(\alpha_0, \alpha'_0, \alpha''_0)} \otimes 1"] \ar[d, "1 \otimes \beta"']
	& \Ta \Ga_{\ol{\alpha''_0} \star \alpha_0} \otimes \Ta \Ga_{\ol{\alpha''_1} \star (\gamma \star \alpha''_0)}
	\ar[dd, "\beta"]
	\\
	\Ta \Ga_{\ol{\alpha'_0} \star \alpha_0} \otimes \Ta \Ga_{\ol{\alpha'_1} \star (\gamma \star \alpha'_0)} \otimes \Ta \Ga_{\ol{\alpha''_1} \star \alpha'_1}
	\ar[d, "\beta \otimes 1"']
	&
	\\
	\Ta \Ga_{\ol{\alpha_1} \star (\gamma \star \alpha_0)} \otimes \Ta \Ga_{\ol{\alpha'_1} \star \alpha_1} \otimes \Ta \Ga_{\ol{\alpha''_1} \star \alpha'_1}
	\ar[r, "1 \otimes \lambda_{(\alpha_1, \alpha'_1, \alpha''_1)}"']
	& \Ta \Ga_{\ol{\alpha_1} \star (\gamma \star \alpha_0)} \otimes \Ta \Ga_{\ol{\alpha''_1} \star \alpha_1}
\end{tikzcd}
\end{equation}
The commutativity follows from the associativity of the fusion product $\lambda$ and the fact that it respects the connection on $\Ta\Ga$~\cite{Waldorf:Transgression_II} so that, in particular, $\lambda$ is compatible with the morphism~$r$.

\subsubsection*{Construction of $\pt^{\Ga'}_2$}

Next we construct the 2-isomorphism 
$$
\pt^{\Ga'}_2\colon (\iota^{2*}_{1;0})^* \pt_1^{\Ga'} \longrightarrow
(\iota^{2*}_{1,1})^* \pt_1^{\Ga'} 
$$ 
in $\BGrb(P_*^2M)$ that is part of the parallel transport data for $\Ga'$.
For this, we recall that the fibre of the hermitean line bundle $\Ta\Ga$ at a loop $\gamma$ is constructed from pairs $([\Sa], z)$ of a 2-isomorphism class $[\Sa]$ of trivialisations $\Sa \colon \gamma^*\Ga \to \Ia_0$ in $\BGrb^\nabla(\bbS^1)$ and a complex number $z \in \C$.
The complex line $\Ta\Ga_{\gamma}$ is the set of equivalence classes of such pairs under the equivalence relation
\begin{equation}
	\big( [\Sa], z \big) \sim \big( [\Sa'], \hol(\bbS^1, \sfR (
        \Sa' \circ \Sa^{-1}))\, z \big) \ ,
\end{equation}
where, for a manifold $M$, the functor $\sfR \colon
\BGrb^\nabla(M)(\Ia_B , \Ia_{B'}) \to \HLBdl^\nabla(M)$ for
$B,B'\in\Omega^2(M)$ is essentially descent for line bundles; for
details, see~\cite{Bunk--Thesis,BSS--HGeoQuan,Waldorf--More_morphisms}
(see also Section~\ref{Sec: Bundle gerbes}).

Let $M^{\bbD^2}$ be the diffeological space of smooth maps from the unit disk $\bbD^2$ to $M$.
Let
\begin{align}
	\partial \colon M^{\bbD^2} &\to LM \\ f &\mapsto f_{|\bbS^1}
\end{align}
denote the smooth map induced by restriction to the boundary of the unit disk.
The hermitean line bundle $\partial^*\Ta\Ga$ on $M^{\bbD^2}$ has a canonical trivialisation which is defined as follows:
for a smooth map $f \colon \bbD^2 \to M$, choose a trivialisation $\Sa \colon f^*\Ga \to \Ia_B $ for some $B  \in \Omega^2(\bbD^2)$.
Define a unitary isomorphism of hermitean complex lines
\begin{align}
\label{eq:can sect over filled loops}
	\sigma_{f} \colon \C &\to (\partial^*\Ta\Ga)_{f}
	\\
	z &\mapsto \sigma_{f}(z) \coloneqq \Big[ [\Sa_{|\bbS^1}],
            \exp\Big( - \iu\, \int_{\bbD^2}\, B \Big)\, z \Big] \ .
\end{align}
This isomorphism is defined independently of the choice of $\Sa$:
let $\Sa' \colon f^*\Ga \to \Ia_{B '}$ be another trivialisation.
Then the line bundle $\sfR(\Sa' \circ \Sa^{-1})$ has curvature $B ' - B $, which implies that
\begin{align}
	\sigma_{f}(z) &\coloneqq \Big[ [\Sa_{|\bbS^1}], \exp\Big( - \iu\, \int_{\bbD^2}\, B \Big)\, z \Big]
	\\[4pt]
	&= \Big[ [\Sa_{|\bbS^1}], \exp\Big( - \iu\, \int_{\bbD^2}\, B '\Big)\, \hol\big(\bbS^1, \sfR(\Sa' \circ \Sa^{-1})\big)\,  z \Big]
	\\[4pt]
	&= \Big[ [\Sa'_{|\bbS^1}], \exp\Big( - \iu\, \int_{\bbD^2}\,
          B '\Big)\, z \Big] \ .
\end{align}
This construction works equally well if we replace the `round' unit disk $\bbD^2$ by the unit square $[0,1]^2$, as long as we consider maps $f \colon [0,1]^2 \to M$ whose restrictions to $\partial [0,1]^2$ have sitting instants at the corners.
By the construction of the fusion product $\lambda$ on $\Ta\Ga$, the section $\sigma$ is compatible with fusion,
\begin{equation}
	\sigma_{f' \star_2 f} = \lambda(\sigma_{f'}, \sigma_f)
\end{equation}
for all disks $f,f' \colon [0,1]^2 \to M$ that can be concatenated vertically.
(This is merely the statement that the integral over $[0,1]^2$ decomposes as the sum $\int_{[0,1]^2} = \int_{[0,1] \times [0,\frac{1}{2}]} +\int_{[0,1] \times [\frac{1}{2},1]}$.)

Now consider the following setup:
let $\Sigma \colon [0,1]^2 \to M$ be an element in $P_*^2M$, presenting a fixed-end homotopy from a path $\gamma$ to a path $\gamma'$ in $M$.
We want to compare the 1-isomorphisms $(\iota^{2*}_{1;0})^* \pt^{\Ga'}_1$ and $(\iota^{2*}_{1;1})^* \pt^{\Ga'}_1$ of bundle gerbes over $P_*^2M$.
The source bundle gerbes of both these morphisms have subductions
\begin{equation}
	Y_0 \coloneqq (\iota^{2*}_{1;0})^* \ev_0^* P_0 M =
        \ev_{(0,0)}^* P_0 M = \ev_{(1,0)}^* P_0 M \to P_*^2 M \ ,
\end{equation}
while the target bundle gerbes live over
\begin{equation}
	Y_1 \coloneqq (\iota^{2*}_{1;0})^* \ev_1^* P_0 M =
        \ev_{(0,1)}^* P_0 M = \ev_{(1,1)}^* P_0 M \to P_*^2 M \ .
\end{equation}
The fibre product $\widehat{Y} \coloneqq Y_0 \times_{P_*^2 M} Y_1$ is the space of triples $(\alpha_0, \Sigma, \alpha_1)$ of based paths $\alpha_0, \alpha_1 \in P_0 M$ and fixed-ends homotopies $\Sigma \in P_*^2M$ between arbitrary paths in $M$ such that $\alpha_t(1) = \Sigma(0,t)$ for $t = 0,1$ (see Figure~\ref{Fig:Y hat}).
\begin{figure}[h]
\begin{center}
\begin{overpic}[scale=0.8]
{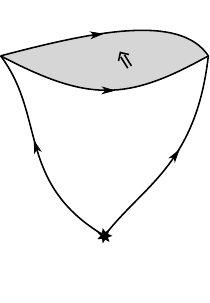}
\put(8,40){$\alpha_0$}
\put(60,40){$\alpha_1$}
\put(30,90){$\Sigma$}
\end{overpic}
\end{center}
\caption{\small An element of the space $\widehat{Y}$.}
\label{Fig:Y hat}
\end{figure}  

The 1-isomorphism $(\iota^{2*}_{1;i})^* \pt^{\Ga'}_1$, for $i = 0,1$, is defined over the subduction
\begin{equation}
	Z_i \coloneqq (\iota^{2*}_{1;i})^* P_{\partial \Delta^2}M \to
        \widehat{Y} \ ,
\end{equation}
which is actually an isomorphism.
Consequently, the 2-isomorphism $\pt^{\Ga'}_2$ should be defined with respect to the subduction
\begin{equation}
	\widehat{Z} \coloneqq Z_0 \times_{\widehat{Y}} Z_1 \to
        \widehat{Y} \ ,
\end{equation}
which again is an isomorphism.
Its elements are triples $(\alpha_0, \Sigma, \alpha_1)$ as above. 
Set $\gamma_t \coloneqq \Sigma \circ \iota^2_{1;t}$ for $t = 0,1$, and
let $x = \gamma_t(0)$ and $y = \gamma_s(1)$ for $t,s = 0,1$.

At a point $(\alpha_0, \Sigma, \alpha_1)$, the morphism of hermitean line bundles over $\widehat{Z}$ that defines $\pt^{\Ga'}_2$ is given by the morphism
\begin{equation}
	\pt^{\Ga'}_{2|(\alpha_0, \Sigma, \alpha_1)} \colon \Ta\Ga_{\overline{\alpha_1} \star (\gamma_0 \star \alpha_0)}
	\to \Ta\Ga_{\overline{\alpha_1} \star (\gamma_1 \star \alpha_0)}
\end{equation}
of complex lines obtained as follows:
\begin{myenumerate}
\item Using a smooth family of rotations of $\bbS^1$, apply parallel transport on $\Ta\Ga$ to obtain an isomorphism
\begin{equation}
	\psi_1 \colon \Ta\Ga_{\overline{\alpha_1} \star (\gamma_0 \star \alpha_0)}
	\to \Ta\Ga_{(\gamma_0 \star \alpha_0) \star \overline{\alpha_1}}
	\to \Ta\Ga_{\gamma_0 \star (\alpha_0 \star
          \overline{\alpha_1})} \ .
\end{equation}
This is achieved by parallel transport along a thin path in $LM$.
Hence, since the parallel transport on $\Ta\Ga$ is superficial, this isomorphism is independent of the choice of a smooth family of rotations.

\item Use the canonical section $\sigma_{\Sigma}(1) \in \Ta\Ga_{\partial \Sigma}$ from (\ref{eq:can sect over filled loops}) to obtain an isomorphism
\begin{equation}
	\psi_2 \colon \Ta\Ga_{\gamma_0 \star (\alpha_0 \star \overline{\alpha_1})}
	\to \Ta\Ga_{\gamma_0 \star (\alpha_0 \star
          \overline{\alpha_1})} \otimes \Ta\Ga_{\partial \Sigma} \ .
\end{equation}

\item 
The boundary loop $\partial \Sigma$ is smoothly and thinly homotopic (via reparameterisations) to $((\id_y \star \gamma_1) \star \id_x) \star \overline{\gamma_0}$, where $\id_x$ is the constant path at the point $x \in M$.
This loop is, in turn, thinly homotopic to $\gamma_1 \star \overline{\gamma_0}$.
We thus obtain a canonical isomorphism
\begin{equation}
	\psi_3 \colon \Ta\Ga_{\gamma_0 \star (\alpha_0 \star \overline{\alpha_1})} \otimes \Ta\Ga_{\partial \Sigma}
	\to \Ta\Ga_{\gamma_0 \star (\alpha_0 \star
          \overline{\alpha_1})} \otimes \Ta\Ga_{\gamma_1 \star
          \overline{\gamma_0}} \ .
\end{equation}

\item The fusion product on $\Ta\Ga$ yields an isomorphism
\begin{equation}
	\psi_4 \colon \Ta\Ga_{\gamma_0 \star (\alpha_0 \star \overline{\alpha_1})} \otimes \Ta\Ga_{\gamma_1 \star \overline{\gamma_0}}
	\to \Ta\Ga_{\gamma_1 \star (\alpha_0 \star
          \overline{\alpha_1})} \ .
\end{equation}

\item Finally, we again use parallel transport along a path in $LM$ that arises from a smooth family of rotations to obtain a canonical isomorphism
\begin{equation}
	\psi_5 \colon \Ta\Ga_{\gamma_1 \star (\alpha_0 \star \overline{\alpha_1})}
	\to \Ta\Ga_{\overline{\alpha_1} \star (\gamma_1 \star
          \alpha_0)} \ .
\end{equation}
\end{myenumerate}
We then define
\begin{equation}
	\pt^{\Ga'}_{2|(\alpha_0,\Sigma,\alpha_1)} \coloneqq \psi_5
        \circ \psi_4\circ \psi_3\circ \psi_2 \circ \psi_1 \ .
\end{equation}

This is compatible with vertical composition in $P_*^2M$:
let $\Sigma, \Sigma' \in P_*^2M$ be two maps $[0,1]^2 \to M$ that can be concatenated vertically.
Since the connection on $\Ta\Ga$ is superficial and compatible with the fusion product, we can replace the morphism $\psi_1$ by
\begin{equation}
	\tilde{\psi}_1 \colon \Ta\Ga_{\overline{\alpha_1} \star (\iota^{2*}_{1;0}\Sigma \star \alpha_0)}
\to \Ta\Ga_{\iota^{2*}_{1;0}\Sigma \star (\alpha_0 \star
  \overline{\alpha_1})} \ .
\end{equation}
Applying the fusion product with $\partial (\Sigma' \star_2 \Sigma)$ yields an isomorphism
\begin{equation}
	\varphi \colon \Ta\Ga_{\iota^{2*}_{1;0}\Sigma \star (\alpha_0 \star \overline{\alpha_1})}
	\to \Ta\Ga_{\iota^{2*}_{1;1}\Sigma' \star (\alpha_0 \star
          \overline{\alpha_1})} \ .
\end{equation}
Combining the fact that the fusion product $\lambda$ is associative
and compatible with the parallel transport on $\Ta\Ga$, that the
parallel transport on $\Ta\Ga$ is superficial (in particular, parallel
transport along thin paths is independent of the choice of thin path),
and that the section $\sigma$ from \eqref{eq:can sect over filled
  loops} is compatible with $\lambda$, it follows that $\pt^{\Ga'}_2$ respects vertical concatenation.

Since all morphisms involved in the construction of $\pt^{\Ga'}_2$ are
smooth, it follows that $\pt^{\Ga'}_2$ is in fact a smooth morphism of bundle gerbes as desired.

\subsubsection*{Construction of $\pt^{\Ga'}_\star$}

The 2-isomorphism 
$$
\pt^{\Ga'}_\star\colon \pr_1^* \pt_1^{\Ga'} \circ \pr_2^* \pt_1^{\Ga'}
\longrightarrow (\,\boldsymbol\cdot\, \star \,\boldsymbol\cdot\,)^*\pt_1^{\Ga'}
$$ 
is directly constructed from the fusion product $\lambda$ on the transgression line bundle $\Ta\Ga$.
Define $q \colon PM \times_M PM \to M$ by $(\gamma, \gamma') \mapsto \gamma(0) = \gamma'(1)$.
The morphism $\pt^{\Ga'}_\star$ is defined over the subduction
\begin{equation}
	Q_1 \coloneqq \pr_1^* P_{\partial \Delta^2}M \times^{\phantom{\dag}}_{q^* P_0M} \pr_2^* P_{\partial \Delta^2}M
	\to PM \times_M PM \ .
\end{equation}
Given a point $\big((\alpha_0, \gamma, \alpha_1), (\alpha_1, \gamma',
\alpha_2)\big) \in Q_1$, the morphism $\pt^{\Ga'}_\star$ is given by
the diagram
\begin{equation}
\begin{tikzcd}[row sep=1.25cm, column sep=1.75cm]
	\Ta\Ga_{\overline{\alpha_1} \star (\gamma \star \alpha_0)} \otimes \Ta\Ga_{\overline{\alpha_2} \star (\gamma' \star \alpha_1)}
	\ar[r] \ar[d, dashed, "\pt^{\Ga'}_\star"']
	& \Ta\Ga_{\overline{\alpha_1} \star (\gamma \star \alpha_0)} \otimes \Ta\Ga_{(\overline{\alpha_2} \star \gamma') \star \alpha_1} \ar[d, "\lambda"]
	\\
	\Ta\Ga_{\overline{\alpha_2} \star (\gamma' \star \gamma) \star \alpha_0}
	& \Ta\Ga_{(\overline{\alpha_2} \star \gamma') \star (\gamma \star \alpha_0)} \ar[l]
\end{tikzcd}
\end{equation}
where the horizontal morphisms are induced by smooth families of reparameterisations.
The compatibility of this morphism with the morphism $\beta$ from~\eqref{eq:def beta} follows again from the superficiality of the connection on $\Ta\Ga$ and the associativity of the fusion product $\lambda$.

The compatibility of $\pt^{\Ga'}_\star$ with $\pt^{\Ga'}_2$ as in~\eqref{eq:compat pt_2 and pt_star} is seen analogously to how we proved the compatibility of $\pt^{\Ga'}_2$ with vertical concatenation of homotopies.
The interchange law~\eqref{eq:interchange law for pt^Ga} is satisfied
by the associativity of $\lambda$, its compatibility with the parallel
transport on $\Ta\Ga$ and with the section $\sigma$ from \eqref{eq:can sect over filled loops}, as well as the superficiality of the connection on $\Ta\Ga$.

\subsubsection*{Construction of $\varepsilon^{\Ga'}$}

Finally, the 2-isomorphism 
$$
\varepsilon^{\Ga'}\colon \sfc^*\pt_1^{\Ga'} \longrightarrow 1_{\Ga'}
$$ 
is obtained directly from the superficial connection on $\Ta\Ga$:
it is defined over the space of triples $(\alpha, \id_x, \alpha) \in
P_{\partial \Delta^2}M$, and all paths
of the form $\ol{\alpha}\star \alpha$ are canonically contractible by thin homotopies.

All necessary coherences in Definition~\ref{def:pt for gerbe} then follow from the superficiality of the parallel transport on $\Ta\Ga$, the associativity of the fusion product $\lambda$ and its compatibility with the section $\sigma$, and the fact that the parallel transport on $\Ta\Ga$ is compatible with the fusion product.
Thus we have

\begin{theorem}
Let $\Ga \in \BGrb^\nabla(M)$ be a bundle gerbe with connection on $M$, and let $\Ga' \coloneqq \Ra\Ta(\Ga) \in \BGrb^\nabla(M)$ be the regression of the transgression of $\Ga$.
Then the quadruple $\pt^{\Ga'} = (\pt^{\Ga'}_1, \pt^{\Ga'}_2, \pt^{\Ga'}_\star, \varepsilon^{\Ga'})$ defines a parallel transport on the bundle gerbe $\Ga'$.
\end{theorem}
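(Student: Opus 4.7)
The plan is to verify one by one the axioms of Definition~\ref{def:pt for gerbe} for the quadruple assembled above. Essentially every coherence in that definition reduces to a combination of three fundamental properties of the transgression line bundle $\Ta\Ga$ established in~\cite{Waldorf:Transgression_II}: the superficiality of the connection on $\Ta\Ga$, the associativity and parallelism of the fusion product $\lambda$, and the compatibility of the canonical section $\sigma$ of $\partial^*\Ta\Ga$ from \eqref{eq:can sect over filled loops} with $\lambda$. The proof therefore becomes an organised set of diagram chases, each of whose commutativity is reduced to one of these three inputs.

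First I would confirm that $\pt^{\Ga'}_1$ is a 1-isomorphism of bundle gerbes, meaning that the descent datum $\beta$ in \eqref{eq:def beta} satisfies the cocycle compatibility with the bundle gerbe product on $\Ga'$ over $(P_{\partial \Delta^2}M)^{[3]}$. This is the hexagon drawn immediately after \eqref{eq:def beta} and it follows from the associativity of $\lambda$ together with the fact that the reparametrisation isomorphism $r$ commutes with $\lambda$; the latter is an immediate consequence of superficiality and parallelism of $\lambda$.

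Next I would verify the thin invariance conditions \eqref{eq:pt^G thin 2} and \eqref{eq:pt^G thin 3} for $\pt^{\Ga'}_2$. The key observation is that in the construction of the factor $\psi_2$ the section $\sigma_\Sigma$ depends on $\Sigma$ only through the class of its boundary loop and the fibrewise integral of the curving $B$. When $\Sigma$ is thin the differential $\Sigma_*$ has rank at most one, so $\Sigma^*B = 0$ and the exponential factor in \eqref{eq:can sect over filled loops} trivialises. Combined with superficiality of $\pt^{\Ta\Ga}$, this forces $\pt^{\Ga'}_{2|\Sigma} = \pt^{\Ga'}_{2|\Sigma'}$ whenever $\Sigma,\Sigma'$ are thin with the same horizontal boundary, giving \eqref{eq:pt^G thin 2}. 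The higher-dimensional condition \eqref{eq:pt^G thin 3} follows by applying the same principle to the induced family of 2-morphisms parametrised by the extra coordinate direction.

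The remaining axioms, namely associativity of $\pt^{\Ga'}_\star$, the compatibility square \eqref{eq:compat pt_2 and pt_star}, the interchange law \eqref{eq:interchange law for pt^Ga}, and the unital coherences for $\varepsilon^{\Ga'}$, each reduce to a diagram chase in which every composite edge is expanded as a word in $\lambda$, $r$, $\sigma$, and parallel transport on $\Ta\Ga$. Commutativity is then extracted from the three basic inputs recalled above. I expect the hardest step to be the interchange law \eqref{eq:interchange law for pt^Ga}, where four homotopies are concatenated both horizontally and vertically: here one must decompose the total boundary of the resulting 2-cube into four sub-disks carrying compatible sections $\sigma$ and then glue via fusion, and the bookkeeping of the various thin reparametrisations needed to match sources and targets is the most delicate part of the argument, even though it is conceptually routine once one fixes a canonical decomposition scheme.
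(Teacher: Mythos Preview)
Your proposal is correct and follows essentially the same approach as the paper. The paper does not give a separate proof for this theorem; instead the verification of each axiom is interleaved with the construction itself, and the paragraph immediately preceding the theorem statement summarises exactly the three inputs you isolate: superficiality of the connection on $\Ta\Ga$, associativity of the fusion product $\lambda$ and its compatibility with the section $\sigma$, and compatibility of the parallel transport on $\Ta\Ga$ with $\lambda$.

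One small point of precision: in your argument for~\eqref{eq:pt^G thin 2} you write ``$\Sigma^*B = 0$'', but the $B$ in~\eqref{eq:can sect over filled loops} is a 2-form on $[0,1]^2$ (coming from a chosen trivialisation $\Sa \colon \Sigma^*\Ga' \to \Ia_B$), not on $M$, so the pullback notation is a type mismatch. The intended statement is that when $\Sigma$ has rank at most one, one can choose the trivialisation $\Sa$ so that $B = 0$, because the pullback of the gerbe curving along a thin map vanishes and hence $\Sigma^*\Ga'$ admits a flat trivialisation. With that adjustment your reasoning goes through, and in fact you supply more detail on the thin-invariance conditions than the paper does.
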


\subsubsection*{Transfer to arbitrary bundle gerbes}

In~\cite{Waldorf:Transgression_II}, Waldorf shows that the functors
$\Ta$ and $\Ra$ come with a canonical natural isomorphism
\begin{equation}
	\Aa \colon 1 \to \Ra \circ \Ta
\end{equation}
as endofunctors of the homotopy 1-category $\mathtt{h}\BGrb^\nabla(M)$.
Given a bundle gerbe $\Ga \in \BGrb^\nabla(M)$, we thus get a 2-isomorphism class of 1-isomorphisms $\Ga \to \Ga' = \Ra\Ta(\Ga)$.
Let $\Aa_\Ga \colon \Ga \to \Ga'$ be a representative for this class.

Let $\Ba \colon \Ga \to \Ga'$ be a 1-isomorphism with \emph{adjoint
  inverse} $\Ba^{-1}$, i.e.~a weak inverse $\Ba^{-1}$ together with 2-isomorphisms $\epsilon_\Ba \colon 1_\Ga \longrightarrow \Ba^{-1} \circ \Ba$ and $\delta_\Ba \colon \Ba \circ \Ba^{-1} \longrightarrow 1_{\Ga'}$ that satisfy the triangle identities.
We can use it to define a 1-isomorphism $\pt^{\Ga,\Ba}_1$ as the composition
\begin{equation}
\begin{tikzcd}[row sep=1.25cm, column sep=1.5cm]
	\ev_0^*\Ga' \ar[r, "\pt^{\Ga'}_1"] & \ev_1^*\Ga' \ar[d, "\ev_1^* \Ba^{-1}"]
	\\
	\ev_0^*\Ga \ar[u, "\ev_0^*\Ba"] \ar[r, dashed, "\pt^{\Ga,\Ba}_1"'] & \ev_1^*\Ga
\end{tikzcd}
\end{equation}
We define 2-isomorphisms
\begin{align}
	\pt^{\Ga,\Ba}_2 &\coloneqq 1_{\Ba^{-1}} \circ \pt^{\Ga'}_2
                          \circ 1_\Ba \ ,
	\\[4pt]
	\pt^{\Ga,\Ba}_\star &\coloneqq 1_{\Ba^{-1}} \circ \big(
                              \pt^{\Ga'}_\star \circ_2
                              (1_{\pt^{\Ga'}_1} \circ \delta_\Ba \circ
                              1_{\pt^{\Ga'}_1}) \big) \circ 1_\Ba \ ,
	\\[4pt]
	\varepsilon^{\Ga,\Ba} &\coloneqq \epsilon_\Ba \circ_2
                                (1_{\Ba^{-1}} \circ \varepsilon^{\Ga'}
                                \circ 1_\Ba) \ ,
\end{align} 
where we have omitted pullbacks.
From these definitions we readily see

\begin{proposition}
The quadruple $\pt^{\Ga,\Ba} = (\pt^{\Ga,\Ba}_1, \pt^{\Ga,\Ba}_2, \pt^{\Ga,\Ba}_\star, \varepsilon^{\Ga,\Ba})$ defines an object in $\PT(\Ga)$.
\end{proposition}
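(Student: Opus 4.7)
The plan is to verify, one by one, each of the axioms of Definition~\ref{def:pt for gerbe} for the quadruple $\pt^{\Ga,\Ba}$ by reducing them to the corresponding axioms already established for $\pt^{\Ga'}$. Since $\pt^{\Ga,\Ba}_1 = \ev_1^*\Ba^{-1} \circ \pt^{\Ga'}_1 \circ \ev_0^*\Ba$ and the higher components $\pt^{\Ga,\Ba}_2$, $\pt^{\Ga,\Ba}_\star$, $\varepsilon^{\Ga,\Ba}$ are obtained from the corresponding data for $\Ga'$ by horizontal pre- and post-composition with identities on (pullbacks of) $\Ba$ and $\Ba^{-1}$, together with insertions of the unit $\epsilon_\Ba$ and counit $\delta_\Ba$ of the chosen adjoint equivalence, most conditions propagate along the conjugation. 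The morphism $\Ba$ does not depend on paths, homotopies or concatenations, so it commutes with pullbacks along any of the maps $\ev_t$, $\iota^{n*}_{\cdots;s}$, $\pr_i$, $\sfc$ that feature in the axioms, which allows the reduction to go through.

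First I would treat the thin-invariance properties~\eqref{eq:pt^G thin 2} and~\eqref{eq:pt^G thin 3}. For any two thin maps $\Sigma, \Sigma' \in P^2_* M$ agreeing on the vertical edges, the equality $\pt^{\Ga'}_{2|\Sigma} = \pt^{\Ga'}_{2|\Sigma'}$ immediately yields $\pt^{\Ga,\Ba}_{2|\Sigma} = \pt^{\Ga,\Ba}_{2|\Sigma'}$, since the identities $1_{\ev_0^*\Ba}$ and $1_{\ev_1^*\Ba^{-1}}$ with which we conjugate are the same for $\Sigma$ and $\Sigma'$. The same argument handles~\eqref{eq:pt^G thin 3}. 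The compatibility of $\varepsilon^{\Ga,\Ba}$ with constant paths (and its interaction with the unitors of $\BGrb(M)$) then follows from the corresponding compatibility of $\varepsilon^{\Ga'}$, combined with one application of the triangle identity relating $\epsilon_\Ba$ and $\delta_\Ba$.

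The heart of the argument is the associativity of $\pt^{\Ga,\Ba}_\star$ and the compatibility diagram~\eqref{eq:compat pt_2 and pt_star}, which are where the counit insertion $\delta_\Ba$ in the definition of $\pt^{\Ga,\Ba}_\star$ actually plays a role. Here I would unfold $\pt^{\Ga,\Ba}_\star$ evaluated on a concatenable pair, writing out $\pr_1^*\pt^{\Ga,\Ba}_1 \circ \pr_2^*\pt^{\Ga,\Ba}_1$ and using the triangle identity $(1_\Ba \circ \delta_\Ba) \circ_2 (\epsilon_\Ba \circ 1_\Ba) = 1_\Ba$ to cancel the pair $\ev_1^*\Ba^{-1} \circ \ev_1^*\Ba$ appearing in the middle, so that the morphism reduces to a single insertion of $\pt^{\Ga'}_\star$ conjugated by $\Ba$ and $\Ba^{-1}$. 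The associativity coherence for a concatenable triple then becomes the pasting of the known associativity hexagon for $\pt^{\Ga'}_\star$ together with two copies of the triangle identity; the compatibility square~\eqref{eq:compat pt_2 and pt_star} is obtained in the same way, using naturality of the triangle identity applied to both horizontal legs.

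Finally, I would verify compatibility with vertical composition in $P^2_*M$ and the interchange law~\eqref{eq:interchange law for pt^Ga}, both of which reduce to their $\Ga'$-counterparts by the same conjugation argument, with no further use of $\epsilon_\Ba$ or $\delta_\Ba$. Smoothness of all the components of $\pt^{\Ga,\Ba}$ is automatic because the inserted morphisms $\Ba$, $\Ba^{-1}$, $\epsilon_\Ba$, $\delta_\Ba$ are fixed data on $M$ pulled back along smooth maps. I expect the main obstacle to be purely bookkeeping: correctly tracking which pullbacks of $\Ba^{\pm 1}$ the identities $\epsilon_\Ba$ and $\delta_\Ba$ act on in the associativity coherence, and ensuring that the triangle identities are applied to the right strand of the string diagram so that the reduction to $\pt^{\Ga'}_\star$ really goes through. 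Once this bookkeeping is organised (most cleanly via string diagrams in the bicategory $\BGrb(PM \times_M PM)$), no further creativity is required.
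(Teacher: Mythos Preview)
Your proposal is correct and is exactly the intended approach: the paper itself does not give a proof beyond stating that the proposition follows ``readily from these definitions,'' and your conjugation-and-triangle-identity argument is precisely how one unpacks that remark. The bookkeeping you anticipate (tracking pullbacks of $\Ba^{\pm1}$ and applying the triangle identities on the correct strands) is all that is needed.
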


For $\psi \colon \Ba \to \Ba'$ a 2-isomorphism of 1-isomorphisms $\Ba, \Ba' \colon \Ga' \to \Ga$, we obtain a 2-isomorphism
\begin{equation}\label{eq:psi2iso}
	\psi^{(-1)} \circ 1_{\pt^{\Ga'}_1} \circ \psi \colon
        \pt_1^{\Ga,\Ba} \to \pt_1^{\Ga,\Ba'} \ .
\end{equation}
Here $\psi^{(-1)}$ denotes the 2-isomorphism obtained from $\psi$ by taking the inverse with respect to horizontal composition.
Again it follows from the definitions that this defines an isomorphism
\begin{equation}
	\widehat{\psi} \colon \pt^{\Ga,\Ba} \to \pt^{\Ga,\Ba'}
\end{equation}
in the category $\PT(\Ga)$.
If $\psi' \colon \Ba \to \Ba'$ is another (parallel unitary)
2-isomorphism, then $\psi$ and $\psi'$ differ by multiplication with a locally constant $\U(1)$-valued function $f_{\psi, \psi'}$ on $M$.
Since horizontal inverses of 2-isomorphisms have dual underlying line bundles~\cite{Waldorf--More_morphisms}, the morphisms $\psi^{(-1)}$ and $\psi'{}^{(-1)}$ differ by the locally constant $\U(1)$-valued function $f_{\psi^{(-1)}, \psi'{}^{(-1)}} = (f_{\psi, \psi'})^{-1}$.
Consequently, we deduce from \eqref{eq:psi2iso} that
\begin{equation}
	\widehat{\psi'} = \widehat{\psi} \ .
\end{equation}
That is, for any pair of parallel unitary 1-isomorphisms $\Ba, \Ba'
\colon \Ga \to \Ga'$ for which there exists some parallel unitary 2-isomorphism $\Ba \to \Ba'$, we obtain a \emph{unique} isomorphism $\pt^{\Ga, \Ba} \to \pt^{\Ga,\Ba'}$.

Let $[[\Aa_\Ga]]$ denote the full subgroupoid of $\BGrb^\nabla(M)(\Ga,\Ga')$ on those 1-isomorphisms $\Ga \to \Ga'$ that are isomorphic to Waldorf's 1-isomorphism $\Aa_\Ga$.
Our constructions define a functor
\begin{equation}
	\pt^{\Ga, (\,\boldsymbol\cdot\,)} \colon [[\Aa_\Ga]] \to \PT(\Ga) \ .
\end{equation}
This functor factors through a groupoid $[[\Aa_\Ga]]_*$ with the same
objects as $[[\Aa_\Ga]]$ and a unique isomorphism between any two
objects. In particular, every object in $[[\Aa_\Ga]]_*$ is final and
the canonical functor $[[\Aa_\Ga]] \to [[\Aa_\Ga]]_*$ is a final
functor. It follows that, for any category $\Cscr$, any
functor $\Fscr \colon [[\Aa_\Ga]] \to \Cscr$ that factors through $[[\Aa_\Ga]]_*$ has a colimit, which is represented by $\Fscr(\Ba)$ for any object $\Ba \in [[\Aa_\Ga]]$.

\begin{definition}\label{Def: parallel transport on Gerbe}
Let $\Ga \in \BGrb^\nabla(M)$ be a bundle gerbe with connection on $M$.
The \emph{parallel transport} of $\Ga$ is
\begin{equation}
	\pt^\Ga \coloneqq \colim \big( \pt^{\Ga, (\,\boldsymbol\cdot\,)} \colon
        [[\Aa_\Ga]] \to \PT(\Ga) \big) \ .
\end{equation}
\end{definition}

\subsection{The transgression line bundle as a holonomy}
\label{sec: Transgression and PT}

Let $\Ga \in \BGrb^\nabla(M)$ be a bundle gerbe with connection on $M$ and write $\Ga' = \Ra\Ta(\Ga)$.
We will now determine the holonomy of the parallel transport on $\Ga'$.
For this, consider the diffeological space $L_*M$ of smooth maps $\bbS^1 \to M$ that have a sitting instant at $1 \in \bbS^1$.
In other words, $L_*M$ is the pullback
\begin{equation}
\begin{tikzcd}[row sep=1.25cm]
	L_*M \ar[r, "\iota"] \ar[d, "\ev_1"'] & PM \ar[d, "\ev_0 \times \ev_1"]
	\\
	M \ar[r, "{\mit\Delta}"'] & M \times M
\end{tikzcd}
\end{equation}
in $\Dfg$, where $\iota$ denotes the inclusion map and ${\mit\Delta}$ is the
diagonal embedding.
The pullback $\iota^* \pt^{\Ga'}_1$ is an automorphism of $\ev_1^*\Ga'$, which we understand as the \emph{holonomy} of $\pt^{\Ga'}$.
It is defined over the subduction $\iota^* P_{\partial \Delta^2} M \to L_*M$.
Recall from Section~\ref{Sec: Bundle gerbes} that a 1-automorphism of a bundle gerbe defines a line bundle via descent.
Thus the holonomy $\iota^*\pt^{\Ga'}_1$ gives rise to a descended line bundle $\hol(\Ga) \in \HLBdl(L_*M)$.
Our goal is to understand this descended line bundle more explicitly.

Let $\widehat{\ev_1} \colon \ev_1^*P_0M \to P_0M$ be the morphism induced by the pullback
\begin{equation}
\begin{tikzcd}[row sep=1cm]
	\ev_1^*P_0M \ar[r, "\widehat{\ev_1}"] \ar[d] & P_0M \ar[d]
	\\
	L_*M \ar[r, "\ev_1"'] & M
\end{tikzcd}
\end{equation}
in $\Dfg$. The hermitean line bundle (with connection) underlying the bundle gerbe $\ev_1^*\Ga'$ is the pullback bundle
\begin{equation}
	L\coloneqq \widehat{\ev_1}^{[2]*}\Ta\Ga \to (\ev_1^*P_0M)^{[2]}
	\cong \iota^* P_{\partial \Delta^2} M \ .
\end{equation}
We now apply the construction from the diagram~\eqref{eq:construction in Section 2} that produces a line bundle $\sfR(\Aa)$ from an automorphism $\Aa$ of a bundle gerbe:
the tensor product bundle $L^\vee \otimes \pt^{\Ga'}_1$ on $\iota^* P_{\partial \Delta^2}M$ has fibres
\begin{equation}
	\big( L^\vee \otimes \pt^{\Ga'}_1 \big)_{(\alpha_0, \gamma, \alpha_1)}
	= \Ta\Ga^\vee_{\overline{\alpha_1} \star \alpha_0} \otimes \Ta\Ga_{\overline{\alpha_1} \star (\gamma \star \alpha_0)}
	\cong \Ta\Ga_{\overline{\alpha_0} \star \alpha_1} \otimes
        \Ta\Ga_{\overline{\alpha_1} \star (\gamma \star \alpha_0)} \ .
\end{equation}
Now the thin-invariant parallel transport and the fusion product on $\Ta\Ga$ yield an isomorphism
\begin{equation}
\label{eq:iso L cong pt_loop}
\begin{tikzcd}[column sep=1.75cm, row sep=1cm]
	\Ta\Ga_{\overline{\alpha_0} \star \alpha_1} \otimes \Ta\Ga_{\overline{\alpha_1} \star (\gamma \star \alpha_0)}
	\ar[r] \ar[d, dashed]
	& \Ta\Ga_{\alpha_1 \star \overline{\alpha_0}} \otimes \Ta\Ga_{\alpha_0 \star (\overline{\alpha_1} \star \gamma)} \ar[d, "\lambda"]
	\\
	\Ta\Ga_{\gamma} & \Ta\Ga_{\alpha_1 \star (\overline{\alpha_1} \star \gamma)} \ar[l]
\end{tikzcd}
\end{equation}
By the associativity of $\lambda$, this is an isomorphism of descent data for line bundles on $L_*M$.
This shows

\begin{proposition}
\label{st:iso L cong pt_loop}
The morphism~\eqref{eq:iso L cong pt_loop} yields an isomorphism of hermitean line bundles
\begin{equation}
	\hol(\Ga) \coloneqq \sfR \big( \iota^*\pt^{\Ga'}_1 \big)
	\overset{\cong}{\to} \Ta\Ga_{|L_*M}
\end{equation}
over $L_*M$.
\end{proposition}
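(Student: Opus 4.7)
The plan is to verify that the morphism defined pointwise by~\eqref{eq:iso L cong pt_loop} assembles into a well-defined morphism of hermitean line bundles on $\iota^* P_{\partial \Delta^2}M$ and then descends to $L_*M$, providing the asserted isomorphism. First, I would unravel the two line bundles at play: the source bundle $\sfR(\iota^*\pt^{\Ga'}_1)$ is constructed via the descent diagram~\eqref{eq:construction in Section 2}, so its pulled-back bundle on $\iota^* P_{\partial \Delta^2}M$ is $L^\vee \otimes \iota^*\sfs^*\Ta\Ga$ with descent isomorphism over $(\iota^* P_{\partial \Delta^2}M)^{[2]}$ built from the bundle gerbe multiplication of $\Ga'=\Ra\Ta(\Ga)$, i.e.\ from the fusion product $\lambda$ on $\Ta\Ga$. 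The target is the pullback of $\Ta\Ga_{|L_*M}$ along the projection $\iota^* P_{\partial \Delta^2}M \to L_*M$, which carries the identity descent data.

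Next I would observe that the pointwise morphism in~\eqref{eq:iso L cong pt_loop} is smooth: it is assembled from $\lambda$, which is smooth by the construction of $\Ta\Ga$, and from thin-invariant parallel transports along smooth reparameterisations and rotations of loops, which depend smoothly on $(\alpha_0, \gamma, \alpha_1)$. Superficiality of the connection on $\Ta\Ga$ ensures that the result is independent of the particular thin paths chosen to implement the reparameterisations, so this gives a genuine morphism of hermitean line bundles on $\iota^* P_{\partial \Delta^2}M$; it is fibrewise an isomorphism since $\lambda$ and thin parallel transport are unitary isomorphisms of complex lines.

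The main step is to check descent compatibility. Concretely, given two points $(\alpha_0, \gamma, \alpha_1), (\alpha'_0, \gamma, \alpha'_1)$ in the fibre product lying over the same loop $\gamma \in L_*M$, one must show that the square formed by~\eqref{eq:iso L cong pt_loop} at each point and the canonical descent isomorphism of~\eqref{eq:construction in Section 2} on the source commutes with the identity descent isomorphism on the target. Rewriting every arrow in terms of $\Ta\Ga$, this becomes a diagram chase on a suitable iterated tensor product of fibres of $\Ta\Ga$ over various concatenated paths; its commutativity reduces to the associativity of $\lambda$, the compatibility of $\lambda$ with the thin-invariant parallel transport and with $180^\circ$ rotation (encoded in the fusion axioms of Section~\ref{Sec: Bundle gerbes}), and the triviality of $\lambda$ on factors of the form $\Ta\Ga_{\overline{\alpha} \star \alpha}$ contracted by thin homotopies. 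The main obstacle is the bookkeeping: several instances of $\lambda$, of rotation isomorphisms, and of thin reparameterisations appear, and one must fit them into a single coherence diagram on a six- or seven-fold tensor product of fibres of $\Ta\Ga$, analogous to the associativity pentagon already used to establish compatibility of $\beta$ with the bundle gerbe multiplication in the construction of $\pt^{\Ga'}_1$.

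Once descent compatibility is verified, the morphism passes to an isomorphism $\hol(\Ga) \to \Ta\Ga_{|L_*M}$ on $L_*M$, completing the proof.
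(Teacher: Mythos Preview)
Your proposal is correct and follows the same approach as the paper, just in much greater detail. The paper's argument is essentially the single sentence preceding the proposition: ``By the associativity of $\lambda$, this is an isomorphism of descent data for line bundles on $L_*M$''---exactly the reduction you describe, with the further compatibilities (thin-invariant parallel transport, rotation) being implicit since they are used freely throughout the construction of $\pt^{\Ga'}_1$.
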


Thus the parallel transport $\pt^{\Ga'}$ reproduces the transgression line bundle $\Ta\Ga$ as its holonomy.

\begin{remark}
For a generic bundle gerbe $\Ga$ with parallel transport $\pt^\Ga$, the morphism $\pt^\Ga_2$ induces a parallel transport on $\hol(\Ga)$.
It should be possible to construct from this a fusion line bundle with connection on $LM$ in the sense of~\cite{Waldorf:Transgression_II}, which then regresses to a bundle gerbe with connection on $M$.
Its underlying bundle gerbe should be canonically isomorphic to $\Ga$
(up to 2-isomorphism), and that should allow the reconstruction of the connection on $\Ga$ from its parallel transport in our sense.
However, this would go beyond the scope of this paper, and since for our applications in Sections~\ref{sect:extensions from gerbes} and~\ref{Sec: Magnetic translations} having an explicit construction for $\pt^{\Ga'}$ is sufficient, we leave this reconstruction of the connection on $\Ga$ for future work.
\qen
\end{remark}

\section{2-group extensions from bundle gerbes}
\label{sect:extensions from gerbes}

Let $G$ be a connected Lie group with a smooth group action on a manifold $M$.
In Section~\ref{Sec: U(1)} we saw how a principal bundle $P \to M$ gives rise to a group extension $\Sym_G(P) \to G$ which encodes all information about equivariant structures on $P$.
We were able to give two equivalent constructions for $\Sym_G(P)$, one as a subgroup of $\Diff(P)$, and one as descent data associated to the path fibration $P_0 G \to G$ and a parallel transport on $P$.

In this section we study the analogous situation for a bundle gerbe $\Ga \in \BGrb(M)$ instead of a principal bundle $P\in\Bun_H(M)$.
There are two main differences to the situation in Section~\ref{Sec: U(1)}:
equivariant structures on $\Ga$ form a groupoid rather than a set, and they do not assemble into a topological or smooth space.
We thus cannot expect a universal extension $\Sym_G(\Ga) \to G$ as diffeological groups.
A good framework to describe this extension is that of group objects in categories fibred in groupoids over $\Cart$, where the fibration encodes the smooth structure.
After carefully setting up this framework, we give two constructions of $\Sym_G(\Ga)$, in analogy to the two constructions of $\Sym_G(P)$ in Section~\ref{Sec: U(1)}.
We conclude this section by showing that, again, the extension $\Sym_G(\Ga) \to G$ encodes all information about equivariant structures on $\Ga$.

\subsection{Smooth groupoids and symmetries of gerbes}
\label{sect:smoothsymgerbes}

Let $\Ga \in \BGrb(M)$ be a bundle gerbe on $M$.
Let $\Phi \colon G \times M \to M$ be an action of a connected Lie (or
diffeological) group $G$ on $M$.
Let $\Cart$ denote the category of smooth manifolds that are diffeomorphic to $\R^n$ for some $n \in \N_0$.
The morphisms in $\Cart$ are the smooth maps between these manifolds.

We can view $M$ and $G$ as presheaves on $\Cart$ by setting
\begin{equation}\label{Def: G as sheave}
	M(c) = C^\infty(c,M)
	\qandq
	G(c) = C^\infty(c,G) \ .
\end{equation}
By adding identity morphisms, we can canonically enhance the presheaf
$G$ to a (pre)stack on $\Cart$, i.e.~a (pre)sheaf of groupoids, which we still denote by $G$.
Given a section $f \in G(c)$, i.e.~a smooth map $f \colon c \to G$, we can define a map
\begin{equation}
\begin{tikzcd}[column sep=1.25cm]
	\Phi_f \colon c \times M \ar[r, "f \times 1_M"] & G \times M
        \ar[r, "\Phi"] & M \ .
\end{tikzcd}
\end{equation}
We can then assign to $f$ the groupoid
\begin{equation}
	\Sym_G^{\PSh}(\Ga)(f)
	\coloneqq \BGrb(c \times M) (\pr_M^*\Ga, \Phi_f^*\Ga) \ ,
\end{equation}
where $\pr_M \colon c \times M \to M$ is the projection.
The groupoid $\Sym_G^{\PSh}(\Ga)(f)$ is non-empty:
since the map $f \colon c \to G$ is homotopic to the constant map at the identity in $G$, it follows that $\pr_M^*\Ga$ and $\Phi_f^*\Ga$ have the same Dixmier-Douady class as gerbes on $c \times M$, so that there exists an isomorphism $\pr_M^*\Ga \to \Phi_f^*\Ga$.

The assignment $f \mapsto \Sym_G^{\PSh}(\Ga)(f)$ is evidently not a presheaf of groupoids on $\Cart$ since it depends not only on the object $c$, but also on a choice of a smooth map $f \colon c \to G$.
We can reformulate this in the following way:
let $\ul{G}$ denote the category with objects the smooth maps $f \colon c \to G$, where $c \in \Cart$ is any Cartesian space.
The morphisms
\begin{equation}
	\varphi \colon (f \colon c \to G) \to (f' \colon c' \to G)
\end{equation}
in $\ul{G}$ are commutative triangles of smooth maps
\begin{equation}
\begin{tikzcd}
	c \ar[rr, "\varphi"] \ar[dr, "f"'] & & c' \ar[dl, "f'"]
	\\
	& G &
\end{tikzcd}
\end{equation}
Then $\Sym_G^{\PSh}(\Ga)$ is a presheaf of groupoids on $\ul{G}\,$:
to an object $f \colon c \to G$ in $\ul{G}$ we assign the groupoid $\Sym_G^{\PSh}(\Ga)(f)$, while to a morphism $\varphi \colon f \to f'$ we assign the pullback functor
\begin{equation}
	\Sym_G^{\PSh}(\Ga)(\varphi) \coloneqq (\varphi \times 1_M)^*
        \colon \Sym_G^{\PSh}(\Ga)(f') \to \Sym_G^{\PSh}(\Ga)(f) \ .
\end{equation}
By a slight abuse of notation, we will denote the functor $\Sym_G^{\PSh}(\Ga)(\varphi)$ by $\varphi^*$.
Explicitly, given a 1-isomorphism $A \colon \pr_M^*\Ga \to
\Phi_{f'}^*\Ga$ over $c'$, it is defined by the commutative diagram
\begin{equation}
\begin{tikzcd}[column sep=1.75cm, row sep=1cm]
	\pr^*_M \Ga \ar[rr, dashed, "\varphi^*A"] \ar[d, "\cong"'] & & \Phi_f^* \Ga
	\\
	(\varphi \times 1_M)^* \pr_M^{\prime\,\ast} \Ga \ar[r, "(\varphi \times 1_M)^*A"'] & (\varphi \times 1_M)^* \Phi_{f'}^* \Ga \ar[r, "\cong"'] & \big( \Phi_{f'} \circ (\varphi \times 1_M) \big)^* \Ga \ar[u, "\cong"']
\end{tikzcd}
\end{equation}
By construction of the 2-category of bundle gerbes, this defines a pseudofunctor
\begin{equation}
	\Sym_G^{\PSh}(\Ga) \colon \ul{G}^\opp \to \Grpd \ ,
\end{equation}
where $\Grpd$ is the 2-category of groupoids, functors, and natural transformations.

\begin{remark}
The assignment $f \mapsto \Sym_G^{\PSh}(\Ga)(f)$ is not a strict
presheaf of groupoids on $\ul{G}\,$, as it is only pseudofunctorial~\cite{Moerdijk:Stacks_and_gerbes}.
There are several ways to technically treat such pseudo-presheaves of groupoids:
\begin{myenumerate}
\item Encode the coherence morphisms by viewing pseudo-presheaves of
  groupoids as coherent simplicial presheaves, i.e.~as simplicial
  functors $\mathfrak{C} \circ {\rm N}_\Delta (\ul{G})^\opp \to \Set_\Delta$ in the notation of~\cite{Lurie:HTT}.

\item Use a strictification procedure to translate pseudo-presheaves of groupoids into presheaves of groupoids~\cite{Hollander:HoThy_for_stacks}.

\item Use the Grothendieck construction, or straightening, to translate pseudo-presheaves of groupoids into categories fibred in groupoids over $\ul{G}$~\cite{Vistoli:Fib_Cats,Lurie:HTT}.
\end{myenumerate}
We will follow the third approach here because the transition between the parameterising categories $\ul{G}$ and $\Cart$ becomes particularly easy in that framework.
\qen
\end{remark}

We will frequently make use of the Grothendieck construction to pass from $\Grpd$-valued pseudo-functors to categories fibred in groupoids; for background on the Grothendieck construction and fibred categories we refer to~\cite{Vistoli:Fib_Cats,Lurie:HTT}.
We will, however, describe the resulting fibred categories explicitly.
For example, the canonical projection functor $\pr:\ul{G} \to \Cart$ is the category fibred in groupoids obtained by applying the Grothendieck construction to the (pseudo)functor $c \mapsto G(c)$, where $G(c)$ is regarded as a groupoid with only identity arrows.

\begin{definition}
\label{def:GrFib in Grpds}
A functor $\pi \colon \Dscr \to \Cscr$ between categories is a \emph{Grothendieck
  fibration in groupoids}, or makes $\Dscr$ into a \emph{category
  fibred in groupoids} over $\Cscr$, if it satisfies the properties:
\begin{myenumerate}
\item For every object $d \in \Dscr$ and for every morphism $f \colon
  c \to \pi(d)$ in $\Cscr$, there exists a morphism $\widehat{f}
  \colon \widehat{c} \to d$ in $\Dscr$ with $\pi(\widehat{f}\ ) = f$.

\item For every pair of diagrams
\begin{equation}
\label{eq:cartesian morphism}
\begin{tikzcd}
	& d_0 \ar[dl, dashed, " \widehat{f_{01}}"'] \ar[dr, "\zeta_{02}"] &
	\\
	d_1 \ar[rr, "\zeta_{12}"'] & & d_2
	\\
	& \pi(d_0) \ar[dl, "f_{01}"'] \ar[dr, "\pi(\zeta_{02})"] &
	\\
	\pi(d_1) \ar[rr, "\pi(\zeta_{12})"'] & & \pi(d_2)
\end{tikzcd}
\end{equation}
in $\Dscr$ and $\Cscr$, respectively, there exists a unique lift $\widehat{f_{01}}$ of $f_{01}$ that makes the upper triangle commute.
\end{myenumerate}
\end{definition}

The first requirement resembles a path-lifting condition. The
second requirement can be viewed as a relative horn-filling property:
given any $\Lambda^2_2$-horn $\sigma$ in $\Dscr$ and a filling of
$\pi(\sigma)$ in $\Cscr$ to a 2-simplex, there exists a unique filling
of $\sigma$ to a 2-simplex in $\Dscr$ that lifts the 2-simplex in
$\Cscr$. Alternatively, consider an arbitrary functor $\pi \colon
\Dscr \to \Cscr$ between categories and a morphism $\zeta_{12} \colon
d_1 \to d_2$ in $\Dscr$.
If for every pair of solid arrow diagrams as in~\eqref{eq:cartesian morphism} the dashed arrow exists such that the upper triangle commutes and such that \smash{$\pi(\widehat{f_{01}}) = f_{01}$}, one says that $\zeta_{12}$ is \emph{$\pi$-Cartesian}.
In particular, if $\pi$ is a Grothendieck fibration in groupoids, then property~(2) of Definition~\ref{def:GrFib in Grpds} is equivalent to saying that every morphism in $\Dscr$ is $\pi$-Cartesian.
If $\pi \colon \Dscr \to \Cscr$ is a Grothendieck fibration in
groupoids and $c \in \Cscr$, we denote by $\Dscr_{|c} = \pi^{-1}(c)$ the
fibre over $c$, which is the groupoid with objects $d \in \Dscr$ such
that $\pi(d) = c$ and morphisms \smash{$\widehat{f} \colon d \to d'$} such
that \smash{$\pi(\widehat{f}\ ) = 1_c$}.

\begin{definition}
\label{def:Hscr}
A category fibred in groupoids over $\Cart$ is a \emph{smooth groupoid}.
Let $\Hscr$ denote the strict 2-category of smooth groupoids.
Its objects are smooth groupoids, its morphisms are (strictly) commutative diagrams of functors
\begin{equation}
\begin{tikzcd}[column sep=0.5cm]
	\sfX_0 \ar[rr] \ar[dr] & & \sfX_1 \ar[dl]
	\\
	& \Cart &
\end{tikzcd}
\end{equation}
and its 2-morphisms are natural transformations that project to the identity.
We denote by $\ul{\Hscr}(\sfX,\sfZ)$ the groupoid of functors $\sfX \to \sfZ$ that project to the identity on $\Cart$.
\end{definition}

\begin{example}
Let $M$ be a smooth manifold.
An important example of a smooth groupoid is given by the Grothendieck fibration $\HLBdl^M \to \Cart$, whose objects are pairs $(c,L)$ of a Cartesian space $c \in \Cart$ and a hermitean line bundle $L \to c \times M$, and whose morphisms $(c,L) \to (c',L')$ are pairs $(\varphi, \psi)$ of a smooth map $\varphi \colon c \to c'$ and an isomorphism $\psi \colon L \to (\varphi \times 1_M)^*L'$ of hermitean line bundles on $c \times M$.
One can interpret $\HLBdl^M$ as describing smooth families of hermitean line bundles on $M$.
For $M = *$, we write $\HLBdl^* \eqqcolon \HLBdl$.
\qen
\end{example}

\begin{definition}
\label{def:Sym(G) as fibred category}
Let $p\colon \Sym_G(\Ga) \to \ul{G}$ denote the category fibred in groupoids obtained by applying the Grothendieck construction to the pseudofunctor \smash{$\Sym_G^{\PSh}(\Ga) \colon \ul{G}^\opp \to \Grpd$}.
Explicitly, the category $\Sym_G(\Ga)$ consists of:
\begin{myitemize}
\item $\ul{\rm Objects:}$ pairs $(f,A)$, where $f \in \ul{G}$ is a smooth map $f \colon c \to G$ and $A \colon \pr_M^*\Ga \to \Phi_f^*\Ga$ is a 1-isomorphism of bundle gerbes over $c \times M$.

\item $\ul{\rm Morphisms:}$ a morphism $(f_0,A_0) \to (f_1, A_1)$ is a pair $(\varphi, \psi)$ of a morphism $\varphi \colon f_0 \to f_1$ in $\ul{G}$ and a 2-isomorphism $\psi \colon A_0 \to \varphi^*A_1$ in $\Sym_G^{\PSh}(\Ga)(f_0)$.
\end{myitemize}
\end{definition}

The functor $p\colon \Sym_G(\Ga) \to \ul{G}$ is automatically a fibration in groupoids, since it arises as the Grothendieck construction of a pseudo-presheaf of groupoids.
Since Grothendieck fibrations are stable under composition~\cite{Vistoli:Fib_Cats}, the composite functor
\begin{equation}
\begin{tikzcd}[column sep=0.75cm, row sep=1cm]
	\Sym_G(\Ga) \ar[rr, "p"] \ar[dr, dashed, "\pi"'] & & \ul{G} \ar[dl, "\pr"]
	\\
	& \Cart &
\end{tikzcd}
\end{equation}
makes $\Sym_G(\Ga)$ into a smooth groupoid.

\subsection{Smooth 2-groups}
\label{sect:smooth2groups}

We would now like to establish that $\Sym_G(\Ga)$ is not just a smooth groupoid, but can also be regarded as a higher group in some sense.
That is, we would like to find on $\Sym_G(\Ga)$ the same type of structure as we found on the bundle $\Sym_G(P) \to G$ in Section~\ref{sect:gauge bundles on groups}.
Here, however, we are working inside the ambient 2-category $\Hscr$, and so we will need to make precise what we mean by a group in $\Hscr$.
The notion of a group object in a 2-category goes back to~\cite{BL:2-Groups}.
The following definitions are taken from~\cite{SP:String_group} which are strongly based on~\cite{BL:2-Groups}.
Let $\Cscr$ be a 2-category with finite products; in particular, it has a terminal object $*$.
Examples are the 2-categories $\Grpd$ and~$\Hscr$.

\begin{definition}[\cite{BL:2-Groups}]
A \emph{monoid object} in $\Cscr$ is a sextuple $(\sfH, \sfu, \otimes, \sfa, \sfl, \sfr)$ of 
\begin{myitemize}
\item an object $\sfH \in \Cscr$,

\item 1-morphisms $\sfu \colon * \to \sfH$ and $\otimes \colon \sfH
  \times \sfH \to \sfH$, and

\item 2-isomorphisms
\begin{align}
	\sfa \colon \otimes \circ\, (\otimes \times 1_\sfH) &\to \otimes
                                                            \circ
                                                            (1_\sfH
                                                            \times
                                                            \otimes) \
                                                            ,
	\\[4pt]
	\sfl \colon \otimes \circ\, (\sfu \times 1_\sfH) &\to 1_\sfH \ ,
	\\[4pt]
	\sfr \colon \otimes \circ\, (1_\sfH \times \sfu) &\to 1_\sfH \ .
\end{align}
\end{myitemize}
These data are required to satisfy a pentagon and a triangle identity; see~\cite[Definition~41]{SP:String_group}.

An \emph{abelian monoid object} comes with an additional 2-isomorphism $\beta \colon \otimes \circ\, \tau \to \otimes$ satisfying the coherence conditions in~\cite[Definition~47]{SP:String_group}, where $\tau \colon \sfH \times \sfH \to \sfH \times \sfH$ is the interchange of factors.
\end{definition}

\begin{definition}
A \emph{1-morphism} of monoid objects $(\sfH, \sfu, \otimes, \sfa, \sfl, \sfr) \to (\sfH', \sfu', \otimes', \sfa', \sfl', \sfr')$ in $\Cscr$ consists of a triple $(F_1, F_\otimes, F_\sfu)$ of
\begin{myitemize}
\item a 1-morphism $F_1 \colon \sfH \to \sfH'$ and

\item 2-isomorphisms $F_\otimes \colon \otimes' \circ\, (F_1 \times F_1) \to F_1 \circ \otimes$ and $F_\sfu \colon \sfu' \to F_1 \circ \sfu$.
\end{myitemize}
These are required to satisfy the coherence conditions in~\cite[Definition~42]{SP:String_group}.

Morphisms of abelian monoid objects satisfy an additional compatibility condition for the symmetries $\beta$ and $\beta'$, which can be found in~\cite[Definition~48]{SP:String_group}.
\end{definition}

\begin{definition}[{\cite[Definition~43]{SP:String_group}}]
A \emph{2-morphism} $(F_1, F_\otimes, F_\sfu) \to (E_1, E_\otimes, E_\sfu)$ of monoid objects in $\Cscr$ is a 2-morphism $\theta \colon F_1 \to E_1$ such that the diagrams
\begin{equation}
\begin{tikzcd}[column sep=2cm, row sep=1cm]
	\otimes' \circ (F_1 \times F_1) \ar[r, "\otimes' \circ (\theta \times \theta)"] \ar[d, "F_\otimes"']
	& \otimes' \circ (E_1 \times E_1) \ar[d, "E_\otimes"]
	\\
	F_1 \circ \otimes \ar[r, "\theta \circ \otimes"'] & E_1 \circ \otimes
\end{tikzcd}
\end{equation}
\begin{equation}
\begin{tikzcd}[column sep={1.5cm,between origins}, row sep=1cm]
	& \sfu' \ar[dl, "F_\sfu"'] \ar[dr, "E_\sfu"] & 
	\\
	F_1 \circ \sfu \ar[rr, "\theta \circ \sfu"'] & & E_1\circ\sfu
\end{tikzcd}
\end{equation}
commute.
2-morphisms of abelian monoid objects are 2-morphisms of the underlying monoid objects.
\end{definition}

\begin{example}
A monoid object in the 2-category $\Cat$ of categories is precisely a monoidal category.
Similarly, 1-morphisms and 2-morphisms between monoid objects in $\Cat$ are precisely the monoidal functors and the monoidal natural transformations, respectively.
The abelian monoids in $\Cat$ are precisely the symmetric monoidal categories.
\qen
\end{example}

\begin{definition}[{\cite[Definition~41]{SP:String_group}}]
A \emph{group object} in $\Cscr$ is a monoid object $(\sfH, \sfu, \otimes, \sfa, \sfl, \sfr)$ in $\Cscr$ such that the 1-morphism
\begin{equation}
	(\otimes, \pr_1) \colon \sfH \times \sfH \to \sfH \times \sfH
\end{equation}
is (weakly) invertible.
An \emph{abelian group object} in $\Cscr$ is an abelian monoid object whose underlying monoid object is a group object.
\end{definition}

For $\Cscr$ a 2-category with finite products, we denote the 2-category of group objects in $\Cscr$ by $2\Grp(\Cscr)$.

\begin{definition}
\label{Def: 2-group}
The 2-category of \emph{2-groups} is $2\Grp(\Grpd)$.
The 2-category of \emph{smooth 2-groups} is $2\Grp(\Hscr)$.
\end{definition}

Both these 2-categories are enriched in groupoids.
Let us examine Definition~\ref{Def: 2-group} a little more closely.
Consider two objects $\pi_\sfC \colon \sfC \to \Cart$ and $\pi_\sfD \colon \sfD \to \Cart$ in $\Hscr$.
The product in $\Hscr$ is given by the pullback in $\Cat$:
\begin{equation}
	\big (\sfC \xrightarrow{ \ \pi_\sfC \ } \Cart \big) \times \big (\sfD
        \xrightarrow{ \ \pi_\sfD \ } \Cart \big)
	= \big( \sfC \times_\Cart \sfD \to \Cart \big) \ .
\end{equation}
Explicitly, the category $\sfC \times_\Cart \sfD$ has
\begin{myitemize}
\item $\ul{\rm Objects:}$ pairs $(c \in \sfC, d \in \sfD)$ such that $\pi_\sfC(c) = \pi_\sfD(d)$.

\item $\ul{\rm Morphisms:}$ pairs $(\phi, \psi)$ of morphisms $\phi$ in $\sfC$ and $\psi$ in $\sfD$ such that $\pi_\sfC (\phi) = \pi_\sfD (\psi)$.
\end{myitemize}
A monoid structure on $\sfC \in \Hscr$ thus allows us to multiply pairs of objects \emph{in the same fibre} and pairs of morphisms that lie \emph{over the same morphism} in $\Cart$.

\begin{example}
The tensor product of line bundles turns the presheaf of groupoids of hermitean line
bundles with connection $\HLBdl^{\nabla} \to \Cart$ into an abelian group object in $\Hscr$.
Similarly, for any manifold $M$ it also turns the internal hom
$\big(\HLBdl^{\nabla}\big)^M$ into an abelian group object in $\Hscr$.
\qen
\end{example}

\subsection{Smooth principal 2-bundles}
\label{sect:principal2bundles}

We shall now establish our precise notion of an extension of smooth 2-groups.

\begin{definition}
Let $\Cscr$ be a 2-category with finite products, let $(\sfH, \sfu, \otimes_\sfH, \sfa, \sfl, \sfr)$ be a monoid object in $\Cscr$, and let $\sfC \in \Cscr$.
A \emph{right action} of $\sfH$ on $\sfC$ is a morphism $\otimes
\colon \sfC \times \sfH \to \sfC$ in $\Cscr$, together with
2-morphisms $\alpha$ and $ u$ in $\Cscr$ that witness the commutativity of the diagrams
\begin{equation}
\begin{tikzcd}[row sep=1cm, column sep=1.75cm]
	\sfC \times \sfH \times \sfH \ar[r, "1_\sfC \times \otimes_\sfH"] \ar[d, "\otimes \times 1_\sfH"']
	& |[alias=A]| \sfC \times \sfH \ar[d, "\otimes"]
	\\
	|[alias=B]| \sfC \times \sfH \ar[r, "\otimes"']
	& \sfC
	\arrow[Rightarrow, from=A, to=B, shorten <= 5, shorten >= 5, "\alpha"]
\end{tikzcd}
\qquad \mbox{and} \qquad
\begin{tikzcd}[row sep=1cm, column sep=1.5cm]
	\sfC \ar[r, "1_\sfC \times \sfu"] \ar[dr, "1_\sfC"'""{name=V, inner sep=0pt, below, pos=.5}]
	& |[alias=U]| \sfC \times \sfH \ar[d, "\otimes"] 
	\\
	& \sfC 
	\arrow[Rightarrow, from=U, to=V, shorten <= 3, shorten >= 3, "u"]
\end{tikzcd}
\end{equation}
and that are coherent with respect to the 2-isomorphism $\sfa$, $\sfl$
and $\sfr$.
Left actions are defined analogously.
\end{definition}

\begin{example}
The standard example for an action of a monoid object is that of a module category $\sfC$ over a monoidal category $\sfH$ in $\Cscr = \Cat$.
\qen
\end{example}

\begin{definition}
\label{def:hopb of fibd Grpds}
Let $\Cscr$ be a category.
Suppose there are categories fibred in groupoids $\pi_i \colon \sfD_i \to
\Cscr$, for $i=0,1$, and $\pi_\sfE \colon \sfE \to \Cscr$ over
$\Cscr$, and suppose there is a diagram
\begin{equation}
\begin{tikzcd}
	& \sfE & \\ \sfD_0 \ar[ur, "F_0"] & & \sfD_1 \ar[ul, "F_1"']
\end{tikzcd}
\end{equation}
of categories fibred in groupoids over $\Cscr$.
The \emph{homotopy pullback} $\sfD_0 \times_{\sfE}^{\mathtt{h}}
\sfD_1$ is the category with
\begin{myitemize}
\item $\ul{\rm Objects:}$ triples $(d_0,\eta, d_1)$, where $d_i \in \sfD_i$ and $\eta \colon F_0(d_0) \to F_1(d_1)$ is an isomorphism in $\sfE$ that projects to the identity under $\pi_\sfE$.

\item $\ul{\rm Morphisms:}$ a morphism $(d_0,\eta,d_1) \to (d'_0, \eta',
  d'_1)$ is a pair $(\psi_0, \psi_1)$ of morphisms $\psi_i \colon d_i
  \to d'_i$ such that the diagram
\begin{equation}
\begin{tikzcd}[row sep=1cm]
	F_0(d_0) \ar[r, "\eta"] \ar[d, "F_0(\psi_0)"']
	& F_1(d_1) \ar[d, "F_1(\psi_1)"]
	\\
	F_0(d'_0) \ar[r, "\eta'"'] & F_1(d'_1)
\end{tikzcd}
\end{equation}
commutes in $\sfE$.
\end{myitemize}
\end{definition}

This comes with a canonical functor
\begin{align}
	\pi_{\mathtt{h}} \colon \sfD_0 \times_{\sfE}^{\mathtt{h}}
  \sfD_1 &\to \Cscr
	\\
	(d_0,\eta,d_1) &\mapsto \pi_0(d_0)=\pi_1(d_1)
	\\
	(\psi_0,\psi_1) &\mapsto \pi_0(\psi_0)=\pi_1(\psi_1) \ ,
\end{align}
which, as we show in Appendix~\ref{app:Principal 2-bundles}, is a
Grothendieck fibration in groupoids.

\begin{definition}
\label{def:sfH-bundle}
Let $\sfH$ be a smooth 2-group, and let $\sfX \in \Hscr$ be any smooth groupoid.
An \emph{$\sfH$-principal 2-bundle} on $\sfX$ is an object $\sfP \in
\Hscr$ with a morphism $\pi \colon \sfP \to \sfX$, a right action $(\otimes, \alpha)$ of $\sfH$ on $\sfP$ and a 2-isomorphism
\begin{equation}
\begin{tikzcd}[row sep=1cm]
	\sfP \times \sfH \ar[r, "\otimes"] \ar[d, "\pr"'] & |[alias=A]| \sfP \ar[d, "\pi"]
	\\
	|[alias=B]| \sfP \ar[r, "\pi"'] & \sfX
	\arrow[Rightarrow, from=B, to=A, shorten <= 5, shorten >= 5, "\eta"]
\end{tikzcd}
\end{equation}
such that
\begin{myenumerate}
\item the functor $\pi \colon \sfP \to \sfX$ is an essentially
  surjective Grothendieck fibration,

\item the action $(\otimes, \alpha)$ of $\sfH$ on $\sfP$ and the
  2-isomorphism $\eta$ are compatible in the sense that the diagram 
\begin{equation}
\begin{tikzcd}
	& \sfP \times \sfH \ar[dd, "\pr"' {description, pos=0.25}] \ar[rr, "\otimes"] & & \sfP \ar[dd, "\pi"]
	\\
	\sfP \times \sfH \times \sfH \ar[dd, "1_\sfP \times \otimes_\sfH"'] \ar[rr, crossing over, "\otimes \times 1_\sfH" pos=0.75] \ar[ur, "\pr"]
	& & \sfP \times \sfH \ar[ur, "\pr"'] 
	&
	\\
	& \sfP \ar[rr, "\pi"' pos=0.25] & & \sfX
	\\
	\sfP \times \sfH \ar[ur, "\pr"] \ar[rr, "\otimes"'] & & \sfP \ar[ur, "\pi"'] \ar[from=uu, crossing over, "\otimes" {description, pos=0.25}] &
\end{tikzcd}
\end{equation}
is coherent, where the front face carries the 2-isomorphism $\alpha$ that is part of the action of $\sfH$ on $\sfP$, the back, right-hand, and bottom faces carry the 2-isomorphism $\eta$, and the left-hand face commutes strictly,

\item the composition $\sfP
  \times \sfH \to \sfP \times_\sfX \sfP \times \sfH \to \sfP
  \times_\sfX^{\mathtt{h}} \sfP$ is an equivalence, where the first functor is
  induced by the diagonal functor $\sfP \to \sfP \times_\sfX \sfP$.
\end{myenumerate}
\end{definition}

The first condition can be understood as demanding that $\sfP \to
\sfX$ has local sections (see Lemma~\ref{st:surjectivity lemma} from Appendix~\ref{app:Principal 2-bundles}).
The second condition implements the property that the $\sfH$-action preserves the projection to $\sfX$ up to coherent homotopy.
The third condition says that the $\sfH$-action is principal.
Note that upon choosing an inverse to the equivalence $\sfP
\times_\sfX \sfP \hookrightarrow \sfP \times_\sfX^{\mathtt{h}} \sfP$,
one could equivalently formulate condition (3) using strict pullbacks
alone (again by Lemma~\ref{st:surjectivity lemma} from
Appendix~\ref{app:Principal 2-bundles}).

In order to understand the notion of an extension of smooth 2-groups, we first need to define the kernel of a morphism of smooth 2-groups.
Naively, the kernel could easily be defined as a fibre over $\sfu$, but the resulting category will not generally be fibred in groupoids over $\Cart$.
As it turns out, the homotopy pullback does satisfy this property.

\begin{definition}
\label{def:ker^h(p)}
Let $p \colon \sfH \to \sfG $ be a morphism of smooth 2-groups in $\Hscr$.
Its \emph{kernel} $\ker^{\mathtt{h}}(p)$ is the homotopy pullback
\begin{equation}
\begin{tikzcd}[row sep=1cm]
	\ker^{\mathtt{h}}(p) \ar[r] \ar[d, dashed,  "\kappa"'] & \sfH \ar[d, "p"]
	\\
	\Cart = *_\Hscr \ar[r, "\sfu_\sfG"'] & \sfG
\end{tikzcd}
\end{equation}
\end{definition}

Explicitly, $\ker^{\mathtt{h}}(p)$ is given by
\begin{equation}
	\ker^{\mathtt{h}}(p) \coloneqq *_\Hscr \times_\sfG^{\mathtt{h}} \sfH \ .
\end{equation}
Using Definition~\ref{def:hopb of fibd Grpds} we can equivalently describe it as the category with objects given by pairs $(h,\eta)$ of an object $h \in \sfH$ and an isomorphism $\eta \colon p(h) \to \sfu_\sfG(\pi_\sfH(h))$ in $\sfG$.
Its morphisms $(h_0,\eta_0) \to (h_1,\eta_1)$ are given by morphisms $\zeta \colon h_0 \to h_1$ such that $\eta_1 \circ p(\zeta) = u_\sfG(\pi_\sfH(\zeta))\circ\eta_0$.
We readily observe that the restrictions of the structure morphisms
$\otimes_\sfH$, $\sfa_\sfH$, $\sfl_\sfH$ and $\sfr_\sfH$, together with the morphism $\sfu_\sfH$, turn $\ker^{\mathtt{h}}(p)$ into a smooth 2-group.
It should also be possible to turn the strict kernel $\ker(p)$ into a smooth 2-group in this case, using an inverse to the equivalence $\ker(p) \hookrightarrow \ker^{\mathtt{h}}(p)$ (compare Lemma~\ref{st:GrFib Lemma} from
Appendix~\ref{app:Principal 2-bundles}), but the homotopy-kernel
$\ker^{\mathtt{h}}(p)$ carries a \emph{canonical} 2-group structure,
and using $\ker(p)$ instead of $\ker^{\mathtt{h}}(p)$ would make Construction~\ref{cstr: fun G -> Aut} below rather cumbersome.

\begin{lemma}
\label{st:ker^h is GrFibd over Cart}
In the setting of Definition~\ref{def:ker^h(p)}, the functor $\kappa$
is a Grothendieck fibration in groupoids.
\end{lemma}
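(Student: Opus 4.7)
The plan is to verify the two conditions of Definition~\ref{def:GrFib in Grpds} for $\kappa$ directly, leveraging that $\pi_\sfH \colon \sfH \to \Cart$ is itself a Grothendieck fibration in groupoids and that the analogous statement for $\pi_\sfG$ has the crucial consequence that \emph{every} morphism in $\sfG$ is Cartesian over its image. Recall that an object of $\ker^{\mathtt{h}}(p)$ is a pair $(h,\eta)$ with $h\in\sfH$ and $\eta \colon p(h) \to \sfu_\sfG(\pi_\sfH(h))$ an isomorphism in $\sfG$ lying over the identity in $\Cart$; a morphism $(h_0,\eta_0) \to (h_1,\eta_1)$ is a morphism $\zeta \colon h_0 \to h_1$ in $\sfH$ satisfying $\eta_1 \circ p(\zeta) = \sfu_\sfG(\pi_\sfH(\zeta)) \circ \eta_0$; and $\kappa$ records $\pi_\sfH$ on objects and morphisms.

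For the lifting property, given $(h,\eta)$ and $f \colon c \to \pi_\sfH(h)$ in $\Cart$, I would first invoke the fibration $\pi_\sfH$ to obtain a lift $\widehat{f}_\sfH \colon \widehat{h} \to h$. The composite $\eta \circ p(\widehat{f}_\sfH) \colon p(\widehat{h}) \to \sfu_\sfG(\pi_\sfH(h))$ lies over $f$, while $\sfu_\sfG(f) \colon \sfu_\sfG(c) \to \sfu_\sfG(\pi_\sfH(h))$ is Cartesian in $\sfG$. Hence there exists a unique $\widehat{\eta} \colon p(\widehat{h}) \to \sfu_\sfG(c)$ over $1_c$ with $\sfu_\sfG(f) \circ \widehat{\eta} = \eta \circ p(\widehat{f}_\sfH)$. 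Since morphisms in the fibre groupoid $\sfG_{|c}$ are automatically invertible, $\widehat{\eta}$ is an isomorphism, so $(\widehat{h},\widehat{\eta}) \in \ker^{\mathtt{h}}(p)$ and $\widehat{f}_\sfH$ furnishes the desired lift of $f$ with target $(h,\eta)$.

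For the horn-filling property, suppose we are given morphisms $\zeta_{02} \colon (h_0,\eta_0) \to (h_2,\eta_2)$ and $\zeta_{12} \colon (h_1,\eta_1) \to (h_2,\eta_2)$ in $\ker^{\mathtt{h}}(p)$, together with $f_{01} \colon \pi_\sfH(h_0) \to \pi_\sfH(h_1)$ satisfying $\pi_\sfH(\zeta_{12}) \circ f_{01} = \pi_\sfH(\zeta_{02})$. The fibration $\pi_\sfH$ then yields a unique $\widehat{f}_{01} \colon h_0 \to h_1$ in $\sfH$ with $\pi_\sfH(\widehat{f}_{01}) = f_{01}$ and $\zeta_{12} \circ \widehat{f}_{01} = \zeta_{02}$. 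The remaining task is to check the identity $\eta_1 \circ p(\widehat{f}_{01}) = \sfu_\sfG(f_{01}) \circ \eta_0$ in $\sfG$. Post-composing both sides with $\sfu_\sfG(\pi_\sfH(\zeta_{12}))$ and applying the compatibility conditions for $\zeta_{12}$ and $\zeta_{02}$, each side reduces to $\eta_2 \circ p(\zeta_{02})$; Cartesianness of $\sfu_\sfG(\pi_\sfH(\zeta_{12}))$ in $\sfG$ then forces the desired equality among morphisms over $f_{01}$, so $\widehat{f}_{01}$ is a morphism in $\ker^{\mathtt{h}}(p)$ and manifestly the unique such lift.

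I anticipate no serious obstacle beyond bookkeeping: the argument amounts to organising three distinct Grothendieck fibration structures (those of $\sfH$, $\sfG$, and the conclusion for $\ker^{\mathtt{h}}(p)$) and repeatedly invoking the universal property of Cartesian arrows in $\sfG$. The step requiring the most care is the compatibility check in the horn-filling property, since there the candidate lift is already determined by $\pi_\sfH$ and one must \emph{derive} its compatibility with the $\eta_i$ from that of $\zeta_{02}$ and $\zeta_{12}$, rather than constructing it as in the path-lifting step.
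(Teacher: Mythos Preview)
Your argument is correct. The paper, however, does not verify the two conditions directly here: it simply observes that $\ker^{\mathtt{h}}(p) = *_\Hscr \times_\sfG^{\mathtt{h}} \sfH$ is a homotopy pullback of categories fibred in groupoids and invokes Lemma~\ref{st:GrFib Lemma}~(1), which asserts in general that such a homotopy pullback is again fibred in groupoids over $\Cscr$. Your proof is precisely the specialisation of the proof of that lemma to the case $\sfD_0 = \Cart$, $\sfD_1 = \sfH$, $\sfE = \sfG$: your lifting argument is its lifting argument with the $\sfD_0$-part trivialised, and your horn-filling argument---including the key step of post-composing with a Cartesian arrow in $\sfG$ to force equality---matches the ``back left square'' verification there. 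So the mathematics is the same; the paper just factors it through a reusable statement about homotopy pullbacks, which pays off because the same lemma is invoked again elsewhere (e.g.\ for strict versus homotopy kernels and in the principal 2-bundle discussion).
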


\begin{proof}
This follows directly by Lemma~\ref{st:GrFib Lemma}~(1) from Appendix~\ref{app:Principal 2-bundles}.
\end{proof}

Let $\ker(p)$ denote the strict pullback of the diagram $\Cart \overset{\sfu_\sfG}{\to} \sfG \overset{p}{\longleftarrow} \sfH$.
Explicitly, it is the category with objects $h \in \sfH$ such that
$p(h) = \sfu_\sfG(\pi_\sfH(h))$ and morphisms $\zeta \colon h_0 \to h_1$
such that $p(\zeta) = \sfu_\sfG(\pi_\sfH(\zeta))$.
The functor $\ker(p) \to \Cart$ is not a Grothendieck fibration in groupoids in general.
However, if $p \colon \sfH \to \sfG$ is a Grothendieck fibration in groupoids, then so are the functors $\ker^{\mathtt{h}}(p) \to \Cart$ and $\ker(p) \to \Cart$, and
 the canonical inclusion $\ker(p) \hookrightarrow
\ker^{\mathtt{h}}(p)$ is an equivalence.
The next definition is loosely modelled on~\cite[Definition~75]{SP:String_group}.

\begin{definition}
\label{def:2-group extensions}
Let $\sfA$ and $ \sfG$ be smooth 2-groups.
An \emph{extension} of $\sfG$ by $\sfA$  is a pair $(F,p)$ of a morphism of smooth 2-groups $p \colon \sfH \to \sfG$ that turns $\sfH$ into a $\ker^{\mathtt{h}}(p)$-principal 2-bundle over $\sfG$, and an equivalence of smooth 2-groups $F \colon \sfA \to \ker^{\mathtt{h}}(p)$.
\end{definition}

By Lemma~\ref{st:GrFib Lemma} from Appendix~\ref{app:Principal 2-bundles}, we could equivalently require $p$ to turn $\sfH$ into a $\ker(p)$-principal 2-bundle, but then we would need to use the non-canonical 2-group structure on $\ker(p)$.
This essentially amounts to choosing an inverse for the equivalence
$\ker(p) \hookrightarrow \ker^{\mathtt{h}}(p)$.

Our goal now is to define when an extension of smooth 2-groups is central.
Again, we follow the ideas of~\cite{SP:String_group}, where the criterion for an extension of $\sfG$ by $\sfA$ to be central is formulated using a functor $\sfG \to \Aut(\sfA)$ from $\sfG$ into the automorphisms of $\sfA$ as a 2-group; the smooth structure does not matter here.
In~\cite{SP:String_group}, this functor is obtained from abstract arguments.

\begin{construction}
\label{cstr: fun G -> Aut}
In our formalism, we can understand this construction as follows:
consider smooth 2-groups $\sfG$ and $\sfA$, where $\sfA$ is abelian,
and let $(F,p)$ be a smooth 2-group extension of $\sfG$ by $\sfA$,
with morphism $p \colon \sfH \to \sfG$.
Then $\sfA$ is abelian if and only if $\ker^{\mathtt{h}}(p)$ is
abelian, which is true if and only if $\ker(p)$ is abelian (since the 2-group structure induces Picard groupoid structures on the fibres of these smooth 2-groups, where $F$ induces monoidal equivalences).
Fix an arbitrary Cartesian space $c \in \Cart$.
Let $\Aut(\ker^{\mathtt{h}}(p)_{|c})$ denote the Picard groupoid of monoidal autoequivalences of the fibre $\ker^{\mathtt{h}}(p)_{|c}$ of $\ker^{\mathtt{h}}(p)$ over $c$.
Note that we do not claim that the Picard groupoids $\Aut(\ker^{\mathtt{h}}(p)_{|c})$ assemble into a smooth 2-group (though it might be possible to achieve this).
We claim that there is a functor $\sfG_{|c} \to \Aut(\ker^{\mathtt{h}}(p)_{|c})$ which is canonical up to unique natural isomorphism.
 
 Let $(\,\boldsymbol\cdot\,)^\vee \colon \sfH_{|c} \to \sfH_{|c}$ denote a choice of functorial inverse in $\sfH_{|c}$.
 This can always be enhanced to a functorial choice of \emph{adjoint inverse}, i.e.~a functor $k \mapsto (k^\vee, \ev_k, \coev_k)$ that maps $k$ to a triple of a dual object $k^\vee$, and duality morphisms (which are isomorphisms in this case) $\ev_k \colon k \otimes_\sfH k^\vee \to \sfu_\sfH(c)$ and $\coev_k \colon \sfu_\sfH(c) \to k^\vee \otimes_\sfH k$ which satisfy the triangle identities.
 The functor $(\,\boldsymbol\cdot\,)^\vee$ acts on morphisms $\psi \colon k \to k'$ by taking the dual of $\psi^{-1}$ with respect to the chosen duality data on $k$ and $k^\vee$.
 This enhancement can be achieved by choosing an \emph{adjoint inverse} for the equivalence of categories $(\otimes_\sfH, \pr_1)_{|c} \colon \sfH_{|c} \times \sfH_{|c} \to \sfH_{|c} \times \sfH_{|c}$ (which is always possible).

To an object $k \in \sfH_{|c}$ we associate the functor
\begin{align}
	\Ad_k \colon \ker^{\mathtt{h}}(p)_{|c} &\to \ker^{\mathtt{h}}(p)_{|c} 
	\\
	(h,\varphi) &\mapsto (k \otimes_\sfH h \otimes_\sfH k^\vee, \varphi_k)
	\\
	\big( \zeta \colon (h_0, \varphi_0) \to (h_1, \varphi_1) \big)
	&\mapsto 1_k \otimes_\sfH \zeta \otimes_\sfH 1_{k^\vee} \ ,
\end{align}
where the morphism $\varphi_k$ is the composition
\begin{equation}
\begin{tikzcd}[row sep=1.25cm]
	p(k \otimes_\sfH h \otimes_\sfH k^\vee) \ar[rrr] \ar[d,
        dashed, swap,  "\varphi_k"]
	& & & p(k) \otimes_\sfG p(h) \otimes_\sfG p(k^\vee) \ar[d, "1_{p(k)} \otimes_\sfG \varphi \otimes_\sfG 1_{p(k^\vee)}"]
	\\
	\sfu_\sfG (c) 
	& p(k \otimes_\sfH k^\vee) \ar[l]
	& p(k) \otimes_\sfG p(k^\vee) \ar[l]
	& p(k) \otimes_\sfG \sfu_\sfG(c) \otimes_\sfG p(k^\vee) \ar[l]
\end{tikzcd}
\end{equation}
Given another object $k' \in \sfH_{|c}$ such that $p(k) = p(k')$ in $\sfG$, the principality condition implies that there exists an object $(b, \beta) \in \ker^{\mathtt{h}}(p)$ and an isomorphism $\psi \colon k' \to k \otimes_\sfH b$.
Since $\ker^{\mathtt{h}}(p)$ is abelian, this induces an isomorphism
\begin{equation}
	\alpha_{k,k'} \colon (1_k \times \ev_b \times 1_{k^\vee})
        \circ (\psi \times 1_h \times \psi^\vee) \colon \Ad_{k'}(h)
        \to \Ad_k(h) \ .
\end{equation}
By the functoriality of $(\,\boldsymbol\cdot\,)^\vee$, any other choice of $(b,\beta)$ and $\psi$ yields the same isomorphism $\Ad_{k'}(h) \to \Ad_k(h)$ in this way.
Furthermore, this isomorphism is natural in $k$ and $h$ by the functoriality of $\otimes_\sfH$ and $(\,\boldsymbol\cdot\,)^\vee$.
That is, the pair $(\Ad,\alpha)$ defines an object
\begin{equation}
	(\Ad,\alpha)_c \in \holim^{\Grpd} \ul{2\Grp} \big(
        \sfH_{|c}^{[\bullet]}, \Aut(\ker^{\mathtt{h}}(p)_{|c}) \big) \ ,
\end{equation}
where $\sfH_{|c}^{[\bullet]} \to \sfG_{|c}$ denotes the \v{C}ech nerve of the functor $p_{|c}$.
As we show in the proof of Proposition~\ref{st:effective epi from surj
  GrFib} in Appendix~\ref{app:Principal 2-bundles}, any choice of preimages of the objects $g \in \sfG_{|c}$ under $p_{|c}$ now induces a functor $\sfG_{|c} \to \Aut(\ker^{\mathtt{h}}(p)_{|c})$ from these data.
Moreover, any other choice of such preimages will induce a canonical natural isomorphism of functors.
Hence we obtain a well-defined isomorphism class of functors, which we denote by
\begin{equation}
	[\Ad,\alpha]_c \in \pi_0 \Big( \ul{2\Grp} \big( \sfG_{|c},
        \Aut(\ker^{\mathtt{h}}(p)_{|c}) \big) \Big) \ .
\end{equation}
This class allows us to state when a smooth 2-group extension is central.
\qen
\end{construction}

\begin{definition}
\label{def:central sm2Grp ext}
Let $(F,p)$ be an extension of a smooth 2-group $\sfG$ by a smooth 2-group $\sfA$.
Then $(F,p)$ is \emph{central} if $\sfA$ is abelian, and for every $c
\in \Cart$ the isomorphism class $[\Ad,\alpha]_c$ agrees with the
isomorphism class of the trivial 2-group morphism $\sfG_{|c} \to \Aut(\ker^{\mathtt{h}}(p)_{|c})$.
\end{definition}

\subsection{Global description of the 2-group extension}
\label{sec:Sym(Ga) global}

We shall now apply the general considerations of
Sections~\ref{sect:smooth2groups}
and~\ref{sect:principal2bundles} to the smooth groupoid
$\Sym_G(\Ga)$ constructed in Section~\ref{sect:smoothsymgerbes}.

\begin{theorem}
\label{Thm: Sym is a 2-group}
The functor $\pi \colon \Sym_G(\Ga) \to \Cart$ is a smooth 2-group.
\end{theorem}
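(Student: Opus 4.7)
The plan is to verify directly the axioms of a group object in $\Hscr$ from Section~\ref{sect:smooth2groups}, namely by constructing multiplication, unit, associator and unitors, checking the pentagon and triangle identities, and finally establishing weak invertibility. The construction parallels that of $\Sym_G(P)$ from Proposition~\ref{Prop: multiplicative structure bundle}, but at one categorical level higher, and the whole game is to keep track of the coherence 2-isomorphisms that arise from pseudo-functoriality of pullback in $\BGrb$.

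First I would define the multiplication functor
\[
	\otimes \colon \Sym_G(\Ga) \times_\Cart \Sym_G(\Ga) \to \Sym_G(\Ga)
\]
by sending a pair of objects $(f_1, A_1), (f_2, A_2)$ over a common $c \in \Cart$ to $(f_1 \cdot f_2,\, \Phi_{f_2}^*A_1 \circ A_2)$, where $f_1 \cdot f_2 \colon c \to G$ is the pointwise product and we use the canonical 2-isomorphism $\Phi_{f_2}^*\Phi_{f_1}^*\Ga \cong \Phi_{f_1\cdot f_2}^*\Ga$ coming from $\Phi_{g_1}\circ\Phi_{g_2} = \Phi_{g_1\cdot g_2}$ to view this composition as a morphism $\pr_M^*\Ga \to \Phi_{f_1\cdot f_2}^*\Ga$. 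Morphisms are sent analogously using horizontal composition of the 2-isomorphisms and the morphism structure in $\ul{G}$. The unit $\sfu \colon \Cart \to \Sym_G(\Ga)$ sends $c$ to the pair $(e_c,\, 1_{\pr_M^*\Ga})$, where $e_c \colon c \to G$ is constant at the identity (so that $\Phi_{e_c} = \pr_M$).

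Next I would construct the associator, left and right unitors as 2-isomorphisms in $\Hscr$. For three objects $(f_i, A_i)$ over $c$, the two ways to bracket $A_1 \cdot A_2 \cdot A_3$ differ by the canonical 2-isomorphism witnessing the pseudo-functoriality of the assignment $f \mapsto \Phi_f^*$; this supplies $\sfa$. The unitors $\sfl$ and $\sfr$ come similarly from $\Phi_{e_c}^* A \cong A$ together with the identity constraints in $\BGrb(c \times M)$. The pentagon and triangle identities then reduce to the pentagon/triangle coherences for pullback pseudofunctors combined with the associativity and unit constraints of composition of 1-morphisms in the 2-category $\BGrb(c \times M)$; these are automatic.

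Finally I would verify weak invertibility of
\[
	(\otimes, \pr_1) \colon \Sym_G(\Ga) \times_\Cart \Sym_G(\Ga) \to \Sym_G(\Ga) \times_\Cart \Sym_G(\Ga).
\]
Given objects $(f_1,A_1)$ and $(g,B)$ over the same $c$, set $f_2 \coloneqq f_1^{-1}\cdot g$ and $A_2 \coloneqq (\Phi_{f_2}^*A_1)^{-1} \circ B$, which is well defined because $A_1$ is a 1-isomorphism in $\BGrb(c\times M)$ and hence weakly invertible, and because $\Phi_{f_2}^*$ is a pseudofunctor and so preserves weak invertibility. This defines an inverse assignment up to the canonical coherence 2-isomorphisms, and functoriality is immediate. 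The fact that $\Sym_G(\Ga)$ is fibred in groupoids over $\Cart$ (composition of the two Grothendieck fibrations $\Sym_G(\Ga) \to \ul{G} \to \Cart$) ensures that everything descends correctly through the projection $\pi$.

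The main obstacle is bookkeeping: all the coherence 2-isomorphisms from the pseudo-functoriality of pullback of bundle gerbes, from the pseudofunctor $g \mapsto \Phi_g$ on $M$, and from composition in $\BGrb$ must be threaded together consistently so that the pentagon and triangle diagrams for $(\otimes, \sfa, \sfl, \sfr)$ commute on the nose. This is entirely formal once one expands everything, but it is the only place where genuine care is needed; all of it is dictated by the strictly commuting identity $\Phi_{g_1}\circ\Phi_{g_2}=\Phi_{g_1\cdot g_2}$ on $M$ together with the standard coherences of the 2-category $\BGrb(c\times M)$.
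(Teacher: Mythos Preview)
Your approach is essentially identical to the paper's: the same multiplication via pullback-then-compose, the same unit at $(e_c,1_{\pr_M^*\Ga})$, coherences extracted from the 2-categorical structure of $\BGrb$, and invertibility checked by exhibiting inverses. The paper packages the inverse slightly differently by writing down a formula $(f,A)^{-1}\coloneqq(f^{-1},(1,\Phi_{f^{-1}})^*A^{-1})$ for a functorial inverse object rather than an inverse to the shear map, but these are equivalent.

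One point does need tightening. You write the product as $\Phi_{f_2}^*A_1\circ A_2$ and justify it via $\Phi_{g_1}\circ\Phi_{g_2}=\Phi_{g_1g_2}$. But $\Phi_{f_2}\colon c\times M\to M$, whereas $A_1$ lives over $c\times M$, so the pullback $\Phi_{f_2}^*A_1$ is not well-typed as written; and for smooth families $f_i\colon c\to G$ the composition $\Phi_{f_1}\circ\Phi_{f_2}$ does not even type-check, so the pointwise identity you invoke does not directly apply. The paper handles this by introducing the map $(1,\Phi_{f_0})\colon c\times M\to c\times M$, $(u,x)\mapsto(u,\Phi_{f_0(u)}(x))$, and recording the identities
\[
\pr_M\circ(1,\Phi_{f_0})=\Phi_{f_0},\qquad \Phi_{f_1}\circ(1,\Phi_{f_0})=\Phi_{f_1f_0},\qquad (1,\Phi_{f_1})\circ(1,\Phi_{f_0})=(1,\Phi_{f_1f_0}),
\]
which are what actually drive the associator and unitor coherences. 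Your argument goes through once you replace $\Phi_{f_2}^*$ by $(1,\Phi_{f_2})^*$ throughout and use these identities in place of the pointwise one; without this correction the pentagon/triangle verification you describe does not literally parse.
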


\begin{proof}
First we show that $\Sym_G(\Ga)$ carries the structure of a monoid object in $\Hscr$.
The terminal object $*_\Hscr \in \Hscr$ is the identity functor $1_\Cart \colon \Cart \to \Cart$.
We start by defining the 1-morphism $\sfu \colon *_\Hscr \to
\Sym_G(\Ga)$; it is the functor that assigns to every object $c \in
\Cart$ the object $(e_c, \pr_M^*1_\Ga) \in \Sym_G(\Ga)$, where $e_c
\colon c \to G$ is the constant map at the identity object $e \in G$.

Next we define the 1-morphism 
$$
\otimes \colon \Sym_G(\Ga) \times_\Cart \Sym_G(\Ga) \to \Sym_G(\Ga)
$$
in the following way:
consider two arbitrary objects $(f_0,A_0), (f_1,A_1) \in \Sym_G(\Ga)$ in
the same fibre of $\pi \colon \Sym_G(\Ga) \to \Cart$, i.e.~$f_0,f_1$ are defined over the same object $c \in \Cart$.
We define the map $(1,\Phi_{f_0})$ as the composition
\begin{equation}
\begin{tikzcd}[column sep=1.25cm, row sep=1cm]
	c \times M \ar[d, dashed, "{(1,\Phi_{f_0})}"'] \ar[r, "{\mit\Delta}
        \times 1"] & c \times c \times M \ar[d, "1 \times f_0\times 1"]
	\\
	c \times M & c \times G \times M \ar[l, "1 \times \Phi"]
\end{tikzcd}
\end{equation}
Observe that
\begin{equation}
\label{eq:Phi_f props}
	\pr_M \circ (1,\Phi_{f_0}) = \Phi_{f_0} \ ,
	\quad
	\Phi_{f_1} \circ (1,\Phi_{f_0}) = \Phi_{f_1 \, f_0}
	\qandq
	(1,\Phi_{f_1}) \circ (1,\Phi_{f_0}) = (1, \Phi_{f_1 \, f_0}) \
        ,
\end{equation}
where the second and third identities use the fact that $\Phi$ is a group action.
Thus we can form the 1-morphism
\begin{equation}
\begin{tikzcd}[column sep=1.5cm, row sep=1.25cm]
	\pr_M^*\Ga \ar[r, "A_0"] \ar[d, dashed] & \Phi_{f_0}^*\Ga \ar[r, "\cong"] & (1,\Phi_{f_0})^* \pr_M^* \Ga \ar[d, "{(1,\Phi_{f_0})^*A_1}"]
	\\
	\Phi_{f_1 \, f_0}^*\Ga & & (1,\Phi_{f_0})^*\Phi_{f_1}^* \Ga \ar[ll, "\cong"]
\end{tikzcd}
\end{equation}
The solid unlabelled arrows are canonical isomorphisms that stem from
the fact that $\BGrb$ is a (pre)sheaf of 2-categories on the category
of manifolds $\Mfd$~\cite{Waldorf--More_morphisms,NS:Equivar}.
By a slight abuse of notation, we denote the composite morphism by $(1,\Phi_{f_0})^*A_1 \circ A_0$.
Then we set
\begin{equation}
\label{eq:2-grp str on Sym(G)}
	(f_1,A_1) \otimes (f_0,A_0)
	\coloneqq \big( f_1 \, f_0,\, (1,\Phi_{f_0})^*A_1 \circ A_0
        \big) \ ,
\end{equation}
and analogously on 2-isomorphisms.
The associator and unitors are readily obtained from those in the sheaf of 2-categories $\BGrb$.
The coherence conditions in $\BGrb$ imply that $\Sym_G(\Ga)$, endowed with the multiplication and coherence morphisms defined here, is a monoid object in $\Hscr$.

Now we show that $\Sym_G(\Ga)$ is in fact a group object in $\Hscr$.
Set
\begin{equation}
\label{eq:inverse on Sym(G)}
	(f,A)^{-1} \coloneqq \big( f^{-1},\, (1, \Phi_{f^{-1}})^* A^{-1} \big)
\end{equation}
and analogously on morphisms, where $f^{-1}$ denotes the composition
of the map $f \colon c \to G$ with the inversion map in the group $G$.
It follows from the properties~\eqref{eq:Phi_f props} of
$\Phi_{(\,\boldsymbol\cdot\,)}$ that this provides a functorial
(two-sided) inverse object with respect to the 1-morphism $\otimes$,
and hence shows that the morphism 
$$
(\otimes,\pr_1) \colon \Sym_G(\Ga) \times_\Cart \Sym_G(\Ga) \to
\Sym_G(\Ga) \times_\Cart \Sym_G(\Ga)
$$ 
is an equivalence in $\Hscr$ (where the product is taken in $\Hscr$).
Thus $\Sym_G(\Ga) \to \Cart$ is indeed a group object in $\Hscr$.
\end{proof}

\begin{theorem}
\label{st: 2-group extension}
There is a smooth 2-group extension
\begin{equation}
\label{eq:Sym(Ga)-extension}
\begin{tikzcd}
	1 \ar[r] & \HLBdl^M \ar[r, "\iota"] & \Sym_G(\Ga) \ar[r, "p"] &
        \ul{G} \ar[r] & 1 \ ,
\end{tikzcd}
\end{equation}
where we abbreviate $\ast_\Hscr=\Cart$ by $1$.
\end{theorem}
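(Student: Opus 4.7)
The plan is to verify the conditions of Definition~\ref{def:2-group extensions}: construct a morphism of smooth 2-groups $p\colon \Sym_G(\Ga) \to \ul{G}$ that endows $\Sym_G(\Ga)$ with the structure of a $\ker^{\mathtt{h}}(p)$-principal 2-bundle over $\ul{G}$, and exhibit an equivalence $F\colon \HLBdl^M \to \ker^{\mathtt{h}}(p)$ of smooth 2-groups; the inclusion $\iota$ is then the composition of $F$ with the canonical morphism $\ker^{\mathtt{h}}(p) \to \Sym_G(\Ga)$.

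First I would observe that the projection $p$ is already implicit in Definition~\ref{def:Sym(G) as fibred category}. Since $\Sym_G(\Ga)$ arose via the Grothendieck construction of the pseudofunctor $\Sym_G^{\PSh}(\Ga)$, the functor $p$ is automatically a Grothendieck fibration in groupoids, and the explicit formulae~\eqref{eq:2-grp str on Sym(G)} and~\eqref{eq:inverse on Sym(G)} show that $p$ commutes strictly with the multiplication, unit and inversion maps, so $p$ is a strict morphism of smooth 2-groups. Its essential surjectivity reduces to the non-emptiness of each groupoid $\Sym_G^{\PSh}(\Ga)(f)$, already established in Section~\ref{sect:smoothsymgerbes} via the contractibility of $c\in\Cart$. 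To identify $\ker^{\mathtt{h}}(p)$, I note that since each fibre of $\ul{G} \to \Cart$ carries only identity morphisms, any object $((f,A),\eta)$ of the homotopy pullback must have $f = e_c$ and $\eta$ the identity; thus $A$ is an automorphism of $\Phi_{e_c}^*\Ga = \pr_M^*\Ga$ over $c\times M$. The descent construction~\eqref{eq:construction in Section 2}, applied fibrewise and extended naturally in $c$, yields a functor $F\colon \HLBdl^M \to \ker^{\mathtt{h}}(p)$, which by the standard identification of $\End(\Ga)$ with $\HLBdl(c\times M)$ as monoidal categories is a monoidal equivalence, hence an equivalence of smooth 2-groups.

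It remains to verify the three conditions of Definition~\ref{def:sfH-bundle} for $p$. Condition~(1) has been addressed. The right action of $\ker^{\mathtt{h}}(p)$ on $\Sym_G(\Ga)$ is the restriction along $F$ of the group multiplication: unwinding~\eqref{eq:2-grp str on Sym(G)} with $f_0 = e_c$ (so that $\Phi_{e_c} = \pr_M$) gives $(f,A)\otimes(e_c,B) = (f, A\circ B)$, which covers the identity on $\ul{G}$ strictly, making condition~(2) a consequence of the associativity of horizontal composition in $\BGrb$. The main obstacle is principality, condition~(3): that the canonical functor
\begin{equation}
\Sym_G(\Ga) \times_\Cart \ker^{\mathtt{h}}(p) \longrightarrow \Sym_G(\Ga) \times^{\mathtt{h}}_{\ul{G}} \Sym_G(\Ga)
\end{equation}
sending $((f,A),(e_c,B))$ to the pair of objects $(f,A)$ and $(f, A\circ B)$ equipped with the identity 2-morphism in $\ul{G}$ is an equivalence. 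Essential surjectivity follows from the observation that any two objects $(f,A_0), (f,A_1)$ in the same fibre of $p$ differ by the automorphism $B = A_0^{-1} \circ A_1$ of $\pr_M^*\Ga$; full faithfulness reduces to the analogous statement for 2-isomorphisms in $\BGrb(c\times M)$. Since the descent construction~\eqref{eq:construction in Section 2} and all involved pullbacks are compatible with morphisms $c \to c'$ in $\Cart$, the comparison functor is an equivalence in $\Hscr$, completing the verification.
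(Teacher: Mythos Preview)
Your proposal is correct and follows essentially the same approach as the paper: both identify $p$ as an essentially surjective Grothendieck fibration and morphism of smooth 2-groups, both use the discreteness of the fibres of $\ul{G}$ to reduce $\ker^{\mathtt{h}}(p)$ to the automorphism groupoids of $\pr_M^*\Ga$ and invoke the standard equivalence with $\HLBdl(c\times M)$, and both verify principality via the $\HLBdl$-torsor structure on morphism categories of bundle gerbes. One minor slip: the construction~\eqref{eq:construction in Section 2} you cite actually goes in the opposite direction (from automorphisms of $\Ga$ to line bundles), whereas the paper uses its inverse $L \mapsto L \otimes 1_{\pr_M^*\Ga}$ to define $\iota$ directly; since this is an equivalence either way, the argument is unaffected.
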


\begin{proof}
The projection functor $\pr \colon \ul{G} \to \Cart$ is a smooth
2-group via $\otimes_G \colon \ul{G} \times_\Cart \ul{G} \to \ul{G}$ defined by
\begin{equation}
(f_1 \colon c \to G) \otimes_G (f_0 \colon c \to G) = (f_1 \, f_0
\colon c \to G) \ .
\end{equation}
It is evident from~\eqref{eq:2-grp str on Sym(G)} that $p \colon \Sym_G(\Ga) \to \ul{G}$ is a morphism of smooth 2-groups.
It is a Grothendieck fibration in groupoids by construction, and it is surjective on objects since $G$ is connected (as we have argued at the beginning of this section).

Next we define the morphism $\iota \colon \HLBdl^M \to \Sym_G(\Ga)$ in $\Hscr$.
Over a Cartesian space $c\in \Cart$, it is simply the canonical inclusion
\begin{align}
	\HLBdl(c \times M) &\to \BGrb(c \times M)(\pr_M^*\Ga,
                             \Phi_{e_c}^*\Ga) = \BGrb(c \times
                             M)(\pr_M^*\Ga, \pr_M^*\Ga) \ .
\end{align}
Here $e_c \colon c \to G$ is the constant map at the unit element of $G$.
Since the inclusion of line bundles into morphisms of bundle gerbes strictly maps the tensor product to the composition~\cite{Waldorf--More_morphisms,Bunk--Thesis}, we readily find that $\iota$ is a morphism of smooth 2-groups.

To see that~\eqref{eq:Sym(Ga)-extension} is an extension of smooth 2-groups, we first show that the inclusion $\iota$ is an equivalence $\HLBdl^M \to \ker^{\mathtt{h}}(p)$.
By Lemma~\ref{st:GrFib Lemma} from Appendix~\ref{app:Principal
  2-bundles} and the fact that $p \colon \Sym_G(\Ga) \to \ul{G}$ is a
Grothendieck fibration in groupoids, it follows that the canonical inclusion $\ker(p) \hookrightarrow \ker^{\mathtt{h}}(p)$ is an equivalence.
Consequently, it suffices to show that $\iota$ induces an equivalence $\HLBdl^M \to \ker(p)$.
Over an object $c \in \Cart$, the fibre of $\ker(p)$ consists of the automorphism groupoid of $\pr_M^* \Ga \in \BGrb(c \times M)$.
It is well-known~\cite{Waldorf--More_morphisms} that the inclusion
$\HLBdl(c \times M) \to \BGrb(c \times M)$ given by $L \mapsto L \otimes 1_{\pr_M^*\Ga}$ is an equivalence of groupoids.

To see that the functor $p \colon \Sym_G(\Ga) \to \ul{G}$ is an $\HLBdl^M$-principal 2-bundle (see Definition~\ref{def:sfH-bundle}), it now suffices to show that the functor
\begin{align}
	(1, \alpha) \colon \Sym_G(\Ga) \times_\Cart \HLBdl^M
	&\to \Sym_G(\Ga)\times_{\ul{G}} \Sym_G(\Ga)
	\\
	\big( (f,A), L \big) &\mapsto \big( (f,A), (f, A \otimes L) \big)
\end{align}
is an equivalence in $\Hscr$, where we have written out the product in $\Hscr$ as the fibre product over $\Cart$.
Observe that by the equivalence $\HLBdl^M \to \ker^{\mathtt{h}}(p)$,
it is enough to consider the action of $\HLBdl^M$, and since $\ul{G}$
has discrete fibres, i.e.~the fibres have no non-identity morphisms,
there is an identity $\Sym_G(\Ga)\times_{\ul{G}}^{\mathtt{h}} \Sym_G(\Ga) = \Sym_G(\Ga)\times_{\ul{G}} \Sym_G(\Ga)$, and hence we can work with the strict pullback instead of the homotopy pullback.

Since both sides are fibred over $\Cart$, it suffices to show that this functor is an equivalence on all fibres~\cite[Proposition~3.36]{Vistoli:Fib_Cats}.
Thus we fix an object $c \in \Cart$.
We need to check that the functor
\begin{align}
\label{eq:torsor fctr, fixed c}
	(1, \alpha)_{|c} \colon \Sym_G(\Ga)_{|c} \times \HLBdl(c \times M)
	&\to \Sym_G(\Ga)_{|c} \times_{\ul{G}_{\,|c}} \Sym_G(\Ga)_{|c}
	\\
	\big( (f,A), L \big) &\mapsto \big( (f,A), (f, A \otimes L) \big)
\end{align}
is an equivalence.
By construction, both sides decompose into coproducts
\begin{equation}
	\Sym_G(\Ga)_{|c} \times \HLBdl(c \times M) = \coprod_{f \colon
                                                   c \to G}\, \BGrb(c
                                                   \times
                                                   M)(\pr_M^*\Ga,
                                                   \Phi_f^*\Ga) \times
                                                   \HLBdl(c \times M)
\end{equation}
and
\begin{equation}
	\Sym_G(\Ga)_{|c} \times_{\ul{G}_{\,|c}} \Sym_G(\Ga)_{|c} =
                                                               \coprod_{f
                                                               \colon
                                                               c \to
                                                               G}\,
                                                               \BGrb(c
                                                               \times
                                                               M)(\pr_M^*\Ga,
                                                               \Phi_f^*\Ga)
                                                               \times
                                                               \BGrb(c
                                                               \times
                                                               M)(\pr_M^*\Ga,
                                                               \Phi_f^*\Ga)
                                                               \ ,
\end{equation}
so the functor $(1, \alpha)_{|c}$ decomposes into functors
\begin{align}
	(1, \alpha)_{|f} \colon \BGrb(c \times M)&(\pr_M^*\Ga, \Phi_f^*\Ga) \times \HLBdl(c \times M)
	\\
	&\to \BGrb(c \times M)(\pr_M^*\Ga, \Phi_f^*\Ga) \times \BGrb(c
          \times M)(\pr_M^*\Ga, \Phi_f^*\Ga) \ .
\end{align}
This functor acts as the identity on the first factor and as the standard action of line bundles on isomorphisms of bundle gerbes in the second factor.
Thus $(1, \alpha)_{|f}$ is an equivalence since on any manifold $X$, the category of 1-isomorphisms between any given bundle gerbes is a torsor category over $\HLBdl(X)$ with respect to this action~\cite{Waldorf--More_morphisms}.
\end{proof}

\begin{proposition}
If $G$ acts non-trivially on $M$, then the extension~\eqref{eq:Sym(Ga)-extension} is not central.
\end{proposition}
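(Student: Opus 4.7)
The plan is to establish non-centrality by testing the adjoint action class $[\Ad,\alpha]_c$ from Construction~\ref{cstr: fun G -> Aut} over a single, well-chosen Cartesian space. The natural choice is $c = \R^0 \in \Cart$, over which $\ul{G}_{|c}$ is the discrete groupoid with object set $G$, and $\ker^{\mathtt{h}}(p)_{|c}$ is equivalent to $\HLBdl(M)$ via the equivalence $\iota$ established in the proof of Theorem~\ref{st: 2-group extension}.

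First I would compute the adjoint action $\Ad \colon G \to \Aut(\HLBdl(M))$ explicitly. For $g \in G$ and a chosen lift $(g,A) \in \Sym_G(\Ga)_{|c}$, the formulas \eqref{eq:2-grp str on Sym(G)} and \eqref{eq:inverse on Sym(G)} unfold the conjugate $(g,A) \otimes (e,L) \otimes (g,A)^{-1}$ to $(e,\Phi_{g^{-1}}^*(A \circ L \circ A^{-1}))$, where $L \in \HLBdl(M)$ is identified with the corresponding automorphism of $\pr_M^*\Ga$ under the equivalence $\sfR$ recalled in Section~\ref{Sec: Bundle gerbes}. Since line bundle automorphisms slide through any 1-isomorphism of gerbes, $\sfR(A \circ L \circ A^{-1}) = L$, and therefore
\begin{equation}
\Ad_{g}(L) \;=\; \Phi_{g^{-1}}^* L \ .
\end{equation}
This computation also confirms independence of the choice of lift $A$ up to the canonical natural isomorphism provided by the principal action of $\HLBdl^M$.

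Next I would suppose for contradiction that $[\Ad,\alpha]_c$ coincides with the class of the trivial 2-group morphism. Unwrapping the definition, this provides for every $g \in G$ a monoidal natural isomorphism $\eta_g \colon \Phi_{g^{-1}}^* \Rightarrow \id_{\HLBdl(M)}$. The decisive observation is to test naturality on the unit object $\mathds{1} \in \HLBdl(M)$: each $\lambda \in C^\infty(M,\U(1)) = \Aut_{\HLBdl(M)}(\mathds{1})$ yields a naturality square in which, after cancelling the invertible element $\eta_{g,\mathds{1}} \in C^\infty(M,\U(1))$ (multiplication being pointwise, hence commutative), one obtains the pointwise identity
\begin{equation}
\lambda(x) \;=\; \lambda\bigl(\Phi_{g^{-1}}(x)\bigr) \qquad \text{for all } x \in M \ ,
\end{equation}
valid for every $\lambda \in C^\infty(M,\U(1))$.

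Finally, since $G$ acts non-trivially on $M$, there exist $g \in G$ and $x \in M$ with $\Phi_{g^{-1}}(x) \neq x$; choosing any $\lambda \in C^\infty(M,\U(1))$ that separates the points $x$ and $\Phi_{g^{-1}}(x)$ contradicts the above identity, showing that the extension \eqref{eq:Sym(Ga)-extension} is not central. The main obstacle I expect is the bookkeeping in the first step: one must carefully track the pullback coherences in \eqref{eq:2-grp str on Sym(G)} and \eqref{eq:inverse on Sym(G)} and verify, using the monoidal equivalence $\sfR$, that $A$ and $A^{-1}$ really do cancel against $L$; once the explicit formula $\Ad_g(L) = \Phi_{g^{-1}}^*L$ is in hand, the remainder is a short naturality argument that does not even invoke the monoidal coherence of $\eta_g$.
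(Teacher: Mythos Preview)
Your proposal is correct and follows essentially the same approach as the paper: compute the adjoint action explicitly as pullback along the $G$-action, then derive a contradiction by testing naturality of the putative trivialising isomorphism on automorphisms of the trivial line bundle. Your formula $\Ad_g(L)\cong\Phi_{g^{-1}}^*L$ differs from the paper's stated $\Phi_f^*L$ by an inverse, but this is immaterial to the argument (and your computation from~\eqref{eq:2-grp str on Sym(G)} and~\eqref{eq:inverse on Sym(G)} appears to be the correct one).
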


\begin{proof}
This follows readily from the explicit forms~\eqref{eq:2-grp str on Sym(G)} and~\eqref{eq:inverse on Sym(G)} of the product and the inverse in $\Sym_G(\Ga)$, together with the fact that composition of morphisms of bundle gerbes is compatible with tensoring by line bundles.
Explicitly, given $(f,A) \in \Sym_G(\Ga)_{|c}$ and $L \in \HLBdl(c\times
M)$ we find 
\begin{equation}
	(f,A) \otimes \iota(L) \otimes (f,A)^{-1} \cong (e_c,
        \Phi_f^*L) = \iota(\Phi_f^*L) \ .
\end{equation}
Hence $(\Ad, \alpha)_c(f)(L) \cong \Phi_f^* L$.
Observe that since $\ul{G}$ has discrete fibres, we have $\ker^{\mathtt{h}}(p) = \ker(p)$, and by the equivalence \smash{$\HLBdl^M \to \ker(p)$} it is sufficient to consider the adjoint action on the smooth 2-group $\HLBdl^M$ here.

Let $c = *$, so that the data $f$ corresponds to an element $g \in G$.
Assume that the extension~\eqref{eq:Sym(Ga)-extension} is central.
Then, by Construction~\ref{cstr: fun G -> Aut} and Definition~\ref{def:central sm2Grp ext}, there is an isomorphism $\varphi \colon (\Ad, \alpha)_{*} \to 1_{\HLBdl(M)}$ of morphisms of 2-groups $G \to \Aut(\HLBdl(M))$.
Let $I \in \HLBdl(M)$ denote the trivial line bundle, and let $\psi \colon I \to I$ be any automorphism, i.e.~any $\U(1)$-valued function on $M$.
The naturality of $\varphi$ then implies, in particular, that the diagram
\begin{equation}
\begin{tikzcd}[column sep=1.5cm, row sep=1cm]
	g^*I = I \ar[r, "g^*\psi"] \ar[d, "\varphi_{I,g}"'] & I = g^*I \ar[d, "\varphi_{I,g}"]
	\\
	I \ar[r, "\psi"'] & I
\end{tikzcd}
\end{equation}
commutes.
But this is equivalent to the identity $\psi = g^*\psi$ for any $g \in
G$ and $\psi \colon M \to \U(1)$, which is a contradiction if the $G$-action on $M$ is non-trivial.
\end{proof}

We now obtain an action of $\Sym_G(\Ga)$ on $\Ga$ in the following sense:
let $\widehat{\Ga} \in \Hscr$ denote the category fibred in groupoids
over $\Cart$ which is defined as follows. Consider the presheaf of groupoids on $\ul{M}$ that assigns to $f \colon c \to M$ the category $\BGrb(c)(\Ia, f^*\Ga)$.
Then $q:\widehat{\Ga}\to\ul{M}$ is obtained by applying the Grothendieck construction to this presheaf.
The action of $\Sym_G(\Ga)$ on $\Ga$ is then the morphism of categories
over $\ul{G \times M} \cong \ul{G} \times \ul{M}$ obtained through the diagram
\begin{equation}
\begin{tikzcd}[column sep={2cm,between origins}, row sep=1.25cm]
	\Sym_G(\Ga) \times\Phi^*\widehat{\Ga} \ar[dr, "p \times q"']
        \ar[rr, dashed, "\widehat{\Phi}"] & & \pr_M^*\widehat{\Ga} \ar[dl,
        "1\times q"] & & \widehat{\Ga} \ar[d,"q"]
	\\
	& \ul{G} \times \ul{M} \ar[rrr, shift left=0.1cm, "{\pr_M}"] \ar[rrr, shift left=-0.1cm, "\Phi", swap] & & & \ul{M}
\end{tikzcd}
\end{equation}
where we suppress pullbacks and denote by $\pr_M , \Phi\colon  \ul{G} \times \ul{M} \longrightarrow
 \ul{M}  $ the functors induced from the smooth maps $\pr_M, \Phi$ via postcomposition.
 The functor $\widehat{\Phi}$ sends an object $(\Aa,\Ja) \in
 \Sym_G(\Ga)_{|c} \times \widehat{\Ga}_{|f}$ to the composition
 ${\mit\Delta}^*(1_c \times f)^*(\Aa , \Ja)$, where ${\mit\Delta} \colon c \to c
 \times c$ is the diagonal map.

The construction $\Sym_G$ is 2-functorial: let $E \colon \mathcal{G}\longrightarrow \mathcal{G}'$ be a 1-isomorphism of 
bundle gerbes. Pick an adjoint inverse $E^{\vee}$.
The 1-isomorphism $E$ induces a 1-isomorphism of smooth 2-groups
\begin{align}
\widehat{E} \colon \Sym_G(\mathcal{G}) & \longrightarrow \Sym_G(\mathcal{G}') \\
\big(f\colon c \longrightarrow G ,A \colon
  \pr_M^*\mathcal{G}\longrightarrow \Phi_f^*\mathcal{G}\big)
                                     &\longmapsto \Big(f,
                                       \pr_M^*\mathcal{G'}
                                       \xrightarrow{\pr_M^*E^{\vee}}\pr_M^*\mathcal{G}\overset{A}{\longrightarrow}
                                       \Phi_f^* \mathcal{G}
                                       \xrightarrow{\Phi_f^* E}
                                       \Phi_f^* \mathcal{G}'\Big) \ .
\end{align}
Let $E, E' \colon \Ga \longrightarrow \Ga'$ be 1-isomorphisms
and $\eta\colon E \longrightarrow E'$ a 2-isomorphism. We 
construct a smooth 2-isomorphism $\widehat{\eta}\colon \widehat{E}
\longrightarrow \widehat{E'}$ 
whose component at an object $(f,A)$ of $\Sym_G(\Ga)$ is given by
\begin{equation}
\begin{tikzcd}[column sep=1.6cm]
\pr_M^* \Ga' \ar[r,"\pr_M^*E^{\vee}", bend left=40,""name=LL1]
\ar[r,"\pr_M^*{E'}^{\vee}", bend right=40,swap,""name=LL2] & \ \ \pr_M^* \Ga \ar[r,"A"] & \Phi_f^* \Ga \ar[r,"\Phi_f^*E", bend left=40,""name=LL1'] \ar[r,"\Phi_f^*E'", bend right=40,swap,""name=LL2'] & \Phi_f^* \Ga'
\ar[from=LL1, to=LL2,Rightarrow, "\pr_M^*\!\eta^\vee",shorten <= 3pt, shorten >= 3pt]
\ar[from=LL1', to=LL2',Rightarrow, "\Phi_f^*\eta",shorten <= 3pt, shorten >= 3pt] 
\end{tikzcd} 
\end{equation}

\subsection{Descent description of the 2-group extension}
\label{Sec:Desc description of Sym}

We can describe the smooth 2-group $\Sym_G(\Ga)$ in a way analogously to Section~\ref{sect:non-triv fibs via connections}; that is, we can construct $\Sym_G(\Ga)$ in terms of descent data for the path fibration $P_0 G \to G$ and the parallel transport on $\Ga$ introduced in Section~\ref{Sec: parallel transport}.
However, for a bundle gerbe $\Ga$ this construction is more involved
compared to the case of a principal bundle $P$.
In particular, we need to replace the associated bundle construction $(P_0 G \times \Gau(P))/{\sim}$ of $\La_G$ (cf.~Section~\ref{sect:non-triv fibs via connections}) by a homotopy-coherent version.
Once established, the descent presentation of $\Sym_G(\Ga)$ allows us to study and compute this smooth 2-group very explicitly in certain situations, as we demonstrate in Section~\ref{Sec: Magnetic translations}.

Recalling the notation of Section~\ref{sect:non-triv fibs via
  connections}, let $\Ga \in \BGrb^\nabla(M)$ be a bundle gerbe with
connection on $M$.
Using the smooth map~\eqref{EQ: Descent description line bundle} we obtain a diffeological hermitean line bundle
\begin{equation}
\label{eq:L^dashv}
	\sfL^\dashv \coloneqq (L\Phi)^*(\Ta \Ga) \to (P_0 G)^{[2]}
        \times M \ .
\end{equation}
This object is completely analogous to~\eqref{EQ: Descent description
  line bundle} when one views the holonomy of a line bundle $L$
on $M$ as the transgression of $L$ to the loop space $LM$, and subsequently the
transgression line bundle $\Ta\Ga$ as the holonomy of the bundle gerbe $\Ga$ on $M$ (cf.~Section~\ref{sec: Transgression and PT}).
In the adjoint picture, we can interpret $\sfL^\dashv$ as a smooth
assignment of a line bundle with connection $\sfL_{(\gamma,\alpha)}$
on $M$ to each pair of based paths $(\gamma, \alpha) \in (P_0G)^{[2]}$.

Consider the simplicial diffeological space $(P_0 G)^{[\bullet]}
\times M$ with face maps 
$$
d_i:(P_0 G)^{[n]}
\times M\to (P_0 G)^{[n-1]}
\times M
$$ 
for $0\leq i\leq n-1$ defined by deleting the $i$-th entry of
$(P_0G)^{[n]}$.
The fusion product $\lambda$ on the transgression line bundle $\Ta\Ga$ over the
loop space $LM$ induces an isomorphism
\begin{equation}
\label{eq:L^dashv coherence iso}
	(L\Phi)^*\lambda \colon d_0^*\sfL^\dashv \otimes d_2^*\sfL^\dashv \to d_1^*\sfL^\dashv
\end{equation}
of hermitean line bundles over $(P_0 G)^{[3]} \times M$, which is coherent over $(P_0 G)^{[4]} \times M$.

\begin{remark}
In an adjoint fashion, the hermitean line bundle $\sfL^\dashv$ on $(P_0 G)^{[2]} \times M$ from~\eqref{eq:L^dashv} can be seen as a morphism of smooth groupoids $\sfL \colon (P_0 G)^{[2]} \to \HLBdl^M$ (under the equivalence of diffeological vector bundles and morphisms to $\HLBdl$ following from~\cite[Thm.~4.8]{Bunk:2020ifw}).
In this picture, the coherent isomorphism $(L\Phi)^*\lambda$ from~\eqref{eq:L^dashv coherence iso} corresponds to a coherent isomorphism $d_0^* \sfL \otimes d_2^* \sfL \to d_1^*\sfL$ of morphisms $(P_0 G)^{[3]} \to \HLBdl^M$.
In this sense, $(\sfL^\dashv, (L\Phi)^*\lambda)$ represent a smooth $\HLBdl^M$-valued \v{C}ech 1-cocycle on $G$ with respect to the \v{C}ech nerve of the subduction $P_0 G \to G$.
Note that this nicely fits the formalism for higher principal bundles with not-necessarily strict structure groups---such as $\HLBdl^M$---from~\cite{NSS:oo-bdls_I}.
\qen
\end{remark}

\begin{definition}
Let $c\in\Cart$ be a Cartesian space and $f \colon c \to G$ a smooth
map. We define a category $\Des^\PSh_\sfL(f)$ with
\begin{myitemize}
\item $\ul{\rm Objects:}$ pairs $(J, \jmath)$, where $J \in
  \HLBdl(f^*P_0 G \times M)$ and where $\jmath$ is an isomorphism of hermitean
  line bundles
\begin{equation}
	\jmath \colon d_1^*J \to \big(\widehat{f}^{\,[2]} \times
        1\big)^*\sfL^\dashv \otimes d_0^*J 
\end{equation}
over $(f^*P_0G)^{[2]}\times M$, where $\widehat{f}^{\,[n]} \colon (f^*P_0 G)^{[n]} \to P_0 G^{[n]}$ is the canonical map induced by the pullback of the subduction $P_0 G \to G$ along $f$.
These data are required to satisfy the compatibility condition that
\begin{equation}
\label{eq:obs in Des_L and fusion}
\begin{tikzcd}[row sep=1cm]
	d_2^*d_1^*J \ar[r, "d_2^*\jmath"] \ar[dd, "d_1^*\jmath"']
	& d_2^* \big( (\widehat{f}^{\,[2]} \times 1)^*\sfL^\dashv \otimes d_0^*J \big) \ar[d, "1 \otimes d_0^*\jmath"]
	\\
	& d_2^*\big(\widehat{f}^{\,[2]} \times 1\big)^*\sfL^\dashv \otimes d_0^*\big(\widehat{f}^{\,[2]} \times 1\big)^*\sfL^\dashv \otimes d_1^*d_0^*J \ar[d, "\widehat{f}^{\,[3]*} \lambda \otimes 1"]
	\\
	d_1^*\big(\widehat{f}^{\,[2]} \times 1\big)^*\sfL^\dashv \otimes d_2^*d_0^*J \ar[r, equal]
	& d_1^*\big(\widehat{f}^{\,[2]} \times 1\big)^*\sfL^\dashv \otimes d_1^*d_0^*J
\end{tikzcd}
\end{equation}
is a commutative diagram in $\HLBdl((f^*P_0 G)^{[3]} \times M)$, where
we use the simplicial relations.

\item $\ul{\rm Morphisms:}$ a morphism $(J,\jmath) \to (J',\jmath')$ is an isomorphism $\psi \colon J \to J'$ such that
\begin{equation}
\label{eq:mps in Des_L and fusion}
\begin{tikzcd}[column sep=1.25cm, row sep=1cm]
	d_1^*J \ar[r, "\jmath"] \ar[d, "d_1^*\psi"']
	& \big(\widehat{f}^{\,[2]} \times 1\big)^*\sfL^\dashv \otimes d_0^*J \ar[d, "1 \otimes d_0^*\psi"]
	\\
	d_1^*J' \ar[r, "\jmath'"']
	& \big(\widehat{f}^{\,[2]} \times 1\big)^*\sfL^\dashv \otimes d_0^*J'
\end{tikzcd}
\end{equation}
is a commutative diagram in $\HLBdl((f^*P_0 G)^{[2]} \times M)$.
\end{myitemize}
\end{definition}

Pullbacks of morphisms of bundle gerbes turns the assignment $(f \colon c \to G) \mapsto \Des^\PSh_\sfL(f)$ into a presheaf of groupoids on $\ul{G}\,$.
(This is actually even a sheaf of groupoids, but we will not need this fact here.)
Applying the Grothendieck construction, we obtain a category fibred in
groupoids over $\ul{G}\,$, $p_\sfL:\Des_\sfL\to\ul{G}\,$, and composing with the canonical projection functor $\pr \colon \ul{G} \to \Cart$ we obtain a category fibred in groupoids over $\Cart$
\begin{equation}
\begin{tikzcd}[row sep=.75cm, column sep=0.25cm]
	\Des_\sfL \ar[rr, "p_\sfL"] \ar[dr, dashed, "\pi_\sfL"'] & & \ul{G} \ar[dl, "\pr"]
	\\
	& \Cart &
\end{tikzcd}
\end{equation}
which defines a smooth groupoid $\Des_\sfL$.

\begin{proposition}
\label{st:group str on Des_L}
The functor $\pi_\sfL:\Des_\sfL\to\Cart$ is a smooth 2-group.
\end{proposition}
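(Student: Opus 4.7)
The plan is to equip $\Des_\sfL$ with a smooth 2-group structure mirroring that of $\Sym_G(\Ga)$ in Theorem~\ref{Thm: Sym is a 2-group}, with the fusion product $\lambda$ on the transgression line bundle $\Ta\Ga$ playing the role that the composition of bundle gerbe morphisms played there. The starting point is the observation that the pointwise product of $G$ induces, for any pair of smooth maps $f_0, f_1 \colon c \to G$, a smooth map
\begin{equation}
	m_{f_1, f_0} \colon f_0^*P_0G \times_c f_1^*P_0G \to (f_1 \, f_0)^*P_0G
\end{equation}
over $c$, sending a pair of paths $(\gamma_0, \gamma_1)$ (based at $e$, ending at $f_0(x)$ and $f_1(x)$ respectively) to their pointwise product $\gamma_1 \cdot \gamma_0$. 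The unit $\sfu_\sfL \colon \Cart \to \Des_\sfL$ sends $c$ to the object $(I, 1)$ over the constant map $e_c \colon c \to G$, where $I$ is the trivial line bundle and the descent isomorphism is the canonical one coming from the fact that loops of the form $\overline{\alpha} \star \alpha$ are thin and therefore have canonically trivialised fusion lines by superficiality.

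For the multiplication $\otimes_\sfL \colon \Des_\sfL \times_\Cart \Des_\sfL \to \Des_\sfL$, I would send an object $\big((J_1, \jmath_1), (J_0, \jmath_0)\big)$ lying over $(f_1, f_0)$ to an object $(J, \jmath)$ over $f_1\, f_0$, whose underlying line bundle $J$ on $(f_1\, f_0)^*P_0G \times M$ is assembled from $J_0$, the pullback of $J_1$ along the $G$-translation by $f_0$ in the $M$-factor, and a contribution of $\sfL^\dashv$ encoding the holonomy picked up by the reparameterisation from $\gamma_1 \cdot \gamma_0$ to $(\gamma_1 \cdot f_0) \star \gamma_0$, in complete analogy with~\eqref{Eq:Multiplication}. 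Concretely, after choosing local smooth sections of $f_0^*P_0G$ and $f_1^*P_0G$ (which exist because these are subductions), the formula
\begin{equation}
	J_{|(\gamma_1 \cdot \gamma_0, x)} \cong J_{1\,|(\gamma_1,\, \Phi_{\gamma_0(1)}(x))} \otimes J_{0\,|(\gamma_0, x)}
\end{equation}
determines $J$ up to a canonical fusion twist, and the required isomorphism $\jmath$ is obtained by splicing together $\jmath_0$, the translate of $\jmath_1$, and the fusion product $\lambda$. The associator and unitors then descend from the associativity of $\lambda$ and its compatibility with reparameterisations and thin homotopies.

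The main technical obstacle is verifying that the resulting pair $(J, \jmath)$ satisfies the descent condition~\eqref{eq:obs in Des_L and fusion} and that the construction is independent of the local section chosen for $m_{f_1, f_0}$. Both statements reduce to the associativity of the fusion product $\lambda$ over $(P_0G)^{[4]} \times M$ and its compatibility with the superficial connection on $\Ta\Ga$, as recalled in Section~\ref{Sec: Bundle gerbes}; this is precisely the input supplied by Waldorf's transgression-regression machine. A diagram chase, formally identical to the associativity verification at the end of the construction of $\pt^{\Ga'}_1$ in Section~\ref{Sec: parallel transport}, delivers the required coherence.

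Finally, to promote this monoid structure to a group structure, I would define the inverse on objects by
\begin{equation}
	(J, \jmath)^{-1} \coloneqq \big(\Phi_{f^{-1}}^* J^\vee, \jmath^{-\vee}\big)
\end{equation}
over $f^{-1} \colon c \to G$, with the descent datum induced by dualising $\jmath$ and invoking the fusion product to identify $\sfL^\dashv_{(\gamma, \alpha)}$ with $(\sfL^\dashv_{(\alpha, \gamma)})^\vee$ (a consequence of the thin-invariance of $\pt^{\Ta\Ga}$ on loops of the form $\overline{\gamma} \star \alpha \star \overline{\alpha} \star \gamma$). Weak invertibility of $(\otimes_\sfL, \pr_1)$ then follows fibrewise from the fact that, over each $c \in \Cart$, the category $\Des_{\sfL\,|c}$ has $\HLBdl(c \times M)$ acting simply transitively on the descent data over any fixed $f \colon c \to G$; this last statement is proved exactly as in the torsor argument~\eqref{eq:torsor fctr, fixed c} in the proof of Theorem~\ref{st: 2-group extension}.
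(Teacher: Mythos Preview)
Your overall strategy---building the monoid structure on $\Des_\sfL$ from the fusion product $\lambda$ on $\Ta\Ga$, in parallel with the composition-based structure on $\Sym_G(\Ga)$---is exactly right. But the construction of $\otimes_\sfL$ has a genuine gap. Your formula
\[
	J_{|(\gamma_1 \cdot \gamma_0,\, x)} \;\cong\; J_{1\,|(\gamma_1,\, \Phi_{\gamma_0(1)}(x))} \otimes J_{0\,|(\gamma_0,\, x)}
\]
only specifies $J$ at points in the image of the pointwise-product map $m_{f_1,f_0}$, and even there it depends on the non-unique factoring $\gamma_{10} = \gamma_1 \cdot \gamma_0$. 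Choosing local sections of $f_i^*P_0G$ does not repair this: a section hands you one particular factoring, but you still owe a line bundle on \emph{all} of $(f_1\,f_0)^*P_0G \times M$, together with a proof that the result is independent of the section. The phrase ``up to a canonical fusion twist'' is precisely where the missing construction is hiding; your stated formula omits the twist factor entirely.

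The paper's proof avoids sections altogether by working on the enlarged space
\[
	Y_{f_1,f_0} \;=\; \big((f_1\,f_0)^*P_0G \times_c f_1^*P_0G \times_c f_0^*P_0G\big) \times M,
\]
on which one forms the bundle $K = F^*\sfs^*L\Phi^*\Ta\Ga \,\otimes\, (1 \times \Phi_{\ev_1})^*J_1 \,\otimes\, J_0$. The first tensor factor is the explicit transgression line comparing an arbitrary $\gamma_{10}$ to the concatenation $(\gamma_1 \cdot f_0)\star\gamma_0$---this is the ``contribution of $\sfL^\dashv$'' you allude to, made precise. One then \emph{descends} $K$ along the projection $Y_{f_1,f_0} \to (f_1\,f_0)^*P_0G \times M$, with descent isomorphism $\kappa$ assembled from $\lambda$ and $\jmath_0,\jmath_1$; associativity of the fusion product supplies the cocycle condition. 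The descended bundle $\Des(K,\kappa)$ is the underlying line bundle of the product, and the same mechanism yields $\jmath_K$. This descent step is the key technical move your proposal is missing. (A minor further issue: your inverse $\Phi_{f^{-1}}^* J^\vee$ does not type-check as written, since it does not live over $(f^{-1})^*P_0G \times M$; one also needs to transport along path inversion in $G$.)
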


\begin{proof}
Let $(f_0,J_0,\jmath_0), (f_1,J_1,\jmath_1) \in \Des_\sfL$ be two objects, where $(J_i,\jmath_i)$ lies in the fibre over a smooth map $f_i \colon c \to G$ for $i = 0,1$.
The product $(f_1,J_1,\jmath_1) \otimes (f_0,J_0,\jmath_0)$ is defined as follows.
First, observe that it should lie in the fibre of $\Des_\sfL$ over the
pointwise product map $f_1 \, f_0 \colon c \to G$, $u \mapsto f_1(u) \, f_0(u)$.
Consider the smooth map
\begin{align}
	F \colon m^*P_0G \times^{\phantom{\dag}}_{G\times G} \pr_1^*P_0G
  \times^{\phantom{\dag}}_{G\times G} \pr_2^*P_0G &\to P_{\partial \Delta^2} G
	\\
	(\gamma_{10},\gamma_1,\gamma_0) &\mapsto F(\gamma_{10},\gamma_1,\gamma_0)
	= \big( \gamma_{10}, \gamma_1 \, \gamma_0(1),\, \gamma_0 \big)
                                           \ ,
\end{align}
where $m \colon G\times G\to G$ is the multiplication of
$G$, and $\pr_1$ and $\pr_2$ are the projections to the first and
second factors of $G\times G$.
Let us denote by $\Phi_{\ev_1}$ the composition $P_0G \times M \to G
\times M \to M$, where the first map evaluates a based path at its end
point and the second map is the action $\Phi$ of $G$ on $M$.
The pair $(f_1,f_0)$ defines a map $c \to G\times G$.
Let $\sfs \colon P_{\partial \Delta^2}G \to LG$ be the map defined in~\eqref{eq:sfs}, and let (by a slight abuse of notation) $L\Phi \colon LG \times M \to LM$ denote the map $(\gamma, x) \mapsto \gamma_x$, with $\gamma_x(t) = \Phi_{\gamma(t)}(x)$.
Consider the hermitean line bundle
\begin{equation}
\label{eq:otimes_Des}
	K \coloneqq F^*\sfs^*L\Phi^*\Ta\Ga \otimes (1_{P_0 G} \times \Phi_{\ev_1})^*J_1 \otimes J_0
\end{equation}
on the diffeological space 
$$
Y_{f_1,f_0} \coloneqq \big((f_1 \, f_0)^*P_0G \times_c f_1^*P_0G
\times_c f_0^*P_0G\big) \times M \ .
$$
We claim that the bundle $K$ descends along the projection $p_1 \colon Y_{f_1,f_0} \to (f_1 \, f_0)^*P_0G \times M$.
The descended bundle is the hermitean line bundle underlying the product $(f_1,J_1,\jmath_1) \otimes (f_0,J_0, \jmath_0)$.

We thus endow the bundle $K$ with an isomorphism $\kappa \colon
(p_1)_0^*K \to (p_1)_1^*K$ over $Y_{f_1,f_0}^{[2]}$, which is required
to satisfy a cocycle relation over \smash{$Y_{f_1,f_0}^{[3]}$}.
An element of \smash{$Y_{f_1,f_0}^{[2]}$} can be identified with a pair of triples $((\gamma_{10},\gamma_1,\gamma_0), (\gamma_{10}, \gamma'_1, \gamma'_0))$, where $(\gamma_i, \gamma'_i) \in (P_0G)^{[2]}$ for $i = 0,1$.
We define the isomorphism $\kappa$ as the composition of the fusion product $\lambda$ on $\Ta\Ga$ with $(1 \times \Phi_{\ev_1})^*\jmath_1 \otimes \jmath_0$.
Then the cocycle condition simply follows from the compatibility condition~\eqref{eq:obs in Des_L and fusion} and the associativity of the fusion product.
(We also need to use thin reparameterisations, but these are
implemented in a completely coherent way by the thin-homotopy invariant connection on $\Ta\Ga$.)

Thus we obtain a descended hermitean line bundle $\Des(K,\kappa)$ on
$(f_1 \, f_0)^*P_0G \times M$ (for descent properties of diffeological
vector bundles, see~\cite{Bunk:2020ifw}).
Applying the fusion product in the first tensor factor of $K$, we obtain an isomorphism which (by the associativity of $\lambda$) descends to an isomorphism
\begin{equation}
	\jmath_K \colon d_1^*\Des(K,\kappa) \to \big(\,\widehat{f_1 \, f_0}{}^{[2]} \times 1 \big)^*\sfL^\dashv \otimes d_0^*\Des(K,\kappa)
\end{equation}
over $((f_1 \, f_0)^*P_0G)^{[2]} \times M$.
Again by the associativity of $\lambda$ and thin-homotopy invariance, the pair $(\Des(K,\kappa),\jmath_K)$ satisfies the relation~\eqref{eq:obs in Des_L and fusion}, and hence it makes sense to set
\begin{equation}
	(f_1,J_1,\jmath_1) \otimes (f_0,J_0,\jmath_0) \coloneqq \big(f_1\,f_0,
        \Des(K,\kappa),\jmath_K \big) \ .
\end{equation}
The action of the product $\otimes$ in $\Des_\sfL$ on morphisms simply sends $(\psi_1,\psi_0)$ to the descent along $p_1$ of the isomorphism $1_{\Ta\Ga} \otimes (1 \times \Phi_{\ev_1})^*\psi_1 \otimes \psi_0$.
The unitors of $\otimes$ are readily obtained from the construction, and the associator is defined from the fusion product; its coherence is yet another application of the associativity of $\lambda$ and the superficiality of the parallel transport on $\Ta\Ga$.
Inverses are constructed analogously to~\eqref{eq:inverse on Sym(G)}.
Finally, all constructions are compatible with pullbacks along maps $\varphi \colon c' \to c$ of Cartesian spaces, so that we obtain the structure of a smooth 2-group on $\Des_\sfL$.
\end{proof}

\begin{theorem}
\label{st:Sym = Des}
There is a weakly commutative diagram of smooth 2-groups
\begin{equation}
\label{eq:2grp ext iso}
\begin{tikzcd}[row sep=1cm]
	1 \ar[r] & \HLBdl^M \ar[r, "\iota"] \ar[d, "1"] & \Sym_G(\Ga) \ar[r, "p"] \ar[d, "\Psi"] & \ul{G} \ar[r] \ar[d, "1"] & 1
	\\
	1 \ar[r] & \HLBdl^M \ar[r, "\iota_\sfL"] & \Des_\sfL \ar[r, "p_\sfL"] & \ul{G} \ar[r] & 1
\end{tikzcd}
\end{equation}
where the functor $\Psi$ is an equivalence.
\end{theorem}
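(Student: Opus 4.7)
The plan is to build $\Psi$ fibrewise and then check it assembles into a morphism of smooth 2-groups that fits into the diagram, using the parallel transport $\pt^\Ga$ of Definition~\ref{Def: parallel transport on Gerbe} as the central tool; once this is done, showing that $\Psi$ is an equivalence will be a torsor-theoretic exercise.

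I would first construct $\Psi$ on objects. Fix an object $(f,A)\in\Sym_G(\Ga)_{|c}$, so $f\colon c\to G$ and $A\colon \pr_M^*\Ga\to\Phi_f^*\Ga$ on $c\times M$. Pulling $A$ back along $\widehat f\times 1_M\colon f^*P_0G\times M\to c\times M$ and using that $\Phi_f\circ(\widehat f\times 1_M)=\Phi_{\ev_1\circ\widehat f}$, one gets a 1-isomorphism $\widehat A\colon\pr_M^*\Ga\to\Phi_{\ev_1}^*\Ga$ over $f^*P_0G\times M$. On the other hand, the parallel transport $\pt^\Ga_1$ of Section~\ref{Sec: parallel transport}, combined with the path $\gamma_x$ attached to $(\gamma,x)\in f^*P_0G\times M$, provides a canonical 1-isomorphism $\widetilde{\pt}{}^\Ga\colon \pr_M^*\Ga\to \Phi_{\ev_1}^*\Ga$ over $f^*P_0G\times M$. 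The composite $\widehat A\circ\widetilde{\pt}{}^{\Ga\,-1}$ is then an automorphism of $\Phi_{\ev_1}^*\Ga$, and applying the equivalence $\sfR\colon\BGrb(\,\boldsymbol\cdot\,)(\Ga,\Ga)\to\HLBdl(\,\boldsymbol\cdot\,)$ from \eqref{eq:construction in Section 2} yields a hermitean line bundle $J_{(f,A)}$ on $f^*P_0G\times M$. Over $(f^*P_0G)^{[2]}\times M$ the two pullbacks $d_0^*\widetilde{\pt}{}^\Ga$ and $d_1^*\widetilde{\pt}{}^\Ga$ are related, via $\pt^\Ga_2$ applied to a thin homotopy between $\gamma_{1,x}$ and $\gamma_{0,x}$ filling the loop $\overline{\gamma_1}\star\gamma_0$, by tensoring with $\sfL^\dashv=(L\Phi)^*\Ta\Ga$; this produces the isomorphism $\jmath_{(f,A)}\colon d_1^*J_{(f,A)}\to(\widehat f{}^{[2]}\times 1)^*\sfL^\dashv\otimes d_0^*J_{(f,A)}$, and Proposition~\ref{st:iso L cong pt_loop} together with the associativity of the fusion product $\lambda$ gives the cocycle \eqref{eq:obs in Des_L and fusion}. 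A 2-morphism $(\varphi,\psi)\colon(f,A)\to(f',A')$ is sent to $\sfR$ applied to $\widehat\psi$, and it is immediate that the required square \eqref{eq:mps in Des_L and fusion} commutes. This defines a functor $\Psi\colon\Sym_G(\Ga)\to\Des_\sfL$ over $\ul G$.

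Next, I would verify that $\Psi$ is a morphism of smooth 2-groups. For the product, consider $(f_i,A_i)\in\Sym_G(\Ga)_{|c}$ with $i=0,1$. By Theorem~\ref{Thm: Sym is a 2-group} the product is $(f_1 f_0,(1,\Phi_{f_0})^*A_1\circ A_0)$, whereas by Proposition~\ref{st:group str on Des_L} the product in $\Des_\sfL$ is assembled using $F^*\sfs^*L\Phi^*\Ta\Ga$ on the triple-path space $Y_{f_1,f_0}$. The comparison reduces to showing that the difference between $\widetilde{\pt}{}^\Ga$ for the concatenated path $\gamma_1\star\gamma_0$ and the composite $(1,\Phi_{f_0})^*\widetilde{\pt}{}^\Ga_{\gamma_1}\circ\widetilde{\pt}{}^\Ga_{\gamma_0}$ is precisely the factor $F^*\sfs^*L\Phi^*\Ta\Ga$; but this is exactly what $\pt^\Ga_\star$ and its compatibility \eqref{eq:compat pt_2 and pt_star} with $\pt^\Ga_2$ encode, together with thin invariance. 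A similar (easier) argument with $\varepsilon^\Ga$ shows compatibility with units, and the associator and unitor comparisons are coherences following from the associativity of $\pt^\Ga_\star$. The restriction of $\Psi$ to the kernel $\HLBdl^M$ corresponds to taking $(f,A)=(e_c,\pr_M^*1_\Ga\otimes L)$, in which case $\widehat A$ equals $\widetilde{\pt}{}^\Ga$ after applying $\varepsilon^\Ga$ along constant paths, so $\sfR$ returns $L$ itself; this gives the required 2-isomorphism $\Psi\circ\iota\cong\iota_\sfL$, making \eqref{eq:2grp ext iso} weakly commute.

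Finally, to see that $\Psi$ is an equivalence it suffices, as in the end of the proof of Theorem~\ref{st: 2-group extension}, to check fibrewise equivalence over every $c\in\Cart$, and furthermore to work one fibre $f\colon c\to G$ at a time in $\ul G$. For fixed $f$, both $\Sym_G(\Ga)_{|c,f}=\BGrb(c\times M)(\pr_M^*\Ga,\Phi_f^*\Ga)$ and $\Des_{\sfL\,|c,f}$ are torsors over the Picard groupoid $\HLBdl(c\times M)$: the former by~\cite{Waldorf--More_morphisms}, the latter by direct inspection of the defining data $(J,\jmath)$. The functor $\Psi$ is $\HLBdl(c\times M)$-equivariant by the already established compatibility with $\iota$, and any equivariant functor between torsors over a Picard groupoid is automatically an equivalence, provided both fibres are non-empty; non-emptiness on the $\Sym$ side follows from the argument at the start of Section~\ref{sect:smoothsymgerbes} using connectedness of $G$, and on the $\Des$ side one takes as basepoint the image of any object under $\Psi$. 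The main obstacle in this program will be the bookkeeping in the multiplicativity check: keeping track of reparameterisations, thin homotopies, and the compatibility between $\pt^\Ga_\star$, $\pt^\Ga_2$ and the fusion product $\lambda$ when comparing the tensor factor $F^*\sfs^*L\Phi^*\Ta\Ga$ of \eqref{eq:otimes_Des} with the transport of $\Ga$ around the triangle with sides $\gamma_0$, $\gamma_1$, $\gamma_1\star\gamma_0$ at each point of $M$.
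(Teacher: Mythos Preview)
Your strategy is essentially the paper's: compare a given 1-isomorphism $A$ with the parallel transport over $f^*P_0G\times M$, extract the line bundle via $\sfR$ (the paper writes this as $\Hom_1(\pt^{\Ga'}_f,\,\boldsymbol\cdot\,)$, which is equivalent), and then verify multiplicativity and equivalence. The paper organises this by first reducing to $\Ga'=\Ra\Ta(\Ga)$ and then factoring $\Psi$ through an intermediate descent category $\Des(\Sym_G(\Ga'))$, with the first arrow being ordinary descent for gerbe morphisms and the second having an explicit inverse $(\,\boldsymbol\cdot\,)\otimes\pt_1^{\Ga'}$; your one-step construction and your torsor argument for equivalence are legitimate shortcuts.

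There is, however, a concrete error in your construction of $\jmath_{(f,A)}$. You say that $d_0^*\widetilde{\pt}{}^\Ga$ and $d_1^*\widetilde{\pt}{}^\Ga$ are related ``via $\pt^\Ga_2$ applied to a thin homotopy between $\gamma_{1,x}$ and $\gamma_{0,x}$''. No such thin homotopy exists in general: $\gamma_0,\gamma_1\in (P_0G)^{[2]}$ are arbitrary based paths with the same endpoint, and the induced paths $\gamma_{0,x},\gamma_{1,x}$ in $M$ typically enclose genuine area. The 2-isomorphism $\pt^\Ga_2$ only provides a \emph{canonical} comparison when the homotopy is thin; for a non-thin filling it depends on the choice of filling, which would make $\jmath_{(f,A)}$ ill-defined. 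The correct input is exactly the holonomy identification of Proposition~\ref{st:iso L cong pt_loop}: the composite $(d_1^*\widetilde{\pt}{}^\Ga)\circ(d_0^*\widetilde{\pt}{}^\Ga)^{-1}$ is an automorphism whose image under $\sfR$ is the transgression line bundle on the loop $\overline{\gamma_1}\star\gamma_0$, i.e.\ $\sfL^{\dashv}$; this is precisely the paper's isomorphism $\beta_f$ in~\eqref{eq:pullback of gerbe hol}. Once you replace the ``thin homotopy'' step by this holonomy argument (and note that it is stated for $\Ga'=\Ra\Ta(\Ga)$, so you should invoke the functoriality of $\Sym_G$ to reduce to that case), the rest of your argument goes through.
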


\begin{proof}
By the functoriality of $\Ga \mapsto \Sym_G(\Ga)$ (see Section~\ref{sec:Sym(Ga) global}) we can assume that we are in the case where $\Ga' = \Ra\Ta (\Ga)$ is the regression of a transgression, so that we have direct access to our construction of a parallel transport on $\Ga'$ from Section~\ref{sec: Transgression and PT}.
We start by constructing the functor $\Psi$.
For this, we construct a diagram in the 2-category $\Hscr$ of the form
\begin{equation}
\label{eq:Psi diag}
\begin{tikzcd}[column sep=1.5cm]
	\Sym_G(\Ga') \ar[r, "\varpi^*"]
	& \Des \big( \Sym_G(\Ga') \big) \ar[rr, shift left=0.1cm, "{\Hom_1(\pt_1^{\Ga'},\,\boldsymbol\cdot\,)}"]
	& & \Des_\sfL \ar[ll, shift left=0.1cm, "{(\,\boldsymbol\cdot\,) \otimes \pt_1^{\Ga'}}"]
\end{tikzcd}
\end{equation}
and the functor $\Psi$ is the composition from left to right.
Each of  the functors in~\eqref{eq:Psi diag} is an equivalence of
categories fibred in groupoids over $\Cart$, and hence so is $\Psi$.

For a smooth map $f \colon c \to G$, let $\varpi_f \colon f^*P_0 G \times M \to c \times M$ denote the canonical projection.
First we define the category $\Des(\Sym_G(\Ga'))$.
It is obtained via the Grothendieck construction applied to the
presheaf $\Des^\PSh(\Sym_G(\Ga'))$ of groupoids on $\ul{G}\,$, which
assigns to a smooth map $f \colon c \to G$ the groupoid $\Des^\PSh(\Sym_G(\Ga'))(f)$ where
\begin{myitemize}
\item objects are pairs $(A,\alpha)$ of a 1-isomorphism $A\colon
  \varpi_f^*\Ga' \to \varpi_f^*\Phi_f^*\Ga'$ over $f^*P_0G \times M$
  and a 2-isomorphism $\alpha \colon d_1^*A\to d_0^*A$ over
  $(f^*P_0 G)^{[2]} \times M$, which is coherent over $(f^*P_0 G)^{[3]} \times M$, and

\item morphisms $(A, \alpha) \to (A', \alpha')$ are given by
  2-isomorphisms $\psi \colon A \to A'$ satisfying $\alpha' \circ d_1^*\psi = d_0^*\psi \circ \alpha$.
\end{myitemize}
The functor $\varpi^*$ simply pulls back 1-morphisms \smash{$A \colon \pr_M^*\Ga' \to \Phi_f^*\Ga'$} along the subductions $\varpi_f$.
This functor is an equivalence since morphisms of bundle gerbes satisfy descent%
\footnote{In~\cite{Bunk--Thesis} the descent property was proven along surjective submersions of manifolds, but the proof directly carries over to subductions of diffeological spaces.}%
~\cite[Theorem~A.19]{Bunk--Thesis}.

Next we introduce some notation:
we define the map
\begin{align}
	P_0 \Phi \colon P_0 G \times M &\to PM
	\\
	(\gamma,x)&\mapsto \gamma_x \ ,
\end{align}
where
\begin{align}
\gamma_x(t) = \Phi_{\gamma(t)}(x) =: P_0 \Phi (\gamma,x)(t)
\end{align}
for all $t\in[0,1]$.
Observe that
\begin{equation}
	\ev_0 \circ P_0 \Phi = \pr_M
	\qquad \mbox{and} \qquad
	\ev_1 \circ P_0 \Phi = \Phi \circ (\ev_1 \times 1_M) \ .
\end{equation}
Thus the pullback of the parallel transport 1-isomorphism
\eqref{eq:pt1Ga'} by the map $P_0\Phi$ is a morphism
\begin{equation}
	(P_0 \Phi)^* \pt^{\Ga'}_1 \colon \pr_M^*\Ga' \to (\ev_1 \times 1_M)^* \Phi^*\Ga'
\end{equation}
in $\BGrb(P_0G\times M)$.
Given a smooth map $f \colon c \to G$, we obtain a smooth map
\begin{equation}
	P_0 \Phi \circ\big(\widehat{f} \times 1_M\big) \colon f^*P_0 G
        \times M \to PM \ .
\end{equation}
It satisfies
\begin{equation}
	\ev_0 \circ P_0 \Phi \circ \big(\widehat{f} \times 1_M\big) = \pr_M
	\qquad \mbox{and} \qquad
	\ev_1 \circ P_0 \Phi \circ \big(\widehat{f} \times 1_M\big) =
        \Phi_f \circ \varpi_f \ ,
\end{equation}
where $\varpi_f \colon f^*P_0 G \times M \to c \times M$ is the projection.
Hence we obtain a morphism
\begin{equation}
	\pt^{\Ga'}_f \coloneqq \big( P_0 \Phi \circ (\widehat{f} \times 1) \big)^* \pt^{\Ga'}_1
	\colon \varpi_f^* \pr_M^*\Ga' \to \varpi_f^* \Phi_f^*\Ga' \ ,
\end{equation}
which is defined over $f^*P_0 G \times M$.
By Proposition~\ref{st:iso L cong pt_loop} there is a canonical 2-isomorphism
\begin{equation}
	\big(d_0^* \pt^{\Ga'}_f\big)^{-1} \circ d_1^* \pt^{\Ga'}_f
        \xrightarrow{ \ \cong \ } \big( L\Phi \circ (\tau \times 1_M) \circ (\widehat{f}^{\,[2]} \times 1_M) \big)^*\Ta\Ga
	\cong \big(\widehat{f}^{\,[2]} \times 1_M)^* \sfL^{\dashv\, \vee}
\end{equation}
of 1-automorphisms of the pullback of $\pr_M^*\Ga'$ to $(f^*P_0 G)^{[2]} \times M$, where $\tau(\gamma_0, \gamma_1) = (\gamma_1, \gamma_0)$.
Equivalently, this is a 2-isomorphism
\begin{equation}
\label{eq:pullback of gerbe hol}
	\beta_f \colon d_0^* \pt^{\Ga'}_f \xrightarrow{ \ \cong \ } \big(\widehat{f}^{\,[2]} \times 1_M\big)^* \sfL^\dashv \otimes d_1^* \pt^{\Ga'}_f
\end{equation}
of 1-isomorphisms $\varpi_f^* \pr_M^* \Ga' \to \varpi_f^*\Phi_f^*\Ga'$ over $(f^*P_0 G)^{[2]} \times M$.

Now we come to the definition of the functor $(\,\boldsymbol\cdot\,) \otimes \pt_1^{\Ga'}$.
Given an object $(f,J,\jmath) \in \Des_\sfL$, define a morphism of bundle gerbes over $f^*P_0 G \times M$ via
\begin{equation}
	J \otimes \pt^{\Ga'}_f \colon \varpi_f^* \pr_M^*\Ga' \to
        \varpi_f^* \Phi_f^*\Ga' \ .
\end{equation}
Using the 2-isomorphism~\eqref{eq:pullback of gerbe hol}, we obtain a 2-isomorphism
\begin{equation}
\begin{tikzcd}[row sep=1cm, column sep=2cm]
	d_1^* J \otimes d_1^* \pt^{\Ga'}_f \ar[r, "\jmath \otimes 1"] \ar[dr, dashed, "\widehat{\jmath}"']
	& d_0^* J \otimes \big(\widehat{f}^{\,[2]} \times 1_M\big)^*\sfL^\dashv \otimes d_1^*\pt^{\Ga'}_f \ar[d,"1\otimes\beta_f^{-1}"]
	\\
	& d_0^* J \otimes d_0^* \pt^{\Ga'}_f
\end{tikzcd}
\end{equation}
over $(f^*P_0 G) ^{[2]} \times M$.
By construction, the 2-isomorphism $\widehat{\jmath}\,$ is coherent over $(f^*P_0 G)^{[3]} \times M$, and thus the pair $(J \otimes \pt^{\Ga'}_f, \widehat{\jmath}\,)$ defines a descent datum (with respect to the subduction $f^*P_0 G \times M \to c \times M$) for a 1-isomorphism of bundle gerbes $\pr_M^* \Ga' \to \Phi_f^*\Ga'$.
Analogously, morphisms in $\Des_\sfL$ give rise to morphisms of descent data as constructed above.
This defines the functor
\begin{equation}
	(\,\boldsymbol\cdot\,) \otimes \pt^{\Ga'}_1 \colon \Des_\sfL \to \Des \big(
        \Sym_G(\Ga') \big) \ .
\end{equation}
This is a functor of categories fibred in groupoids over $\Cart$ by the compatibility of pullbacks of bundles and their morphisms with the tensor product.

Finally, we introduce an inverse functor $\Hom_1\big(\pt^{\Ga'}_1,\,\boldsymbol\cdot\,\big)$ for $(\,\boldsymbol\cdot\,) \otimes \pt^{\Ga'}_1$.
An object $(f, A, \alpha) \in \Des(\Sym_G(\Ga'))$ consists, in particular, of a 1-isomorphism \smash{$A \colon \varpi_f^* \pr_M^*\Ga' \to \varpi_f^* \Phi_f^* \Ga'$} of bundle gerbes over $f^*P_0 G \times M$.
Another such morphism is given by $\pt^{\Ga'}_f$.
We can hence use the internal hom-functor $\Hom_1$ in the 2-category $\BGrb(f^*P_0G \times M)$ (see~\cite[Section~3.2]{Bunk--Thesis} and also~\cite[Section~2.1]{BW:Transgression_of_D-branes}) to produce a hermitean line bundle
\begin{equation}
	\Hom_1\big(\pt^{\Ga'}_f, A\big) \in \HLBdl(f^*P_0 G \times
        M) \ .
\end{equation}
This comes with an isomorphism over $(f^*P_0 G)^{[2]} \times M$ defined
by the diagram
\begin{equation}
\begin{tikzcd}[column sep=1.5cm, row sep=1cm]
	d_1^*\Hom_1\big(\pt^{\Ga'}_f, A\big) \ar[r] \ar[d, dashed]
	& \Hom_1\big(d_1^*\pt^{\Ga'}_f, d_1^*A\big) \ar[d, "{\Hom_1(\beta_f^{-1}, \alpha)}"]
	\\
	\big(\widehat{f}{}^{\,[2]}\times 1_M\big)^*\sfL^\dashv \otimes d^*_0\Hom_1\big(\pt^{\Ga'}_f, A\big)
	& \Hom_1\big((\widehat{f}{}^{\,[2]}\times 1_M)^*\sfL^{\dashv\, \vee} \otimes d_0^*\pt^{\Ga'}_f, d_0^*A\big) \ar[l]
\end{tikzcd}
\end{equation}
where the 2-isomorphism $\beta_f$ stems from~\eqref{eq:pullback of gerbe hol} and where the lower horizontal isomorphism of line bundles stems from the (categorified) linearity of $\Hom_1$~\cite[Theorem~3.63]{Bunk--Thesis}.
It follows from the properties of $\alpha$ and $\beta_f$ that $\Hom_1\big(\pt^{\Ga'}_f, A\big)$ defines an object in $(\Des_\sfL)_{|f}$.
By mapping a morphism $\psi$ in $\Des\big(\Sym_G(\Ga')\big)_{|f}$ to $\Hom_1\big(\pt^{\Ga'}_f, \psi\big)$, we obtain a functor
\begin{equation}
	\Hom_1\big(\pt_1^{\Ga'}, \,\boldsymbol\cdot\, \big) \colon \Des \big( \Sym_G(\Ga') \big) \to \Des_\sfL
\end{equation}
of categories fibred in groupoids over $\Cart$.
Again by the linearity of $\Hom_1$, it follows straightforwardly that
$(\,\boldsymbol\cdot\,) \otimes \pt^{\Ga'}_1$ and $\Hom_1\big(\pt^{\Ga'}_1,\,\boldsymbol\cdot\,\big)$ are mutually inverse functors.

To conclude the proof, we need to check that $\Psi$ is compatible with the monoid structures on $\Sym_G(\Ga')$ and on $\Des_\sfL$.
For $i = 0,1$, let $f_i \colon c \to G$ be smooth maps from $c \in \Cart$ to $G$, and consider objects $A_i \in \Sym_G(\Ga')_{|f_i}$ in the fibres over $f_i$.
By the explicit construction of the 2-group structure on $\Des_\sfL$ in the proof of Proposition~\ref{st:group str on Des_L} it follows that the hermitean line bundle underlying the object $\Psi(f_1,A_1) \otimes \Psi(f_0,A_0)$ of $\Des_\sfL$ is given as the descent of the bundle
\begin{align}
\label{eq:Psi and grp str}
	\sfL^\dashv \otimes (1 \times \Phi_{\ev_1})^*& \big( \Hom_1(\pt^{\Ga'}_{f_1}, \varpi_{f_1}^* A_1) \big) \otimes \Hom_1(\pt^{\Ga'}_{f_0}, \varpi_{f_0}^* A_0)
	\\[4pt]
	&\cong \sfL^\dashv \otimes \Hom_1 \big( (1 \times \Phi_{\ev_1})^* \widehat{f}_1^{\,*} \pt^{\Ga'}_1, (1 \times \Phi_{\ev_1})^*  \varpi_{f_1}^* A_1 \big) \otimes \Hom_1(\widehat{f}_0^{\,*} \pt^{\Ga'}_1, \varpi_{f_0}^* A_0)
	\\[4pt]
	&\cong \sfL^\dashv \otimes \Hom_1 \big( (1 \times \Phi_{\ev_1})^* \widehat{f}_1^{\,*} \pt^{\Ga'}_1 \circ \widehat{f}_0^{\,*} \pt^{\Ga'}_1,\, (1 \times \Phi_{\ev_1})^*  \varpi_{f_1}^* A_1 \circ \varpi_{f_0}^* A_0 \big)
	\\[4pt]
	&\cong \Hom_1 \big( (\widehat{f_1 \, f_0})^* \pt^{\Ga'}_1,\,
          \varpi_{f_1 \, f_0}^* (\Phi_{f_0}^* A_1 \circ A_0) \big) \ .
\end{align}
The first and second isomorphisms follow from the properties of the
internal hom-functor $\Hom_1$.
The third isomorphism is a direct application of Proposition~\ref{st:iso L cong pt_loop}.
The bundle in the last line is the line bundle underlying the object
$\Psi\big((f_1,A_1) \otimes(f_0,A_0)\big)$ of $\Des_\sfL$ (see~\eqref{eq:2-grp str on Sym(G)}).
Hence the canonical isomorphism~\eqref{eq:Psi and grp str} establishes the compatibility of $\Psi$.
Its coherence again follows from the properties of the transgression line bundle $\Ta\Ga$.
The proofs that $\Psi$ respects unitors as well as the weak
commutativity of the diagram~\eqref{eq:2grp ext iso} are straightforward.
\end{proof}

\subsection{Equivariant bundle gerbes}
\label{Sec: Equivar BGrbs}

We shall now investigate the relation between sections of the smooth 2-group extension $\Sym_G(\Ga) \to \ul{G}$ and equivariant structures on $\Ga$.
We first recall an explicit definition of an equivariant bundle gerbe from~\cite{GSW:Global_gauge_anomalies} (see also~\cite{MRSV:Equivariant_BGrbs}), which can be understood very nicely from the perspective of the formalism developed in~\cite{NS:Equivar}.

Let $G$ be a connected Lie group, $M$ a manifold with $G$-action $\Phi \colon G\times M \longrightarrow M$, and
$\Ga$ a hermitean bundle gerbe over $M$. 
Corresponding to the action groupoid $G\times M\rightrightarrows M$ there is a simplicial manifold 
\begin{equation}
\begin{tikzcd}
\cdots \  G^{\times 2} \times M \ar[rr, shift left] \ar[rr, shift right]
\ar[rr] & & G\times M \ar[rr, shift left] \ar[rr, shift right] & & M \ ,
\end{tikzcd}
\end{equation} with face maps $d_i:G^{\times n}\times M\to G^{\times
  n-1}\times M$ for $0\leq i\leq n$ given by
\begin{align}
 d_i(g_{0},g_1,\dots, g_{n-1},x)=  \begin{cases}
\big(g_{0},g_1,\dots , g_{n-2},\Phi_{g_{n-1}}(x)\big) & \text{for }i=n \\
(g_{0},g_1,\dots ,g_{i-1}\, g_{i},\dots, g_{n-1},x) & \text{for } 0 < i < n \\
(g_{1},\dots, g_{n-1},x) & \text{for } i=0
\end{cases} \ .
\end{align}
On $G\times M$, the face maps $d_0=\pr_M$ and $d_1=\Phi$ are the source and target
maps of the action groupoid.

\begin{definition}
Let $G$ be a connected Lie group, $M$ a manifold with $G$-action $\Phi \colon G\times M \longrightarrow M$, and $\Ga$ a hermitean bundle gerbe over $M$.
A \emph{$G$-equivariant structure} on $\Ga$ consists of a 1-isomorphism $A\colon \pr_M^*\Ga \longrightarrow \Phi^*\Ga$ 
over $G\times M$
and a 2-isomorphism $\chi \colon  d_2^*A \circ d_0^*A \longrightarrow
d_1^*A $ over $G^{\times 2}\times M$ such that
\begin{equation}
	d_2^*\chi \circ \big( 1_{(d_3 \circ d_2)^*A} \circ d_0^*\chi \big)= d_1^*\chi \circ \big( 1_{(d_0\circ d_0)^* A} \circ d_3^*\chi\big)
\end{equation}
over $G^{\times 3}\times M$. A \emph{morphism} $(A,\chi) \longrightarrow (A',\chi')$ between equivariant structures on $\Ga$
consists of a 2-isomorphism $\vartheta  \colon A \longrightarrow A'$
such that the diagram
\begin{equation}
\begin{tikzcd}[row sep=1cm]
d_2^*A\circ d_0^*A \ar[rr, "d_2^*\vartheta \circ d_0^*\vartheta "] \ar[d,"\chi",swap] & & d_2^*A'\circ d_0^*A' \ar[d,"\chi'"] \\ 
d_1^*A \ar[rr, "d_1^*\vartheta  ",swap] & & d_1^*A'
\end{tikzcd}
\end{equation}
commutes. 
We denote by $\mathcal{E}(\Ga)$ the groupoid of equivariant structures on $\Ga$. 
\end{definition}  

A \emph{splitting} of $p \colon \Sym_G(\Ga)\longrightarrow \underline{G}$ is a smooth 2-group homomorphism 
$s\colon \underline{G} \longrightarrow \Sym_G(\Ga) $ such that $p\circ s = 1_{\underline{G}}$. We assume here
for simplicity and without loss of generality that unitors are strictly preserved. We denote by $\mathcal{S}(\ul{G}\,;\Sym_G(\Ga))$ the 
groupoid of splittings of $p \colon \Sym_G(\Ga)\longrightarrow \underline{G}\,$. 
Concretely, a splitting consists of
\begin{myitemize}
\item 
a 1-isomorphism $ s(f) \colon \pr_M^*\Ga \longrightarrow \Phi_f^*\Ga$
of bundle gerbes on $c\times M$ for every
Cartesian space $c\in\Cart$ and sections $f\in G(c)$, 
\item 
a 2-isomorphism $s(\varphi)\colon s(f)\longrightarrow 
\varphi^* s(f') $ for every morphism $\varphi \colon f \longrightarrow f'$ in $\underline{G}$\,, and
\item 
a 2-isomorphism
$s(f)\otimes s(f') \longrightarrow  s(f\, f')$ in $\Sym_G(\Ga)$ for every $f,f'\in G(c)$,
\end{myitemize} 
such that $\varphi^*s(\varphi') \circ s(\varphi)= s(\varphi' \circ
\varphi)$ and the diagram
\begin{equation}
\begin{tikzcd}[row sep=1cm]
s(f) \otimes s(f')\otimes s(f'') \ar[rr] \ar[d] & & s(f) \otimes s(f'\, f'') \ar[d] \\
s(f\, f')\otimes s(f'') \ar[rr] & & s(f\, f' \, f'')
\end{tikzcd}
\end{equation}
commutes.
A morphism $\omega \colon s \longrightarrow s'$ of splittings consists
of 2-isomorphisms $\omega(f)\colon s(f)\longrightarrow s'(f)$ in
$\BGrb(c\times M)$ for all $f\in G(c)$ such that the diagrams
\begin{equation}
\begin{tikzcd}[column sep=1.5cm, row sep=1cm]
s(f) \ar[d, "s(\varphi)",swap]\ar[r, "\omega(f)"] & s'(f) \ar[d, "s'(\varphi)"] \\
\varphi^* s(f') \ar[r, "\varphi^*\omega(f')", swap] & \varphi^* s'(f')
\end{tikzcd}
\qandq
\begin{tikzcd}[row sep=1cm]
s(f)\otimes s(f') \ar[rr,"\omega(f)\otimes\omega(f')"] \ar[d] & & s'(f)\otimes s'(f') \ar[d] \\
s(f\, f') \ar[rr,swap,"\omega(f\,f')"] & & s'(f\, f')
\end{tikzcd}
\end{equation}
commute. 

In what follows we
construct an equivalence
\begin{align}
\Xi \colon \mathcal{E}(\Ga) \longrightarrow \mathcal{S} \big( \underline{G}\,;\Sym_G(\Ga) \big)
\end{align}
of categories.
Let $(A,\chi)$ be an equivariant structure on $\Ga$ and $(f\colon c\longrightarrow G) \in G(c)$. Pulling back $A$ along $\Phi_f$ gives rise to a 1-isomorphism $ \Phi_f^*A \colon \pr_M^* \Ga \longrightarrow \Phi_f^* \Ga$ over $c\times M$. We can define the section $\Xi(A,\chi) \colon \ul{G} \longrightarrow \Sym_G(\Ga)$ 
by $\Xi(A,\chi)(f)= \Phi_f^*A \colon \pr_M^*\Ga \longrightarrow \Phi_f^* \Ga $. The 2-isomorphisms $\Xi(A,\chi)(\varphi)$
are induced by general properties of pullbacks and the 2-isomorphism $\chi$ induces the 2-isomorphism encoding the 
compatibility with multiplication.  
The action of $\Xi$ on morphisms of equivariant structures is again by pullback along $\Phi_f$.  

\begin{theorem}
\label{Thm: Equivariant}
The functor $\Xi \colon \mathcal{E}(\Ga) \longrightarrow \mathcal{S}(\underline{G}\,;\Sym_G(\Ga))$ is an equivalence of categories.
\end{theorem}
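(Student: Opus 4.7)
The plan is to construct a quasi-inverse $\Xi^{-1}\colon \mathcal{S}(\ul{G}\,;\Sym_G(\Ga)) \to \mathcal{E}(\Ga)$ by descent along an open Cartesian cover of $G$, and then to exhibit natural isomorphisms between the composites $\Xi\circ\Xi^{-1}$ and $\Xi^{-1}\circ\Xi$ and the respective identity functors. Given a splitting $s$, I first choose an open cover $\{c_i\}_{i\in I}$ of $G$ by Cartesian spaces, with inclusions $\iota_i\colon c_i\hookrightarrow G$. The splitting produces 1-isomorphisms $s(\iota_i)\colon \pr_M^*\Ga\to \Phi_{\iota_i}^*\Ga$ over $c_i\times M$, and the naturality 2-isomorphisms $s(\varphi)$ associated to the inclusions $c_i\cap c_j\hookrightarrow c_i$ and $c_i\cap c_j\hookrightarrow c_j$ yield gluing 2-isomorphisms between the restrictions of $s(\iota_i)$ and $s(\iota_j)$ to $(c_i\cap c_j)\times M$; the cocycle condition on triple overlaps follows from the functoriality axiom $\varphi^*s(\varphi')\circ s(\varphi)= s(\varphi'\circ\varphi)$. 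Since bundle gerbes and their morphisms satisfy descent along open covers, these data assemble into a 1-isomorphism $A\colon\pr_M^*\Ga\to \Phi^*\Ga$ over $G\times M$, unique up to canonical 2-isomorphism.

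An entirely analogous descent argument over $G\times G$, applied to the multiplicativity 2-isomorphisms $s(f)\otimes s(f')\to s(f\,f')$ of the splitting evaluated on products of charts, then produces a 2-isomorphism $\chi\colon d_2^*A\circ d_0^*A\to d_1^*A$ over $G^{\times 2}\times M$, whose simplicial cocycle condition on $G^{\times 3}\times M$ descends directly from the pentagon coherence of the splitting on $\ul{G}^{\times 3}$. Morphisms $\omega\colon s\to s'$ of splittings descend, by the same procedure applied to the components $\omega(\iota_i)$, to morphisms $(A,\chi)\to(A',\chi')$ of equivariant structures; functoriality is immediate. This defines~$\Xi^{-1}$.

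To verify $\Xi\circ\Xi^{-1}\cong 1$, observe that by the defining property of descent the construction comes with canonical 2-isomorphisms $\iota_i^*A\cong s(\iota_i)$. For any plot $f\colon c\to G$, one covers $c$ by Cartesian opens whose restrictions factor through some $\iota_{i(j)}$, and descent again produces a canonical 2-isomorphism $(f\times 1_M)^*A\cong s(f)$ that is natural in morphisms of $\ul{G}$ and compatible with the multiplicativity and unitor data. Conversely, starting from $(A,\chi)\in\mathcal{E}(\Ga)$ and setting $s=\Xi(A,\chi)$, the descent data $\{\iota_i^*A\}$ recover $A$ up to canonical 2-isomorphism by the uniqueness part of descent, and likewise $\chi$ is recovered from the multiplicativity data of $s$, yielding $\Xi^{-1}\circ\Xi\cong 1$.

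The main obstacle is the careful bookkeeping required to check that the descent 2-isomorphisms above are natural in plots $f$ and intertwine the monoidal coherence data on both sides. This ultimately reduces to matching the coherence axioms for a morphism of smooth 2-groups $\ul{G}\to\Sym_G(\Ga)$ (as spelled out in Section~\ref{sect:smooth2groups}) with the defining coherences of a $G$-equivariant structure on $\Ga$; these two packages of coherence data are precisely parallel once both are expressed as descent data for the action 2-groupoid of $G$ on $M$, so once this matching is recognised the remaining verifications reduce to routine applications of the descent property of $\BGrb$.
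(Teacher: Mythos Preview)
Your approach is correct and essentially the same as the paper's: both arguments evaluate the splitting on a Cartesian open cover of $G$ and use descent for morphisms of bundle gerbes to glue the local 1-isomorphisms $s(\iota_i)$ into a global $A$, then repeat over $G^{\times 2}$ for $\chi$. The only organizational difference is that the paper proves $\Xi$ is essentially surjective, full, and faithful directly (rather than packaging the same construction as an explicit quasi-inverse), and it is slightly more careful in two places you gloss over: it takes a \emph{good} open cover so that intersections $c_i\cap c_j$ are again Cartesian, and it explicitly passes to a refinement of $\{c_i\times c_j\}$ so that the multiplication $m$ factors through the chosen charts when comparing $m^*A$ with $s(m_{|c_i\times c_j})$.
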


\begin{proof}
We start by showing that $\Xi$ is essentially surjective.
Let $s\colon \ul{G} \longrightarrow \Sym_G(\Ga)$ be a splitting.
We pick a good open cover $\{c_i\}_{i \in \Lambda}$ of $G$. This
induces good open covers  of $G^{\times2}$ and $G^{\times3}$ given by $\{c_i\times c_j\}_{i,j \in \Lambda}$ and $\{ c_i \times c_j \times c_k \}_{i,j,k \in \Lambda}$, respectively. Every $c_i$ comes with an embedding $f_i\colon c_i \longrightarrow G$ and hence can be regarded as an 
object of $\underline{G}\,$. Applying the section $s\colon \underline{G}
\longrightarrow \Sym_G(\Ga)$ to all elements of the open cover
provides a collection of compatible 1-isomorphisms 
$$
s(c_i )\coloneqq s(f_i\colon c_i\longrightarrow G) \colon \pr_M^* \Ga
\longrightarrow \Phi_{f_i}^* \Ga \ .
$$
On double intersections $c_{ij}\coloneqq c_i \cap c_j$ we get coherent
2-isomorphisms\footnote{Here we interpret $s$ as a map of stacks, i.e.\
a natural transformation of 2-functors $\Cart \longrightarrow \Cat$, via the 
inverse Grothendieck construction.}  
\begin{align}
s(c_{ij})\colon s(c_i)_{|c_{ij}}\longrightarrow s(f_i{}_{|c_{ij}})= s(f_j{}_{|c_{ij}}) \longrightarrow s(c_j)_{|c_{ij}} \ ,
\end{align}
since 
$\Sym_G(\Ga)\longrightarrow \Cart$ is a Grothendieck fibration.
Hence the 1-isomorphisms $s(c_i)$
glue together to a 1-isomorphism $A_s \colon \pr_M^* \Ga
\longrightarrow \Phi^* \Ga$ over $G\times M$.

Let $\pr_1,\pr_2:G^{\times2}\to G$ be the projections to the first and
second factors, and $ m:G^{\times2}\to G$ the multiplication in $G$.
From $A_s$ we can construct three 1-morphisms $\pr_1^* A_s$, $\pr_2^*A_s$, and $ m^* A_s$  over $G^{\times2}\times M$. 
We would like to show that these 1-morphisms are canonically isomorphic to the 1-morphisms constructed from the
good open cover $\{ c_i\times c_j \}_{i,j\in \Lambda}$ by applying $s$
to the morphisms $\pr_{1|c_i \times c_j}$, $\pr_{2|c_i \times
  c_j} $, and $ m_{|c_i \times c_j}$ on $ {c_i \times c_j}
\longrightarrow G $, respectively. For this, consider the commutative diagram
\begin{equation}
\begin{tikzcd}[row sep=1cm]
c_i\times c_j \times M \ar[r] \ar[d, hookrightarrow] & c_i \times M \ar[d, hookrightarrow]  \\ 
G^{\times2}\times M \ar[r, "\pr_1", swap] & G\times M
\end{tikzcd}
\end{equation} 
which implies that 
$$
\pr_{1|c_i\times c_j}^*A_s= \pr_{1|c_i\times c_j}^*s(c_i
\hookrightarrow G) \xrightarrow{ \ \cong \ } s\big(c_i \times c_j 
\xrightarrow{\pr_1} c_i \hookrightarrow G\big) \ ,
$$
where the 2-isomorphism is part of the data of the section $s$. The same argument
shows the claim for $\pr_2$. To show the corresponding statement for $ m$ we need to pick a 
refinement $\{ \widetilde{c}_a\}_{a \in \widetilde{\Lambda}}$ of the cover $\{c_i\times c_j\}_{i,j \in \Lambda}$ 
such that the diagram
\begin{equation}
\begin{tikzcd}[row sep=1cm]
\coprod\limits_{a \in \widetilde{\Lambda}}\, \widetilde{c}_a  \ar[rd, " m"]  \ar[d, hookrightarrow] & \\
\coprod\limits_{i,j \in \Lambda}\, c_i\times c_j   \ar[d, hookrightarrow] & \coprod\limits_{i \in \Lambda}\, c_i \ar[d, hookrightarrow]  \\ 
G^{\times2} \ar[r, " m", swap] & G
\end{tikzcd}
\end{equation} 
commutes. The cover $\{ \widetilde{c}_a\}_{a \in \widetilde{\Lambda}}$ can be constructed by choosing a 
common refinement of the covers $\{c_i \times c_j\}_{i,j\in \Lambda}$
and $\{ m^{-1}(c_i)\}_{i\in \Lambda}$ of $G^{\times2}$. 

The multiplication of $c_i \times c_j 
\xrightarrow{\pr_1} G$ with $c_i \times c_j 
\xrightarrow{\pr_2} G$ in $\underline{G}$ is $c_i \times c_j 
\xrightarrow{ \  m \ }  G$. The structure of a smooth 2-group homomorphism on $s$ now provides natural 2-isomorphisms 
\begin{equation}\label{eq:chi2iso}
	s\big(c_i \times c_j \xrightarrow{ \ \pr_1 \ }  G\big) \otimes
        s\big(c_i \times c_j \xrightarrow{ \ \pr_2 \ }
        G\big)\longrightarrow s\big(c_i \times c_j \xrightarrow{ \  m
        \ }  G\big)
\end{equation} 
which glue together to a 2-isomorphism 
$\chi_s \colon \Phi^*(\pr_1^*A_s)\circ (\pr_2^* A_s)\longrightarrow
 m^* A_s$ over $G^{\times2}\times M$ because \eqref{eq:chi2iso} is a 2-isomorphism of smooth 1-isomorphisms. 
The coherence condition for $\chi_s$ over $G^{\times3}\times M$ follows from the observation that the various pullbacks to 
$G^{\times3}\times M$ can be constructed by applying $s$ to different functions from $c_i\times c_j \times c_k$ to $G$ and the coherence condition 
for $s$. 
This shows that $\Xi$ is essentially surjective.

We next show that the functor $\Xi$ is faithful: let $\vartheta,\vartheta' \colon (A,\chi) \longrightarrow (A',\chi')$ be isomorphisms
of equivariant structures on $\Ga$ such that
$\Xi(\vartheta')=\Xi(\vartheta)$, and let $g\in G$. 
We can take $c=\R^0$ and $f\colon c \longrightarrow G$ to be the constant map at $g$ to
conclude that $\varphi_{|\{g\}\times M}=\Xi(A,\chi)(\varphi)(f)$ and $\varphi_{|\{g\}\times M}=\Xi(A',\chi')(\varphi)(f)$
agree. Hence the two isomorphisms agree pointwise and the statement follows. 

Finally, we show that the functor $\Xi$ is full: let $(A,\chi) $ and $(A',\chi')$ be equivariant structures on $\Ga$ and 
$\omega \colon \Xi(A,\chi) \longrightarrow \Xi(A',\chi')$ a morphism of splittings. 
Evaluating $\omega$ on the good open cover $\{ c_i\}_{i \in \Lambda}$ from above provides
isomorphisms $\omega \colon A_{|c_i\times M} \longrightarrow A'_{|c_i\times M}$. 
Since $\omega$ is a morphism of splittings, these morphisms glue together to a
2-isomorphism $\vartheta_\omega\colon A\longrightarrow A'$. That this is an isomorphism 
of equivariant structures follows from the coherence conditions for $\omega$ and 
the observation that it suffices to check the conditions locally. 
\end{proof}

\begin{corollary}
A bundle gerbe $\Ga$ on a manifold $M$ admits an equivariant structure if and only if the 2-group extension 
$$
1\longrightarrow\HLBdl^M \longrightarrow \Sym_G(\Ga)\longrightarrow
\ul{G}\longrightarrow 1
$$ 
admits a splitting. 
\end{corollary}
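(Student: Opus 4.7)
The proof will be essentially a direct consequence of Theorem~\ref{Thm: Equivariant}, which establishes an equivalence of categories
\begin{equation}
	\Xi \colon \mathcal{E}(\Ga) \longrightarrow \mathcal{S}\big(\ul{G}\,;\Sym_G(\Ga)\big) \ .
\end{equation}
The plan is simply to observe that, under an equivalence of categories, one category is non-empty if and only if the other is.

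First, I would note that ``$\Ga$ admits a $G$-equivariant structure'' is by definition the statement that the groupoid $\mathcal{E}(\Ga)$ is non-empty, i.e.~that it contains at least one object $(A,\chi)$. Dually, ``the extension $1 \to \HLBdl^M \to \Sym_G(\Ga) \to \ul{G} \to 1$ admits a splitting'' is by definition the statement that the groupoid $\mathcal{S}(\ul{G}\,;\Sym_G(\Ga))$ of smooth 2-group homomorphisms $s \colon \ul{G} \to \Sym_G(\Ga)$ satisfying $p \circ s = 1_{\ul{G}}$ is non-empty.

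For the forward direction, given an equivariant structure $(A,\chi) \in \mathcal{E}(\Ga)$, I would apply the functor $\Xi$ to obtain a splitting $\Xi(A,\chi) \in \mathcal{S}(\ul{G}\,;\Sym_G(\Ga))$; the fact that this is indeed a splitting is guaranteed by the construction of $\Xi$ in the proof of Theorem~\ref{Thm: Equivariant}. For the reverse direction, given a splitting $s$, essential surjectivity of $\Xi$ (as established in the proof of Theorem~\ref{Thm: Equivariant} via gluing data from a good open cover of $G$) produces an equivariant structure $(A_s, \chi_s)$ with $\Xi(A_s,\chi_s) \cong s$ in $\mathcal{S}(\ul{G}\,;\Sym_G(\Ga))$.

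There is no genuine obstacle here, since all the technical work---constructing $\Xi$, verifying it respects the coherences, and proving essential surjectivity through the gluing argument over a good open cover---has already been carried out in Theorem~\ref{Thm: Equivariant}. The corollary simply extracts the non-emptiness consequence of that equivalence.
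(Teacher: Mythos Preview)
Your proposal is correct and matches the paper's approach: the corollary is stated immediately after Theorem~\ref{Thm: Equivariant} without proof, since it is the direct non-emptiness consequence of the equivalence $\Xi \colon \mathcal{E}(\Ga) \to \mathcal{S}(\ul{G}\,;\Sym_G(\Ga))$ established there.
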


\begin{definition}
Let $(\mathcal{G},A,\chi)$ and $(\mathcal{G}',A', \chi')$ be $G$-equivariant bundle gerbes on $M$. An \emph{equivariant 1-isomorphism} $\Ga \longrightarrow \Ga'$
consists of a 1-morphism of bundle gerbes $E \colon \Ga \longrightarrow \Ga'$
together with a 2-morphism $\gamma:\Phi^*E\to\pr_M^*E$ defined by the
diagram
\begin{equation}
\begin{tikzcd}[row sep=1cm]
\pr_M^* \Ga \ar[d, "\pr_M^* E",swap]  \ar[r,"A"] & \Phi^* \Ga \ar[d, "\Phi^*E"] \ar[ld, Rightarrow, "\gamma"] \\ 
\pr_M^* \Ga'  \ar[r,"A'",swap] & \Phi^* \Ga' 
\end{tikzcd}
\end{equation}
in $\BGrb(G\times M)$, such that for every $g,g' \in G$ there is an
equality of diagrams
\begin{equation}
\begin{tikzcd}[column sep= 1.5cm, row sep=1cm]
\pr_M^*\Ga \ar[d, " \pr_M^*E",swap]  \ar[r,"A"] & \Phi_{g}^* \Ga \ar[d, "\Phi_{g}^*E"] \ar[ld, Rightarrow, "\gamma"] \ar[r,"\Phi_{g}^*A"] &\Phi_{g'\,g}^* \Ga \ar[d, "\Phi_{g'\,g}^*E"] \ar[ld, Rightarrow, "\gamma"] \\ 
\pr_M^*\Ga' \ar[rr, bend right=40,"A'",swap]  \ar[r,"A'",swap] & \Phi_{g}^* \Ga' \ar[d,"\chi'", Rightarrow, shorten >= 5] \ar[r,"\Phi_{g}^*A'",swap] & \Phi_{g'\,g}^* \Ga' \\
& \ & 
\end{tikzcd}
\quad = \quad
\begin{tikzcd}[column sep= 1.5cm, row sep=1cm]
& \Phi_{g}^*\Ga \ar[rd,"\Phi_{g}^*A"] \ar[d,Rightarrow, "\chi"] & \\
\pr_M^*\Ga \ar[d, "\pr_M^* E",swap]  \ar[ru,"A"] \ar[rr,"A",swap] & \   & \Phi_{g'\,g}^* \Ga \ar[d, "\Phi_{g'\,g}^*E"] \ar[lld, Rightarrow, "\gamma"] \\ 
\pr_M^*\Ga'  \ar[rr,"A'",swap] & & \Phi_{g'\,g}^* \Ga' 
\end{tikzcd}
\end{equation}
\end{definition}

Being an equivariant 1-isomorphism is a structure and not a property: given a 1-morphism $E\colon\Ga\to\Ga'$ of bundle gerbes there is a
set $\mathcal{E}(E)$ of equivariant structures on $E$. 
According to Theorem~\ref{Thm: Equivariant} we can describe the
equivariant structures on $\Ga$ and $\Ga'$ by splittings
$s\colon \underline{G}\longrightarrow \Sym_G(\mathcal{G})$ and $s'\colon \underline{G} \longrightarrow \Sym_G(\mathcal{G}')$.
We shall now give a description of an equivariant structure on 
$E$ using these homomorphisms of 2-groups.
For this, recall from the end of Section~\ref{sec:Sym(Ga) global} that any 1-isomorphism $E \colon \Ga \to \Ga'$ in $\BGrb(M)$ gives rise to a morphism of smooth 2-groups $\widehat{E} \colon \Sym_G(\Ga) \to \Sym_G(\Ga')$: choose an adjoint inverse $E^{\vee}$ for $E$ and define $\widehat{E}$ via
\begin{equation}
	(f,A) \longmapsto \Phi_f^*E \circ (f,A) \circ E^{\vee} \ .
\end{equation}

\begin{proposition}
There is a natural bijection $\Xi_E$ between the set $\mathcal{E}(E)$ of equivariant structures 
on $E$ and the set of 2-isomorphisms $\widehat{\gamma}\colon \widehat{E} \circ s \longrightarrow s'$ of smooth morphisms of 2-groups. 
\end{proposition}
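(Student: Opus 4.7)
The plan is to construct $\Xi_E$ by combining pullback along the classifying morphisms $f\colon c\to G$ with the adjoint-inverse data on $E$, in the spirit of the proof of Theorem~\ref{Thm: Equivariant}. In that notation, the splittings induced by $(\Ga,A,\chi)$ and $(\Ga',A',\chi')$ are $s(f)=(f\times 1_M)^*A$ and $s'(f)=(f\times 1_M)^*A'$, and by the construction of $\widehat{E}$ recalled at the end of Section~\ref{sec:Sym(Ga) global},
\begin{equation}
	(\widehat{E}\circ s)(f) \,=\, \Phi_f^{\ast}E \,\circ\, (f\times 1_M)^*A \,\circ\, \pr_M^{\ast}E^{\vee}
\end{equation}
as a 1-isomorphism $\pr_M^*\Ga' \to \Phi_f^*\Ga'$ over $c\times M$. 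Using the identity $\Phi_f^*E = (f\times 1_M)^*\Phi^*E$ together with the adjoint-inverse data $\delta_E\colon E\circ E^\vee \to 1_{\Ga'}$ and $\epsilon_E\colon 1_\Ga\to E^\vee\circ E$, the 2-isomorphisms $\widehat{\gamma}(f)\colon (\widehat{E}\circ s)(f)\Rightarrow s'(f)$ are in canonical bijection with 2-isomorphisms $(f\times 1_M)^*(\Phi^*E\circ A)\Rightarrow (f\times 1_M)^*(A'\circ \pr_M^*E)$. I would therefore define $\Xi_E(\gamma)(f)$ as the image of $(f\times 1_M)^*\gamma$ under this adjunction, and I would define the candidate inverse map by evaluating $\widehat{\gamma}$ at the tautological object $1_G\in G(G)$, whence the reverse adjunction produces a 2-isomorphism $\gamma\colon \Phi^*E\circ A\Rightarrow A'\circ \pr_M^*E$ over $G\times M$.

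Naturality of $\widehat{\gamma}$ in morphisms $\varphi\colon f\to f'$ of $\ul{G}$ is automatic from the strict functoriality of pullback in $\BGrb$, and a short diagram chase verifies that $\Xi_E$ and this candidate inverse are mutually inverse at the level of underlying 2-morphisms. Independence from the choice of adjoint inverse $(E^\vee,\epsilon_E,\delta_E)$ follows exactly as in the functoriality discussion in Section~\ref{sec:Sym(Ga) global}, since distinct adjoint inverses differ by canonical unitary 2-isomorphisms that cancel via the triangle identities.

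The main technical step, and the step I expect to be the principal obstacle, is to show that the coherence axiom imposed on an equivariant 1-isomorphism — the pasting equation on $G^{\times 2}\times M$ that relates $\chi$, $\chi'$ and $\gamma$ — is equivalent to the condition that $\widehat{\gamma}\colon \widehat{E}\circ s \Rightarrow s'$ be a 2-morphism between morphisms of monoid objects in $\Hscr$. Under the equivalence $\Xi$ of Theorem~\ref{Thm: Equivariant}, $\chi$ and $\chi'$ are precisely the multiplicative coherence 2-isomorphisms $s_\otimes$ and $s'_\otimes$, while the coherence $\widehat{E}_\otimes$ of $\widehat{E}$ is assembled from the (co)evaluation data for $E\dashv E^\vee$ and the pseudofunctoriality of pullback in $\BGrb$. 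I expect the verification to reduce to a diagram chase over $G^{\times 2}\times M$ that pastes $\widehat{\gamma}(f)\otimes\widehat{\gamma}(f')$ and $\widehat{\gamma}(f\,f')$ against $(\widehat{E}\circ s)_\otimes$ and $s'_\otimes$, and which, after cancelling the duality data $\epsilon_E$ and $\delta_E$ via the triangle identities, collapses onto exactly the defining equality of an equivariant 1-isomorphism pulled back along $(f,f')\colon c\to G^{\times 2}$. Once this matching of coherences is established, bijectivity is immediate from the pointwise construction in the first paragraph, and naturality of the bijection in $E$, $(A,\chi)$ and $(A',\chi')$ is automatic from the pullback definition of $\Xi_E$.
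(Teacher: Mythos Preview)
Your construction of $\Xi_E(\gamma)(f)$ via $(f\times 1_M)^*\gamma$ and the adjoint-inverse data is exactly the paper's approach: the pasting diagram in the paper's proof is precisely the adjunction move you describe. The identification of the equivariant coherence condition on $\gamma$ with the monoidal 2-morphism condition on $\widehat{\gamma}$ is also correct in outline, and the paper does not spell this out either---it simply refers back to the method of Theorem~\ref{Thm: Equivariant}.

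There is, however, a genuine gap in your inverse construction. You propose to recover $\gamma$ from $\widehat{\gamma}$ by evaluating at ``the tautological object $1_G\in G(G)$''. But the objects of $\ul{G}$ are smooth maps $f\colon c\to G$ with $c\in\Cart$, and $G$ is in general not a Cartesian space (for instance, if $G$ is compact and non-trivial, or more generally whenever $G$ is not contractible). So $1_G$ is not an object of $\ul{G}$, and the smooth 2-group $\Sym_G(\Ga')$, being fibred over $\Cart$, has no fibre over $G$ at which you could evaluate $\widehat{\gamma}$. The inverse must instead be constructed by the covering-and-gluing argument used in the proof of Theorem~\ref{Thm: Equivariant}: choose a good open cover $\{c_i\}_{i\in\Lambda}$ of $G$ by Cartesian spaces, evaluate $\widehat{\gamma}$ on the inclusions $f_i\colon c_i\hookrightarrow G$ to obtain local 2-isomorphisms over $c_i\times M$, and use the naturality of $\widehat{\gamma}$ with respect to morphisms in $\ul{G}$ to see that these agree on overlaps and hence glue to the desired $\gamma$ over $G\times M$. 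The multiplicative coherence of $\widehat{\gamma}$ then yields the equivariance condition on $\gamma$ over $G^{\times 2}\times M$ after passing to a suitable refinement, exactly as in that proof.
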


\begin{proof}
Let $\Ga$ and $\Ga'$ be $G$-equivariant bundle gerbes on a manifold
$M$ with a smooth $G$-action $\Phi\colon G\times M 
\longrightarrow M$, and let $(E,\gamma) \colon \Ga \longrightarrow
\Ga'$ be a 1-isomorphism of equivariant bundle gerbes. Fix an adjoint inverse $E^{\vee}$ to $E$. 
We construct the 2-isomorphism $\Xi_E(\gamma) \colon \widehat{E} \circ
s \longrightarrow s'$ as follows: let 
$f\colon c \longrightarrow G $ be an element of $\underline{G}\,$. The natural transformation $\Xi_E(\gamma)$ 
consists of a 2-isomorphism $ \Xi_E(\gamma)_f \colon  \widehat{E} \circ s(f) \longrightarrow s'(f)$
which we construct by the diagram
\begin{equation}
\begin{tikzcd}
& \pr_M^* \Ga \ar[dd,"\pr_M^* E"] \ar[ldd, Rightarrow, dashed, shorten >= 7.5ex] \ar[rr,"s(f)"] & & \Phi_f^* \Ga \ar[dd,"\Phi_f^* E"] \ar[lldd, "(1\times f)^*\gamma", Rightarrow] \\ 
\pr_M^* \Ga' \ar[ru, "\pr_M^*E^{\vee}"] \ar[rd, "1",swap] & \  & & \\
 \ & \pr_M^* \Ga' \ar[rr,"s'(f)"]  & & \Phi_f^* \Ga'
\end{tikzcd}
\end{equation}
Using the same methods as in the proof of Theorem~\ref{Thm: Equivariant} one can show that $\Xi_E$ is 
a bijection.
\end{proof}

\begin{definition}\label{def:eq2iso}
An \emph{equivariant 2-isomorphism} of $G$-equivariant bundle gerbes $(E,\gamma)\longrightarrow (E',\gamma')$ consists 
of a 2-morphism $\eta \colon E \longrightarrow E'$ such that there is
an equality of diagrams
\begin{equation}
\begin{tikzcd}[column sep= 1.5cm,  row sep =1.2cm]
\pr_M^* \Ga \ar[d, bend right=90,"\pr_M^* E'",swap, ""name=LL]  \ar[d, "\pr_M^* E",""name=LL2]  \ar[r,"A"] & \Phi^* \Ga \ar[d, "\Phi^*E"] \ar[ld, Rightarrow, "\gamma"] \\ 
\pr_M^* \Ga'  \ar[r,"A'",swap] & \Phi^* \Ga' 
\ar[from=LL2, to=LL,Rightarrow, "\pr_M^*\eta",shorten <= 3pt, shorten >= 3pt]
\end{tikzcd}
\quad = \quad
\begin{tikzcd}[column sep= 1.5cm, row sep =1.2cm]
\pr_M^* \Ga   \ar[d, "\pr_M^* E'"]  \ar[r,"A"] & \Phi^* \Ga \ar[d, "\Phi^*E'",""name=LL2,swap] \ar[ld, Rightarrow, "\gamma'"] \ar[d, bend left=90,"\Phi^* E", ""name=LL] \\ 
\pr_M^* \Ga'  \ar[r,"A'",swap] & \Phi^* \Ga' 
\ar[from=LL, to=LL2,Rightarrow, "\Phi^*\eta",shorten <= 3pt, shorten >= 3pt]
\end{tikzcd}
\end{equation} 
\end{definition}
Being an equivariant 2-morphism is a property. 

The 2-group extension $\Sym_G(\Ga)\longrightarrow \underline{G}$
can also be used to study the existence of equivariant structures on 1-morphisms.
A condition for 2-isomorphisms of bundle 
gerbes to be equivariant is
\begin{proposition}
	Let $(E,\gamma)$ and $(E',\gamma')$ be equivariant 1-isomorphisms. A 2-isomorphism $\eta \colon E 
	\longrightarrow E'$ is equivariant if and only if $
        \Xi_{E'}(\gamma') \circ (\widehat{\eta}\circ 1_\gamma) =\Xi_{E}(\gamma)$. 
\end{proposition}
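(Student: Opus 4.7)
The plan is to prove both directions by reducing the asserted equation to a componentwise statement at each object $f\colon c\to G$ of $\ul{G}$. The 2-morphisms $\Xi_E(\gamma)$, $\Xi_{E'}(\gamma')$ and $\widehat{\eta}$ are all 2-natural transformations between morphisms of smooth 2-groups $\ul{G}\to\Sym_G(\Ga')$, so the equality $\Xi_{E'}(\gamma')\circ(\widehat{\eta}\circ 1_s)=\Xi_E(\gamma)$ of such 2-natural transformations is equivalent to the corresponding equality of their components at every $f\in\ul{G}$. This reduces the global question to a check in the ordinary 2-category $\BGrb(c\times M)$ for each $f$.

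At a fixed $f\colon c\to G$, I would unfold the three constituents as follows. By the construction preceding the statement, $\Xi_E(\gamma)_f\colon \widehat{E}(s(f))\to s'(f)$ is obtained from $\gamma$ by pullback along $(1_c\times f)\colon c\times M\to G\times M$ and whiskering with the adjoint inverse $E^\vee$; similarly $\Xi_{E'}(\gamma')_f$ arises from $(1_c\times f)^*\gamma'$ and $E'^\vee$. The component of $\widehat{\eta}$ at $(f,s(f))$, by the formula given at the end of Section~\ref{sec:Sym(Ga) global}, is the whiskering $\Phi_f^*\eta \circ 1_{s(f)} \circ \pr_M^*\eta^\vee$, which on $s(f)=(1_c\times f)^*A$ is precisely $(1_c\times f)^*\eta$ up to the same whiskerings by duality data that appear in $\Xi_E(\gamma)_f$ and $\Xi_{E'}(\gamma')_f$. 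After these identifications, the triangle equation at $f$ becomes exactly the pullback along $(1_c\times f)$ of the pasting equation displayed in Definition~\ref{def:eq2iso}.

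Both implications then follow. If $\eta$ is equivariant, the diagram in Definition~\ref{def:eq2iso} commutes over $G\times M$, so its pullback along every $(1_c\times f)$ commutes, giving equality of components pointwise in $\ul{G}$ and hence the asserted identity. Conversely, taking $c=G$ and $f=1_G$ recovers the diagram of Definition~\ref{def:eq2iso} over $G\times M$ itself.

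The only delicate point—which I would not describe as a genuine obstacle—is the bookkeeping of the adjoint inverses $E^\vee,E'^\vee$ and their unit/counit 2-isomorphisms, which appear on both sides of the triangle equation and must be cancelled using the triangle identities. This cancellation is uniform and independent of $f$; it is of exactly the same nature as in the construction of $\Xi_E$ and of $\widehat{\eta}$ and in the proof of Theorem~\ref{Thm: Equivariant}, so no further substantive argument is required.
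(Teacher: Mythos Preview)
Your approach matches the paper's, which compresses the argument to a single sentence: ``This follows from Definition~\ref{def:eq2iso} using the fact that the inverses $E^{\vee}$ and $E'{}^{\vee}$ are adjoints to $E$ and $E'$.'' Your unfolding of the components and your emphasis on the triangle identities for the adjoint inverses is exactly the content behind that sentence.

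There is one slip in your converse direction: you cannot take $c=G$ and $f=1_G$, because $G$ is typically not a Cartesian space (in the main applications it is a compact Lie group). The fix is routine and in the spirit of the paper's proof of Theorem~\ref{Thm: Equivariant}: either evaluate at constant maps $f\colon \R^0\to G$ at each $g\in G$ to see the pasting equality pointwise over $G\times M$, or restrict along the inclusions $U_i\hookrightarrow G$ of a good open cover and use that 2-morphisms of bundle gerbes are determined locally. Either version recovers the global identity of Definition~\ref{def:eq2iso}.
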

\begin{proof}
This follows from Definition~\ref{def:eq2iso} using the fact that the inverses $E^{\vee}$
and $E'{}^{\vee}$ are adjoints to $E$ and $E'$. 
\end{proof}

\section{Application I: Nonassociative magnetic translations}
\label{Sec: Magnetic translations}

Nonassociativity in quantum mechanics has a long history dating back
to foundational work on the theory in the 1930's. Its interest was revived in the
1980's with the realisation that the magnetic translation operators
on the states of a charged particle moving in a magnetic monopole
background generally form a nonassociative
algebra~\cite{Jackiw:3-cocycles,Gunaydin:1985ur};
see~\cite{Szabo:Durham} for a mathematical introduction to the subject
together with a survey and comparison of the various approaches to the
quantisation of the pertinent twisted Poisson structures. The recent revived interest in these
models has come about from their conjectural relevance to the
low-energy dynamics
of closed strings in non-geometric
backgrounds,
which are based on arguments invoking T-duality applied to target spaces that are tori or more generally
total spaces of torus bundles~\cite{Lust:2010iy,MSS:NonGeo_Fluxes_and_Hopf_twist_Def,Bakas:2013jwa,Mylonas:2013jha}, and other compact Lie groups~\cite{Blumenhagen:2010hj}. See
e.g.~\cite{Szabo:Corfu} for a contemporary introduction to the
subject with further references.

As a first application of the general framework presented in this paper, we reformulate the 
well-known theory of magnetic translations for source-free magnetic fields in the language of Section~\ref{Sec: U(1)}.
We then use the results of Section~\ref{sect:extensions from gerbes} to describe nonassociative 
magnetic translations, which were first studied from a geometric
perspective in~\cite{BMS:NA_translations} on $\R^d$. They were subsequently  
studied from a quantum field theory perspective in~\cite{Mickelsson},
where generalisations from $\R^d$ to connected Lie groups are also considered. Here we will show that they are induced by a natural section of $\Sym_G(\Ga)\longrightarrow \underline{G}$
constructed using the parallel transport of Section~\ref{Sec: parallel transport}.

\subsection{Magnetic translations on $\bbT^d$}
\label{Ex: Line bundles on T}

Magnetic translations appear in the quantum mechanics of an
electrically charged particle moving 
on a manifold $M$ in the presence of a magnetic 
field, which is given by a 2-form $B\in \Omega^2(M)$. In the
semi-classical Maxwell theory of electromagnetism, the 2-form $B$ is
closed and has integer periods. The first requirement $H=\dd B=0$ is
the statement that there are no magnetic monopoles. The second
requirement is the Dirac charge quantisation condition which states that 
$B$ is the curvature of a connection on a hermitean line bundle $L$
over $M$. In Bloch theory (see e.g.~\cite{Gruber}), the line bundle $L$ is used in geometric
quantisation of the shift of the canonical symplectic structure on the
cotangent bundle $T^*M$ by the 2-form $B$, so that the quantum Hilbert 
space of wavefunctions for the particle is $\mathcal{H}= {\rm
  L}^2(M;L)$, the space of square-integrable
sections of $L$. The (global) symmetry group $G$ of the particle acts on $M$, and one would like to promote the $G$-action 
to an action on the Hilbert space by linear operators. In quantum
mechanics, this action on $\mathcal{H}$ is only required to define a
projective representation of $G$. If $G$ acts via translations the resulting operators are called 
\emph{magnetic translations}. The construction in Section~\ref{Sec: U(1)} provides 
a universal mechanism to construct magnetic translations,   
which we will illustrate on the example of a
$d$-dimensional torus $M=\bbT^d$. Magnetic translations on $\bbT^d$
have been studied in e.g.~\cite{Fiore,Davighi} (for constant magnetic
fields $B$), but our treatment is more general and also fits in with
expectations from string theory.

Instead of working on $\bbT^d$ directly, we work equivariantly on the
universal cover
$\R^d$ by viewing $\bbT^d=\R^d/\Z^d$ as the quotient of the
natural free action $\tau$ on $\R^d$ of the discrete subgroup
$\Z^d\subset\R^d$ by translations. The corresponding projection $\pi\colon \R^d\longrightarrow \bbT^d$ 
is a surjective submersion. To describe line bundles on $\bbT^d$ we
consider the diagram of manifolds 
\begin{equation}
\begin{tikzcd}[row sep=1cm]
\U(1) &  \\
\R^d\times \Z^d \cong \R^d \times_{\bbT^d} \R^d \ar[r, shift left,
"{\pi_1}"] \ar[r, shift right, "{\pi_0}", swap] \ar[u,"f"] & \R^d \ar[d,"\pi"] \\
 &  \bbT^d 
\end{tikzcd}
\end{equation}  
where we use the identification $\R^d\times \Z^d \ni (x,i) \mapsto
(x,x+i)\in  \R^d \times_{\bbT^d} \R^d $; under this identification,
$\pi_0=\pr_{\R^d}$ is the projection and $\pi_1=\tau$ is the
$\Z^d$-action on $\R^d$.
Any line bundle on $\bbT^d$ can be described by a smooth function $f\colon \R^d\times \Z^d\longrightarrow \U(1)$ satisfying 
$$
f(x+i,j)\, f(x,i)=f(x,i+j)
$$ 
for all $x\in\R^d$ and $i,j\in \Z^d$.
This means that $f$ is a 1-cocycle on the group $\Z^d$ with values in the $\Z^d$-module 
$C^ \infty(\R^d;\U(1))$.
We will use the notation $f_i(\,\boldsymbol\cdot\,)\coloneqq f(\,\boldsymbol\cdot\,,i)\in
C^\infty(\R^d;\U(1))$ for $i\in\Z^d$. 
Concretely, the $\U(1)$-bundle described by $f$ is the quotient
\begin{align}
P_f \coloneqq \big( \R^d\times \U(1) \big)\big/{\sim} 
\end{align}
by the equivalence relation
\begin{align}
(x+i,1) \sim \big(x, f_i(x)\big) 
\end{align}  
for all $x\in\R^d$ and $i\in\Z^d$. Sections of the associated line
bundle $L_f \longrightarrow \bbT^d$ are in one-to-one correspondence
with equivariant functions on the universal covering space, which are
functions $\psi\in C^\infty(\R^d;\C)$ satisfying quasi-periodic boundary conditions 
\begin{align}
\psi(x+i)= f_i(x)\,\psi(x) \ . 
\end{align} 

The action of the translation group $G=\R^d_{\mathtt{t}}$ on $\R^d$,
$x\mapsto x+v$,  induces
an action $\tau$ of $\R^d_{\mathtt{t}}$ on $\bbT^d$. For $v\in\R^d_{\mathtt
  t}$, the 
bundle $\tau_v^*P_f$ is described by the functions $\tau_v^*f_i=f_i(\,\boldsymbol\cdot\, -v)$.\footnote{Note that the functions $f_i$ are not 
invariant under the subgroup $\Z_{\mathtt{t}}^d\subset
\R_{\mathtt{t}}^d$, whereas the bundles $P_f$ 
and $P_{\tau_v^*f}$ are canonically isomorphic. This is nothing but a concrete
implementation of the fact that pullbacks are only well defined up to canonical isomorphism.   \color{black}} This allows us to give a concrete 
description for the fibres of the principal bundle $\Sym_{\R^d_{\mathtt
    t}}(P_f)\to\R^d_{\mathtt{t}}$ as
\begin{equation}\label{Eq: Sym for MT}
\Sym_{\R^d_{\mathtt{t}}}(P_f)_{|v} = \Bun_{\U(1)}(M)(P_f, P_{\tau_v^*f})=
  \big\{g\in C^\infty(\R^d, \U(1)) \ \big| \ g(x+i)=f_i(x)\,f_i(x-v)^{-1}\, g(x) \big\} \ .
\end{equation}     
The group\footnote{For an 
action of a group $G$ on a set $X$, we denote by $X^G\subseteq X$ the subset of $G$-invariants.}  $C^\infty(\bbT^d;\U(1))=C^\infty(\R^d;\U(1))^{\Z^d}$
acts freely and transitively on 
$\Sym_{\R^d_{\mathtt{t}}}(P_f)_{|v} $ by pointwise multiplication. The multiplicative structure from
Proposition~\ref{Prop: multiplicative structure bundle} on
$\Sym_{\R^d_{\mathtt{t}}}(P_f)$ takes the concrete form
\begin{align}
\mu\big((g,v),(g',v')\big)\coloneqq \big((\tau_{v'}^*g) \, g',v+v'\big) \ .
\end{align} 

A smooth section of the short exact sequence   
\begin{align}
1\longrightarrow C^\infty(\bbT^d, \U(1))\longrightarrow \Sym_{\R^d_{\mathtt
  t}}(P_f)\longrightarrow \R^d_{\mathtt{t}} \longrightarrow 1
\end{align}
induces a twisted action of $\R^d_{\mathtt{t}}$ on the quantum
Hilbert space $\mathcal{H}={\rm L}^2(\bbT^d;L_f)$; here we do not
require this section to be a group homomorphism, and the 2-cocycle twisting this action takes values
in $C^\infty(\bbT^d; \U(1))$. We can construct such a section
from the choice of a connection on $P_f$, which reproduces the 
usual expression for magnetic translations. A connection on $P_f$ can be described 
by a 1-form $A\in \Omega^1(\R^d)$ satisfying  
\begin{align}
-\iu \, \dd \log f = \pi_0^*A - \pi_1^* A \ .
\end{align}
This condition implies that the closed 2-form $\dd A=\pi^*B$ descends
to a well-defined magnetic field $B$ on $\bbT^d$.
The section corresponding to $A$ is given by parallel transport:
\begin{align}
s_A \colon \R^d_{\mathtt{t}} & \longrightarrow \Sym_{\R^d_{\mathtt{t}}}(P_f)  \\
v & \longmapsto s_A(v)= \exp\Big( - \iu\, \int_{\Delta^1(\,\boldsymbol\cdot\,;v)}\, A \Big) \ , 
\end{align}
where 
$$
\Delta^1(x;v)= \{ x-v +t\, v \in \R^d \mid 0\leq t \leq 1 \} \ .
$$
We check that this is indeed an element of $\Sym_{\R^d_{\tt
    t}}(P_f)_{|v}$:
\begin{align}
\big(s_A(v)\big)(x+i) &=
  \exp\Big(-\iu\,\int_{\Delta^1(x+i;v)}\,A+\iu\,\int_{\Delta^1(x;v)}\,A\Big)
              \ \big(s_A(v) \big) (x)
                              \\[4pt]
&=\exp\Big(\int_{\Delta^1(x;v)}\,\dd\log
  f_i\Big) \ \big(s_A(v) \big) (x) \\[4pt]
&= f_i(x)\, f_i(x-v)^{-1} \ \big(s_A(v) \big) (x) \ .
\end{align}
The corresponding 2-cocycle describing the extension agrees with the 2-cocycle constructed 
in e.g.~\cite{Soloviev:Dirac_Monopole_and_Kontsevich,
  BMS:NA_translations}. 

By Proposition~\ref{Prop: Action line bundle} the extension 
$\Sym_{\R^d_{\mathtt{t}}}(P_f)$
acts on the total space of the line bundle $L_f$
and hence on the quantum state space $\mathcal{H}$. The section $s_A$ realises 
translations $v\in \R^d_{\mathtt{t}}$ as linear operators
$\mathcal{P}(v):\mathcal{H}\to\mathcal{H}$ on this Hilbert space via 
$$
\mathcal{P}(v)\psi=s_A(v)\,\tau_v^*\psi \ .
$$
One easily checks that $\mathcal{P}(v)\psi\in\mathcal{H}$ for $\psi\in\mathcal{H}$,
i.e. $\big(\mathcal{P}(v)\psi\big)(x+i) = f_i(x)\,
\big(\mathcal{P}(v)\psi\big)(x) $. 
However, they only provide a projective representation of the
translation group $\R^d_{\mathtt{t}}$
since $s_A$ is not a group homomorphism. Explicitly, using Stokes’
Theorem we find that the magnetic translations satisfy the relations
of the twisted group algebra
\begin{align}
\mathcal{P}(v)\,\mathcal{P}(v')= \exp \Big(
  -\iu\,\int_{\Delta^2(\,\boldsymbol\cdot\,; v',v) } \, \pi^*B\Big) \ \mathcal{P}(v+v') \ ,
\end{align} 
where 
$$
\Delta^2(x;v',v) = \{ x-v-v'+t_1 \, v' + t_2\, v\in\R^d \mid 0
\leq t_2 \leq t_1 \leq 1  \} \ ,
$$
and we used the relation
$$
\partial\Delta^2(x;v',v) =
\Delta^1(x;v)-\Delta^1(x;v+v')+\Delta^1(x+v;x+v+v')
$$
in the simplicial complex in $\R^d$.

\begin{remark}
By dropping the (quasi-)periodicity conditions everywhere one gets
back the description of magnetic translations
corresponding to (necessarily trivialisable) line bundles over $\R^d$ (cf.~\cite{BMS:NA_translations}).
\qen
\end{remark}

\subsection{Nonassociative magnetic translations from parallel transport}

Dirac's extension of the classical Maxwell theory assumes a singular magnetic
field $B$ whose 3-form curvature $H=\dd B$ is distributional, with
zero-dimensional support consisting of the locations of magnetic
monopoles on the configuration manifold $M$. However, in applications
to string theory the closed 3-form $H$ corresponds to an NS--NS flux and is typically
smooth, as we now assume. The framework described in Section~\ref{Ex: Line bundles on T} is not capable of encoding 
magnetic fields with non-vanishing magnetic charge $H=\dd B$, since in this 
case $B$ can never be realised as the curvature of a line bundle. The
quantisation problem now concerns an $H$-twisted Poisson structure
on the cotangent bundle $T^*M$~\cite{Szabo:Durham}, with twisting
of the canonical Poisson structure which spoils the Jacobi identity
for functions in $C^\infty(T^*M;\C)$ that vary along the vertical directions. The
corresponding quantum operators do not associate; it is not
possible to realise a nonassociative algebra by linear operators acting on a Hilbert
space. Different approaches to describing
the nonassociative quantum mechanics of charged particles moving in the background
of a magnetic field with smooth monopole sources are described in~\cite{Mylonas:2013jha, Bojowald:2014oea, KS:Symplectic_realisation}. 

Fluxes in string theory obey a generalised version of Dirac charge
quantisation (see e.g.~\cite{Szabo:2012hc}); in particular, the closed
3-form $H$ has integer periods and hence is the
curvature of a connection on a hermitean bundle gerbe over $M$. 
Based on this observation, in~\cite{BMS:NA_translations} we suggested
the following approach: geometrically the magnetic field $B$
can be interpreted as the curving on a trivial gerbe $\mathcal{I}_B$ with curvature $H$. 
We proposed to use the 2-Hilbert space of sections of $\mathcal{I}_B$ as a replacement 
for the usual Hilbert space of quantum mechanics. 
The category of sections of a gerbe $\Ga$ on a manifold $M$ is the morphism category 
$\Gamma(M;\Ga)\coloneqq \BGrb(\mathcal{I}, \Ga)$; for details on the 2-Hilbert space structure on this category we refer to~\cite{BSS--HGeoQuan,Bunk-Szabo--Fluxes_brbs_2Hspaces,Bunk--Thesis}.  
As evidence for our approach we constructed a projective 
action of nonassociative magnetic translation operators on this 2-Hilbert space, 
which naturally encodes the relations of the $H$-twisted Poisson
algebra on $T^*M$. However, the drawback of this approach is that it does not work for
topologically non-trivial fluxes $H$, or equivalently for
gerbes $\Ga$ with non-torsion Dixmier-Douady class. Extending our geometric approach to nonassociative
quantum mechanics along these lines was in fact one of our original motivations behind
the present paper.
 
Let us first explain how the action of nonassociative magnetic translations for magnetic 
fields with sources on $M=\R^d$, which was described
in~\cite{BMS:NA_translations}, can be constructed via the 2-group extensions 
from Section~\ref{sect:extensions from gerbes}. 
Every gerbe on $\R^d$ is isomorphic to a trivial gerbe 
$\mathcal{I}_B$ represented by the diagram
\begin{equation}
\begin{tikzcd}[row sep=1cm]
\R^d\times \C \ar[d,swap,"\pr"] & \\
\R^d\ar[r, shift left] \ar[r, shift right] & \R^d \ar[d,"1"] \\
&  \R^d 
\end{tikzcd} \ \ 
\end{equation} 
with trivial connection $A=0$ and curving $B\in \Omega^2(\R^d)$. 
The connection on $\mathcal{I}_B$ induces a section\footnote{To
  simplify the presentation, in the
  following we disregard smooth structures and work in the 2-category of
2-groups $2\Grp(\Grpd)$. The extensions to categories fibred
in groupoids over $\Cart$ is straightfoward.}
$$
s_{B}\colon \R^d_{\mathtt{t}}\longrightarrow \text{Sym}_{\R^d_{\mathtt{t}}}(\mathcal{I})
$$  
via parallel transport:
\begin{align}
s_{B}(v)\coloneqq \big(\pt_{1}^{\mathcal{I}_B}\big)_{|\Delta^1(\,\boldsymbol\cdot\,;v)}\colon \Ia_B \longrightarrow \tau_v^* \Ia_B \ .
\end{align}
Combining $s_B$ with the action of $\Sym_{\R^d_{\mathtt{t}}}(\Ia)$ on $\Ia$ induces a higher projective action of 
$\R^d_{\mathtt{t}}$ on $\Gamma(\R^d;\Ia)$. We refer to~\cite[Section 4]{BMS:NA_translations} for precise definitions.  

Since all line 
bundles over $\R^d$ are isomorphic to a trivial line bundle, the
category \smash{$\text{Sym}_{\R^d_{\mathtt{t}}}(\mathcal{I})_{|v}$} at $v\in
\R^d_{\mathtt{t}}$ is equivalent to
the category with one object and morphisms 
described by smooth functions $\R^d\longrightarrow \U(1)$.  Thus
\begin{align}
\Sym_{\R^d_{\mathtt{t}}}(\mathcal{I}) \cong \big(*\DS
  C^\infty(\R^d;\U(1))\big) \rtimes \R^d_{\mathtt{t}} \ .
\end{align} 
The 2-group structure on $\Sym_{\R^d_{\mathtt{t}}}(\mathcal{I})$ is given by 
\begin{align}
v\otimes v'&\coloneqq v+v' \ , \\[4pt]
(h,v)\otimes(h',v')&\coloneqq \big((\tau_{v'}^*h) \, h',v+v'\big) \ .
\end{align}
However, under this equivalence the action of the magnetic translation operators 
becomes elusive. In~\cite{BMS:NA_translations} we equipped the 1-morphisms 
with a connection to circumvent this problem. We did not require the 2-morphisms to preserve these connections
leading to equivalent categories. The parallel transport 1-morphisms $\pt_1^{\Ia_B}$ can be equipped with 
such a connection in a canonical way. This reproduces the constructions from~\cite{BMS:NA_translations}.  

For this, let $\mathcal{I}_B$ be a trivial bundle gerbe
on a smooth manifold $M$ with curving $B\in\Omega^2(M)$ and curvature $H=\dd B$.
We fix a base point $x\in M$. Via transgression and regression  
we get a bundle gerbe $\Ra \Ta(\mathcal{I}_B)$ defined over the
diffeological path fibration 
$P_0M\longrightarrow M$. 
The corresponding line bundle over $(P_0M)^{[2]}$ admits a canonical trivialisation.
It comes equipped with
a connection 1-form given as the pullback of the transgression of $B$ to
the loop space $LM$ along the embedding $l\colon (P_0M)^{[2]} \longrightarrow LM$. 
To describe the curving of $\Ra \Ta(\mathcal{I}_B)$ we note that a tangent vector
to a based path $\gamma \in P_0M$ is a smooth section $ \mathcal{V} \in \Gamma ([0,1];\gamma^*TM)$ 
which is zero in a neighbourhood of $0$ and constant in a neighbourhood of $1$. 
The 2-form $\Ra \Ta(B)\in \Omega^2(P_0M)$ is defined by the transgression formula
\begin{align}
\Ra \Ta(B)(\mathcal{V},\mathcal{V}') = \int_\gamma\,
  \imath_{\mathcal{V}'} \imath_\mathcal{V} H \ ,
\end{align}   
where $\imath$ denotes the interior multiplication between a vector
and a form.
There is a natural 1-isomorphism \cite[Section 6.1]{Waldorf:Transgression_II} 
$W  \colon \Ra \Ta(\mathcal{I}_B) \longrightarrow \mathcal{I}_B$ with
underlying diagram
\begin{equation}
\begin{tikzcd}
 & & P_0 M \ar[rd] \ar[ld] & & \\
 (P_0 {M})^{[2]} \ar[r, shift left] \ar[r, shift right] &P_0 M \ar[rd] & & \ar[ld] M & \ar[l, shift left] \ar[l, shift right] M \\
& & M & &
\end{tikzcd}
\end{equation}
of diffeological spaces. The line bundle which is part of this 
$1$-morphism has a canonical trivialisation for trivial bundle gerbes $\mathcal{I}_B$ 
and has the connection 1-form $A_W\in \Omega^1(P_0M)$ defined 
by the transgression formula
\begin{align}
A_W(\mathcal{V})= \int_\gamma\, \imath_\mathcal{V} B \ .
\end{align} 

In order to describe the parallel transport, we pull everything back to the path space
$PM$ along the two evaluation maps $\ev_0,\ev_1 \colon PM \longrightarrow M$. 
The parallel transport defined in Section~\ref{Sec: parallel transport} is a 1-morphism 
\begin{align}
\pt_1^{\Ra \Ta(\mathcal{I}_B)}\colon \ev_0^*\Ra \Ta(\mathcal{I}_B) 
\longrightarrow \ev_1^* \Ra \Ta(\mathcal{I}_B) 
\end{align} 
given by 
a line bundle with connection over the space $P_{\partial \Delta^2}M$. 
An element of $P_{\partial \Delta^2}M$ 
is a triple of paths $(\gamma_{xy},\gamma_{yz},\gamma_{xz})$, where $\gamma_{xy}$ is a path 
from the base point $x$ to some other point $y\in M$, $\gamma_{yz}$ is a path from $y$ to
a third point $z\in M$, and $\gamma_{xz}$ is a path from $x$ to $z$. 
Again, the line bundle for the parallel transport is trivial since we work
with a trivial bundle gerbe. Its connection is given by 
\begin{align}
 \int_{ \overline{\gamma_{xz}} \star \gamma_{yz} \star \gamma_{xy}} \,
\imath_\bullet B \ ,
\end{align} 
where the notation means that the evaluation on a tangent vector $\mathcal{V}$ is 
given by 
\begin{align}
 \int_{ \overline{\gamma_{xz}} \star \gamma_{yz} \star \gamma_{xy}} \,
\imath_\mathcal{V} B \ . 
\end{align} 
We obtain a 1-morphism 
\begin{align}
\pt_1^{\mathcal{I}_B}\colon \ev_0^* \mathcal{I}_B \xrightarrow{\ev_0^* W^{-1}} \ev_0^*
\Ra \Ta(\mathcal{I}_B) \xrightarrow{\pt_1^{\Ra \Ta(\mathcal{I}_B)}}\ev_1^*
\Ra \Ta(\mathcal{I}_B) \xrightarrow{\ev_1^*W} \ev_1^* \mathcal{I}_B \ ,
\end{align} 
representing the colimit from Definition~\ref{Def: parallel transport on Gerbe}.
Concretely, this is a trivial line bundle over $PM$ with connection 
\[ 
\int_{\gamma_{yz}} \, \imath_\bullet B \ . 
\] 
This is exactly the formula used for the magnetic translations
in \cite{BMS:NA_translations} in the case $M=\R^d$, hence it provides a conceptual underpinning 
of the constructions
in~\cite{BMS:NA_translations}, and moreover generalises them to trivial bundle gerbes on arbitrary 
manifolds $M$.     

\subsection{Nonassociative magnetic translations on $\bbT^d$}

Now we generalise the description of nonassociative magnetic translations to the $d$-dimensional
torus $\bbT^d$, see also~\cite{ Mickelsson} for a discussion from a
quantum field theory point of view. 
A problem in this context is that for topologically non-trivial gerbes on $\bbT^d$, there 
are no non-trivial sections. This makes the 2-Hilbert space of sections an uninteresting object 
to study.\footnote{We expect that there exists a better definition of 
sections circumventing this problem.} However, our 2-group extension still exists and should encode 
the geometry of nonassociative magnetic translations in this context,
regardless of whether or not sections exist. Non-trivial gerbes over $\bbT^d$ are similarly
  treated as coming from $\Z^d$-equivariant (topologically trivial) gerbes over
  $\R^d$, as in e.g.~\cite[Section~7.1]{MW16}.

Bundle gerbes on $\bbT^d$ can be described using the surjective submersion
$\pi \colon \R^d\longrightarrow \bbT^d$ and the corresponding
diagram
\begin{equation}
\begin{tikzcd}[column sep=1em, row sep=1cm]
\pi_{0,1}^*L\otimes \pi_{1,2}^*L \ar[rd] \ar[rr,"f"] & &  \pi_{0,2}^*L
\ar[ld] & L \ar[d] & & \\
& \R^d\times \Z^d \times \Z^d \ar[rr, shift left] \ar[rr, shift right]
\ar[rr] & & \R^d\times \Z^d \ar[rr, shift left, "\pi_1"] \ar[rr, shift
right, swap,"\pi_0"] & & \R^d \ar[d,"\pi"] \\
& & & & & \bbT^d 
\end{tikzcd} 
\end{equation} 
Here we used the identification $(x,i,j)\in \R^d\times \Z^d \times \Z^d \mapsto (x,x+i,x+i+j)\in (\R^d)^{[3]}$. 
Concretely, a bundle gerbe consists of a line bundle $L$ 
over $\R^d\times \Z^d$, which we can assume to be trivial without loss of generality, and an isomorphism
$f\colon \pi_{0,1}^*L\otimes \pi_{1,2}^*L \longrightarrow
\pi_{0,2}^*L$ of line bundles over $\R^d\times \Z^d\times \Z^d$
satisfying a coherence condition over $\R^d\times (\Z^d)^{\times3}$. We can describe this 
isomorphism by a collection of smooth maps $f_{i,j}\colon \R^d\longrightarrow \U(1)$ for all $i,j\in \Z^d$, and the coherence 
condition translates to 
$$
f_{i,j}(x)\,f_{i+j,k}(x) = f_{i,j+k}(x)\,f_{j,k}(x+i)
$$
for all $x\in\R^d$ and $i,j,k\in\Z^d$, which is
the 2-cocycle condition for
$$
f_{\,\boldsymbol\cdot\,,\,\boldsymbol\cdot\,}\in {\rm C}^2\big(\Z^d;
C^\infty(\R^d;\U(1))\big) \ .
$$
We denote the gerbe described by $f$ 
as $\mathcal{G}_f$.

For $v\in\R^d_{\mathtt{t}}$, the pullback of $\mathcal{G}_f$ along the translation 
$\tau_v$ can be described by the 
2-cocycle $\tau_v^*f_{i,j}= f_{i,j}(\,\boldsymbol\cdot\, - v)$. 
Using
\cite[Proposition A.31]{Bunk--Thesis} we can describe the category
$\Sym_{\R^d_{\mathtt{t}}}(\mathcal{G}_f)_{|v}$ at $v\in \R^d_{\tt t}$
up to equivalence in the following way: 
its objects are functions  
$g\colon \R^d\times \Z^d \longrightarrow \U(1)$ satisfying 
\begin{align}
f_{i,j}(x -v)\,g_{i}(x)\,g_{j}(x+i)=g_{i+j}(x)\, f_{i,j}(x) \ ,
\end{align} 
for all $x\in\R^d$ and $i,j \in \Z^d$.
It is straightforward to deduce the morphisms in $\Sym_{\R^d_{\mathtt{t}}}(\mathcal{G}_f)$
from \cite[Proposition A.31]{Bunk--Thesis}; we find that a morphism 
from $g$ to $g'$ is described by a function $h\colon \R^d \longrightarrow \U(1)$
satisfying 
\begin{align}
h(x)\,g_i(x)= g_i'(x)\,h(x+i)
\end{align} 
for all $x\in\R^d$ and $i\in\Z^d$. 
Note that for the trivial 2-cocycle $f_{i,j}=1$ this describes the 
category of line bundles over $\bbT^d$ with arbitrary gauge 
transformations as morphisms. 
The 2-group structure on
\begin{align}
\Sym_{\R^d_{\mathtt{t}}}(\mathcal{G}_f) = \coprod_{v\in \R^d_{\mathtt{t}}} \, \Sym_{\R^d_{\mathtt{t}}}(\mathcal{G}_f)_{|v}
\end{align}
from Theorem~\ref{Thm: Sym is a 2-group} takes the form 
\begin{align}
(g_i,v)\otimes(g'_i,v')&\coloneqq \big((\tau_{v'}^*g_i)\,
                         g_i',v+v'\big) \ , \\[4pt] 
(h,v)\otimes(h',v')&\coloneqq \big((\tau_{v'}^*h)\, h',v+v'\big) \ , 
\end{align}
fitting into the 2-group extension from Theorem~\ref{st: 2-group extension}:
\begin{align}\label{eq:2groupextTn}
1 \longrightarrow \HLBdl(\bbT^d) \longrightarrow \text{Sym}_{\R^d_{\mathtt{t}}}(\mathcal{G}_f) \longrightarrow \R^d_{\mathtt{t}} \longrightarrow 1 \ .
\end{align}

As in the case of line bundles from Section~\ref{Ex: Line bundles on
  T}, a connection $(A,B)$ on $\mathcal{G}_f$ induces a section of the
extension \eqref{eq:2groupextTn}.
A connection on $\mathcal{G}_f$ is described by a 2-form $B\in \Omega^2(\R^d)$ and a 1-form
$A\in \Omega^1(\R^d\times \Z^d)$ satisfying
\begin{align}
-\iu \, \dd \log f_{i,j}(x) &= A_{i+j}(x)-A_i(x) -A_j(x+i) \ , \\[4pt]
\dd A &= \pi_1^*B-\pi_0^*B \ ,
\end{align}  
for all $x\in\R^d$ and $i,j \in \Z^d$. The second condition implies that
the closed 3-form $\dd B=\pi^*H$ descends to a well-defined flux $H$ on $\bbT^d$.
Using the connection we can construct a section 
$$
s_{A,B}\colon \R^d_{\mathtt{t}}\longrightarrow \text{Sym}_{\R^d_{\mathtt
    t}}(\mathcal{G}_f)
$$ 
by
\begin{align}
s_{A,B}(v)\coloneqq(g_i, v) \qquad \mbox{with} \quad g_i=\exp\Big( \iu\,
  \int_{\Delta^1(\,\boldsymbol\cdot\,;v)}\, A_i \Big) \ .
\end{align}
We check that this is indeed an element 
of $\text{Sym}_{\R^d_{\mathtt{t}}}(\mathcal{G}_f)_{|v}$:
\begin{align}
g_i(x)\,g_j(x+i)\,g_{i+j}(x)^{-1}&= \exp\Big( \iu\, \int_{\Delta^1(x;v)}\, A_i +\iu\, \int_{\Delta^1(x+i;v)}\, A_j - \iu\, \int_{\Delta^1(x;v)}\, A_{i+j} \Big) \\[4pt]
&= \exp\Big(  -\int_{\Delta^1(x;v)}\, \dd \log f_{i,j} \Big) \\[4pt]
&= f_{i,j}(x-v)^{-1}\,f_{i,j}(x) \ . 
\end{align}

For the multiplication we find 
\begin{align}
 s_{A,B}(v)\otimes s_{A,B}(v')&= \bigg(\exp\Big( \iu\,
                                \int_{\Delta^1(\,\boldsymbol\cdot\,-v';v)}\,
                                A_i +\iu\,
                                \int_{\Delta^1(\,\boldsymbol\cdot\,;v')}\,
                                A_i \Big) \,,\, v+v'\bigg) \\[4pt]
&=\bigg(\exp\Big( \iu\, \int_{\Delta^1(\,\boldsymbol\cdot\,;v+v')}\,
   A_i-\iu\, \int_{\Delta^2(\,\boldsymbol\cdot\,;v',v)}\,
   \big(B-\tau_i^*B\big)\Big) \,,\,v+v'\bigg) \\[4pt] 
&=s_{A,B}(v+v')\otimes \bigg(\exp\Big(-\iu\,
  \int_{\Delta^2(\,\boldsymbol\cdot\,;v',v)}\big(B-\tau_i^*B\big)
  \Big)\,,\,0\bigg) \ .
\end{align}
This particular product is associative on the nose. However, the line
bundle on $\bbT^d$
described by the transition functions
$\exp\big(-\iu\,\int_{\Delta^2(\,\boldsymbol\cdot\,;v',v)}\, (B-\tau_i^*B)\big)$ is non-trivial. We can use the decomposition
 \begin{align}
\int_{\Delta^2(\,\boldsymbol\cdot\,;v',v)}\, \dd\imath_\bullet B =
   \pounds_{\bullet}\int_{\Delta^2(\,\boldsymbol\cdot\,;v',v)} \, B
   -\int_{\Delta^2(\,\boldsymbol\cdot\,;v',v)}\, \imath_{\bullet}
   \pi^*H \ ,
\end{align}
where $\pounds$ is the Lie derivative, 
to construct a 2-isomorphism 
\begin{align}
\Pi_{v,v'}\coloneqq \exp\Big(-\iu\, \int_{\Delta^2(\,\boldsymbol\cdot\,;v',v)}\, B \Big) \colon 
 s_{A,B}(v)\otimes s_{A,B}(v') \longrightarrow s_{A,B}(v+v') \ ,
\end{align}
which has the advantage that the line bundle on the right-hand side is 
trivial. 
\begin{remark}\label{Remark: Non-associativity}
In this last representation the nonassociativity of the higher magnetic translations is realised by the
composition property 
\begin{align}
\Pi_{u,v+w} \circ \tau_{-u}^*\Pi_{v,w} = \omega_{u,v,w} \ \Pi_{u+v,w} \circ \Pi_{u,v} \ ,
\end{align}
where
\begin{align}
\omega_{u,v,w} = \exp\Big(\iu\,
  \int_{\Delta^3(\,\boldsymbol\cdot\,;w,v,u)}\, \pi^*H \Big)
\end{align}
and
\begin{align}
\Delta^3(x;w,v,u) = \{x-u-v-w+t_1\,w+t_2\,v+t_3\,u \in\R^d \mid 0\leq
  t_3\leq t_2\leq t_1\leq 1\} \ .
\end{align}
Concretely this means that there are two different ways to go from the
triple product
$s_{A,B}(u)\otimes s_{A,B}(v)\otimes s_{A,B}(w)$ to $s_{A,B}(u+v+w)$.
Their difference is controlled by the 3-cocycle
$\omega_{\,\boldsymbol\cdot\,,\,\boldsymbol\cdot\,,\,\boldsymbol\cdot\,}$
on the translation group $\R^d_{\mathtt{t}}$ with values in
$C^\infty(\bbT^d;\U(1))$, as depicted in the
commutative diagram
\begin{equation}
\begin{tikzcd}[row sep=1.0cm, column sep=0cm]
 & s_{A,B}(u) \otimes s_{A,B}(v+w) \ar[rd,"\Pi_{u,v+w}"] &  \\
s_{A,B}(u)\otimes \big(s_{A,B}(v)\otimes s_{A,B}(w)\big)\ar[d,"1",swap] \ar[ru,"1\otimes\Pi_{v,w}"] &  & s_{A,B}(u+v+w) \ar[d, "{\omega_{u,v,w}^{-1}}"]
	\\
\big(s_{A,B}(u)\otimes s_{A,B}(v)\big)\otimes s_{A,B}(w) \ar[rd,swap,"\Pi_{u,v}\otimes1"] &  &  s_{A,B}(u+v+w)  \\
 &s_{A,B}(u+v) \otimes s_{A,B}(w) \ar[ru,swap,"\Pi_{u+v,w}"] & 
\end{tikzcd}
\end{equation}
This is the implementation of nonassociativity in the higher categorical framework. 
\qen 
\end{remark}

\section{Application II: Anomalies in quantum field theory}
\label{Sec: Anomalies}

In this section we begin by using the group extensions $\Sym_G(P)$ from Section~\ref{Sec: U(1)}
to study the existence of equivariant trivialisations of line bundles.  
This has direct applications to the path integral description of the 
chiral anomaly in quantum field theory {on even-dimensional spacetime manifolds.} 
Then using the 2-group extension $\Sym_G(\mathcal{G})$ from
Section~\ref{sect:extensions from gerbes}, we study 
the analogous questions
for gerbes and apply our findings to the Hamiltonian description 
of the chiral anomaly {on odd-dimensional time-slices}.   

\subsection{Even dimensions: Chiral anomalies}
\label{Sec:Path integral}

Let $G$ be a connected Lie group, $M$ a manifold with smooth $G$-action $\Phi \colon G\times M \longrightarrow M$, and 
$(P,\chi)$ a $G$-equivariant $\U(1)$-bundle on $M$.
The equivariant structure on $P$ can be described by a splitting 
$s_{P} \colon G \longrightarrow \Sym_G(P)$. Assume furthermore that $P$ is
trivial as a line bundle, i.e.\ there exists a 
1-isomorphism $\psi\colon I \longrightarrow P$. 
The trivial bundle carries a canonical equivariant structure with corresponding splitting $s_I \colon G \longrightarrow \Sym_G(I)$.

Rewriting the result of Section~\ref{Sec: Eq bundles} slightly, we see that $\psi$ is equivariant if and only if 
the smooth 1-cocycle
\begin{align}
G  &\longrightarrow C^\infty(M;\U(1))
\\
g &\longmapsto s_{P}(g)\, \widehat{\psi}\big(s_I(g)\big)^{-1}
\end{align}
is trivial. Every other isomorphism $I \longrightarrow P$
differs from $\psi$ by a uniquely determined element of $C^\infty(M;\U(1))$.  Their corresponding 1-cocycles 
differ by the coboundary defined by this element. Hence the obstruction for an equivariant bundle which is trivial
as a line bundle to be also trivial as an equivariant bundle is an
element of the degree one group cohomology ${\rm H}^1\big(G;C^\infty(M;\U(1))\big)$.
This has also been observed in~\cite{CM94} from a different perspective. 
 
The question of whether a bundle is equivariantly trivial 
is important in the path integral perspective on
chiral anomalies in quantum field theory. Let $M$ be a based
even-dimensional compact Riemannian spin manifold, $G$ a 
Lie group, 
$Q$ a principal $G$-bundle on $M$, and $\rho\colon G \longrightarrow \End(V)$
a unitary representation of $G$ on a finite-dimensional vector space
$V$ which encodes the matter content of the field theory. 
Denote by $S^+$ and $S^-$ the positive and negative chirality spinor
bundles on $M$,
respectively, by $\mit\Gamma$ the group of based gauge transformations of $Q$ and 
by $\mathscr{A}$ the affine space of connections on $Q$. 
The field content of the theory are chiral spinors, which are smooth sections of the vector bundle
$S^+\otimes V$, where here $V$ is the hermitean vector bundle associated to $Q$ via the 
representation $\rho$.
There is a 
family of (twisted) Dirac operators 
$$
\D_A \colon \Gamma(M;S^+\otimes V) \longrightarrow  \Gamma(M;S^-\otimes
V)
$$
parameterised by gauge fields $A \in \mathscr{A}$, which are first
order elliptic differential operators acting on chiral spinors. These data
together define the content of a \emph{chiral gauge theory}.

The formal path integral over the chiral spinor fields is the
determinant $\det(\D_A)$ of the Dirac operator $\D_A$. 
However, the determinant of $\D_A$ is in general not a number but an
element of a complex
line, and it
can be defined only after suitable 
regularisation as an element of the fibre of the determinant line bundle 
$\det\longrightarrow \mathscr{A}$~\cite{AS}. This defines the (exponentiated)
effective action functional 
which is a section
\begin{align}
Z=\exp(-S):\Ascr\to\det \ ,
\end{align}
with $S(A)=-\log\,\det(\D_A)$.
The action of $\mit\Gamma$ on $\mathscr{A}$ via gauge transformations
can be lifted to the determinant line bundle, which hence descends to a 
line bundle over the moduli space of gauge connections $\mathscr{A}/ {\mit\Gamma}$.

Being an affine space, $\Ascr$ is contractible, so over $\mathscr{A}$ we can trivialise 
the determinant line bundle and hence identify the effective action functional $Z$ with 
a complex function. However, this might not be possible over the orbit 
space $\mathscr{A}/ {\mit\Gamma}$: if the descended line bundle
is non-trivial then we cannot identify the effective action functional with a complex
function in a gauge-invariant way, i.e.\ the gauge symmetry is anomalous. 
The line bundle over $\mathscr{A}/ {\mit\Gamma}$ is trivial if 
and only if we can choose a \emph{${\mit\Gamma}$-equivariant} trivialisation 
of the line $Z(A)=\det(\D_A)$. By our general discussion above, the obstruction to this is an element of
${\rm H}^1\big({\mit\Gamma}; \U(1)^\Ascr\big)$, where $\U(1)^\Ascr$ is the diffeological space of maps from $\Ascr$ to $\U(1)$.
An explicit formula for this smooth 1-cocycle is obtained in \cite{CM94}.    

\subsection{Odd dimensions: The Faddeev-Mickelsson-Shatashvili anomaly}
\label{Sec: Hamiltonian}

We shall now generalise the construction from the Section~\ref{Sec:Path integral}
to bundle gerbes. For this, we need to introduce a categorification of 
the first group cohomology which takes values in a smooth abelian 
2-group. 
We use a definition along the lines of \cite{BMS:NA_translations},
adjusted to the smooth setting. 
\begin{definition}\label{Def: smooth 1}
Let $G$ be a Lie group and $\sfA$ a smooth abelian 2-group equipped
with a left action $\rho$ of $\ul{G}\,$.
A \emph{smooth higher 1-cocycle} on
$G$ with values in $\sfA$ consists of
\begin{myitemize}
\item a morphism $\lambda \colon \underline{G}\longrightarrow \sfA$,
  $g\mapsto\lambda_g$, 
in $\Hscr$, and
 
\item a smooth natural isomorphism $\chi_{g,g'}\colon \lambda_g \otimes \rho_g(\lambda_{g'})\longrightarrow \lambda_{g\,g'}$ of smooth functors $\underline{G}\times_\Cart\ul{G}\longrightarrow \sfA$,
\end{myitemize} 
such that for every $c \in \Cart$:
\begin{myitemize}
\item $\lambda_{e_c} = I_c$ where $I_c$ is the monoidal identity object of the fibre $\sfA_{|c}$ of $\sfA$ over $c \in \Cart$,
and where $e_c \colon c \to G$ is the constant map at the identity element of $G$,

\item $\chi_{e_c,\,\boldsymbol\cdot\,}$ and $\chi_{\,\boldsymbol\cdot\,,e_c}$ agree with the left and right unitor morphisms 
in $\sfA_{|c}$, and

\item the diagram 
\begin{equation}
\begin{tikzcd}[row sep=1.0cm]
\lambda_g \otimes \rho_g\big(\lambda_{g'} \otimes \rho_{g'}(\lambda_{g''})\big) \ar[d] \ar[rr,"1 \otimes \rho_g(\chi_{g,g'})"]& & \lambda_g \otimes \rho_g(\lambda_{g'\,g''}) \ar[r,"\chi_{g,g'\,g''}"] & \lambda_{g\,g'\,g''} \\ 
\lambda_g \otimes \rho_g(\lambda_{g'}) \otimes \rho_{g\,g'}(\lambda_{g''}) \ar[rr,swap,"\chi_{g,g'}\otimes 1"]  & & \lambda_{g\,g'} \otimes \rho_{g\,g'}(\lambda_{g''}) \ar[ru,swap,"\chi_{g\,g',g''}"] & 
\end{tikzcd}
\end{equation}
commutes for all $g,g',g'' \in G(c)$. 
\end{myitemize}
\end{definition}
We will also need the concept of a higher coboundary.
\begin{definition}
Let $(\lambda,\chi)$ and $(\lambda',\chi')$ be higher 1-cocycles 
on a Lie group $G$ valued in a smooth abelian 2-group $\sfA$. A \emph{higher coboundary} between $(\lambda,\chi)$ and $(\lambda',\chi')$
consists of 
\begin{myitemize}
\item a morphism $\theta\colon  * \longrightarrow  \sfA$, and

\item smooth isomorphisms $\omega_g \colon \lambda_g\otimes \rho_g(\theta)\longrightarrow 
\theta \otimes \lambda'_{g}$ for every $g \in \ul{G}\,$,
\end{myitemize}
such that $\omega_{e_c}$ agrees with the symmetry isomorphism
$\beta_\sfA$, and the diagram
\begin{equation}
\begin{tikzcd}[row sep=1.0cm]
\lambda_g \otimes \rho_g\big(\lambda_{g'}\otimes
\rho_{g'}(\theta)\big) \ar[r] \ar[d] & \lambda_g\otimes
\rho_g(\theta)\otimes\rho_g(\lambda'_{g'}) \ar[r,"\omega_g\otimes 1"] & \theta\otimes \lambda'_g \otimes\rho_g(\lambda'_{g'}) \ar[d,"1\otimes\chi'_{g,g'}"] \\ 
\lambda_g \otimes \rho_g(\lambda_{g'})\otimes
\rho_{g\,g'}(\theta)\ar[r,swap,"\chi_{g,g'}\otimes 1"] & \lambda_{g\,g'}\otimes \rho_{g\,g'}(\theta) \ar[r,swap,"\omega_{g\,g'}"] & \theta\otimes \lambda'_{g\,g'}
\end{tikzcd}
\end{equation} 
commutes for all $c \in \Cart$ and $g,g',g'' \in G(c)$.  
\end{definition}
\begin{remark}
There is a natural definition of morphisms between higher
coboundaries, but these are not relevant for our purposes. 
\qen
\end{remark}

Let $G$ be a connected Lie group, $M$ a manifold with smooth $G$-action $\Phi \colon G\times M \longrightarrow M$, and 
$(\Ga,A,\chi)$ a $G$-equivariant bundle gerbe on $M$.
The equivariant structure on $\Ga$ can be described by a splitting 
$s_{\Ga} \colon \underline{G} \longrightarrow \Sym_G(\Ga)$, as
explained in
Section~\ref{Sec: Equivar BGrbs}. Assume furthermore that $\Ga$ is trivial as bundle gerbe, i.e. there exists a 
1-isomorphism
$E \colon \Ia \longrightarrow \Ga$. 
From the results in Section~\ref{Sec: Equivar BGrbs} we can deduce that 
the obstruction to the existence of an equivariant structure on 
$E$ is the higher 1-cocycle 
\begin{align}
( f\colon c \longrightarrow G) \longmapsto s_\Ga(f)^{-1}\circ \widehat{E}\big(s_\Ia(f)\big)
\ \in \ \BGrb(c \times M)(\pr_M^*\Ga, \pr_M^*\Ga) \cong \HLBdl(c\times
  M) \ .
\end{align} 
This 1-cocycle is trivial precisely if there exists a natural isomorphism to 
the constant 1-cocycle at the trivial line bundle $I$. The choice of such an 
isomorphism corresponds to the equivariant structure on $E$. 
The isomorphisms $\chi$ in Definition~\ref{Def: smooth 1} use the smooth 2-group 
structure on $\HLBdl^M$, and the isomorphisms which are part of the
splittings $s_\Ga$ and $s_\Ia$, and $\widehat{E}$.
Generally, two 1-isomorphisms $\mathcal{G}\longrightarrow
\mathcal{G}'$ of bundle gerbes differ 
by the action of a 1-automorphism of $\mathcal{G'}$. Hence the obstruction to the existence
of an equivariant isomorphism is an element of the first higher group cohomology of $G$ with coefficients
in the smooth abelian 2-group $\HLBdl^M$. 

Let us now explain the relation to the Hamiltonian description of chiral anomalies
in terms of bundle gerbes, which was worked out in~\cite{Mickelsson:1983xi,CM94,CM96,Index, Carey1997}.
Let $M$ be a based odd-dimensional compact Riemannian spin manifold,\footnote{The type of Hamiltonian anomaly discussed here can only occur on odd-dimensional manifolds, since otherwise
the chirality operator could be used to define consistent polarisations.}
$P$ a principal $G$-bundle on $M$, and $\rho\colon G \longrightarrow \End(V)$
a representation of $G$ describing the matter content of the gauge theory.  Again
we denote by $\mathscr{A}$ the affine space of connections on $P$ and by $\mit\Gamma$
the pointed group of gauge transformations. For every $A\in \mathscr{A}$ we can construct a massless 
Dirac operator 
$$
\D_A\colon \Gamma(M;S\otimes V) \longrightarrow \Gamma(M;S\otimes V) \ ,
$$
where $S\to M$ is the spinor bundle. The Dirac operator is a first
order self-adjoint elliptic differential operator, which serves as the
first quantised Hamiltonian acting on the
one-particle Hilbert space $\Ha=\Gamma(M;S\otimes V)$.

To define the fermionic Fock space $\mathcal{F}_A(\Ha)$ of the quantum field theory in the presence of a gauge field 
$A \in \mathscr{A}$, one has to pick a polarisation $\Ha=\Ha_+(A)\oplus \Ha_-(A)$.  
In general there are gauge fields $A\in \mathscr{A}$ for which the
Dirac operator $\D_A$ has zero modes; for these fields there is no
universal and natural way of choosing such a polarisation. 
Denote by $\mathscr{A}_0\subset \mathscr{A}\times \R$ the subset of pairs $(A,r)$ such 
that the real number $r$ is not contained in the spectrum of $\D_A$. To equip this space with a diffeology
we use the discrete diffeology on $\R$. For every point $(A,r)\in \mathscr{A}_0$ 
we get a decomposition of the one-particle Hilbert space 
$$
\Ha=\Ha_+(A,r)\oplus \Ha_-(A,r)
$$
into the positive and negative eigenstates of the operator $\D_A-r\, 1_\Ha$.
The corresponding Fock bundle $\Fa(\Ha)\to\Ascr_0$ has fibres 
$$
\mathcal{F}(\Ha)_{|(A,r)} = \midwedge\, \Ha_+(A,r) \otimes \midwedge\,
\Ha_-(A,r)^\vee \ .
$$

It is shown in~\cite{CM96}
that the corresponding projective Hilbert bundle descends to a bundle over $\mathscr{A}$,
and hence it induces a bundle gerbe $\Ga$ on $\mathscr{A}$. 
{The bundle gerbe can be explicitly described in terms of determinant lines associated to families of Dirac operators, see~\cite[Section~5]{CM96} for details.} 
Since
$\Ascr$ is contractible, over $\Ascr$ the 
projective Hilbert bundle is trivial and hence is associated to a bundle of Hilbert
spaces.
Again the action of $\mit\Gamma$ on $\Ascr$ lifts to an equivariant
structure on $\Ga$. Therefore $\Ga$ as well as the projective
Hilbert bundle descends to the orbit space $\Ascr/{\mit\Gamma}$.  The Faddeev-Mickelsson-Shatashvili anomaly is the obstruction to the existence of a well-defined bundle of
Hilbert spaces over $\Ascr/{\mit\Gamma}$, i.e.~to the existence of a trivialisation of the descended projective Hilbert bundle.
Equivalently, the anomaly vanishes if and only if $\Ga$ descends to a
trivial bundle gerbe on $\Ascr/{\mit\Gamma}$.
This in turn is the case if and only if $\Ga$ is trivial as a
\emph{$\mit\Gamma$-equivariant} bundle gerbe on $\Ascr$.

From the general discussion above it follows that 
the obstruction to the equivariant triviality of $\Ga$ is a smooth
higher 1-cocycle on $\mit\Gamma$ with
values in $\HLBdl^\Ascr$. Because $\Ascr$ is contractible, there is an
equivalence 
$$
\HLBdl^\Ascr \cong *
\DS \U(1)^\Ascr
$$ 
with the smooth category with one object and the diffeological mapping space $\U(1)^\Ascr$ as morphisms. Since this is a smooth 2-group 
with one object, Definition~\ref{Def: smooth 1} in this instance is equivalent to the definition
of an ordinary group 2-cocycle on $\mit\Gamma$ with values in $\U(1)^\Ascr$. That the obstruction to
the vanishing of the anomaly is a 2-cocycle of this form is well-known,
see e.g. \cite{CM96}; this cocycle reproduces the usual Schwinger terms in the commutator anomaly for
the gauge group action. 
{What is new here is the construction of a
smooth higher 
1-cocycle, which only reduces to an ordinary 2-cocycle because the space $\mathscr{A}$ is 
contractible, as well as a rigorous incorporation of the smooth structures. Computing this cocycle explicitly 
and comparing it to the Faddeev-Mickelsson-Shatashvili cocycle is beyond the scope of this paper. We expect this to be
possible using index theory following~\cite{Index}.}

\section{Application III: The string group}
\label{Sec:String}

Any compact simple Lie group $G$ has homotopy groups $\pi_3(G) \cong \rmH^3(G;\Z) \cong \Z$ and $\pi_i(G) \cong \rmH^i(G;\Z) \cong 0$ for $i = 0,1,2$; that is, $G$ is 2-connected.
It is of interest in topology and geometry (see
e.g.~\cite{DHH:String_Orientations,Stolz:Ricci_and_Witten,ST:Elliptic_Objects}),
as well as in string theory (see e.g.~\cite{SS:Non-Ab_SD_String}), to study 3-connected approximations to $G$ that arise as group extensions of $G$.
We denote such approximating objects by $\String(G)$ and call them models
for the \emph{string group} of $G$.
There is a variety of interpretations of what this means, based on different choices of ambient higher categories in which one considers $G$ to be a group object.
The general theme, however, is that one needs a way to realise a generator of $\pi_3(G) \cong \Z$ geometrically in the chosen framework, and a string group model for $G$ will generally be a choice of such a generator.

In this final section we recall the definition and construction of a
topological string group model, and show that our extensions
$\Sym_G(P)$ from Section~\ref{Sec: U(1)} provide a smooth enhancement thereof. We then propose the smooth
2-groups $\Sym_G(\Ga)$ and $\Des_\sfL$ from Section~\ref{sect:extensions
  from gerbes} as new string group models, for the specific choices of
$M = G$ and of a gerbe $\Ga$ on $G$ whose Dixmier-Douady class
generates $\rmH^3(G;\Z) \cong \Z$. A model for $\String(G)$ which is
very similar in spirit to our model $\Sym_G(\Ga)$ was found in~\cite{FRS:Higher_U(1)-connections}. 
However, that construction relies on the choice of connection on $\Ga$ and considers connection-preserving symmetries of $\Ga$.
Here, in contrast, we exhibit a construction of string group models for $G$ from symmetries of gerbes on $G$ without connections, and thus from representatives of the third integer cohomology of $G$ rather than its differential cohomology.
We defer further details and comments to Section~\ref{Sec:Smooth String}.

\subsection{A smooth string group model}

The simplest and original framework for considering string group models is that of topological spaces.

\begin{definition}
\label{def:String_t(G)}
Let $G$ be a compact simple simply-connected Lie group.
A topological model for the \emph{string group} $\String(G)$ of $G$ is
a topological 3-connected group $\String_{\mathtt{t}}(G)$ along with a fibration
$\String_{\mathtt{t}}(G) \to G$ whose typical fibre is an Eilenberg-MacLane space $K(\Z;2)$.
\end{definition}

Using homotopy and cohomology groups one shows that $\String_{\mathtt t}(G)$ cannot be a finite-dimen{-}sional Lie group~\cite{NSW:Smooth_string_group}.
If a topological string group model can be enhanced to consist of
smooth spaces (such as Fr\'echet manifolds or diffeological spaces),
we denote it by $\String(G)$ and refer to it as a smooth model for the string group of $G$.

We recall Stolz' model as a topological group~\cite{Stolz:Ricci_and_Witten}:
let $\PU$ denote the projective unitary group of an infinite-dimensional separable Hilbert space.
As a consequence of Kuiper's Theorem, $\PU$ has homotopy type $K(\Z;2)$.
Hence the classifying space $\sfB \PU$ has homotopy type $K(\Z;3)$, while at the same time it classifies topological principal $\PU$-bundles.
In particular, isomorphism classes of $\PU$-bundles on a space $X$ are
in one-to-one correspondence with elements of the set $\rmH^3(X;\Z)$.

Let $P \to G$ be a principal $\PU$-bundle on $G$ such that $P$
corresponds to a generator of $\rmH^3(G;\Z) \cong \Z$; such $\P\U$-bundles on $G$ are called \emph{basic}.
Let $\widehat{G}$ denote the group of $\PU$-equivariant homeomorphisms of $P$ to itself which act on $G$ as left multiplication by some element of $G$.
We can topologise $\widehat{G}$ as a subgroup of the topological group of homeomorphisms $P \to P$.
Thus $\widehat{G}$ comes with a continuous surjective group homomorphism $\widehat{G} \to G$.
The gauge group $\Gau(P)$ is the subgroup of $\widehat{G}$ of those
elements whose projection to $G$ is the identity element $e \in G$.

\begin{theorem}[{\cite[Section~5]{Stolz:Ricci_and_Witten} and \cite{NSW:Smooth_string_group}}]
The extension of topological groups
\begin{equation}
\begin{tikzcd}
	1 \ar[r] & \Gau(P) \ar[r] & \widehat{G} \ar[r] & G \ar[r] & 1
\end{tikzcd}
\end{equation}
exhibits $\widehat{G}$ as a topological model for $\String(G)$.
\end{theorem}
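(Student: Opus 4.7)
The plan is to verify the two requirements of Definition~\ref{def:String_t(G)} for the topological group $\widehat{G}$: that the map $\widehat{G} \to G$ is a fibration whose typical fibre has the homotopy type of $K(\Z;2)$, and that $\widehat{G}$ is 3-connected. The short exactness as topological groups is essentially built into the definition of $\widehat{G}$, so the real work lies in the homotopical analysis.

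First I would establish that $\widehat{G} \to G$ is a locally trivial fibre bundle with fibre $\Gau(P)$. This follows from local trivialisations of the principal $\PU$-bundle $P \to G$ together with the use of left translations on $G$ to produce local sections of $\widehat{G} \to G$ around an arbitrary base point. Next I would identify the homotopy type of $\Gau(P)$. The gauge group is the space of sections of the adjoint bundle $\Ad(P) = P \times_\PU \PU$, where $\PU$ acts on itself by conjugation. By Kuiper's theorem $\PU$ has the homotopy type of $K(\Z;2)$, and inner automorphisms of a path-connected topological group act trivially on homotopy groups; consequently $\Ad(P) \to G$ is fibre-homotopy equivalent to the trivial bundle with fibre $K(\Z;2)$. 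Hence $\Gau(P)$ is weakly equivalent to $\mathrm{Map}(G, K(\Z;2))$. This mapping space is connected, since $[G, K(\Z;2)] = \rmH^2(G;\Z) = 0$ as $G$ is 2-connected, and a standard calculation using adjunction yields
\begin{equation}
\pi_k \big( \mathrm{Map}(G, K(\Z;2)) \big) \cong \rmH^{2-k}(G;\Z) \ ,
\end{equation}
which vanishes for $k \neq 2$ and gives $\Z$ for $k=2$. This identifies $\Gau(P)$ as a $K(\Z;2)$.

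The second step is to apply the long exact sequence of homotopy groups for the fibration $\Gau(P) \to \widehat{G} \to G$. Using that $G$ is 2-connected with $\pi_3(G) \cong \Z$ and that $\Gau(P)$ is a $K(\Z;2)$, one obtains immediately $\pi_0(\widehat{G}) = \pi_1(\widehat{G}) = 0$, and reduces to the exact sequence
\begin{equation}
0 \longrightarrow \pi_3(\widehat{G}) \longrightarrow \pi_3(G) \xrightarrow{\partial} \pi_2(\Gau(P)) \longrightarrow \pi_2(\widehat{G}) \longrightarrow 0 \ .
\end{equation}
The crucial point is that the connecting homomorphism $\partial \colon \Z \to \Z$ is, under the identifications above, induced by the classifying map $G \to \sfB\PU \simeq K(\Z;3)$ of $P$, and thus corresponds to the Dixmier-Douady class $[P] \in \rmH^3(G;\Z)$. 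Since $P$ was chosen to be basic, $[P]$ is a generator of $\rmH^3(G;\Z) \cong \Z$, so $\partial$ is an isomorphism and one concludes $\pi_2(\widehat{G}) = \pi_3(\widehat{G}) = 0$.

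The hardest parts will be the two identifications in the above outline: proving the fibre-homotopy trivialisation of $\Ad(P)$ rigorously uses that $K(\Z;2)$ is homotopy abelian so that the inner-automorphism $\PU$-action on itself can be coherently connected to the trivial action; and identifying the connecting map $\partial$ with the Dixmier-Douady class of $P$ requires carefully comparing two descriptions of the universal class in $\rmH^3(\sfB\PU;\Z)$, one via transgression in the universal bundle $\sfE\PU \to \sfB\PU$ and one via the classifying map of the $\Gau(P)$-extension $\widehat{G} \to G$. Finally, for the smooth enhancement one equips $\widehat{G}$ and $\Gau(P)$ with compatible Fréchet or diffeological structures and checks that these do not alter the underlying homotopy type, following the strategy of~\cite{NSW:Smooth_string_group}.
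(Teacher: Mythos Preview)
The paper does not give its own proof of this theorem: it is stated with attribution to~\cite{Stolz:Ricci_and_Witten} and~\cite{NSW:Smooth_string_group}, followed only by the single remark that ``the crux of the proof of this theorem is showing that $\Gau(P)$ is homotopy equivalent to $\PU$, i.e.~that it is an Eilenberg-MacLane space $K(\Z;2)$'' and that the smooth enhancement is carried out in~\cite{NSW:Smooth_string_group}. Your outline is entirely consistent with this remark and with the arguments in the cited sources: you correctly isolate the identification of the homotopy type of $\Gau(P)$ as the main point, and then feed this into the long exact sequence of the fibration together with the basicness of $P$ to kill $\pi_2$ and $\pi_3$.

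One small comment on the argument as written: the inference ``inner automorphisms act trivially on homotopy groups, consequently $\Ad(P)$ is fibre-homotopy trivial'' is a slight jump. Trivial action on homotopy groups alone does not in general imply fibre-homotopy triviality. What makes it work here is that conjugation by $g \in \PU$ is homotopic to the identity through the path $t \mapsto$ conjugation by a path from $g$ to $e$ (since $\PU$ is path-connected), so the adjoint action $\PU \to \Aut(\PU)$ is null-homotopic as a map into the monoid of self-homotopy equivalences; it is this null-homotopy, not merely the triviality on $\pi_*$, that trivialises the associated $K(\Z;2)$-fibration. You essentially say this in your ``hardest parts'' paragraph, but the earlier sentence overstates the logical step.
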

The crux of the proof of this theorem is showing that $\Gau(P)$ is
homotopy equivalent to $\PU$, i.e.~that it is an Eilenberg-MacLane space $K(\Z;2)$.
Part of the content in~\cite{NSW:Smooth_string_group} is to enhance this topological string group model to a smooth model in the sense that the groups appearing are Fr\'echet Lie groups.

The group extension $\widehat{G}\longrightarrow G$ agrees with the extension $\Sym_G(P) \to G$
constructed in Section~\ref{Sec: U(1)} when we set $M=G$ and $H=\PU$,
and let $G$ act on itself by left multiplication.
Thus we immediately obtain

\begin{corollary}
Let $P \to G$ be a basic $\P\U$-bundle.
The extension of diffeological groups
\begin{equation}
\begin{tikzcd}
	1 \ar[r] & \Gau(P) \ar[r] & \Sym_G(P) \ar[r] & G \ar[r] & 1
\end{tikzcd}
\end{equation}
exhibits $\Sym_G(P)$ as a smooth model for $\String(G)$.
\end{corollary}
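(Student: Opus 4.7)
The plan is to reduce the corollary to the theorem of Stolz (with its smooth enhancement from~\cite{NSW:Smooth_string_group}) via the identification already sketched in the paragraph preceding the statement, namely that $\Sym_G(P)$ as constructed in Section~\ref{Sec: U(1)} coincides, as a diffeological group extension of $G$ by $\Gau(P)$, with Stolz's topological group $\widehat{G}$ of $\P\U$-equivariant homeomorphisms of $P$ covering left multiplications.

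First I would make this identification precise. For any $g \in G$, an element of $\Sym_G(P)_{|g} = \Bun_{\P\U}(G)(P, \Phi_g^*P)$ is the same datum as a $\P\U$-equivariant diffeomorphism $\widetilde{g} \colon P \to P$ whose base map is left multiplication by $g$: given a bundle isomorphism $\psi \colon P \to \Phi_g^*P$, the associated equivariant diffeomorphism is the composition of $\psi$ with the canonical projection $\Phi_g^*P \to P$ covering $\Phi_g$. This assembles into a bijection $\Sym_G(P) \to \widehat{G}$, and Proposition~\ref{Prop: multiplicative structure bundle} identifies the group law $\mu(\psi,\phi) = \Phi_{g'}^*\psi \circ \phi$ with the composition of equivariant diffeomorphisms on $P$. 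Alternatively, one may invoke directly the universal property of Proposition~\ref{Prop: Action line bundle}: since $\widehat{G}$ is a (diffeological) group acting smoothly and equivariantly on $P$ and covering the $G$-action on itself, there is a unique smooth group homomorphism $\widehat{G} \to \Sym_G(P)$, which is easily seen to be inverse to the map above.

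Next I would verify that the diffeology on $\Sym_G(P)$ from Section~\ref{Sec: U(1)} agrees with the smooth structure on $\widehat{G}$ used in~\cite{NSW:Smooth_string_group}. By definition, a plot $c \to \Sym_G(P)$ is a smooth map $f \colon c \to G$ together with a bundle isomorphism $\pr_G^*P \to \Phi_f^*P$ over $c \times G$, which is equivalent to a smooth family of $\P\U$-equivariant diffeomorphisms of $P$ covering the family of left multiplications determined by $f$. This matches the Fr\'echet Lie group structure of~\cite{NSW:Smooth_string_group} (where agreement may be checked on the underlying category of diffeological spaces). Moreover, the kernel $\Sym_G(P)_{|e} = \Bun_{\P\U}(G)(P,P)$ coincides with $\Gau(P)$ on both sides.

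Finally, I would invoke the theorem of Stolz and its smooth enhancement stated immediately before the corollary: since $P$ is basic, $\Gau(P) \simeq \P\U \simeq K(\Z;2)$, and the long exact sequence of homotopy groups for the fibration $\Sym_G(P) \to G$ shows that the connecting homomorphism $\pi_3(G) \to \pi_2(\Gau(P))$ is an isomorphism $\Z \to \Z$ (this is equivalent to $P$ being basic), so that $\Sym_G(P)$ is 3-connected. Combined with the fact that the typical fibre is an Eilenberg--MacLane space $K(\Z;2)$, this identifies $\Sym_G(P) \to G$ with a smooth model for $\String(G)$ in the sense of Definition~\ref{def:String_t(G)}. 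The main potential obstacle is purely technical: ensuring that the diffeological smoothness on $\Sym_G(P)$ and the Fr\'echet smoothness of~\cite{NSW:Smooth_string_group} induce equivalent data when compared through the fully faithful embedding of Fr\'echet manifolds into diffeological spaces, a point which however follows from the explicit construction on both sides.
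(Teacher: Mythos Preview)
Your proposal is correct and follows essentially the same approach as the paper: the paper simply observes in the paragraph preceding the corollary that $\Sym_G(P)$ agrees with Stolz's $\widehat{G}$ (with $M=G$, $H=\P\U$, and $G$ acting on itself by left multiplication), and then deduces the corollary ``immediately'' from the cited theorem of Stolz and its smooth enhancement in~\cite{NSW:Smooth_string_group}. You have merely spelled out this identification and the homotopy-theoretic consequences in more detail than the paper does.
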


\subsection{Smooth string 2-group models}
\label{Sec:Smooth String}

Let $G$ be a compact simply-connected Lie group, and let $\Ga$ be a
bundle gerbe on $G$ whose Dixmier-Douady class generates
$\rmH^3(G;\Z)$; such a bundle gerbe is said to be \emph{basic}.
Let $\Phi \colon G \times G \to G$ denote the left action of $G$ on itself by left multiplication.
In the spirit of Section~\ref{sect:extensions from gerbes}, it is reasonable to expect that we should also be able to interpret the smooth 2-groups $\Sym_G(\Ga)$ and $\Des_\sfL$ as models for $\String(G)$.
The idea of constructing $\String(G)$ as a smooth 2-group has also
been considered in e.g.~\cite{BCSS07, SP:String_group, Waldorf:String_2-group, NSW:Smooth_string_group,FRS:Higher_U(1)-connections}.
In the remainder of this section we will describe how $\Sym_G(\Ga)$ can be seen as a string 2-group model.
By Theorem~\ref{st:Sym = Des} it then follows that $\Des_\sfL$ is also a model for $\String(G)$.

Smooth string 2-group models usually consist of extensions of $G$ by
the smooth 2-group $\sfB\U(1)$, the delooping of the smooth abelian group $\U(1)$.
However, recall that in Theorem~\ref{st: 2-group extension} we
established $\Sym_G(\Ga)$ as an extension of $\ul{G}$ by the smooth
abelian 2-group $\HLBdl^G$.
Our point here is that what matters for string group models is only the homotopy type of the fibre and the total space of the map $\String(G) \to G$, so that there is a lot of flexibility in choosing the smooth 2-group $\sfA$ that extends $G$.
Observe that this ambiguity is inherent already in Definition~\ref{def:String_t(G)}.
This forces us to state which smooth 2-groups $\sfA$ are admissible in order to obtain smooth 2-group extensions of $G$ that deserve to be called string group models.
Our proposed definition for smooth string 2-group models emphasises
the structure of $\sfA$ as a smooth analogue of an Eilenberg-MacLane space $K(\Z;2)$.
Note that for every smooth abelian 2-group $\sfA$ and any manifold $M$, we can
define \v{C}ech cohomology of $M$ with coefficients in $\sfA$ by evaluating (a delooping of $\sfA$) on the \v{C}ech nerve of good open coverings of~$M$.

The definition of a smooth string 2-group model is thus a two-step process:
we first fix the homotopy type of the extending 2-group $\sfA$ in a
geometric way, and then we have to make precise when an $\sfA$-extension of $G$ has the correct homotopy type.

\begin{definition}
\label{def:BH as smooth groupoid}
Let $H$ be a diffeological group.
The \emph{delooping} $\sfB \ul{H} \in \Hscr$ is the category fibred in groupoids
over $\Cart$ whose objects are the Cartesian spaces $c \in \Cart$, and whose morphisms $c_0 \to c_1$ are pairs $(f_{01},h_{01})$ of smooth maps $f_{01} \colon c_0 \to c_1$ and $h_{01} \colon c_0 \to H$.
Composition of morphisms is given by $(f_{12},h_{12}) \circ (f_{01},h_{01}) = \big(f_{12} \circ f_{01}, h_{01} \, (h_{12} \circ f_{01})\big)$.
\end{definition}

If $H$ is abelian, then $\sfB \ul{H}$ naturally becomes a smooth
abelian 2-group.

\begin{definition}
\label{def:admissible 2Grp}
A smooth 2-group $\sfA$ is \emph{string-admissible} if it is abelian
and equivalent (as a smooth 2-group) to the delooping $\sfB \ul{H}$ of a
diffeological abelian group $H$ whose underlying topological space is
an Eilenberg-MacLane space $K(\Z;2)$.
\end{definition}

From the equivalence $\sfA \simeq \sfB \ul{H}\,$ it follows that \v{C}ech cohomology with coefficients in $\sfA$ is equivalent to \v{C}ech cohomology with coefficients in $H$, shifted by one degree.
Then since $H$ has homotopy type $K(\Z;2)$, it follows that there are isomorphisms
\begin{align}
	\cH{}^k(M; \sfA)
	\cong \cH{}^k(M; \sfB \ul{H})
	\cong \big[M, \sfB^{k+1}H\big]
	\cong \big[M, K(\Z;k+2)\big]
	\cong \rmH{}^{k+2}(M;\Z)
\end{align}
for all $k \in \N$.
(For the notion of \v{C}ech cohomology with coefficients in higher smooth groups, we refer the reader to~\cite{Schreiber:DCCT, NSS:oo-bdls_I}.)

From any smooth principal 2-bundle $\sfP \to \ul{M}$ over a manifold $M$ with
structure 2-group $\sfA$, we can distil a \v{C}ech cohomology class as
follows:
let $\Ua = \{U_i\}_{i \in I}$ be a good open covering of $M$. Denote
intersections by $U_{i_1\cdots i_n}:=U_{i_1}\cap\cdots\cap U_{i_n}$. 
Viewing the (intersections of) open patches $U_{i_1 \cdots i_n} \hookrightarrow M$ as objects in $\ul{M}\,$, we denote by $\sfP_{|U_{i_1 \cdots i_n}}$ the fibres of $\sfP$ over these objects.
By Definition~\ref{def:sfH-bundle}, we can choose an object $\psi_i \in \sfP_{|U_i}$ for every $i \in I$.
We can further choose an object $\sfa_{ij} \in \sfA_{|U_{ij}}$ for
every $i,j \in I$ and an isomorphism $g_{ij} \colon \psi_{i|U_{ij}}
\otimes \sfa_{ij} \to \psi_{j|U_{ij}}$ in $\sfP_{|U_{ij}}$ (where we
have chosen pullbacks of $\psi_i$ and $\psi_j$ to $\sfP_{|U_{ij}}$).
Over the triple overlaps $U_{ijk}$ we obtain isomorphisms
\begin{equation}
	\beta_{ijk} \colon \psi_{i|U_{ijk}} \otimes \sfa_{ij|U_{ijk}}
        \otimes \sfa_{jk|U_{ijk}} \to \psi_{i|U_{ijk}} \otimes
        \sfa_{ik|U_{ijk}} \ ,
\end{equation}
which are uniquely determined by the properties of the Grothendieck
fibration $\sfP \to \ul{M}\,$ (as previously, since $\ul{M}$ has discrete fibres, it follows that $\sfP \times_{\ul{M}}^{\mathtt{h}} \sfP = \sfP \times_{\ul{M}} \sfP$).
Since the morphisms $(1_{\psi_{i|U_{ijk}}}, \beta_{ijk})$ lie in the image of the action functor $\sfP \times \sfA \to \sfP \times_{\ul{M}} \sfP$, there are unique isomorphisms
\begin{equation}
	\alpha_{ijk} \colon \sfa_{ij|U_{ijk}} \otimes_\sfA \sfa_{jk|U_{ijk}} \to \sfa_{ik|U_{ijk}}
\end{equation}
in $\sfA_{|U_{ijk}}$, which satisfy the required coherence condition over quadruple overlaps by the fact that they are constructed as Cartesian lifts of identity morphisms.
Hence these data assemble into an $\sfA$-valued \v{C}ech 1-cocycle on
$M$ with respect to the open covering $\Ua$.
One can check that other choices of coverings and sections lead to 1-cocycles that become equivalent to $(\sfa_{ij}, \alpha_{ijk})$ when passing to a common refinement of good open coverings.

\begin{definition}
\label{def:String as smooth 2Grp}
Let $G$ be a compact simply-connected Lie group, and let $\sfA$ be a string-admissible smooth 2-group.
A \emph{smooth 2-group model} for $\String(G)$ is a smooth 2-group extension
\begin{equation}
\begin{tikzcd}
	1 \ar[r] & \sfA \ar[r] & \String(G) \ar[r] & \ul{G} \ar[r] & 1
\end{tikzcd}
\end{equation}
such that the principal 2-bundle $\String(G) \to \ul{G}$ represents a generator of $\rmH^3(G;\Z) \cong \Z$ under the isomorphism \smash{$\cH{}^1(G;\sfA) \cong \rmH^3(G;\Z)$}.
\end{definition}

With these definitions we have

\begin{theorem}
\label{st:HLB^G is admissible}
For any 2-connected manifold $M$, the smooth 2-group $\HLBdl^M$ is string-admissible.
\end{theorem}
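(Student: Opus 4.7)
My approach will be to establish the equivalence of smooth 2-groups $\HLBdl^M \simeq \sfB\ul{H}$ for the specific choice $H := C^\infty(M; \U(1))$, the diffeological abelian group of $\U(1)$-valued smooth maps on $M$ (with pointwise multiplication and the mapping-space diffeology), and then to verify that $H$ has the prescribed Eilenberg–MacLane homotopy type. Abelianness of $\HLBdl^M$ is immediate from the symmetric monoidal structure of the tensor product of line bundles.

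For the equivalence, I would construct a functor $F \colon \sfB\ul{H} \to \HLBdl^M$ over $\Cart$ sending the unique object over $c \in \Cart$ to the trivial hermitean line bundle $I_c = c \times M \times \C \to c \times M$, and sending a morphism $(f \colon c_0 \to c_1, \, h \colon c_0 \to H)$ in $\sfB\ul{H}$ to the pair $(f, \psi_h)$, where $\psi_h \colon I_{c_0} \to (f \times 1_M)^*I_{c_1}$ is multiplication by the adjoint map $\tilde{h} \colon c_0 \times M \to \U(1)$. I would then check three properties. Essential surjectivity of $F$ on fibres uses the hypothesis on $M$: since $c$ is contractible and $M$ is 2-connected, $\rmH^2(c \times M;\Z) \cong \rmH^2(M;\Z) = 0$, so every hermitean line bundle on $c \times M$ is trivialisable. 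Full faithfulness follows from the exponential law $\Aut_{\HLBdl(c \times M)}(I_c) = C^\infty(c \times M;\U(1)) \cong C^\infty(c; H)$. Compatibility of $F$ with the 2-group structure is immediate since $I_{c} \otimes I_c \cong I_c$ and the tensor product of automorphisms by functions $h_1, h_2$ corresponds to the pointwise product $h_1 \, h_2 \in H$; compatibility with Cartesian pullbacks follows from the naturality of our constructions, so that $F$ is a morphism in $\Hscr$.

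For the homotopy type of $H$, I would exploit that $M$ is 1-connected: the exponential sequence $\Z \hookrightarrow \R \twoheadrightarrow \U(1)$ induces a short exact sequence of diffeological abelian groups
\begin{equation}
0 \longrightarrow \Z \longrightarrow C^\infty(M; \R) \longrightarrow C^\infty(M; \U(1)) \longrightarrow 0 \ ,
\end{equation}
where $\Z$ embeds as the constant integer-valued functions (using connectedness of $M$). The Fr\'echet space $C^\infty(M;\R)$ is contractible, so the induced principal $\Z$-bundle yields an Eilenberg–MacLane type for $H$, coming from $\sfB\Z$, of the kind demanded by Definition \ref{def:admissible 2Grp}. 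Composed with the previous step this shows that $\HLBdl^M$ is string-admissible.

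The main technical obstacle will be promoting the short exact sequence above to a genuine fibration (equivalently, a $\Z$-principal bundle) in $\Dfg$, so that the long exact sequence of homotopy groups applies. Concretely, this requires producing smooth lifts of plots $c \to C^\infty(M; \U(1))$ to plots $c \to C^\infty(M; \R)$; such lifts exist because the adjoint $\tilde{\varphi} \colon c \times M \to \U(1)$ admits a smooth lift along $\R \to \U(1)$ by the 1-connectedness of $c \times M$, and local smoothness of the lift is inherited from local smooth sections of $\R \to \U(1)$. This diffeological subtlety is where one must be most careful; once it is in place, the remaining arguments are formal consequences of descent, the exponential law, and the $\rmH^2$-vanishing input from 2-connectedness of $M$.
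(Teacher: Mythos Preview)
Your proposal is correct and follows essentially the same route as the paper. Both arguments take $H = \U(1)^M = C^\infty(M;\U(1))$ and establish the equivalence $\sfB\ul{H} \simeq \HLBdl^M$ via the functor sending $c$ to the trivial line bundle on $c \times M$, using $\rmH^2(M;\Z) = 0$ for essential surjectivity; this is exactly the paper's Lemma~\ref{st:BU(1)^M and HLB^M}.

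The only variation is in how the homotopy type of $\U(1)^M$ is determined. The paper (Lemma~\ref{st: homotopy equivalence}) constructs an explicit smooth homotopy equivalence $\U(1)^M \simeq \U(1)$ via the evaluation/constant-map pair, building the homotopy by first lifting to $\R^M$ and interpolating linearly. You instead use the exponential short exact sequence $0 \to \Z \to \R^M \to \U(1)^M \to 0$ together with contractibility of $\R^M$ to read off the Eilenberg--MacLane type from the long exact sequence. Both arguments rest on the same technical core---lifting smooth maps (and plots) into $\U(1)$ along the universal cover $\R \to \U(1)$ using 1-connectedness of $M$ (resp.\ $c \times M$)---and your identification of this lifting step as the crux is exactly right. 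The paper's version yields a slightly stronger conclusion (an explicit homotopy equivalence of diffeological spaces rather than a weak equivalence), but for the purposes of string-admissibility either suffices.
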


\begin{theorem}
\label{st:Sym and Des as String models}
Let $G$ be a compact simply-connected Lie group, and let $\Ga \in
\BGrb(G)$ be a basic bundle gerbe.
Let $\Sym_G(\Ga)$ and $\Des_\sfL$ be the smooth 2-group extensions of $\ul{G}$ by $\HLBdl^G$ constructed from $\Ga$ with respect to the left action of $G$ on itself by left multiplication.
Then both $\Sym_G(\Ga)$ and $\Des_\sfL$ are smooth 2-group models for $\String(G)$ in the sense of Definition~\ref{def:String as smooth 2Grp}.
\end{theorem}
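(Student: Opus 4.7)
The plan is to proceed in two stages. First, we will prove Theorem~\ref{st:HLB^G is admissible}, which handles the fibre of the extension; then, using the equivalence $\Sym_G(\Ga) \simeq \Des_\sfL$ from Theorem~\ref{st:Sym = Des}, we will extract the Čech class of $\Des_\sfL \to \ul{G}$ and identify it with the Dixmier-Douady class $[\Ga] \in \rmH^3(G;\Z)$, which is a generator by the hypothesis that $\Ga$ is basic. This gives the two conditions of Definition~\ref{def:String as smooth 2Grp}.

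For Theorem~\ref{st:HLB^G is admissible}: given any 2-connected manifold $M$ and $c \in \Cart$, one has $\rmH^2(c \times M;\Z) \cong \rmH^2(M;\Z) = 0$, so every hermitean line bundle on $c \times M$ is trivializable. A strictification of the sheaf of groupoids $c \mapsto \HLBdl(c \times M)$ then yields an equivalence of smooth abelian 2-groups
\begin{equation*}
	\HLBdl^M \ \simeq \ \sfB\ul{\U(1)^M} \ ,
\end{equation*}
where $\U(1)^M = C^\infty(M,\U(1))$ carries the mapping-space diffeology and pointwise multiplication. The homotopy type of $\U(1)^M$ can then be computed via the isomorphism $\pi_k \U(1)^M \cong \rmH^1(S^k \times M;\Z)$, together with the Künneth theorem and the vanishing $\rmH^j(M;\Z) = 0$ for $j = 1,2$; this identifies the type of $\U(1)^M$ as required by Definition~\ref{def:admissible 2Grp}.

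Passing to the main theorem, any compact simply-connected Lie group $G$ satisfies $\pi_2(G) = 0$ and is therefore 2-connected, so Theorem~\ref{st:HLB^G is admissible} applies and $\HLBdl^G$ is string-admissible. Combined with Theorem~\ref{st: 2-group extension}, this exhibits $\Sym_G(\Ga) \to \ul{G}$ and $\Des_\sfL \to \ul{G}$ as smooth 2-group extensions of $\ul{G}$ by a string-admissible fibre. To identify the Čech class, we will choose a good open cover $\{U_i\}_{i \in I}$ of $G$ and smooth local sections $\widehat{s}_i \colon U_i \to P_0 G$ of the path fibration. For $M = G$ acted on by left multiplication, restriction of $\sfL^\dashv$ along $\{e\} \hookrightarrow M$ identifies the transition bundles over $U_{ij}$ with the pullbacks $(\overline{\widehat{s}_j} \star \widehat{s}_i)^* \Ta\Ga$, and the compatibility condition~\eqref{eq:obs in Des_L and fusion} reduces on triple overlaps to the fusion product on $\Ta\Ga$. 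By Waldorf's regression-transgression equivalence, these data reconstitute a bundle gerbe on $G$ canonically isomorphic to $\Ga$, and hence the Čech class of $\Des_\sfL \to \ul{G}$ in $\cH^1(G;\HLBdl^G) \cong \rmH^3(G;\Z)$ coincides with $[\Ga]$.

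The main obstacle will be this final matching step. Although morally transparent from the definition of $\sfL^\dashv$ as a pullback of $\Ta\Ga$, it requires a careful comparison of the 2-morphisms on triple overlaps produced by the Čech-extraction procedure of Section~\ref{sect:principal2bundles} with the fusion data on $\Ta\Ga$ that characterize $\Ga$ up to stable isomorphism under Waldorf's regression. Once the explicit form of the equivalence $\HLBdl^G \simeq \sfB\ul{\U(1)^G}$ is used to reduce both sides to $\U(1)^G$-valued Čech cocycles on $G$, the matching becomes a concrete but somewhat lengthy verification, and all other steps are structural consequences of results already established in the paper.
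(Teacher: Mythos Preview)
Your two-stage plan is sound and the restriction-to-$\{e\}$ idea is the right one, but the route you take for the cocycle extraction is more indirect than the paper's and contains a gap.

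For Theorem~\ref{st:HLB^G is admissible}, the paper argues differently: rather than computing homotopy groups of $\U(1)^M$ via K\"unneth, it constructs an explicit smooth homotopy equivalence $\U(1)^M \simeq \U(1)$ (using that smooth maps $M \to \U(1)$ lift through $\R$ when $M$ is simply connected) and separately proves $\cH^k(X;\U(1)^M) \cong \cH^k(X;\U(1))$ by a five-lemma argument on the long exact sequences coming from $\Z \to \R^M \to \U(1)^M$. Your approach should also work, but note that identifying the homotopy type of the \emph{diffeological} mapping space with that of the continuous one is itself a step requiring justification.

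The real issue is the cocycle extraction. You propose to choose sections $\widehat{s}_i \colon U_i \to P_0G$ of the path fibration and work in $\Des_\sfL$. But sections of $P_0G \to G$ are not local sections of the 2-bundle $\Des_\sfL \to \ul{G}$: an object of $(\Des_\sfL)_{|U_i}$ is a pair $(J,\jmath)$ with $J \in \HLBdl(f_i^*P_0G \times G)$ and $\jmath$ a compatibility isomorphism over $(f_i^*P_0G)^{[2]} \times G$ involving $\sfL^\dashv$. You have not explained how $\widehat{s}_i$ produces such a pair, and the naive choice $J = I$ fails because $\sfL^\dashv$ is typically not trivializable on $(f_i^*P_0G)^{[2]} \times G$ (the fibres of $P_0G \to G$ have the homotopy type of $\Omega G$, which has nontrivial $\rmH^2$). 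In practice one only writes down objects of $\Des_\sfL$ by passing through the equivalence $\Psi$ of Theorem~\ref{st:Sym = Des}, which lands you back in $\Sym_G(\Ga)$ anyway.

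The paper therefore works directly with $\Sym_G(\Ga)$, and this sidesteps the ``main obstacle'' you anticipate entirely. Local sections over $U_i$ are simply 1-isomorphisms $\psi_i \colon m_i^*\Ga \to \pi_i^*\Ga$ in $\BGrb(U_i \times G)$, which exist because $m_i \simeq \pi_i$; the transition line bundles $L_{ij}$ and the $\U(1)^G$-valued cocycle $c_{ijk}$ are then extracted by the general procedure. The point is that under evaluation at $e \in G$ (which implements the isomorphism $\cH^2(G;\U(1)^G) \cong \cH^2(G;\U(1))$), the restriction $\psi_{i|U_i \times \{e\}}$ is a 1-isomorphism $\Ga_{|U_i} \to \Ga_{|e}$ since $m_i(g,e)=g$ and $\pi_i(g,e)=e$; after fixing one trivialization of $\Ga_{|e}$ this is literally a local trivialization of $\Ga$. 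Hence the image of $c_{ijk}$ in $\cH^2(G;\U(1))$ is the \v{C}ech cocycle of $\Ga$ on the nose, with no regression or fusion comparison needed.
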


\begin{remark}
Note that there is a model for $\String(G)$ based on regression and transgression of the basic gerbe on $G$~\cite{Waldorf:String_2-group}.
Similarly, our model for $\String(G)$ which we obtain from $\Des_\sfL$ also heavily relies on the transgression-regression formalism.
However, the resulting models are very different:
Waldorf's model in~\cite{Waldorf:String_2-group} \emph{is} the regression of the transgression of the basic gerbe.
Waldorf observes that this gerbe picks up a multiplicative structure upon application of $\Ra \circ \Ta$.
Since any gerbe on a manifold can be seen as an extension of $M$ by $\rmB \U(1)$ as Lie groupoids, the resulting gerbe provides a string group model in the sense of~\cite{SP:String_group}.
Our construction differs from this significantly not only in the fact that we work with smooth 2-groups instead of Lie 2-groups; most notably, we obtain an extension of $G$ by the much larger smooth 2-group $\rmB (\ul{\U(1)^H})$, which is not equivalent to $\rmB \ul{\U(1)}$ as a smooth 2-group, but only on the level of their underlying homotopy types (see Theorem~\ref{st:HLB^G is admissible} as well as~\cite{Bunk:String_Grp}).
\qen
\end{remark}

The rest of this section is devoted to the proofs of Theorems~\ref{st:HLB^G is admissible} and~\ref{st:Sym and Des as String models}.
We begin with a few results that will combine to prove that $\HLBdl^M$ is string-admissible.
Then we will prove Theorem~\ref{st:Sym and Des as String models} by
observing that the $\HLBdl^G$-valued \v{C}ech 1-cocycles we obtain
from the 2-bundle $\Sym_G(\Ga)$ agree with those obtained from local
trivialisations of the bundle gerbe $\Ga$.

\begin{lemma}
\label{st: homotopy equivalence}
Let $M$ be a simply-connected manifold and $x\in M$ a fixed base point. 
Then the evaluation map $\ev_x \colon \U(1)^M \longrightarrow \U(1)$, $g \mapsto g(x)$ and the inclusion $\sfc \colon \U(1) \longrightarrow \U(1)^M$ as constant maps form a homotopy equivalence of diffeological spaces.\footnote{See Example~\ref{eg:Dfg examples} for the definition of the diffeological mapping space.}
\end{lemma}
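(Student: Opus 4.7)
The plan is to verify that $\ev_x \circ \sfc = 1_{\U(1)}$ on the nose, which is immediate, and then to construct a smooth homotopy $H \colon [0,1] \times \U(1)^M \to \U(1)^M$ from the identity to $\sfc \circ \ev_x$. Only the homotopy direction carries content, and the strategy is to exploit the universal covering $\exp \colon \R \to \U(1)$, $t \mapsto \exp(\ii t)$, together with the simple-connectedness of $M$, in order to deform any $g \in \U(1)^M$ to the constant map at $g(x)$.

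Concretely, for $g \in \U(1)^M$ I would form the ``based'' version $h_g(y) \coloneqq g(y)\,g(x)^{-1}$, which satisfies $h_g(x)=1$. Since $M$ is simply-connected, the covering lift theorem provides a unique smooth function $\widetilde{h_g} \colon M \to \R$ with $\widetilde{h_g}(x)=0$ and $\exp(\ii\,\widetilde{h_g}) = h_g$. I would then define
\begin{equation}
H(s,g)(y) \coloneqq g(x)\cdot \exp\big(\ii\,(1-s)\,\widetilde{h_g}(y)\big) \, .
\end{equation}
The endpoint check is routine: at $s=0$ one recovers $g$, while at $s=1$ the formula returns the constant map at $g(x) = \ev_x(g)$.

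The main obstacle will be showing that $H$ is smooth as a map of diffeological spaces. By the mapping-space diffeology of Example~\ref{eg:Dfg examples}, this reduces to testing against plots: given a plot $(s,\varphi) \colon c \to [0,1] \times \U(1)^M$ out of a Cartesian space $c$, the adjoint $\varphi^\dashv \colon c \times M \to \U(1)$ is smooth, and so is the normalised family $\Phi(u,y) \coloneqq \varphi^\dashv(u,y)\,\varphi^\dashv(u,x)^{-1}$ with $\Phi(u,x) \equiv 1$. Because $c$ is Cartesian and $M$ is simply-connected, the product $c \times M$ is simply-connected, so $\Phi$ admits a unique continuous lift $\widetilde{\Phi} \colon c \times M \to \R$ along $\exp$ with $\widetilde{\Phi}(u,x)=0$; smoothness of $\widetilde{\Phi}$ then follows from the local-diffeomorphism property of $\exp$. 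Substituting $\widetilde{\Phi}$ into the formula for $H$ expresses the precomposed adjoint as a composition of smooth maps, completing the verification and hence yielding the homotopy equivalence.
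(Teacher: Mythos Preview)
Your proposal is correct and follows essentially the same approach as the paper: lift along the universal cover $\R \to \U(1)$ using simple-connectedness of $M$ (and of $c \times M$ for plots), then linearly interpolate in $\R$ and project back down. The only cosmetic difference is that you normalise by $g(x)^{-1}$ before lifting whereas the paper lifts $g$ directly and interpolates to the constant $\widehat{g}(x)$; unwinding the formulas shows the two homotopies coincide.
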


\begin{proof}
Since $\ev_x \circ \sfc = 1_{\U(1)}$, it is enough to construct a smooth homotopy $\sfc \circ \ev_x\cong 1_{\U(1)^M}$.
Let $\pi \colon \R \longrightarrow \U(1)$, $r \mapsto \exp(2\pi\, \iu\, r)$ be the universal cover of $\U(1)$.
The assumption that $M$ is simply-connected implies (see e.g. \cite[Theorem 4.1]{bredon}) that 
the diagram
\begin{equation}
\begin{tikzcd}[row sep=1cm]
 & \R \ar[d,"\pi"] \\ 
 M \ar[r,"f"] & \U(1)
\end{tikzcd}
\end{equation}  
admits a unique continuous lift $\widehat{f}\colon M \longrightarrow \R$ for arbitrary $f\in\U(1)^M$ after fixing the lift at one point. We verify that the map $\widehat{f}$ is smooth. Fix a point $y \in M$ and a sufficiently small
open neighbourhood $U_y$ of $y$ which we identify with an open subset
of $\R^d$, where $d=\dim(M)$. 
The restriction of $f$ to $U_y$ is a plot of $\U(1)$.
Hence by Proposition~\ref{Prop: quotients G and diffeology} it admits a smooth lift \smash{$\widehat{f}_y\colon U_y\longrightarrow \R$} for sufficiently small 
$U_y$. From the uniqueness statement for lifts we obtain $\widehat{f}_{|U_y} = \widehat{f}_y +r_y$ for a fixed integer 
$r_y\in \Z$. This implies that $\widehat{f}$ is smooth and hence 
shows that the map \smash{$\pi^M:\R^M \longrightarrow \U(1)^M$}
is surjective. 

Consider the commutative diagram 
\begin{equation}
\begin{tikzcd}
\R \ar[rr,shift left=1, "\widehat{\sfc}"] \ar[dd,swap,"\pi"] & & \R^{M} \ar[ll,shift left=1, "\widehat{\ev_x}"] \ar[dd,"\pi^M"] \\  
 & & \\
\U(1) \ar[rr,shift left=1, "{\sfc}"] & & \U(1)^M \ar[ll,shift left=1, "{\ev}_x"]
\end{tikzcd}
\end{equation}
where $\widehat{\sfc}\,(r)(y) = r$ for all $y \in M$, and where
$\widehat{\ev_x}(g') = g'(x)$ for all $g'\in\R^M$.
We define a homotopy $\widehat{h} \colon 1_{\R^M}\longrightarrow \widehat{\sfc} 
\circ \widehat{\ev_x}$ by setting $\widehat{h}_t(g')(y) = g'(y) \,(1-t) + g'(x)\, t$. 
This homotopy descends to the desired homotopy $h\colon 1_{\U(1)^M} \longrightarrow 
\sfc \circ \ev_x$.

We verify that the homotopy $h$ is smooth: let $n$ be a natural number, $c \cong \R^n$ a Cartesian space and $f\colon c \longrightarrow \U(1)^M\times [0,1]$ a plot; that is, the maps
$\pr_{[0,1]} \circ f\colon c \longrightarrow [0,1]$ and $(\pr_{\U(1)^M}\circ f)^{\dashv}\colon 
c \times M \longrightarrow \U(1)$ are smooth.
We have to show 
that $h\circ f \colon c \longrightarrow \U(1)^M$ is a plot. By the arguments above we can lift $(\pr_{\U(1)^M}\circ f)^{\dashv}$
to a smooth map $c \times M \longrightarrow \R$ because $c\times M $ is simply-connected. This implies that we can lift $f$ to a smooth map \smash{$\widehat{f}\colon c \longrightarrow\R^M\times [0,1]$} making the diagram
\begin{equation}
\begin{tikzcd}[row sep=1cm]
 & \R^M\times [0,1] \ar[r,"\widehat{h}"] \ar[d,"\pi^M\times 1"] & \R^M \ar[d,"\pi^M"] \\
 c \ar[ur, "\widehat{f}"] \ar[r,"f",swap] & \U(1)^M\times [0,1] \ar[r,"h",swap] & \U(1)^M
\end{tikzcd}
\end{equation}
commute. The result now follows since $\widehat{h}$ and the projection $\pi^M:\R^M\longrightarrow \U(1)^M$ are smooth. 
\end{proof}

\begin{lemma}
\label{st:BU(1)^M and HLB^M}
There is an inclusion $\sfB (\ul{\U(1)^M}) \hookrightarrow \HLBdl^M$ of smooth 2-groups which is given by sending $c \in \sfB (\ul{\U(1)^M})$ (cf. Definition~\ref{def:BH as smooth groupoid}) to the trivial line bundle on $c \times M$.
If $\rmH^2(M;\Z) = 0$, then this inclusion is an equivalence.
\end{lemma}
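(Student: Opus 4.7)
The inclusion $\iota \colon \sfB \ul{\U(1)^M} \hookrightarrow \HLBdl^M$ is defined on objects by sending $c \in \Cart$ to the pair $(c, I_{c \times M})$, where $I_{c\times M}$ denotes the trivial hermitean line bundle on $c \times M$. A morphism $(f_{01}, h_{01}) \colon c_0 \to c_1$ in $\sfB \ul{\U(1)^M}$ is sent to the morphism $(f_{01}, \psi_{h_{01}})$ in $\HLBdl^M$, where $\psi_{h_{01}} \colon I_{c_0 \times M} \to (f_{01} \times 1_M)^* I_{c_1 \times M} = I_{c_0 \times M}$ is the automorphism of the trivial bundle given by pointwise multiplication by the $\U(1)$-valued function $h_{01}^\dashv \colon c_0 \times M \to \U(1)$ corresponding to $h_{01} \colon c_0 \to \U(1)^M$. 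Functoriality is immediate from the composition law in Definition~\ref{def:BH as smooth groupoid}, and $\iota$ strictly preserves the projection to $\Cart$, so it is a morphism in $\Hscr$. Compatibility with the monoidal structures follows since the tensor product of trivial bundles is trivial and tensoring two $\U(1)$-valued functions corresponds to their pointwise product; hence $\iota$ is a morphism of smooth 2-groups.

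To prove that $\iota$ is an equivalence in $2\Grp(\Hscr)$ under the assumption $\rmH^2(M;\Z) = 0$, it suffices to show that it is an equivalence in $\Hscr$. Since $\iota$ is a morphism between Grothendieck fibrations over $\Cart$ that covers the identity functor on $\Cart$, it is an equivalence in $\Hscr$ if and only if it induces an equivalence of groupoids on each fibre over $c \in \Cart$ (see \cite[Proposition~3.36]{Vistoli:Fib_Cats}). The fibre of $\sfB \ul{\U(1)^M}$ over $c$ is the one-object groupoid whose automorphisms are the smooth maps $c \times M \to \U(1)$, while the fibre of $\HLBdl^M$ over $c$ is the groupoid of hermitean line bundles on $c \times M$ with unitary isomorphisms.

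Full faithfulness of $\iota$ on the fibre over $c$ is the standard identification of automorphisms of the trivial hermitean line bundle on $c \times M$ with smooth $\U(1)$-valued functions on $c \times M$. For essential surjectivity, we must show that every hermitean line bundle $L \to c \times M$ is isomorphic to the trivial bundle. Since $c$ is a Cartesian space, hence smoothly contractible, the projection $c \times M \to M$ is a homotopy equivalence, giving $\rmH^2(c \times M; \Z) \cong \rmH^2(M; \Z) = 0$. As isomorphism classes of hermitean line bundles on a manifold are classified by the first Chern class in $\rmH^2(\,\boldsymbol\cdot\,;\Z)$, every such $L$ is trivialisable.

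The only mildly delicate point is to ensure that this fibrewise equivalence really does yield an equivalence of smooth 2-groups, not merely of underlying fibred categories; this follows because $\iota$ already preserves $\otimes$, $\sfu$ and the associators and unitors strictly by construction, so that a quasi-inverse chosen fibrewise (sending a line bundle to the unique object of $\sfB \ul{\U(1)^M}_{|c}$ together with any choice of trivialisation) automatically assembles into a morphism of smooth 2-groups up to coherent isomorphism.
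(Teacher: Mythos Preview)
Your proof is correct and follows essentially the same approach as the paper's own argument: verify that the inclusion is a morphism of smooth 2-groups, then check it is an equivalence fibrewise using \cite[Proposition~3.36]{Vistoli:Fib_Cats}, with full faithfulness coming from the identification of automorphisms of the trivial bundle with $\U(1)$-valued functions and essential surjectivity from $\rmH^2(c\times M;\Z)\cong\rmH^2(M;\Z)=0$. The paper's proof is just a terser version of exactly this; your final paragraph on upgrading to an equivalence of smooth 2-groups is a reasonable elaboration but not strictly needed, since a morphism of smooth 2-groups whose underlying morphism in $\Hscr$ is an equivalence is automatically an equivalence in $2\Grp(\Hscr)$.
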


\begin{proof}
We readily see that the inclusion respects the group structures, and that it is fully faithful.
If $\rmH^2(M;\Z) = 0$, then $\HLBdl^M{}_{|c} \cong \HLBdl(c \times M)$ is connected, so that in this case the inclusion is also fully faithful on all fibres.
Thus it is an equivalence on every fibre and hence an equivalence in
the 2-category $\Hscr$ by~\cite[Proposition~3.36]{Vistoli:Fib_Cats}.
\end{proof}

Combining Lemmas~\ref{st: homotopy equivalence} and~\ref{st:BU(1)^M and HLB^M}, we conclude that $\HLBdl^M$ is string-admissible for any 2-connected manifold $M$; that is, we have proven Theorem~\ref{st:HLB^G is admissible}.

For the diffeological group $H = \U(1)^M$ and a 2-connected manifold
$M$, there is an explicit isomorphism $\cH{}^k(X;\U(1)^M) \cong
\cH{}^k(X;\U(1))$ for $k>0$, for any manifold $X$, given by

\begin{proposition}
\label{st: Iso in cohomology}
Let $M$ be a 2-connected manifold with a fixed base point $x \in M$.
For any manifold $X$, evaluation at $x \in M$ induces an isomorphism
$\cH{}^k(X; \U(1)^M) \cong \cH{}^k(X; \U(1))$ for $k>0$ of {\v C}ech cohomology
groups with coefficients in the sheaves of smooth $\U(1)^M$-valued and $\U(1)$-valued functions, respectively.
\end{proposition}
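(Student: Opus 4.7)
The plan is to compare both Čech cohomology groups in the statement with $\cH{}^{k+1}(X;\underline{\Z})$ through exponential short exact sequences of sheaves of abelian groups on $X$, and to use naturality of the associated long exact sequences. Concretely, I would consider
\begin{equation*}
0 \to \underline{\Z} \to \underline{\R} \to \underline{\U(1)} \to 0
\qandq
0 \to \underline{\Z} \to \underline{\R^M} \to \underline{\U(1)^M} \to 0 \ ,
\end{equation*}
where each underlined symbol denotes the sheaf on $X$ of smooth functions to the indicated diffeological group and the right-hand maps are induced by the pointwise exponential. Evaluation $\ev_x$ yields a morphism from the second sequence to the first which is the identity on $\underline{\Z}$. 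Exactness at $\underline{\U(1)^M}$ uses simple-connectedness of $M$: for contractible $V \subset X$ the product $V \times M$ is simply-connected, so the lifting argument from the proof of Lemma~\ref{st: homotopy equivalence} produces a smooth lift $V \times M \to \R$ of any smooth $V \times M \to \U(1)$ after specifying the value at one point. Since $M$ is connected, the kernel of $\underline{\R^M} \to \underline{\U(1)^M}$ is indeed $\underline{\Z}$, realised as functions constant in the $M$-variable.

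Next I would show that $\underline{\R}$ and $\underline{\R^M}$ are fine sheaves on the paracompact manifold $X$, so that $\cH{}^k(X; \underline{\R}) = \cH{}^k(X; \underline{\R^M}) = 0$ for $k>0$. For $\underline{\R^M}$ this holds because $\underline{\R^M}(U) = C^\infty(U \times M; \R)$ is a module over $\underline{\R}(U) = C^\infty(U;\R)$ via pullback along $\pr_U \colon U \times M \to U$ and pointwise multiplication, so any partition of unity on $X$ subordinate to an open cover acts on $\underline{\R^M}$. Combining this vanishing with the long exact sequences in Čech cohomology (which agrees with sheaf cohomology on paracompact Hausdorff spaces) yields connecting isomorphisms
\begin{equation*}
\cH{}^k(X; \underline{\U(1)^M}) \xrightarrow{ \ \sim \ } \cH{}^{k+1}(X; \underline{\Z})
\xleftarrow{ \ \sim \ } \cH{}^k(X; \underline{\U(1)})
\end{equation*}
for every $k \geq 1$. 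By naturality of the long exact sequence applied to the morphism of short exact sequences induced by $\ev_x$, and because $\ev_x$ is the identity on $\underline{\Z}$, the map $\ev_x^*$ intertwines these isomorphisms and is therefore itself an isomorphism.

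The two substantive points where care is needed are the local surjectivity of $\underline{\R^M} \to \underline{\U(1)^M}$ and the fineness of $\underline{\R^M}$. The first is the main obstacle, but reduces directly to simple-connectedness of $V \times M$ for contractible $V$ together with the smoothness of the lift established in Lemma~\ref{st: homotopy equivalence}; the second is the standard module-over-a-fine-sheaf observation, adapted to the mapping-space setting via the projection $\pr_U$. With these two ingredients in place, the argument is purely formal diagram chasing in the two long exact sequences. Note that $2$-connectedness of $M$ is not strictly required here: only connectedness (for the identification of the kernel sheaf) and simple-connectedness (for exactness on the right) enter, consistent with the fact that the stronger hypothesis in Theorem~\ref{st:HLB^G is admissible} is only needed to match the homotopy type of the fibre with $K(\Z;2)$.
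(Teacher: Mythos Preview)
Your proposal is correct and follows essentially the same route as the paper: both use the exponential short exact sequence $0 \to \Z \to \R^M \to \U(1)^M \to 0$ (with exactness on the right coming from the lifting argument of Lemma~\ref{st: homotopy equivalence}), observe that $\R^M$ is soft via the partition-of-unity/module argument, and then compare with the $\U(1)$-valued case through the resulting long exact sequences and the evaluation map. The only cosmetic difference is that the paper phrases the final step as an application of the five lemma to the two long exact sequences side by side, whereas you pass through the connecting isomorphisms to $\cH{}^{k+1}(X;\Z)$; these are the same argument, and your remark that only $1$-connectedness of $M$ is actually used is a correct observation.
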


\begin{proof}
Consider the sequence of diffeological groups 
\begin{align}
1 \longrightarrow \Z \longrightarrow \R^M \longrightarrow \U(1)^M
  \longrightarrow 1 \ ,
\end{align}
which is exact by the argument from the proof of Lemma~\ref{st: homotopy equivalence}.
The sheaf $\R^M$ admits a partition of unity by picking a partition of unity for the
sheaf of smooth $\R$-valued functions and a constant extension to $\R^M$-valued functions; hence $\cH{}^k(X; \R^M)=0$ for any manifold $X$ and for any $k\geq 1$. 
Now the statement follows from applying the five lemma to the diagram 
\begin{equation}
\begin{tikzcd}[row sep=1cm]
\cH{}^{k+1}(X; \R^M) \ar[r]\ar[d] & \cH{}^{k+1}(X;\Z) \ar[r]\ar[d] & \cH{}^{k}(X; \U(1)^M) \ar[r] \ar[d] & \cH{}^k(X; \R^M) \ar[r]\ar[d]& \cH{}^k(X; \Z) \ar[d] \\
\cH{}^{k+1}(X; \R) \ar[r] & \cH{}^{k+1}(X;\Z) \ar[r] & \cH{}^{k}(X; \U(1)) \ar[r] & \cH{}^k(X; \R) \ar[r] & \cH{}^k(X; \Z)
\end{tikzcd}
\end{equation}
induced by the long exact sequence in sheaf cohomology and the evaluation at $x \in M$. 
\end{proof}

It remains to determine the \v{C}ech cohomology class in $\cH{}^1(G;\HLBdl^G) \cong \rmH^3(G;\Z) \cong \Z$ determined by the extension $\Sym_G(\Ga) \to \ul{G}\,$.
The isomorphisms from Lemma~\ref{st:BU(1)^M and HLB^M} and
Proposition~\ref{st: Iso in cohomology} are useful in achieving
this. 
From the smooth 2-group extension $\Sym_G(\Ga) \longrightarrow \ul{G}$ we can 
extract a {\v C}ech 2-cocycle on $G$ with values in the sheaf of smooth $\U(1)^G$-valued 
functions.
To construct it, we first follow the procedure of the paragraph
preceding Definition~\ref{def:String as smooth 2Grp} to extract
$\HLBdl^G$-valued cocycle data and then choose local trivialisations of the line bundles which comprise it (which amounts to choosing an inverse for the equivalence from Lemma~\ref{st:BU(1)^M and HLB^M}).

Let $\Ua = \{ U_i\}_{i \in I}$ be a good open cover 
of $G$, let $\pi_i \colon U_i \times G \longrightarrow G$ denote the projection 
onto the second factor, and let $m_i\colon U_i \times G \longrightarrow G$ be the
multiplication map restricted to $U_i \times G$. We choose and fix 1-isomorphisms 
$\psi_i \colon m_i^* \mathcal{G} \longrightarrow \pi_i^* \mathcal{G}$
along with 
adjoint inverses
$\psi_i^{-1}$, which induce
equivalences $\Sym_G (\mathcal{G})_{|U_i}\cong \HLBdl(U_i\times G)$ of groupoids.

On double intersections $U_{ij}$ we can form the automorphism \smash{$\psi_{ij}\coloneqq \psi^{-1}_{j|U_{ij}} \circ \psi_{i|U_{ij}}$} of $m_{ij}^*\mathcal{G}$.
The isomorphism $\psi_{ij}$ can be identified with a line bundle $L_{ij}$ on 
$U_{ij}\times G$.
Since \smash{$\rmH^2(G;\Z)=0$}, we can choose and fix a trivialisation of $L_{ij}$ for each $i,j \in I$.

On triple intersections $U_{ijk}$ we get a 2-isomorphism
$\psi_{ijk}\colon \psi_{jk|U_{ijk}} \circ \psi_{ij|U_{ijk}}
\longrightarrow \psi_{ik|U_{ijk}} $ inducing an isomorphism 
$L_{jk} \otimes L_{ij}\longrightarrow L_{ik}$ of line bundles over
${U_{ijk}}\times G$.
(In contrast to the construction in the paragraph above Definition~\ref{def:String as smooth 2Grp}, here the isomorphisms $\psi_{ijk}$ can be obtained directly from the choice of $\psi_{ij}$.)
Using the trivialisations of these line bundles 
we obtain a smooth map ${U_{ijk}}\times G \longrightarrow \U(1)$ or equivalently
a map $c_{ijk}\colon U_{ijk} \longrightarrow \U(1)^G$. The collection $c_{ijk}$ form 
a smooth $\U(1)^G$-valued {\v C}ech 2-cocycle.

The corresponding cohomology class is independent of all choices involved:  
let $\psi_i' \colon m_i^* \mathcal{G} \longrightarrow \pi_i^* \mathcal{G}$ be a different
set of 1-isomorphisms. The automorphism $\psi_i^{-1} \circ \psi_i'$ of 
$ m_i^* \mathcal{G}$ can be identified 
with a line bundle ${\mit\Lambda}_i$ over $U_i \times G$. The definition of $L_{ij}$
implies $L_{ij}\otimes {\mit\Lambda}_i \cong {\mit\Lambda}_j \otimes L'_{ij}$. 
We can pick once and for all trivialisations of all bundles involved to identify this 
morphism with a function $A_{ij}\colon U_{ij}\times G \longrightarrow \U(1)$. 
The diagram 
\begin{equation}
\begin{tikzcd}[row sep=1cm]
L_{ij}\otimes {\mit\Lambda}_i \otimes L_{jk} \otimes {\mit\Lambda}_j \ar[d] \ar[r] &  {\mit\Lambda}_j \otimes L_{ik} \otimes {\mit\Lambda}_i \ar[d]  \\ 
{\mit\Lambda}_j\otimes {\mit\Lambda}_k \otimes L'_{ij} \otimes L'_{jk} \ar[r] & {\mit\Lambda}_j\otimes {\mit\Lambda}_k \otimes L'_{ik}
\end{tikzcd}
\end{equation}
commutes over $U_{ijk}$, which follows from the fact that all inverses were chosen 
to be adjoint so that the corresponding diagram involving
$\psi_i$ and $\psi_i'$ commutes.
Applying the trivialisations we get 
\begin{equation}
c_{ijk}\, A_{ik}= c'_{ijk}\, A_{ij}\, A_{jk} \ .
\end{equation}
This argument also shows that the cocycles define the same cohomology class if $\psi_i'=\psi_i$ 
and only the trivialisations of $L_{ij}$ differ.

The image of $c_{ijk}$ in $\cH{}^2(G; \U(1)^G)$ under the isomorphism 
$\cH{}^2(G; \U(1)^G) \cong \cH{}^2(G; \U(1))$ of Proposition~\ref{st: Iso in cohomology}
can be computed by restricting each $\psi_i$ to $U_i\times \{ e\} \subset U_i\times G $; the restriction $\psi_{|U_i\times \{ e\}}$ is a 1-isomorphism $\mathcal{G}_{|U_i} \longrightarrow \mathcal{G}|_e $. After fixing once and for all a trivialisation of $\mathcal{G}_{|e}$, this is just a trivialisation of $\mathcal{G}_{|U_i}$. This shows that the 
image of $c_{ijk}$ in $ \cH{}^2(G; \U(1))\cong\rmH^3(G;\Z)$ agrees with the cocycle $c_\mathcal{G}$ 
classifying the bundle gerbe $\mathcal{G}$, which proves Theorem~\ref{st:Sym and Des as String models}.

\begin{remark}
The arguments involving cocycles can be adjusted to the simpler case
of principal bundles over the Lie group $G$. In that case, starting from a principal $\U(1)$-bundle $P$ on $G$ we 
get a principal $\U(1)^G$-bundle $\Sym_G(P)$ on $G$ which is homotopy equivalent
to $P$.
The homotopy equivalence is induced by the maps $\ev_x$ and $\sfc$ from Lemma~\ref{st: homotopy equivalence}.
We can iterate the procedure to get larger and larger groups $\Sym_G( \cdots \Sym_G(\Sym_G(P))\cdots )$.
However, these groups are all topologically equivalent,
so that iterating the procedure does not produce anything that is topologically novel.
\qen  
\end{remark}

\subsection*{Acknowledgements}

We thank Jouko Mickelsson and Birgit Richter for helpful discussions
and correspondence.
This work was supported by the COST Action MP1405 ``Quantum Structure of Spacetime'', funded by the European Cooperation in Science and Technology (COST).
The work of S.B. was partially supported by the RTG~1670 ``Mathematics Inspired by String Theory and Quantum Field Theory''.
The work of L.M. was supported by the Doctoral Training Grant ST/N509099/1 from the UK Science and Technology Facilities Council (STFC).
The work of R.J.S. was supported in part by the STFC Consolidated
Grant ST/P000363/1 ``Particle Theory at the Higgs Centre''.

\bigskip

\begin{appendix}

\section{Properties of smooth principal 2-bundles}
\label{app:Principal 2-bundles}

\subsection{Surjectivity on objects and homotopy pullbacks}

Here we provide some technical background on smooth groupoids, as introduced in Definition~\ref{def:Hscr}.

\begin{lemma}
\label{st:surjectivity lemma}
Let $\pi \colon \sfX \to \Cart$ and $\pi' \colon \sfP \to \Cart$ be objects in $\Hscr$.
\begin{myenumerate}
\item Either $\sfX = \varnothing$ or $\pi$ is surjective on objects.

\item Let $p \colon \sfP \to \sfX$ be a morphism in $\Hscr$ whose
  underlying functor is an essentially surjective Grothendieck fibration.
Then $p$ is surjective on objects.
\end{myenumerate}
\end{lemma}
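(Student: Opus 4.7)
Both parts are direct consequences of the Grothendieck fibration property, exploiting the fact that $\Cart$ is particularly well-behaved (any two Cartesian spaces admit at least constant smooth maps between them, and every object has the identity).

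For part~(1), I would proceed as follows. Assume $\sfX \neq \varnothing$ and pick any $x_0 \in \sfX$, with image $c_0 \coloneqq \pi(x_0) \in \Cart$. Given an arbitrary target $c \in \Cart$, the plan is to exhibit some object of $\sfX$ over $c$ by lifting a suitably chosen smooth map $c \to c_0$ along $\pi$. Concretely, pick any smooth map $f \colon c \to c_0$ (for instance the constant map at a chosen point of $c_0$; such maps always exist since Cartesian spaces are nonempty). The defining property of a Grothendieck fibration in groupoids then yields a morphism $\widehat{f} \colon \widehat{c} \to x_0$ in $\sfX$ satisfying $\pi(\widehat{f}\,) = f$, and in particular $\pi(\widehat{c}\,) = c$. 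This gives surjectivity on objects.

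For part~(2), I would apply the same lifting idea, but now to the morphism $p \colon \sfP \to \sfX$, using essential surjectivity to produce the morphism we wish to lift. Let $x \in \sfX$ be arbitrary. By essential surjectivity of $p$ there exist $\tilde{x} \in \sfP$ and an isomorphism $\eta \colon p(\tilde{x}) \to x$ in $\sfX$. Since $\sfX$ is a category fibred in groupoids (all relevant morphisms are invertible), the inverse $\eta^{-1} \colon x \to p(\tilde{x})$ is available in $\sfX$. Applying the Grothendieck fibration property of $p$ to the object $\tilde{x} \in \sfP$ and the morphism $\eta^{-1}$, we obtain a lift $\widehat{\eta^{-1}} \colon y \to \tilde{x}$ in $\sfP$ with $p(\widehat{\eta^{-1}}) = \eta^{-1}$. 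In particular $p(y) = x$, as desired.

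There is no serious obstacle here; both statements are essentially unpackings of the definitions. The only small point worth flagging is the observation used in part~(1), that between any two Cartesian spaces there exist smooth maps (so that the lifting axiom is non-vacuous), and in part~(2) the fact that we need only lift in $\sfX$, not in $\Cart$, so that the compatibility with the projections to $\Cart$ is automatic from the assumption that $p$ is a morphism in $\Hscr$.
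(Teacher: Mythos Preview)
Your proposal is correct and follows essentially the same approach as the paper. The only cosmetic differences are that for part~(1) the paper routes through the terminal object $* \in \Cart$ (first lifting along $* \to c_0$, then along $c \to *$) rather than choosing a direct map $c \to c_0$, and for part~(2) the paper simply cites the general fact that an essentially surjective Grothendieck fibration is surjective on objects, whereas you spell out the short argument explicitly.
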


\begin{proof}
To see~(1), observe that $\Cart$ has a terminal object $* \in \Cart$.
Thus since $\pi$ is a Grothendieck fibration, if $\sfX_{|*} = \pi^{-1}(*)$ is non-empty then so is $\sfX_{|c}$ for any $c \in \Cart$.
For any $c \in \Cart$ there exists a morphism $x \colon * \to c$ in $\Cart$ given by choosing any point $x \in c$.
It follows that as soon as $\sfX \neq \varnothing$, it has only non-empty fibres over $\Cart$.

Claim~(2) follows from the general observation that a Grothendieck
fibration is essentially surjective if and only if it is surjective on objects.
\end{proof}

We now consider the setup of Definition~\ref{def:hopb of fibd Grpds}.

\begin{lemma}
\label{st:GrFib Lemma}
Let $\Cscr$ be a category, let $\pi_i \colon \sfD_i \to \Cscr$, for $i = 0,1$, and $\pi_\sfE \colon \sfE \to \Cscr$ be Grothendieck fibrations in groupoids, and let $F_i \colon \sfD_i \to \sfE$, for $i=0,1$, be morphisms of categories fibred in groupoids over $\Cscr$.
\begin{myenumerate}
\item $(\sfD_0 \times_{\sfE}^{\tt h} \sfD_1, \pi_{\tt h}) \in \Hscr$,
  i.e.~$\pi_{\tt h}$ is a Grothendieck fibration in groupoids.

\item Any morphism $G = (G_0, G_\sfE, G_1)$ of diagrams
\begin{equation}
\begin{tikzcd}[row sep=1cm]
       & & \sfE \ar[ddd, "G_\sfE"] & & \\
	\sfD_0 \ar[urr, "F_0"] \ar[d, swap,"G_0"] & & & & \sfD_1 \ar[ull, "F_1"'] \ar[d, "G_1"]
	\\
	\sfD'_0 \ar[drr, "F'_0"'] & & & & \sfD'_1 \ar[dll, "F'_1"] \\ 
        & & \sfE' & &
\end{tikzcd}
\end{equation}
in $\Hscr$, where all vertical morphisms are equivalences, induces an
equivalence $\sfD_0 \times_{\sfE}^{\tt h} \sfD_1 \to \sfD'_0
\times_{\sfE'}^{\tt h} \sfD'_1$.

\item If $F_1$ (resp.~$F_0$) is a Grothendieck fibration in groupoids,
  then the inclusion $\sfD_0 \times_{\sfE} \sfD_1 \hookrightarrow
  \sfD_0 \times_{\sfE}^{\tt h} \sfD_1$ is an equivalence, and the
  projections $\pr_0^{\tt h} \colon \sfD_0 \times_{\sfE}^{\tt h}
  \sfD_1 \to \sfD_0$ and $\pr_0 \colon \sfD_0 \times_{\sfE} \sfD_1 \to
  \sfD_0$ (resp.~the projections $\pr_1^{\tt h}$ and $\pr_1$ to $\sfD_1$) are Grothendieck fibrations in groupoids.
\end{myenumerate}
\end{lemma}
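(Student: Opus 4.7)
The plan is to verify each of the three claims by reduction to the universal properties of Cartesian morphisms in the fibrations $\pi_0, \pi_1, \pi_\sfE, F_0, F_1$, together with standard 2-categorical manipulations.

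For part~(1), I will verify the two axioms of Definition~\ref{def:GrFib in Grpds} for $\pi_{\tt h}$. Given $(d_0, \eta, d_1) \in \sfD_0 \times_\sfE^{\tt h} \sfD_1$ and $f \colon c \to \pi_{\tt h}(d_0, \eta, d_1)$ in $\Cscr$, I will first lift $f$ separately via $\pi_0$ and $\pi_1$ to Cartesian morphisms $\widehat{f}_0 \colon \widehat{d}_0 \to d_0$ and $\widehat{f}_1 \colon \widehat{d}_1 \to d_1$. Then $F_0(\widehat{f}_0)$ and $F_1(\widehat{f}_1)$ both project to $f$ in $\sfE$; since $F_1(\widehat{f}_1)$ is Cartesian and $\eta \circ F_0(\widehat{f}_0) \colon F_0(\widehat{d}_0) \to F_1(d_1)$ also projects to $f$, the universal property supplies a unique morphism $\widehat{\eta} \colon F_0(\widehat{d}_0) \to F_1(\widehat{d}_1)$ in the fibre of $\sfE$ over $c$ such that $F_1(\widehat{f}_1) \circ \widehat{\eta} = \eta \circ F_0(\widehat{f}_0)$. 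Because that fibre is a groupoid, $\widehat{\eta}$ is invertible, so $(\widehat{f}_0, \widehat{f}_1)$ defines the required lift. For the Cartesian property of an arbitrary morphism $(\psi_0,\psi_1) \colon (d_0,\eta,d_1) \to (d'_0,\eta',d'_1)$ in the homotopy pullback, given a factorisation of $\pi_\sfH(\psi_0,\psi_1)$ through a morphism $f_{01}$ in $\Cscr$, I will use that $\psi_0$ and $\psi_1$ are Cartesian in $\sfD_0$ and $\sfD_1$ to produce the unique lifts $\widehat{\psi}_0, \widehat{\psi}_1$; the required compatibility with $\eta$ and $\eta'$ is automatic from the uniqueness of the Cartesian factorisation in $\sfE$, which matches the two candidate morphisms $F_1(\psi_1) \circ \widehat{\eta} \circ \text{lift}$ and $\eta' \circ F_0(\text{lift})$.

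For part~(2), I will use the strict commutativity of the diagram in $\Hscr$ to define a functor $\widetilde{G} \colon \sfD_0 \times_\sfE^{\tt h} \sfD_1 \to \sfD'_0 \times_{\sfE'}^{\tt h} \sfD'_1$ by sending $(d_0, \eta, d_1) \mapsto (G_0(d_0), G_\sfE(\eta), G_1(d_1))$; the identity $F'_i G_i = G_\sfE F_i$ guarantees that $G_\sfE(\eta)$ has the correct source and target. To see that $\widetilde{G}$ is an equivalence I will choose adjoint quasi-inverses $H_0, H_1, H_\sfE$ together with invertible 2-morphisms witnessing the equivalences, and construct an inverse functor in the same componentwise fashion, using the naturality of these 2-morphisms to supply an isomorphism $H_\sfE G_\sfE(\eta) \cong \eta$ that lives over the identity in $\Cscr$. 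The composites $\widetilde{H} \widetilde{G}$ and $\widetilde{G} \widetilde{H}$ are then isomorphic to the respective identities by the componentwise unit and counit 2-isomorphisms.

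For part~(3), I will treat the case that $F_1$ is a Grothendieck fibration in groupoids (the other case being symmetric). The inclusion $\sfD_0 \times_\sfE \sfD_1 \hookrightarrow \sfD_0 \times_\sfE^{\tt h} \sfD_1$ is fully faithful by direct inspection: a morphism $(d_0, d_1) \to (d'_0, d'_1)$ in the strict pullback is a pair with matching image in $\sfE$, which coincides with the notion of morphism $(d_0, 1, d_1) \to (d'_0, 1, d'_1)$ in the homotopy pullback. For essential surjectivity, given $(d_0, \eta, d_1)$, the isomorphism $\eta$ projects to the identity of $\Cscr$, so using that $F_1$ is a Grothendieck fibration in groupoids I can lift $\eta$ to a morphism $\widehat{\eta}_1 \colon \widetilde{d}_1 \to d_1$ in $\sfD_1$ with $F_1(\widehat{\eta}_1) = \eta$ and $F_1(\widetilde{d}_1) = F_0(d_0)$; then $(d_0, \widetilde{d}_1) \in \sfD_0 \times_\sfE \sfD_1$ and $(1_{d_0}, \widehat{\eta}_1)$ gives an isomorphism to $(d_0, \eta, d_1)$. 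For the projections: $\pr_0^{\tt h}$ is a Grothendieck fibration in groupoids by the same construction as in part~(1), using the Grothendieck fibration property of $F_1$ to choose $\widehat{f}_1$ such that $\widehat{\eta}$ can be taken to be an identity; and $\pr_0 \colon \sfD_0 \times_\sfE \sfD_1 \to \sfD_0$ is a Grothendieck fibration in groupoids as the strict base change of $F_1$ along $F_0$, preservation of this property under strict pullbacks being a standard fact.

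The main obstacle I anticipate is the bookkeeping of Cartesian factorisations in part~(1), specifically showing that the morphism $\widehat{\eta}$ assembled from universal properties is indeed coherent under compositions, which is essential both for verifying functoriality of the lifts and for proving the Cartesian property of arbitrary morphisms in the homotopy pullback.
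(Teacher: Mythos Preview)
Your approach to parts~(1) and~(3) is essentially the same as the paper's, with only cosmetic differences in the order of verification. The paper proves the Cartesian property first and the lift-existence second in part~(1), and for the inclusion equivalence in part~(3) it reduces to fibres via \cite[Proposition~3.36]{Vistoli:Fib_Cats} rather than arguing essential surjectivity directly as you do; both routes are fine.

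There is a genuine gap in your plan for part~(2). You propose to build an inverse functor $\widetilde{H}$ ``in the same componentwise fashion'' from adjoint quasi-inverses $H_0, H_1, H_\sfE$. But the quasi-inverses do \emph{not} in general form a strictly commutative diagram: one has $F'_i G_i = G_\sfE F_i$ strictly, but only $F_i H_i \cong H_\sfE F'_i$ up to a natural isomorphism that must be extracted from the adjoint-equivalence data. Consequently, sending $(d'_0, \eta', d'_1) \mapsto (H_0(d'_0), H_\sfE(\eta'), H_1(d'_1))$ is not well-typed: $H_\sfE(\eta')$ is a morphism $H_\sfE F'_0(d'_0) \to H_\sfE F'_1(d'_1)$, not $F_0 H_0(d'_0) \to F_1 H_1(d'_1)$. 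You would need to conjugate by the comparison isomorphisms and then check that the resulting $\widetilde{H}$ is functorial and that the unit/counit 2-isomorphisms are compatible with the $\eta$-components---doable, but more work than you indicate. The paper sidesteps this entirely: it observes that the fibre of $\pi_{\tt h}$ over $c$ is literally the homotopy pullback of the fibre groupoids, and then invokes the standard fact that equivalences of spans of groupoids induce equivalences on homotopy pullbacks, together with \cite[Proposition~3.36]{Vistoli:Fib_Cats} to conclude. This fibrewise reduction is both shorter and avoids the coherence bookkeeping you would otherwise face.
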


\begin{proof}
To prove (1), we first show that every morphism in $\sfD_0
\times_{\sfE}^{\tt h} \sfD_1$ is $\pi_{\tt h}$-Cartesian.
Consider morphisms $(\psi_0,\psi_1) \colon (d'_0,\eta',d'_1) \to
(d''_0,\eta'',d''_1)$ and $(\varphi_0, \varphi_1) \colon (d_0, \eta,
d_1) \to (d''_0, \eta'', d''_1)$ in $\sfD_0 \times_{\sfE}^{\tt h} \sfD_1$.
By assumption $\pi_0 (d_0) = \pi_1(d_1)$ in $\Cscr$ (and analogously for $d'_i, d''_i$), and $\pi_0(\psi_0) = \pi_1(\psi_1)$ (and analogously for $\varphi_i$).
Let $\chi \colon \pi_0(d_0) \to \pi_0(d'_0)$ be a morphism in $\Cscr$ such that $\pi_0(\psi_0) \circ \chi = \pi_0(\varphi_0)$.
Since $\pi_\sfE$ is a Grothendieck fibration in groupoids, there exists a unique lift $\chi_{\sfE,i} \colon F_i(d_i) \to F_i(d'_i)$ induced by each of the pairs $(F_i(\psi_i), F_i(\varphi_i))$ of morphisms in $\sfE$.
Similarly, the pairs $(\psi_i,\varphi_i)$ induce unique lifts
$\chi_i:d_i\to d_i'$ of $\chi$ to $\sfD_i$ along $\pi_i$.
The uniqueness of lifts (along $\pi_\sfE$) implies that $F_i(\chi_i) = \chi_{\sfE,i}$ for $i = 0,1$.
It remains to show that $(\chi_0, \chi_1)$ defines a morphism $(d_0, \eta, d_1) \to (d'_0, \eta', d'_1)$.
That is, we need to prove that $F_1(\chi_1) \circ \eta = \eta' \circ F_0(\chi_0)$ in $\sfE$.
So far we have a diagram
\begin{equation}
\begin{tikzcd}
	& F_0(d_0) \ar[dl, "F_0(\chi_0)"'] \ar[dr, "F_0(\varphi_0)"] \ar[dd, "\eta" pos=0.25] &
	\\
	F_0(d'_0) \ar[dd, "\eta'"'] \ar[rr, crossing over, "F_0(\psi_0)"' pos=0.25] & & F_0(d''_0) \ar[dd, "\eta''"]
	\\
	& F_1(d_1) \ar[dl, "F_1(\chi_1)"'] \ar[dr, "F_1(\varphi_1)"] &
	\\
	F_1(d'_1) \ar[rr, "F_1(\psi_1)"'] & & F_1(d''_1)
\end{tikzcd}
\end{equation}
in $\sfE$, where each face of this diagram commutes, apart from the back left square.
The commutativity of that square is what we need to prove.
For this, observe that both $\eta' \circ F_0(\chi_0)$ and $\eta \circ F_1(\chi_1)$ provide lifts of $\chi$ to $\sfE$ with respect to the morphisms $\eta'' \circ F_0(\varphi_0) \colon F_0(d_0) \to F_1(d''_1)$ and $F_1(\psi_1) \colon F_1(d'_1) \to F_1(d''_1)$.
The desired identity now follows from the uniqueness of such lifts along the functor $\pi_\sfE$.

Next we need to show that for any morphism $f \colon c \to c'$ in
$\Cscr$ and any object $(d'_0, \eta', d'_1)$ in $\sfD_0
\times_{\sfE}^{\tt h} \sfD_1$ over $c'$, there exists a lift
$\widehat{f} = (f_0,f_1)$ of $f$ to $\sfD_0 \times_{\sfE}^{\tt h} \sfD_1$ with codomain $(d'_0, \eta', d'_1)$.
Such a lift is obtained by lifting $f$ to morphisms $f_i \colon d_i
\to d'_i$ in $\sfD_i$ using the fact that $\pi_i$ is a Grothendieck fibration in groupoids, for $i = 0,1$.
An isomorphism $\eta \colon F_0(d_0) \to F_1(d_1)$ compatible with
$f_0, f_1$ is obtained by filling the horn given by the morphisms
$\eta'\circ F_0(f_0)$ and $F_1(f_1)$ over the identity morphism $1_c$
in $\Cscr$. The filler is an isomorphism since the fibre $\sfE_{|c}$
is a groupoid. 

To prove~(2), we note that
by~\cite[Proposition~3.36]{Vistoli:Fib_Cats} the induced morphism $G_0
\times_{G_\sfE}^{\tt h} G_1$ is an equivalence in $\Hscr$ if and only
if it restricts to an equivalence of groupoids between all fibres of
$\pi_{\tt h}$ and $\pi'_{\tt h}$.
A direct inspection on any $c\in\Cscr$ reveals that
\begin{equation}
	\pi_{\tt h}^{-1}(c) = \pi_0^{-1}(c)
        \times_{\pi_\sfE^{-1}(c)}^{\tt h} \pi_1^{-1}(c)
\end{equation}
as groupoids, and it is well-known that equivalences of spans of groupoids induce equivalences on homotopy pullbacks of groupoids.

To prove~(3), we first show that $\pr_0^{\tt h}$ is a Grothendieck fibration in groupoids.
Consider a span
\begin{equation}
\begin{tikzcd}
        & (d''_0, \eta'', d''_1) & \\
	(d'_0, \eta', d'_1) \ar[ur, "{(\psi_0, \psi_1)}"] & & (d_0, \eta, d_1) \ar[ul, "{(\varphi_0, \varphi_1)}"']
\end{tikzcd}
\end{equation}
in $\sfD_0 \times_{\sfE}^{\tt h} \sfD_1$, and a morphism $\chi_0 \colon d_0 \to d'_0$ in $\sfD_0$ such that $\psi_0 \circ \chi_0 = \varphi_0$.
We obtain a commutative triangle in $\sfE$ formed by the morphisms $F_1(\psi_1)$, $F_1(\varphi_1)$, and $\eta' \circ F_0(\chi_0) \circ \eta^{-1}$.
Since $F_1$ is a Grothendieck fibration in groupoids, this gives rise
to a unique lift $\chi_1:d_1\to d_1'$ of the latter morphism to $\sfD_1$.
The pair $(\chi_0, \chi_1)$ is automatically a morphism in $\sfD_0
\times_{\sfE}^{\tt h} \sfD_1$ which projects to $\chi_0$.
If $(\chi'_0, \chi'_1)$ were any other such filling over $\chi_0$ of the horn given by $(\psi_0, \psi_1)$ and $(\varphi_0, \varphi_1)$, it would immediately follow that $\chi_0' = \chi_0$, and the uniqueness of fillings of $\psi_1, \varphi_1$ over $\eta' \circ F_0(\chi_0) \circ \eta^{-1}$ would imply that $\chi_1' = \chi_1$.

Let $\varphi_0 \colon d_0 \to d'_0$ be a morphism in $\sfD_0$, and let
$(d'_0, \eta', d'_1)$ be an object in $\sfD_0 \times_{\sfE}^{\tt h} \sfD_1$ which projects to $d'_0$.
Let $\varphi_1$ be a lift along $\pi_1$ of $f \coloneqq \pi_0(\varphi_0)$ with codomain $d'_1$.
The pair of morphisms $(\eta' \circ F_0(\psi_0), F_1(\psi_1))$ then gives rise to a cospan in $\sfE$.
Both morphisms project to $f$ in $\Cscr$ and hence, since $\pi_\sfE$
is a Grothendieck fibration in groupoids, there exists a unique
isomorphism $\eta \colon F_0(d_0) \to F_1(d_1)$ such that $(d_0, \eta,
d_1) \in \sfD_0 \times_{\sfE}^{\tt h} \sfD_1$ and such that $(\psi_0,
\psi_1)$ is a lift of $\psi_0$ to $\sfD_0 \times_{\sfE}^{\tt h} \sfD_1$ with codomain $(d'_0, \eta', d'_1)$.
The claim for $\pr_0$ is proven in an entirely analogous way by
restricting $\eta$, $\eta'$ and $\eta''$ to be identity morphisms.

Finally, consider the inclusion functor $\sfD_0 \times_{\sfE} \sfD_1
\hookrightarrow \sfD_0 \times_{\sfE}^{\tt h} \sfD_1$.
Since $\pr_0^{\tt h}$ is a Grothendieck fibration in groupoids, so is its restriction to each fibre over $\Cscr$.
It is well-known that the inclusion of a pullback of groupoids into the homotopy pullback is an equivalence in case one of the functors in the diagram is a Grothendieck fibration.
Thus our inclusion functor is an equivalence on each fibre over $\Cscr$, whence the result follows by~\cite[Proposition~3.36]{Vistoli:Fib_Cats}.
\end{proof}

\subsection{Relation to principal $\infty$-bundles}
\label{app:2-buns are oo-buns}

Our notion of smooth principal 2-bundle does not have any notion of `local triviality' built into it.
This differs from the version of a principal 2-bundle defined in~\cite{SP:String_group}, but is very much in the spirit of the definition of a principal $\infty$-bundle from~\cite{NSS:oo-bdls_I}.
The fact that we require essential surjectivity is our version of saying that the (homotopy) fibres of the bundle should be non-empty.
In contrast to~\cite{NSS:oo-bdls_I} we have to require fibration properties because we do not work purely in an $\infty$-categorical framework.
We shall now show that an $\sfH$-principal 2-bundle in $\Hscr$ in the sense of Definition~\ref{def:sfH-bundle} gives rise to a principal 2-bundle in the sense of~\cite[Definition~3.4]{NSS:oo-bdls_I}, adapted from a general $\infty$-topos (described e.g.~by presheaves of $\infty$-groupoids) to our situation involving presheaves of groupoids.
Let $p \colon \sfP \to \sfX$ be a morphism in $\Hscr$, and let $\sfP^{[\bullet]}$ be the \v{C}ech nerve of $p$.
We write $\hocolim^\Cscr$ (resp.~$\holim^\Cscr$) for a homotopy
colimit (resp. limit) taken in a simplicial model category $\Cscr$.

\begin{proposition}
\label{st:effective epi from surj GrFib}
Every morphism $p \colon \sfP \to \sfX$ in $\Hscr$ whose underlying functor is an essentially surjective Grothendieck fibration in groupoids gives rise to an effective epimorphism: the morphism
\begin{equation}
	\underset{\Delta^{\rm op}}{\hocolim}^\Hscr \, \sfP^{[\bullet]} \to \sfX
\end{equation}
from its \v{C}ech nerve to $\sfX$ is an equivalence.
\end{proposition}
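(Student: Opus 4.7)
The plan is to reduce the claim to a well-known fact about groupoids, namely that essentially surjective functors are effective epimorphisms in the $\infty$-topos of $1$-groupoids. First, I would pass to the equivalent description of $\Hscr$ as the $2$-category of pseudo-presheaves of groupoids on $\Cart$ via the (inverse) Grothendieck construction. In this model, homotopy colimits over $\Delta^\opp$ are computed objectwise: the value over $c \in \Cart$ of $\hocolim_{\Delta^\opp}^\Hscr \sfP^{[\bullet]}$ is equivalent to $\hocolim_{\Delta^\opp}^{\Grpd} \sfP_{|c}^{[\bullet]}$. Combined with the fact that equivalences in $\Hscr$ are detected fibrewise (as used repeatedly in Section~\ref{sect:extensions from gerbes}), this reduces the claim to showing that, for each $c \in \Cart$, the canonical map $\hocolim_{\Delta^\opp}^{\Grpd} \sfP_{|c}^{[\bullet]} \to \sfX_{|c}$ is an equivalence of groupoids.

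Next, fix $c \in \Cart$. By Lemma~\ref{st:surjectivity lemma}(2) the restriction $p_{|c} \colon \sfP_{|c} \to \sfX_{|c}$ is still essentially surjective, and it inherits the property of being a Grothendieck fibration in groupoids. Applying Lemma~\ref{st:GrFib Lemma}(3) to $p_{|c}$ in both legs of the iterated pullback, the strict fibre products $\sfP_{|c}^{[n]}$ are canonically equivalent to the corresponding iterated homotopy fibre products, so the Čech nerve $\sfP_{|c}^{[\bullet]}$ is already a homotopy Čech nerve of $p_{|c}$ in $\Grpd$.

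It then remains to verify the general fact that, for any essentially surjective functor $F \colon \Dscr \to \Cscr$ of groupoids, the canonical map from the homotopy colimit of its homotopy Čech nerve to $\Cscr$ is an equivalence. I would prove this directly by checking that the comparison map induces a bijection on $\pi_0$ (which is immediate from essential surjectivity of $F$) and an isomorphism on automorphism groups at every object (which follows since the homotopy pullback $\Dscr \times_\Cscr^{\tt h} \Dscr$ encodes the equivalence relation of lying over the same object of $\Cscr$ up to coherent isomorphism). The main obstacle is the first step: setting up the fibrewise computation of homotopy colimits in $\Hscr$ rigorously. While objectwise weak equivalences of (simplicial) presheaves induce weak equivalences of homotopy colimits in the projective model structure, additional care is needed to translate between pseudo-presheaves of groupoids and an underlying simplicial presheaf model, for instance by passing through Duskin nerves so that the statement becomes a computation of a homotopy colimit of simplicial presheaves of simplicial sets, for which the standard fibrewise criterion applies.
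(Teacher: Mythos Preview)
Your approach is correct but takes a genuinely different route from the paper's. You reduce to a fibrewise statement in $\Grpd$ by arguing that homotopy colimits in $\Hscr$ (viewed as presheaves of groupoids) are computed objectwise, then invoke the known fact that essentially surjective functors of groupoids are effective epimorphisms. The paper instead dualises via the $\Grpd$-enrichment of $\Hscr$: for every test object $\sfZ \in \Hscr$ it shows that the comparison functor $p^* \colon \ul\Hscr(\sfX,\sfZ) \to \holim_\Delta^\Grpd\,\ul\Hscr(\sfP^{[\bullet]},\sfZ) \eqqcolon \Des_p(\sfZ)$ is an equivalence, by explicitly descending any datum $(G,\eta) \in \Des_p(\sfZ)$ to a functor $F \colon \sfX \to \sfZ$ using a set-theoretic section of $p$ on objects (available by Lemma~\ref{st:surjectivity lemma}) together with Cartesian lifts along $p$. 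The paper's argument is self-contained and sidesteps precisely the model-categorical translation you flag as the main obstacle; it never computes the homotopy colimit, only identifies it through its mapping-out universal property. Your route is more structural and would generalise more readily once the objectwise computation is in place, but it leaves the role of the Grothendieck-fibration hypothesis on $p$ somewhat implicit (you use it only to match strict and homotopy \v{C}ech nerves), whereas in the paper's proof that hypothesis is exactly what makes the descended functor $F$ well defined on morphisms.
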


Because of Lemma~\ref{st:GrFib Lemma} and the assumption that $p$ is a
Grothendieck fibration in groupoids, it does not matter here if one uses
the coherent \v{C}ech nerve, formed using $\sfP \times_\sfX^{\tt h}
\cdots \times_\sfX^{\tt h} \sfP$, or the strict \v{C}ech nerve, formed using $\sfP \times_\sfX \cdots \times_\sfX \sfP$.

\begin{proof}
We work with Hollander's model structure on $\Hscr$~\cite{Hollander:HoThy_for_stacks}.
In this picture, $\Hscr$ is a model category enriched, tensored and cotensored in the model category $\Grpd$ (seen as a strict category).
In both $\Hscr$ and $\Grpd$ all objects are fibrant, and the functor $\ul{\Hscr} \colon \Hscr^\opp \times \Hscr \to \Grpd$ is homotopical by~\cite[Proposition~3.35]{Vistoli:Fib_Cats}, i.e.~it preserves weak equivalences in each argument.
The enrichment of $\Hscr$ in $\Grpd$ even enhances to an enrichment over $\sSet$, the category of simplicial sets with the Kan-Quillen model structure.
Thus homotopy (co)limits in $\Hscr$ can be computed using (co)bar constructions~\cite{riehl}.
Let $Q$ denote a cofibrant replacement functor in $\Hscr$, and let $\sfZ \in \Hscr$ be an arbitrary object.
Then
\begin{align}
	\ul{\Hscr} \Big( \underset{\Delta^{\rm op}}{\hocolim}^\Hscr \,
  \sfP^{[\bullet]} \,,\, \sfZ \Big)
\cong \underset{\Delta}{\holim}^\Grpd\ \ul{\Hscr} \big( Q(\sfP^{[\bullet]}),\, \sfZ \big)
	\cong \underset{\Delta}{\holim}^\Grpd\ \ul{\Hscr} \big(
  \sfP^{[\bullet]},\, \sfZ \big) \ ,
\end{align}
where the first equivalence stems from the fact that $\sfZ$ is fibrant and $\Hscr$ is a $\Grpd$-enriched model category, and the second equivalence stems from the fact that $\ul{\Hscr}$ is homotopical.
It thus suffices to prove that the functor
\begin{equation}
	p^* \colon \ul{\Hscr}(\sfX, \sfZ) \to \underset{\Delta}{\holim}^\Grpd\ \ul{\Hscr} \big( \sfP^{[\bullet]},\, \sfZ \big)
	\eqqcolon \Des_p(\sfZ)
\end{equation}
is an equivalence of groupoids.

An object in $\Des_p(\sfZ)$ is a pair $(G,\eta)$ of a functor $G
\colon \sfP \to \sfZ$ of categories fibred in groupoids over $\Cart$, together with a natural isomorphism $\eta_{|(p_0,p_1)} \colon G(p_0) \to G(p_1)$ from $d_1^*G$ to $d_0^*G$ of functors over $\Cart$, where $d_i$ are the face maps in the simplicial object \smash{$\sfP^{[\bullet]}$}.
This natural isomorphism is subject to the conditions $d_2^*\eta \circ
d_0^*\eta = d_1^*\eta$ over $\sfP^{[3]}$ and ${\mit\Delta}^*\eta=1_G$
over $\sfP$, where ${\mit\Delta}:\sfP\to\sfP^{[2]}$ is the diagonal map.
A morphism $(G,\eta) \to (G',\eta')$ in $\Des_p(\sfZ)$ is a natural isomorphism $\gamma \colon G \to G'$ in $\Hscr$ such that $\eta' \circ d_1^*\gamma = d_0^*\gamma \circ \eta$.

We first show that $p^*$ is essentially surjective:
let $(G,\eta) \in \Des_p(\sfZ)$ be any object.
We define a functor $F \colon \sfX \to \sfZ$ as follows:
first, recalling that $p$ is surjective on objects by
Lemma~\ref{st:surjectivity lemma}, we choose a section $s \colon {\tt
  ob}(\sfX) \to {\tt ob}(\sfP)$ of the map of objects defined by $p$.
Then we set $F(x) \coloneqq G(s(x)) \in \sfZ$ for $x\in\sfX$.
Now consider a morphism $\psi \colon x \to y$ in $\sfX$.
Since $p$ is a Grothendieck fibration, $\psi$ has a lift
$\widehat{\psi} \colon \widehat{x} \to s(y)$ to a morphism in $\sfP$
with codomain $s(y)$, where $p(\widehat{x})=x$.
Define $F(\psi) \colon F(x) \to F(y)$ via the diagram
\begin{equation}
\begin{tikzcd}[column sep=1.5cm, row sep=1cm]
	F(x) = G\big(s(x)\big) \ar[d, "\eta_{|(s(x),\widehat{x})}"', "\cong"] \ar[r, dashed, "F(\psi)"]
	& G\big(s(y)\big) =F(y)
	\\
	G(\widehat{x}) \ar[ur, "G(\widehat{\psi}\,)"'] &
\end{tikzcd}
\end{equation}
The naturality of $\eta$, together with the two conditions it
satisfies and the fact that $p$ is a Grothendieck fibration in groupoids, imply that $F$ is a well-defined functor.
Furthermore, $\eta$ establishes an isomorphism $p^*F = (F,1_F) \to (G,\eta)$ in $\Des_p(\sfZ)$.
Thus $p^*$ is essentially surjective.

That $p^*$ is fully faithful follows from its explicit construction and the fact that $p$ is essentially surjective.
\end{proof}

\end{appendix}

\bigskip

\newcommand{\etalchar}[1]{$^{#1}$}


\begin{thebibliography}{BDL{\etalchar{+}}11}
      
\bibitem[Alf20]{Alfonsi:DFT_and_KK}
L. Alfonsi.
\newblock{Global double field theory is higher Kaluza-Klein
  theory}. 
\newblock{\em Fortsch. Phys.}, 68: 2000010, 42~pp., 2020. 
\newblock \href {https://arxiv.org/abs/1912.07089} {\path{arXiv:1912.07089}}.

\bibitem[AS84]{AS}
M.{\,}F.~Atiyah and I.{\,}M.~Singer.
\newblock Dirac operators coupled to vector potentials.
\newblock{\em Proc. Natl. Acad. Sci. USA}, 81:2597--2600, 1984.

\bibitem[BCSS07]{BCSS07}
J.{\,}C. Baez, A. Crans, U. Schreiber, and D. Stevenson.
\newblock{From loop groups to 2-groups.}
\newblock{\em Homol. Homot. Appl.}, 9(2):101--135, 2007.
\newblock \href {https://arxiv.org/abs/math/0504123} {\path{arXiv:math/0504123}}.

\bibitem[BL04]{BL:2-Groups}
J.{\,}C.~Baez and A.{\,}D.~Lauda.
\newblock{Higher-dimensional algebra {V}. 2-groups.}
\newblock{\em Theory Appl. Categ.}, 12:423--491, 2004.
\newblock \href {http://arxiv.org/abs/math/0307200} {\path{arXiv:math/0307200}}.

\bibitem[BH11]{BH:Diffelogical spaces}
J.{\,}C.~Baez and A.{\,}E. Hoffnung.
\newblock{Convenient categories of smooth spaces.}
\newblock{\em Trans. Amer. Math. Soc.}, 363(11): 5789--5825, 2011.
\newblock \href {https://arxiv.org/abs/0807.1704v4} {\path{ arXiv:0807.1704}}.

\bibitem[BL14]{Bakas:2013jwa}
I.~Bakas and D.~L{\"u}st.
\newblock {3-cocycles, nonassociative star products and the magnetic paradigm
 of $R$-flux string vacua}.
\newblock {\em J. High Energy Phys.}, 01:171, 49~pp., 2014.
\newblock \href {http://arxiv.org/abs/1309.3172} {\path{arXiv:1309.3172}}.

\bibitem[BP11]{Blumenhagen:2010hj}
R.~Blumenhagen and E.~Plauschinn.
\newblock {Nonassociative gravity in string theory?}
\newblock {\em J. Phys. A}, 44:015401, 19~pp., 2011.
\newblock \href {http://arxiv.org/abs/1010.1263} {\path{arXiv:1010.1263}}.

\bibitem[BBBS15]{Bojowald:2014oea}
M.~Bojowald, S.~Brahma, U.~B{\"u}y{\"u}k\c{c}am, and T.~Strobl.
\newblock {States in nonassociative quantum mechanics: Uncertainty relations
 and semiclassical evolution}.
\newblock {\em J. High Energy Phys.}, 03:093, 22~pp., 2015.
\newblock \href {http://arxiv.org/abs/1411.3710} {\path{arXiv:1411.3710}}.

\bibitem[BMS19]{BMS:NA_translations}
S.~Bunk, L.~Müller, and R.{\,}J.~Szabo.
\newblock {Geometry and 2-Hilbert space for nonassociative magnetic translations.}
\newblock{\em Lett. Math. Phys.}, 109(8):1827--1866, 2019.
\newblock \href {http://arxiv.org/abs/1804.08953} {\path{arXiv:1804.08953}}.

\bibitem[BSS18]{BSS--HGeoQuan}
S.~Bunk, C.~Saemann, and R.{\,}J. Szabo.
\newblock The 2-{H}ilbert space of a prequantum bundle gerbe.
\newblock {\em Rev. Math. Phys.}, 30(1):1850001, 101 pp., 2018.
\newblock \href {http://arxiv.org/abs/1608.08455} {\path{arXiv:1608.08455}}.

\bibitem[BS17]{Bunk-Szabo--Fluxes_brbs_2Hspaces}
S.~Bunk and R.{\,}J. Szabo.
\newblock Fluxes, bundle gerbes and 2-{H}ilbert spaces.
\newblock {\em Lett. Math. Phys.}, 107(10):1877--1918, 2017.
\newblock \href {http://arxiv.org/abs/1612.01878} {\path{arXiv:1612.01878}}.

\bibitem[Bun17]{Bunk--Thesis}
S.~Bunk.
\newblock {{Categorical structures on bundle gerbes and higher geometric
  prequantisation}}.
\newblock PhD Thesis, Heriot-Watt University, Edinburgh, 139~pp., 2017.
\newblock \href {http://arxiv.org/abs/1709.06174} {\path{arXiv:1709.06174}}.

\bibitem[Bun20a]{Bunk:2020ifw}
S.~Bunk.
\newblock Sheaves of higher categories and presentations of smooth
field theories. Preprint, 50~pp., 2020.
\newblock \href {http://arxiv.org/abs/2003.00592} {\path{arXiv:2003.00592}}.

\bibitem[Bun20b]{Bunk:String_Grp}
S.~Bunk.
\newblock {Principal $\infty$-bundles and smooth string group models.}
Preprint, 44~pp., 2020.
\newblock \href {http://arxiv.org/abs/2008.12263} {\path{arXiv:2008.12263}}.

\bibitem[BW18]{BW:Transgression_of_D-branes}
S.~Bunk and K.~Waldorf.
\newblock{Transgression of D-branes.} Preprint, 69~pp., 2018.
\newblock \href {http://arxiv.org/abs/1808.04894} {\path{arXiv:1808.04894}}.

\bibitem[BW19]{BW:OCFFTs}
S.~Bunk and K.~Waldorf.
\newblock{Smooth functorial field theories from $B$-fields and
  D-branes.} Preprint, 64~pp., 2019.
\newblock \href {http://arxiv.org/abs/1911.09990} {\path{arXiv:1911.09990}}.

\bibitem[Bre93]{bredon}
G.~E. Bredon.
\emph{Topology and Geometry}.
Springer Graduate Texts in Mathematics, 1993.

\bibitem[CMM97]{Index}
A.{\,}L.~Carey, J.~Mickelsson, and M.{\,}K.~Murray.
\newblock {Index Theory, Gerbes, and Hamiltonian Quantization.}
\newblock {\em Comm. Math. Phys.}, 183:707--722, 1997.
\newblock \href {https://arxiv.org/abs/hep-th/9511151} {\path{arXiv:hep-th/9511151}}.
	
\bibitem[CMM00]{Carey1997}
A.{\,}L.~Carey, J.~Mickelsson, and M.{\,}K.~Murray.
\newblock {Bundle gerbes applied to quantum field theory.}
\newblock {\em Rev. Math. Phys.}, 12(1):65--90, 2000.
\newblock \href {https://arxiv.org/abs/hep-th/9711133} {\path{arXiv:hep-th/9711133}}.

\bibitem[CM95]{CM94}
A.{\,}L.~Carey and M.{\,}K.~Murray.
\newblock {Mathematical remarks on the cohomology of gauge groups and anomalies.}
\newblock In \emph{Confronting the Infinite}, pages 136–148. World Scientific Publishing, River Edge, NJ, 1995.
\newblock \href {https://arxiv.org/abs/hep-th/9408141} {\path{arXiv:hep-th/9408141}}.

\bibitem[CM96]{CM96}
A.{\,}L.~Carey and M.{\,}K.~Murray.
\newblock {Faddeev's anomaly and bundle gerbes.}
\newblock{\em Lett. Math. Phys.}, 37(1):29–36, 1996.

\bibitem[Col11]{Collier:Inft_Symmetries_of_gerbes}
B. L. Collier.
\newblock{Infinitesimal symmetries of Dixmier-Douady gerbes.}
Preprint, 2011.
\newblock \href {https://arxiv.org/abs/1108.1525} {\path{arXiv:1108.1525}}.

\bibitem[DGTS20]{Davighi}
J.~Davighi, B.~Gripaios and J.~Tooby-Smith,
\newblock{Quantum mechanics in magnetic backgrounds with manifest symmetry and locality}.
\newblock{\em J. Phys. A}, 53:145302, 35~pp., 2020
\newblock \href{https://arxiv.org/abs/1905.11999} {\path{arXiv:1905.11999}}

\bibitem[DHH11]{DHH:String_Orientations}
C.{\,}L.~Douglas, A.{\,}G.~Henriques, and M.{\,}A. Hill.
\newblock {Homological obstructions to string orientations},
{\em Int. Math. Res. Not.}, 18:4074--4088, 2011.
\newblock \href {https://arxiv.org/abs/0810.2131} {\path{arXiv:0810.2131}}.

\bibitem[Fad84]{Faddeev:Op_Anomaly}
L.{\,}D. Faddeev.
\newblock {Operator anomaly for the Gauss law}.
\newblock {\em Phys. Lett. B}, 145:81--84, 1984.

\bibitem[FS85]{FM:Nonabelian_Anomalies}
L.{\,}D. Faddeev and S.{\,}L. Shatashvili.
\newblock{Algebraic and Hamiltonian methods in the theory of nonabelian anomalies}.
\newblock {\em Theor. Math. Phys.}, 60:770--778, 1985.

\bibitem[Fio13]{Fiore}
G.~Fiore,
\newblock{On quantum mechanics with a magnetic field on $\R^n$ and on a torus $\bbT^n$.}
\newblock{\em Int.\ J.\ Theor.\ Phys.},  52:877--896, 2013.
\newblock \href {https://arxiv.org/abs/1103.0034} {\path{arXiv:1103.0034}}.

\bibitem[FRS16]{FRS:Higher_U(1)-connections}
D.~Fiorenza, C.{\,}L.~Rogers, and U.~Schreiber.
\newblock{Higher {$\U(1)$}-gerbe connections in geometric prequantisation.}
\newblock {\em Rev. Math. Phys.}, 28(6):1650012, 72~pp., 2016.
\newblock \href {https://arxiv.org/abs/1304.0236} {\path{arXiv:1304.0236}}.

\bibitem[GSW11]{GSW:Global_gauge_anomalies}
K.~Gaw\c{e}dzki, R.~Suszek, and K.~Waldorf.
\newblock{Global gauge anomalies in two-dimensional bosonic sigma models.}
\newblock {\em Comm. Math. Phys.}, 302(2):513--580, 2011.
\newblock \href {https://arxiv.org/abs/1003.4154} {\path{arXiv:1003.4154}}.

\bibitem[Gru00]{Gruber}
M.{\,}J.~Gruber.
\newblock{Bloch theory and quantisation of magnetic systems.}
\newblock{\em J. Geom. Phys.}, 34:137--154, 2000.
\newblock {\href {http://arxiv.org/abs/math-ph/9903048} {\path{arXiv:math-ph/9903048}}}.

\bibitem[GZ86]{Gunaydin:1985ur}
M.~G{\"u}naydin and B.~Zumino.
\newblock {Magnetic charge and nonassociative algebras}.
\newblock In {\emph{Old and New Problems in Fundamental Physics: Meeting in
 Honour of G.{\,}C. Wick}}, pages 43--53. Quaderni, Pisa: Scuola Normale
 Superiore, 1986.

\bibitem[Hol08]{Hollander:HoThy_for_stacks}
S.~Hollander.
\newblock{A homotopy theory for stacks.}
\newblock{\em Israel J. Math.}, 163:93--124, 2008.
\newblock {\href {http://arxiv.org/abs/math/0110247} {\path{arXiv:math/0110247}}}.

\bibitem[IZ13]{book Diffeology}
P. Iglesias-Zemmour.
\emph{Diffeology}.
 Mathematical Surveys and Monographs, 185. American Mathematical Society, 2013.
 
\bibitem[Jac85]{Jackiw:3-cocycles}
R.~Jackiw.
\newblock {3-cocycle in mathematics and physics}.
\newblock {\em Phys. Rev. Lett.}, 54:159--162, 1985.

\bibitem[Jo85]{Jo}
S.-G.~Jo.
\newblock{Commutators in an anomalous nonabelian chiral gauge theory}.
\newblock{\em Phys. Lett. B}, 163:353--359, 1985.

\bibitem[KS18]{KS:Symplectic_realisation}
V.{\,}G. Kupriyanov and R.{\,}J. Szabo.
\newblock {Symplectic realisation of electric charge in fields of monopole
 distributions}.
\newblock {\em Phys.\ Rev.\ D}, 98(4):045005, 25~pp., 2018.
\newblock \href {http://arxiv.org/abs/1803.00405} {\path{arXiv:1803.00405}}.

\bibitem[Lee13]{Lee}
J.{\,}M. Lee.
\emph{Introduction to Smooth Manifolds}, Second Edition.
Springer Graduate Texts in Mathematics,  2013.
	
\bibitem[Lur09]{Lurie:HTT}
J.~Lurie.
\newblock{Higher topos theory.}
\newblock{\em Ann. Math. Studies}, 170:1--925, 2009.
\newblock \href {http://arxiv.org/abs/math/0608040} {\path{arXiv:math/0806040}}.

\bibitem[L{\"u}s10]{Lust:2010iy}
D.~L{\"u}st.
\newblock {T-duality and closed string noncommutative (doubled) geometry}.
\newblock {\em J. High Energy Phys.}, 12:084, 28~pp., 2010.
\newblock \href {http://arxiv.org/abs/1010.1361} {\path{arXiv:1010.1361}}.

\bibitem[Mic85]{Mickelsson:1983xi}
J.~Mickelsson.
\newblock {Chiral anomalies in even and odd dimensions}.
\newblock {\em Commun. Math. Phys.}, 97:361--370, 1985.

\bibitem[Mic19]{Mickelsson}
J. Mickelsson.
\newblock{Nonassociative magnetic translations: A QFT construction.}
Preprint, 2019.
\newblock \href{https://arxiv.org/abs/1905.01944} {\path{arXiv:1905.01944}}.

\bibitem[MW16]{MW16}
J.~Mickelsson and S.~Wagner.
\newblock{Third group cohomology and gerbes over Lie groups.}
\newblock{\em J. Geom. Phys.}, 108:49--70, 2016.
\newblock \href{https://arxiv.org/abs/1602.02565} {\path{arXiv:1602.02565}}.

\bibitem[Moe02]{Moerdijk:Stacks_and_gerbes}
I.~Moerdijk.
\newblock{Introduction to the language of stacks and gerbes.}
Preprint, 2002.
\newblock \href {http://arxiv.org/abs/math/0212266} {\path{arXiv:math/0212266}}.

\bibitem[Mur96]{Murray--Bundle_gerbes}
M.{\,}K. Murray.
\newblock {Bundle gerbes}.
\newblock {\em J. London Math. Soc.}, 54:403--416, 1996.
\newblock \href {http://arxiv.org/abs/dg-ga/9407015}
{\path{arXiv:dg-ga/9407015}}.

\bibitem[M$^+$17]{MRSV:Equivariant_BGrbs}
M.{\,}K.~Murray, D.{\,}M.~Roberts, D. Stevenson, and R.{\,}F.~Vozzo.
\newblock {Equivariant bundle gerbes},
\newblock {\em Adv. Theor. Math. Phys.}, 21(4):921--975, 2017.
\newblock \href {http://arxiv.org/abs/1506.07931} {\path{arXiv:1506.07931}}.

\bibitem[MSS12]{MSS:NonGeo_Fluxes_and_Hopf_twist_Def}
D.~Mylonas, P.~Schupp, and R.{\,}J. Szabo.
\newblock {Membrane sigma-models and quantisation of non-geometric flux
  backgrounds}.
\newblock {\em J. High Energy Phys.}, 09:012, 55pp., 2012.
\newblock \href {http://arxiv.org/abs/1207.0926} {\path{arXiv:1207.0926}}.

\bibitem[MSS14]{Mylonas:2013jha}
D.~Mylonas, P.~Schupp, and R.{\,}J. Szabo.
\newblock {Non-geometric fluxes, quasi-Hopf twist deformations and
 nonassociative quantum mechanics}.
\newblock {\em J. Math. Phys.}, 55:122301, 30~pp., 2014.
\newblock \href {http://arxiv.org/abs/1312.1621} {\path{arXiv:1312.1621}}.

\bibitem[NSW13]{NSW:Smooth_string_group}
T.~Nikolaus, C.~Sachse, and C.~Wockel.
\newblock {A smooth model for the string group.}
\newblock {\em Int. Math. Res. Not.}, 16:3678--3721, 2013.
\newblock \href {http://arxiv.org/abs/1104.4288} {\path{arXiv:1104.4288}}.

\bibitem[NSS15]{NSS:oo-bdls_I}
T.~Nikolaus, U.~Schreiber, and D.~Stevenson.
\newblock{Principal {$\infty$}-bundles: General theory.}
\newblock{\em J. Homot. Relat. Struct.}, 10(4):749--801, 2015.
\newblock \href {http://arxiv.org/abs/1207.0248} {\path{arXiv:1207.0248}}.

\bibitem[NS11]{NS:Equivar}
T.~Nikolaus and C.~Schweigert.
\newblock{Equivariance in higher geometry.}
\newblock {\em Adv. Math.} 226(4):3367--3408, 2011.
\newblock \href {http://arxiv.org/abs/1004.4558}
{\path{arXiv:1004.4558}}.

\bibitem[Rie14]{riehl}
E. Riehl.
\emph{Categorical Homotopy Theory}.
New Mathematical Monographs, 24. Cambridge University Press, 2014.

\bibitem[SS20]{SS:Non-Ab_SD_String}
C.~Saemann and L.~Schmidt.
\newblock {The nonabelian self-dual string.}
\newblock {\em Lett. Math. Phys.}, 110:1001–1042, 2020.
\newblock \href {http://arxiv.org/abs/1705.02353} {\path{arXiv:1705.02353}}.

\bibitem[SP11]{SP:String_group}
C.{\,}J.~Schommer-Pries.
\newblock{Central extensions of smooth 2-groups and a finite-dimensional string 2-group.}
\newblock{\em Geom. Topol.}, 15(2):609--676, 2011.
\newblock \href {http://arxiv.org/abs/0911.2483} {\path{arXiv:0911.2483}}.

\bibitem[Sch13]{Schreiber:DCCT}
U.~Schreiber.
\newblock{Differential cohomology in a cohesive $\infty$-topos.}
\newblock{Preprint, }
\newblock \href {http://arxiv.org/abs/1310.7930} {\path{arXiv:1310.7930}}.

\bibitem[SW09]{SW:PT_and_Fctrs}
U. Schreiber and K. Waldorf.
\newblock{Parallel transport and functors}.
\newblock {\em J. Homot. Relat. Struct.}, 4(1):187--244, 2009.
\newblock \href {http://arxiv.org/abs/0705.0452} {\path{arXiv:0705.0452}}.

\bibitem[SW11]{SW:Fctrs_v_forms}
U.~Schreiber and K.~Waldorf.
\newblock {Smooth functors vs. differential forms.}
\newblock {\em Homol. Homot. Appl.}, 13(1):143--203, 2011.
\newblock \href {http://arxiv.org/abs/0802.0663} {\path{arXiv:0802.0663}}.

\bibitem[SW17]{SW:Local_2-fctrs}
U.~Schreiber and K.~Waldorf.
\newblock {Local theory for 2-functors on path 2-groupoids.}
\newblock {\em J. Homot. Relat. Struct.}, 12(3):617--658, 2017.
\newblock \href {http://arxiv.org/abs/1303.4663} {\path{arXiv:1303.4663}}.

\bibitem[Sol18]{Soloviev:Dirac_Monopole_and_Kontsevich}
M.{\,}A. Soloviev.
\newblock {Dirac’s magnetic monopole and the Kontsevich star product}.
\newblock {\em J. Phys. A}, 51(9):095205, 21 pp., 2018.
\newblock \href {http://arxiv.org/abs/1708.05030} {\path{arXiv:1708.05030}}.

\bibitem[Sto96]{Stolz:Ricci_and_Witten}
S.~Stolz.
\newblock {A conjecture concerning positive Ricci curvature and the Witten genus.}
\newblock {\em Math. Ann.}, 304(4):785--800, 1996.

\bibitem[ST04]{ST:Elliptic_Objects}
S.~Stolz and P.~Teichner.
\newblock {What is an elliptic object?}
\newblock {\em London Math. Soc. Lect. Note Ser.}, 308:247--343, 2004.

\bibitem[Sza13]{Szabo:2012hc}
R.{\,}J.~Szabo.
\newblock {Quantisation of higher abelian gauge theory in generalised differential cohomology.}
\newblock {\em Proc. Science}, 175:009, 65~pp., 2013.
\newblock \href {http://arxiv.org/abs/1209.2530} {\path{arXiv:1209.2530}}.

\bibitem[Sza19a]{Szabo:Durham}
R.{\,}J.~Szabo.
\newblock {Quantisation of magnetic Poisson structures.}
\newblock {\em Fortsch. Phys.}, 67(8--9):1910022, 13~pp., 2019.
\newblock \href {http://arxiv.org/abs/1903.02845} {\path{arXiv:1903.02845}}.

\bibitem[Sza19b]{Szabo:Corfu}
R.{\,}J.~Szabo.
\newblock {An introduction to nonassociative physics.}
\newblock {\em Proc. Science}, 347:100, 41~pp., 2019.
\newblock \href {http://arxiv.org/abs/1903.05673} {\path{arXiv:1903.05673}}.

\bibitem[Vis05]{Vistoli:Fib_Cats}
A.~Vistoli.
\newblock{Grothendieck topologies, fibred categories and descent theory.}
\newblock{\em Math. Surv. Monogr.}, 123:1--104, 2005.
\newblock \href {http://arxiv.org/abs/math/0412512} {\path{arXiv:math/0412512}}.

\bibitem[Wal07a]{waldorf}
K. Waldorf.	
\newblock {Algebraic structures for bundle gerbes and the Wess-Zumino term in conformal field theory.}
PhD Thesis, Universität Hamburg, 179~pp., 2007.

\bibitem[Wal07b]{Waldorf--More_morphisms}
K.~Waldorf.
\newblock {More morphisms between bundle gerbes}.
\newblock {\em Theory Appl. Categ.}, 18(9):240--273, 2007.
\newblock \href {http://arxiv.org/abs/math/0702652}
  {\path{arXiv:math/0702652}}.

\bibitem[Wal12a]{Waldorf:String_2-group}
K.~Waldorf.
\newblock {A construction of string 2-group models using a transgression-regression technique.}
\newblock {\em Contemp. Math.}, 584:99--115, 2012.
\newblock \href {http://arxiv.org/abs/1201.5052}
{\path{arXiv:1201.5052}}.

\bibitem[Wal12b]{WaldorfI}
K.~Waldorf.
\newblock {Transgression to loop spaces and its inverse I: Diffeological bundles and fusion maps.}
\newblock {\em Cah. Topol. Géom. Différ. Catég.}, LIII:162--210, 2012.
\newblock \href {https://arxiv.org/abs/0911.3212}
{\path{arXiv:0911.3212}}.

\bibitem[Wal16]{Waldorf:Transgression_II}
K.~Waldorf.
\newblock {Transgression to loop spaces and its inverse {II}: {G}erbes and fusion bundles with connection.}
\newblock {\em Asian J. Math.}, 20(1):59--115, 2016.
\newblock \href {http://arxiv.org/abs/1004.0031}
{\path{arXiv:1004.0031}}.

\bibitem[Wal18]{Waldorf:PT_in_2Bundles}
K.~Waldorf.
\newblock {Parallel transport in principal 2-bundles.}
\newblock {\em High. Struct.}, 2(1):57--115, 2018.
\newblock \href {http://arxiv.org/abs/1704.08542}
{\path{arXiv:1704.08542}}.

\end{thebibliography}
\end{document}